\def\beq{\begin{equation}}
\def\eeq{\end{equation}}
\def\ba{\begin{array}}
\def\ea{\end{array}}
\newcommand{\g}{{\mathbf g}}
\newenvironment{abs}{\textbf{Abstract}\mbox{  }}{ }
\newenvironment{key words}{\textbf{Keywords}\mbox{  }}{ }
\newtheorem{theorem}{Theorem}[section]
\newtheorem*{theorem*}{Theorem A }
\newtheorem{lemma}[theorem]{Lemma}
\newtheorem{prop}[theorem]{Proposition}
\newtheorem{crl}[theorem]{Corollary}
\theoremstyle{definition}
\renewenvironment{proof}{\noindent{\textbf{Proof.}}}{\hfill$\Box$}
\newtheorem{rem}[theorem]{Remark}
\newtheorem{df}[theorem]{Definition}
\theoremstyle{remark}
\numberwithin{equation}{section}
\begin{document}
\pagestyle{plain}

\title{Reversed Hardy-Littlewood-Sobolev inequalities with vertical weights on the upper half space}
\author{Jingbo Dou, Yunyun Hu, Jingjing Ma}

\address{Jingbo Dou, School of Mathematics and Statistics,
Shaanxi Normal University,
Xi'an, 710119, P. R. China}
\email{jbdou@snnu.edu.cn}

\address{Yunyun Hu, School of Mathematics and Statistics,
Shaanxi Normal University,
Xi'an, 710119, P. R. China}
\email{yhu@snnu.edu.cn}
\medskip

\address{Jingjing Ma, School of Mathematics and Statistics,
Shaanxi Normal University,
Xi'an, 710119, P. R. China}
\email{mjjwnm@126.com}

\thanks{Corresponding author: Yunyun Hu at yhu@snnu.edu.cn.}

\date{}

\maketitle

\begin{abs}
In this paper, we obtain the reversed Hardy-Littlewood-Sobolev inequality with vertical weights on the upper half space and discuss the extremal functions. We show that the sharp constants in this inequality are attained by introducing a renormalization method. The  classification of corresponding extremal functions is discussed via the method of moving spheres. Moreover, we prove the sufficient and necessary conditions of existence for positive solutions to the Euler-Lagrange equations  by using Pohozaev identities in weak sense. This renormalization method is rearrangement free, which can be also applied to prove the existence of extremal functions for sharp (reversed) Hardy-Littlewood-Sobolev inequality with extended kernels and  other similar inequalities.
\end{abs}

\begin{key words} Reversed Hardy-Littlewood-Sobolev inequality, extremal function, radical  symmetry, regularity, moving sphere method.
 \end{key words}\\
\textbf{Mathematics Subject Classification(2020).} 35A23, 45E10, 45G15, 42B37
\indent

\section{Introduction\label{Section 1}}

The classical Hardy-Littlewood-Sobolev (HLS) inequality established in \cite{Hardy1928,Sobolev1938}, states that
\begin{equation}\label{HL2}
\int_{\mathbb{R}^n}\int_{\mathbb{R}^n}|x-y|^{-\lambda} f(x)g(y)dxdy\leq C_{n,p,\lambda}\|f\|_{L^{p}(\mathbb{R}^n)}\|g\|_{L^{q}(\mathbb{R}^n)}
\end{equation}
holds for $f\in L^{p}(\mathbb{R}^n), g\in L^{q}(\mathbb{R}^n)$, $1<p,q<\infty, 0<\lambda<n$ and $\frac{1}{p}+\frac{1}{q}+\frac{\lambda}{n}=2$.
Lieb \cite{Lieb1983} proved the existence of extremal functions for the inequality \eqref{HL2} by using rearrangement inequalities. He also classified extremal functions and computed the sharp constant in the case of $p=2$, or $q=2$, or $p=q=\frac{2n}{2n-\lambda}$.

HLS inequality \eqref{HL2} plays prominent role in many geometric problems,  such as Yamabe problem, Ricci flow problem, etc.  In particular, from global analysis, the integral (curvature) equation was studied via HLS inequality by Zhu \cite{Z2016}, Dou and Zhu \cite{DZ2019}, Dou, Guo and Zhu \cite{DGZ2020}. It also implies many \emph{important geometrical inequalities}, such as the \emph{Gross logarithmic Sobolev inequality} \cite{Gr1976} and the \emph{Moser-Onofri-Beckner inequality} \cite{Beckner20081}. These inequalities have many applications in analysis, geometric problems and quantum field theory equations.

In the past years, HLS inequality \eqref{HL2} has been studied by many mathematicians.
Using the competing symmetry method, Carlen and Loss \cite{CarL1990} provided an alternative way to reprove Lieb's result.  Frank and Lieb \cite{FL2010} offered a new proof for diagonal cases by employing the reflection positivity of inversions in spheres, and employed the method of moving spheres (Li-Zhu Lemma in \cite{LZ1995}) to characterize the minimizing functions. Later, Frank and Lieb \cite{FL20121} gave a rearrangement-free method to show the sharp inequality \eqref{HL2} by exploiting their conformal covariance.  This new proof also leads to a proof of analogous inequalities on the CR spheres and Heisenberg group (see \cite{FL20122}).

Recently, Dou and Zhu \cite{DZ2015b} established the reversed HLS inequality
\begin{equation} \label{DZ-reverse}
\int_{\mathbb{R}^n}\int_{\mathbb{R}^n}\frac{ f(x)g(y)}{|x-y|^{\lambda}}dxdy\geq S_{n,p,\lambda}\|f\|_{L^{p}(\mathbb{R}^n)}\|g\|_{L^{q}(\mathbb{R}^n)}
\end{equation}
for nonnegative functions $f\in L^{p}(\mathbb{R}^n), g\in L^{q}(\mathbb{R}^n)$, where $n\ge1,\lambda<0, \frac{n}{2n-\lambda}<p,q<1,  $ and $\frac{1}{p}+\frac{1}{q}+\frac{\lambda}{n}=2$. In particular, for $p=q=\frac{2n}{2n-\lambda}$,
they proved the existence of extremal functions and classified all extremal functions via the method of moving spheres, and computed the best constant $S_{n,\frac{2n}{2n-\lambda},\lambda}$.  Inequality \eqref{DZ-reverse} is a  complement of HLS inequality for case $1<p,q<\infty$.

Closely related to the HLS inequality, Stein and Weiss \cite{SW1958} established the following weighted HLS inequality (also call Stein-Weiss inequality)
\begin{equation}\label{HL1}
\int_{\mathbb{R}^n}\int_{\mathbb{R}^n}|x|^{-\alpha}|x-y|^{-\lambda} f(x)g(y)|y|^{-\beta}dxdy\leq C_{n,\alpha,\beta,p,q}\|f\|_{L^{p}(\mathbb{R}^n)}\|g\|_{L^{q}(\mathbb{R}^n)},
\end{equation}
where $1<p,q<+\infty$, $\alpha,\beta,\lambda$ satisfy the following conditions
$$\frac{1}{p}+\frac{1}{q}+\frac{\alpha+\beta+\lambda}{n}=2,\ \ \ \frac{1}{p}+\frac{1}{q}\geq1,$$
$$\alpha+\beta\geq0, \ \ \alpha<\frac{n}{p'}, \ \ \ \beta<\frac{n}{q'},\ \ \ 0<\lambda<n.$$
In \cite{Lieb1983}, Lieb proved the existence of extremal function for the inequality \eqref{HL1} by using rearrangement inequalities in the case $p<q$ and $\alpha,\beta\geq0$. In addition, he pointed out the extremals cannot be expected to exist when $p=q=2$, $\lambda=n-1$, $\alpha=0$ and $\beta=1$, see also \cite{Her1977}. In the case $p=q$, Beckner \cite{Beckner20081,Beckner20082} obtained the sharp constant of inequality \eqref{HL1} by establishing general Stein-Weiss lemma.
For $p<q$, Chen, Lu and Tao \cite{CLLT2019} proved the existence of extremal functions for the weighted HLS inequality under assumption $\alpha+\beta\geq0$, which extended Lieb's results by relaxing the restriction $\alpha,\beta\geq0$. Dou \cite{D2016} established weighted  HLS  type inequality on the upper half space by Hardy inequality and  discussed the existence of extremal functions by using rearrangement inequalities for $p<q$ and $\alpha,\beta\geq0$.
Later, Chen, Liu, Lu and Tao \cite{CLLT2018} established a reverse version of inequalities \eqref{HL1} and proved the existence of extremal functions. Chen, Lu and Tao \cite{CLT2019} also established a general reversed weighted HLS  inequality on the upper half space and proved the existence of their extremal functions.

In the last two decades, various extensions of HLS inequality have been investigated. For example, one has sharp HLS inequality on Heisenberg group, on compact Riemannian manifolds, on reversed forms, and on weighted forms; for interested readers, we refer to \cite{FL20122,FS1974,Han2016,DZ2015b,Ngo20171,Ngo20172,CLLT2021, HLZ2012, Beckner2021}.

\medskip

Apart from these extensions, Gluck \cite{Gl2020} established the following sharp HLS inequality involving general kernels (extended kernels) on the upper half space $\mathbb{R}_+^n$:
\begin{equation}\label{equality00}
\Big|\int_{\mathbb{R}_+^n}\int_{\partial\mathbb{R}^n_+} K_{\alpha,\beta}(x'-y',x_n)f(y)g(x) dydx\Big|\leq C_{n,\alpha,\beta}
\|f\|_{L^p(\partial\mathbb{R}_+^n)}\|g\|_{L^{q}(\mathbb{R}_+^n)},
\end{equation}
where the kernel
$$K_{\lambda,\beta}(x',x_n):=\frac{x_n^\beta}{(|x'|^2+x_n^2)^{\frac{\lambda}{2}}}, \quad  x=(x',x_n)\in \mathbb{R}^{n-1}\times (0,\infty)=\mathbb{R}_+^n$$
with $\beta\geq0$, $0<n-\lambda+\beta<n-\beta$, $\frac{\lambda-2\beta}{2n}+\frac{\lambda}{2(n-1)}<1.$

The family of kernels $K_{\lambda,\beta}$ includes the classical Possion kernel $K_{n,1}$, Riesz kernel $K_{\lambda,0}$ and the Poisson kernel $K_{\lambda,1-n+\lambda}$ for the divergence form operator $u\mapsto div(x_n^{n-\lambda} \nabla u)$ on the half space. For the kernels $K_{n,1}$, $K_{\lambda,0}$, $K_{n-\lambda,1-n+\lambda}$ and $K_{n-\lambda,1}$, the corresponding inequalities of the form of \eqref{equality00} have been investigated, see \cite{HWY2008}, \cite{DZ2015a},\cite{Chens2014} and \cite{DGZ2017}, respectively.
In the case $\lambda<0$,
 Dai, Hu and Liu \cite{DHL2021} proved a reverse version of inequality \eqref{equality00}. They also classified all extremal functions and computed the sharp constants.

In the following, we write the upper half space  as $\mathbb{R}^{n+1}_+=\{(x,t)\in \mathbb{R}^n\times \mathbb{R}\,:\, t>0\}$.
Recently, Dou and Ma \cite{DM2021} established the following weighted HLS type inequality on $\mathbb{R}^{n+1}_+$
\begin{equation}\label{WHLS inequality-t}
\int_{\mathbb{R}^{n+1}_+}\int_{\mathbb{R}^{n+1}_+}\frac{f(x,t) g(y,z)}{t^{\alpha}|(x,t)-(y,z)|^{\lambda}z^{\beta} } dxdtdydz
\le C \|f\|_{L^p(\mathbb{R}_+^{n+1})}\|g\|_{L^{q}(\mathbb{R}_+^{n+1})},
\end{equation}
where $f\in L^p(\mathbb{R}_+^{n+1}), \, g\in L^{q}(\mathbb{R}_+^{n+1})$ and $\lambda,\alpha,\beta,p,q$ satisfy
$$0<\lambda<n+1,\ \ 1< p,\ q<\infty,\
\alpha<\frac{1}{p'},\ \beta<\frac {1}{q'},$$
$$ \alpha+\beta\ge0,\ \ \frac 1p +\frac 1{q}+\frac{\alpha+\beta+\lambda}{n+1}=2.$$
They proved the existence of the extremal functions by the concentration-compactness principle. For the conformal invariant case, they showed the explicit form of the extremal functions  on $\partial\mathbb{R}^{n+1}_+$ by the method of moving spheres. Furthermore, some weighted Sobolev inequalities on $\mathbb{R}^{n+1}_+$ were established by inequality \eqref{WHLS inequality-t}.

Our goal in this paper continues to investigate reverse weighted HLS type inequality related to inequality \eqref{WHLS inequality-t}.
We first establish the following reversed HLS  type inequality with vertical weights.
Assume that $\alpha,\beta,\lambda,p,q$ satisfy
\begin{equation}\label{RWH-exp-0}
\begin{cases}
&-n-1<\lambda<0, 0< p,q<1,\\
&0\le\alpha<-\frac{1}{p'},0\le\beta<-\frac{1}{q'},\\
&\frac1p+\frac1{q}+\frac{\lambda-\alpha-\beta}{n+1}=2,\\
\end{cases}
\end{equation} where $\frac{1}{p}+\frac{1}{p'}=1$ and $\frac{1}{q}+\frac{1}{q'}=1$. We always assume that $p$ and $p'$ are conjugate numbers in the whole paper.

\begin{theorem}\label{RWHLSB}
Let $\alpha,\beta,\lambda,p,q$ satisfy \eqref{RWH-exp-0}. Then there exists a constant $N_{\alpha,\beta,\lambda}:=N(n,\alpha,\beta, \lambda, p)>0$ such that for any nonnegative functions $f\in L^p(\mathbb{R}_+^{n+1})$ and $g\in L^{q}(\mathbb{R}_+^{n+1})$,
\begin{equation}\label{RHLSD-O}
\int_{\mathbb{R}^{n+1}_+}\int_{\mathbb{R}^{n+1}_+}\frac{t^\alpha z^\beta f(x,t)g(y,z)}{|(x,t)-(y,z)|^{\lambda}}dxdtdydz
\ge N_{\alpha,\beta,\lambda}\|f\|_{L^p(\mathbb{R}_+^{n+1})}\|g\|_{L^{q}(\mathbb{R}_+^{n+1})}
\end{equation}
holds.  Moreover, the upper and lower bounds of constant  $ N_{\alpha,\beta,\lambda}$ satisfy
\begin{eqnarray}\label{bestrange}
(\frac{pq-p}{2pq-p-q})^{\frac{1-q}{q}}(\frac{pq-q}{2pq-p-q})^{\frac{1-p}{p}}\mbox{min}\{ D_1, D_2\} \le N_{\alpha,\beta,\lambda}\le \mbox{min}\{ D_1, D_2\},
\end{eqnarray}
where $$D_1=\big[\frac{\pi^{\frac{n}{2}}(p-1)}{(n+1+\alpha) p-n-1}\frac{\Gamma(\frac{(\alpha+1) p-1}{2(p-1)})}{\Gamma(\frac{(n+\alpha+1)p-n-1 }{2(p-1)})}\big]^{\frac{p-1}{p}}\big[\frac{\pi^{\frac{n}{2}}(q-1)}{(n+1+\beta-\lambda) q-n-1}\frac{\Gamma(\frac{(\beta+1) q-1}{2(q-1)})}{\Gamma(\frac{(n+\beta +1)q-n-1}{2(q-1)})}\big]^{\frac{q-1}{q}},$$
$$D_2=\big[\frac{\pi^{\frac{n}{2}}(p-1)}{(n+1+\alpha-\lambda)p-n-1}\frac{\Gamma(\frac{(\alpha+1) p-1}{2(p-1)})}{\Gamma(\frac{(n+\alpha+1)p-n-1 }{2(p-1)})}\big]^{\frac{p-1}{p}}\big[\frac{\pi^{\frac{n}{2}}(q-1)}{(n+1+\beta) q-n-1}\frac{\Gamma(\frac{(\beta+1) q-1}{2(q-1)})}{\Gamma(\frac{(n+\beta +1)q-n-1}{2(q-1)})}\big]^{\frac{q-1}{q}}.$$
\end{theorem}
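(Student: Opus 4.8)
The plan is to first reduce \eqref{RHLSD-O} to bounded, compactly supported nonnegative $f,g$, for which both sides are finite (since $-\lambda>0$ the kernel $|(x,t)-(y,z)|^{-\lambda}=|(x,t)-(y,z)|^{|\lambda|}$ is bounded on compact sets, while $t^{\alpha}$ and $z^{\beta}$ are locally integrable because $\alpha,\beta\ge 0$), and then recover the general case by monotone convergence. Writing $w_1=(x,t)$ and $w_2=(y,z)$, the heart of the matter is to peel off the factors $\|f\|_{L^p}$ and $\|g\|_{L^q}$ from the bilinear integral by the reverse H\"older inequality, which is available exactly because $0<p,q<1$ forces the conjugate exponents $p',q'$ to be negative. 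The delicate point is that the kernel $|w_1-w_2|^{|\lambda|}$ grows at infinity and so lies in no Lebesgue space, so one cannot apply reverse H\"older against the bare kernel; instead I would first integrate one of the two functions against the kernel — obtaining a strictly positive potential that is bounded below on compact sets and grows at infinity — and only then apply reverse H\"older to that potential, after renormalizing by an explicit radial weight such as $(|x|^2+(t+1)^2)^{-a}$ so that the remaining integrals become absolutely convergent.

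Once this reduction is made, the problem comes down to evaluating two explicit integrals, schematically $\int_{\mathbb{R}^{n+1}_+}t^{\alpha p'}(|x|^2+(t+1)^2)^{-\sigma}\,dx\,dt$ together with its $(q,\beta,\lambda)$-analogue, where $\sigma$ is fixed by the balance identity in \eqref{RWH-exp-0}. Each is computed by integrating first over $x\in\mathbb{R}^n$, using $\int_{\mathbb{R}^n}(|x|^2+r^2)^{-\sigma}\,dx=\pi^{n/2}\,\Gamma(\sigma-\tfrac n2)\,\Gamma(\sigma)^{-1}\,r^{n-2\sigma}$, and then over $t\in(0,\infty)$ by a Beta integral; the three conditions in \eqref{RWH-exp-0} are precisely what make these integrals finite, the bound $\alpha<-1/p'$ giving integrability as $t\to 0$, the bound $\beta<-1/q'$ giving integrability as $z\to 0$, and the balance equation $\tfrac1p+\tfrac1q+\tfrac{\lambda-\alpha-\beta}{n+1}=2$ giving decay at infinity. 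Carrying the homogeneous factor $|w_1-w_2|^{-\lambda}$ through the estimate together with the $g$-variable, respectively together with the $f$-variable, produces the two constants $D_1$ and $D_2$ of the statement, and one retains the smaller of the two. Moreover, the interpolation of the factors in the reverse H\"older step cannot be arranged to hit both ideal exponents at once; optimizing over the remaining free parameter produces the weights $\tfrac{pq-p}{2pq-p-q}$ and $\tfrac{pq-q}{2pq-p-q}$ (which add up to $1$), and the associated deficit is exactly the prefactor $(\tfrac{pq-p}{2pq-p-q})^{\frac{1-q}{q}}(\tfrac{pq-q}{2pq-p-q})^{\frac{1-p}{p}}$ appearing in the lower bound of \eqref{bestrange}.

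For the companion upper bound $N_{\alpha,\beta,\lambda}\le\min\{D_1,D_2\}$, I would test \eqref{RHLSD-O} against the near-extremal pairs picked out by the equality discussion of the reverse H\"older step, namely functions $f(x,t)=t^{a_1}(|x|^2+(t+c)^2)^{-b_1}$ and $g(y,z)=z^{a_2}(|y|^2+(z+c)^2)^{-b_2}$ with exponents in the admissible range, for which $\|f\|_{L^p}$, $\|g\|_{L^q}$ and the double integral can all be computed in closed form by the same Gamma and Beta identities; sending the scale parameter $c$ to $0^+$ and to $+\infty$ then drives the quotient to $D_1$ and to $D_2$ respectively. The principal obstacle is the very first step: making the reverse H\"older/renormalization argument rigorous despite the growth of the kernel, which forces a careful bookkeeping of exponents and uses all three hypotheses in \eqref{RWH-exp-0} to certify that every integral that appears is finite; once that bookkeeping is set up correctly, the Gamma-function computations are routine.
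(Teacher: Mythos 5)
Your opening reduction (reversed H\"older in the $g$-variable, so that everything hinges on bounding $\int_{\mathbb{R}^{n+1}_+}\big(\int_{\mathbb{R}^{n+1}_+}t^{\alpha}z^{\beta}f(x,t)|(x,t)-(y,z)|^{-\lambda}\,dxdt\big)^{q'}dydz$ from above, the inequality flipping because $q'<0$) is the same as the paper's. But the central estimate is missing. You propose to handle the potential by a second, ``renormalized'' reverse H\"older, yet this is exactly where the proof lives and no workable mechanism is given. The naive second reverse H\"older yields the pointwise lower bound $\int t^{\alpha}f\,|(x,t)-(y,z)|^{-\lambda}dxdt\ \ge\ \|f\|_{L^p}\big(\int t^{\alpha p'}|(x,t)-(y,z)|^{-\lambda p'}dxdt\big)^{1/p'}=c\,\|f\|_{L^p}\,z^{(n+1)/p'+\alpha-\lambda}$, a function of $z$ alone; raising to the power $q'<0$ and integrating in $(y,z)$ over $\mathbb{R}^{n+1}_+$ gives $+\infty$ (the $y$-integral diverges), so the resulting upper bound on $\int(\cdot)^{q'}$ is vacuous. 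The paper's engine, which your sketch does not replace, is to split the inner integral according to $|(x,t)|\le|(y,z)|/2$ versus $|(y,z)|\le|(x,t)|/2$, bound the kernel by $(|(y,z)|/2)^{-\lambda}$, resp.\ $(|(x,t)|/2)^{-\lambda}$, on each piece, and then invoke the reversed weighted Hardy inequality of Lemma \ref{Weighted Hardy ineq} (from \cite{AKM2019}); the constants $D_1,D_2$ are the Muckenhoupt-type infima of that lemma (computed for pure power weights over half-balls and their complements, not for $(|x|^2+(t+1)^2)^{-\sigma}$), and the prefactor $(\tfrac{pq-p}{2pq-p-q})^{\frac{1-q}{q}}(\tfrac{pq-q}{2pq-p-q})^{\frac{1-p}{p}}$ is exactly the Hardy constant $(\tfrac{p'}{p'+q'})^{-1/q'}(\tfrac{q'}{p'+q'})^{-1/p'}$, not the outcome of an interpolation one optimizes by hand. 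Without this (or an equivalent dyadic/Hardy device) the estimate does not close.

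The argument you offer for $N_{\alpha,\beta,\lambda}\le\min\{D_1,D_2\}$ is also broken. Under \eqref{RWH-exp-0} the quotient $\Phi(f,g)/(\|f\|_{L^p}\|g\|_{L^q})$ is invariant under simultaneous dilations of $f$ and $g$, and your family $f_c(x,t)=t^{a_1}(|x|^2+(t+c)^2)^{-b_1}$ satisfies $f_c(x,t)=c^{a_1-2b_1}f_1(x/c,t/c)$, i.e.\ it is the dilation orbit of $f_1$; hence the quotient is independent of $c$, and the limits $c\to0^+$ and $c\to+\infty$ cannot produce two different values $D_1$ and $D_2$. To separate the two constants one must vary the exponents (approach the boundary of admissibility), or, as in the paper, inherit the bound from the sharpness direction of the Hardy lemma.
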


For simplicity, let $r=q'<0$, we  write the assumptions of exponents $\alpha,\beta,\lambda,p,r$ as
\begin{equation}\label{RWH-exp-1}
\begin{cases}
&-n-1<\lambda<0, \ r<0< p<1,\\
&0\le\alpha<-\frac{1}{p'},\ 0\le\beta<-\frac{1}{r},\\
&\frac 1{r}=\frac1p-\frac{n+1+(\alpha+\beta-\lambda)}{n+1}.
\end{cases}
\end{equation}
Define weighted singular operator  for $f(x,t)\in C^\infty_0(\mathbb{R}^{n+1}_+)$ as
\[I_{\alpha,\beta}f(y,z)=\int_{\mathbb{R}^{n+1}_+}\frac{t^\alpha z^\beta  f(x,t)}{|(x,t)-(y,z)|^{\lambda}}dxdt,
\]
 and a  singular operator

\begin{equation*}
E_\lambda f(y,z)=\int_{\mathbb{R}^{n+1}_+}\frac{f(x,t)}{|(x,t)-(y,z)|^\lambda}dxdt, \qquad \forall\ (y,z)\in\mathbb{R}^{n+1}_+.
\end{equation*}
By duality and the reversed H\"{o}lder inequality (see e.g. Lemma 2.1 in \cite{DZ2015b}), it is easy to verify that the inequality \eqref{RHLSD-O} is equivalent to the following  two form inequalities.

\begin{crl}\label{RWHLSD-theo}
Let $\alpha,\beta,\lambda,p,r$ satisfy \eqref{RWH-exp-1}.
There is a constant $C (n,\alpha,\beta,\lambda,p)>0$ depending on $n,\alpha,\beta, \lambda, p$ such that
 \begin{equation}\label{RHLSD-1}
\| I_{\alpha,\beta}f\|_{L^r( \mathbb{R}^{n+1}_+)}\ge C (n,\alpha,\beta,\lambda,p)\|f\|_{L^p(
\mathbb{R}^{n+1}_+)}
\end{equation}
or
\begin{equation}\label{WHLSD-1}
\|(E_\lambda f)z^{\beta}\|_{L^{r}( \mathbb{R}^{n+1}_+)}\ge C (n,\alpha,\beta,\lambda,p)\|t^{-\alpha} f\|_{L^p(
\mathbb{R}^{n+1}_+)}
\end{equation}
holds for any nonnegative
function $f\in L^p(\mathbb{R}^{n+1}_+).$
\end{crl}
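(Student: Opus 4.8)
The plan is to obtain Corollary~\ref{RWHLSD-theo} from Theorem~\ref{RWHLSB} (equivalently, from \eqref{RHLSD-O}) by the standard duality argument for the reversed Hölder pair $(q,r)$, $r=q'<0$. Recall the reversed Hölder inequality (Lemma~2.1 in \cite{DZ2015b}): for $0<q<1$ and nonnegative measurable $u,v$ on $\mathbb{R}^{n+1}_+$,
\[
\int_{\mathbb{R}^{n+1}_+} uv\,dydz\ \ge\ \Big(\int_{\mathbb{R}^{n+1}_+}u^{q}\,dydz\Big)^{1/q}\Big(\int_{\mathbb{R}^{n+1}_+}v^{r}\,dydz\Big)^{1/r}=:\|u\|_{L^q}\,\|v\|_{L^r},
\]
where for $r<0$ the symbol $\|v\|_{L^r}$ is defined by the displayed right-hand factor, and equality holds when $v$ is proportional to a power of $u$. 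Note also the pointwise identity $I_{\alpha,\beta}f(y,z)=z^{\beta}E_\lambda(t^{\alpha}f)(y,z)$, and that by Tonelli the left-hand side of \eqref{RHLSD-O} equals $\int_{\mathbb{R}^{n+1}_+}(I_{\alpha,\beta}f)\,g\,dydz$.

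The implication \eqref{RHLSD-1}$\Rightarrow$\eqref{RHLSD-O} is immediate: applying the reversed Hölder inequality with $u=g$ and $v=I_{\alpha,\beta}f$ and then \eqref{RHLSD-1} gives $\int(I_{\alpha,\beta}f)g\ge\|g\|_{L^q}\|I_{\alpha,\beta}f\|_{L^r}\ge C\|g\|_{L^q}\|f\|_{L^p}$, which is \eqref{RHLSD-O} with $N_{\alpha,\beta,\lambda}=C$. For the converse \eqref{RHLSD-O}$\Rightarrow$\eqref{RHLSD-1}, fix a nonnegative $f$ (first in the dense class $C_0^\infty(\mathbb{R}^{n+1}_+)$ on which $I_{\alpha,\beta}$ is defined, then extend), put $u=I_{\alpha,\beta}f>0$, and use the saturating test function $g=u^{r-1}$. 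Since $(r-1)q=r$, one has $\int u\,g=\int u^{r}$ and $\|g\|_{L^q}^{q}=\int u^{r}$, so \eqref{RHLSD-O} reads $\int u^{r}\ge N_{\alpha,\beta,\lambda}\|f\|_{L^p}(\int u^{r})^{1/q}$, and dividing (using $\tfrac1q+\tfrac1r=1$) yields $\|I_{\alpha,\beta}f\|_{L^r}\ge N_{\alpha,\beta,\lambda}\|f\|_{L^p}$, i.e.\ \eqref{RHLSD-1}. To run this rigorously without presupposing $\int u^{r}<\infty$, one replaces $g$ by the truncations $g_{\delta,R}=u^{r-1}\mathbf 1_{\{u>\delta\}\cap B_R}$, which are bounded (by $\delta^{r-1}$) with compact support, hence genuinely in $L^q$; feeding $g_{\delta,R}$ into \eqref{RHLSD-O} gives $\int_{\{u>\delta\}\cap B_R}u^{r}\ge N_{\alpha,\beta,\lambda}\|f\|_{L^p}\big(\int_{\{u>\delta\}\cap B_R}u^{r}\big)^{1/q}$, and since this integral is finite ($\le\delta^{r}|B_R|$) and positive, one gets $\int_{\{u>\delta\}\cap B_R}u^{r}\le(N_{\alpha,\beta,\lambda}\|f\|_{L^p})^{r}$; letting $\delta\to0$, $R\to\infty$ and using monotone convergence (recall $u>0$ a.e.) gives $\int_{\mathbb{R}^{n+1}_+}u^{r}\le(N_{\alpha,\beta,\lambda}\|f\|_{L^p})^{r}<\infty$, whence $\|I_{\alpha,\beta}f\|_{L^r}=(\int u^{r})^{1/r}\ge N_{\alpha,\beta,\lambda}\|f\|_{L^p}$.

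Finally, \eqref{RHLSD-1} and \eqref{WHLSD-1} are the same statement after the substitution $h=t^{\alpha}f$: this is a bijection of the cone of nonnegative functions with $\|f\|_{L^p}=\|t^{-\alpha}h\|_{L^p}$, and by the identity above $I_{\alpha,\beta}f=z^{\beta}E_\lambda h$, so \eqref{RHLSD-1} for $f$ is precisely $\|(E_\lambda h)z^{\beta}\|_{L^{r}}\ge C\|t^{-\alpha}h\|_{L^p}$, which is \eqref{WHLSD-1} for $h$; the backward substitution gives the reverse implication. Combining the three implications shows \eqref{RHLSD-O}, \eqref{RHLSD-1}, \eqref{WHLSD-1} are equivalent with the same admissible constant.

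The only real subtlety, and the reason one cannot quite just quote the one-line duality argument, is that the formal dual test function $u^{r-1}$ need not lie in $L^q$ and $I_{\alpha,\beta}f$ need not even be finite a.e.\ for a general nonnegative $f\in L^p$; the truncation $g_{\delta,R}$ and the ensuing monotone limit are what make the argument legitimate, and here one must take care that the order-reversing maps $x\mapsto x^{r}$, $x\mapsto x^{1/r}$ (valid since $r<0$) are applied in the correct direction and that the constant does not degenerate when approximating a general $f$ by $C_0^\infty$ functions from below. This is all routine, which is why the equivalence is stated as easily verified.
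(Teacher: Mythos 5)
Your argument is correct and is exactly the duality-plus-reversed-Hölder route the paper has in mind (the forward implication \eqref{RHLSD-1}$\Rightarrow$\eqref{RHLSD-O} is the same reversed Hölder step that opens the paper's proof of Theorem \ref{RWHLSB}, and the converse via the saturating test function $g=(I_{\alpha,\beta}f)^{r-1}$, made rigorous by truncation, together with the substitution $h=t^{\alpha}f$, is the standard verification the paper leaves to the reader). No gaps.
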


We will employ the  reversed Hardy inequality to establish inequality  \eqref{RHLSD-O}.
The Hardy type inequality is a basic integral inequality in harmonic analysis, such as classic weighted Hardy inequality (see e.g., \cite{DHK1997, Maz2011}), reversed Hardy inequality (see e.g., \cite{AKM2019}). It is a powerful and immediate tool to prove weighted HLS inequality (see e.g., \cite{KM2005, D2016}). In contrast, not much is known about that reversed HLS inequality can be proved by using reversed Hardy inequality. Our approach is different from that of Dou, Zhu \cite{DZ2015b} and Chen et al \cite{CLLT2018}.

\medskip

Once we have established the reversed HLS inequality with vertical weights, it is natural to ask whether the extremal functions for inequality \eqref{RHLSD-O} exist or not. To answer this question, we consider the following extremal function of inequality \eqref{RHLSD-O} as follows:
\begin{align}\label{WHLSE}
N_{\alpha,\beta,\lambda}&=\inf\{\int_{\mathbb{R}^{n+1}_+}\int_{\mathbb{R}^{n+1}_+}\frac{t^\alpha z^\beta f(x,t)g(y,z)}{|(x,t)-(y,z)|^{\lambda}}dxdtdydz: f\geq0, g\geq0,\|f\|_{L^{p_\alpha}(\mathbb{R}^{n+1}_+)}= \|g\|_{L^{q_{\beta}}(\mathbb{R}^{n+1}_+)}=1\}\nonumber\\
&=\inf\{\frac{\int_{\mathbb{R}^{n+1}_+}\int_{\mathbb{R}^{n+1}_+}\frac{t^\alpha z^\beta f(x,t)g(y,z)}{|(x,t)-(y,z)|^{\lambda}}dxdtdydz}{\|f\|_{L^{p_\alpha}(\mathbb{R}^{n+1}_+)} \|g\|_{L^{q_{\beta}}(\mathbb{R}^{n+1}_+)}}: f\geq0, g\geq0, f\in L^{p_\alpha}(\mathbb{R}^{n+1}_+),g\in L^{q_{\beta}}(\mathbb{R}^{n+1}_+)\},
\end{align}
where $p_\alpha=\frac{2(n+1)}{2(n+1)+2\alpha-\lambda}$ and $q_{\beta}=\frac{2(n+1)}{2(n+1)+2\beta-\lambda}$.

\medskip

We will prove the attainability of minimizers for variational problem \eqref{WHLSE} with $p=p_\alpha,q=q_{\beta}$.

\begin{theorem}\label{theorem2}Let $\alpha,\beta,\lambda,p,q$ satisfy \eqref{RWH-exp-0},  and $\alpha,\beta>0$. $N_{\alpha,\beta,\lambda}$ is attained by a pair of positive functions $(f,g)\in L^{p_\alpha}(\mathbb{R}^{n+1}_+)\times L^{q_{\beta}}(\mathbb{R}^{n+1}_+)$ satisfying
$\|f\|_{L^{p_\alpha}(\mathbb{R}^{n+1}_+)}=1$ and $\|g\|_{L^{q_{\beta}}(\mathbb{R}^{n+1}_+)}=1$.
\end{theorem}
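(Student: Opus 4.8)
The plan is to use a direct method / renormalization argument on the variational problem \eqref{WHLSE}. Let $(f_k, g_k)$ be a minimizing sequence of nonnegative functions with $\|f_k\|_{L^{p_\alpha}} = \|g_k\|_{L^{q_\beta}} = 1$ and
$$\int_{\mathbb{R}^{n+1}_+}\int_{\mathbb{R}^{n+1}_+}\frac{t^\alpha z^\beta f_k(x,t)g_k(y,z)}{|(x,t)-(y,z)|^{\lambda}}\,dxdtdydz \to N_{\alpha,\beta,\lambda}.$$
The first step is to exploit the scaling and translation symmetries of the functional. Since $-\lambda > 0$, the kernel $|(x,t)-(y,z)|^{-\lambda}$ grows at infinity, so the natural danger is not concentration but rather spreading/vanishing; the renormalization method advertised in the abstract is designed precisely to rule this out. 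Concretely, the functional is invariant under the transformations $f(x,t) \mapsto \mu^{(n+1)/p_\alpha} f(\mu x + a, \mu t)$ (with the corresponding action on $g$), so after applying a suitable dilation $\mu_k$ and horizontal translation $a_k$ I may normalize $f_k$ so that, say, its mass is not escaping to $0$, $\infty$, or spatial infinity — e.g. by arranging $\int_{B_1^+} f_k^{p_\alpha} = \theta$ for a fixed $\theta > 0$ coming from a level-set / layer-cake estimate. I would also use the equivalent operator formulation \eqref{RHLSD-1}: the Euler–Lagrange relations let me take $g_k$ to be (a multiple of) $(I_{\alpha,\beta}f_k)^{r-1}$ up to the minimizing limit, which couples the two sequences and reduces the problem to controlling $f_k$ alone.

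The second step is compactness. After renormalization, $\{f_k\}$ is bounded in $L^{p_\alpha}$ with $p_\alpha \in (0,1)$, so weak compactness in the usual sense is unavailable; instead I pass to a.e.-convergent subsequences (using local $L^1$ bounds on compact subsets of $\mathbb{R}^{n+1}_+$, which follow from the reversed Hölder inequality together with the a priori lower bound on the functional) to extract $f_k \to f$ a.e. with $f \geq 0$, and similarly $g_k \to g$ a.e. The renormalization guarantees $f \not\equiv 0$. The key analytic input is a Brezis–Lieb type splitting: one shows
$$\liminf_k \left[\mathcal{I}(f_k,g_k) - \mathcal{I}(f_k - f, g_k - g)\right] \geq \mathcal{I}(f,g)$$
(reversed inequality direction, exploiting superadditivity $\|a+b\|_{L^{p_\alpha}}^{p_\alpha} \le \|a\|^{p_\alpha} + \|b\|^{p_\alpha}$ is \emph{false} for exponents $<1$ — actually it is superadditive the other way, which is exactly what one needs here for a reversed inequality). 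Combining this with the lower bound \eqref{bestrange} on $N_{\alpha,\beta,\lambda}$ applied to the ``lost mass'' pieces $(f_k - f)$, $(g_k-g)$, together with the strict concavity giving $\|f\|^{p_\alpha} + \|f_k - f\|^{p_\alpha} \le \|f_k\|^{p_\alpha} = 1 + o(1)$, forces either the lost mass to vanish or a strict inequality contradicting minimality. This dichotomy is resolved by the renormalization, yielding $\|f_k - f\|_{L^{p_\alpha}} \to 0$ and $\|g_k - g\|_{L^{q_\beta}} \to 0$, hence $\|f\|_{L^{p_\alpha}} = \|g\|_{L^{q_\beta}} = 1$ and $\mathcal{I}(f,g) = N_{\alpha,\beta,\lambda}$.

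The third step is to upgrade $f, g \geq 0$ to $f, g > 0$: since $(f,g)$ is a minimizer, it satisfies (a constant multiple of) the Euler–Lagrange system, so $g(y,z)$ is a positive constant times $\big(I_{\alpha,\beta}f(y,z)\big)^{r-1}$ and $f$ is a positive constant times $\big(\int \frac{z^\beta t^\alpha g}{|\cdot|^\lambda}\big)^{p_\alpha'-1}$ type expression; because the kernel $|(x,t)-(y,z)|^{-\lambda} > 0$ everywhere (here $\alpha, \beta > 0$ ensures the weights $t^\alpha, z^\beta$ do not vanish on the open half space in a problematic way and the relevant integrals are finite), $I_{\alpha,\beta}f$ is strictly positive wherever $f$ is not identically zero, which propagates to $g > 0$ and then back to $f > 0$ a.e.

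The main obstacle I anticipate is the compactness step — specifically, making the renormalization rigorous when $0 < p_\alpha, q_\beta < 1$, since one has no reflexivity and the functionals are built from a kernel with \emph{growth} at infinity rather than a singularity. The usual concentration-compactness dichotomy must be run with the roles of ``concentration'' and ``vanishing'' essentially interchanged, and one must carefully track that the reversed Hölder inequality produces the \emph{correct} direction of each estimate. The hypothesis $\alpha, \beta > 0$ (strictly) is presumably what is needed to close an integrability gap near the boundary $t = 0$ or $z = 0$ in these estimates; identifying exactly where strict positivity enters, and verifying that the a.e.-limit $f$ lies in $L^{p_\alpha}$ with the full unit mass rather than strictly less, is the delicate point.
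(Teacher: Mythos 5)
Your outline diverges from the paper's actual route: the authors do not run concentration--compactness on $\mathbb{R}^{n+1}_+$ at all. They first transplant the problem conformally to the unit ball $B^{n+1}$ (inequality \eqref{02}), prove existence, two-sided bounds and H\"older continuity of extremals for the \emph{subcritical} problem with exponents $p<p_\alpha$, $q<q_\beta$ (Proposition \ref{pro1}), and then pass to the critical limit; the ``renormalization'' is a blow-up analysis of the subcritical extremals (the rescaled functions $U_j,V_j$ of \eqref{intsp01}), where the sharp pointwise asymptotics $U_j(x,t)\asymp t^\alpha(1+|(x,t)|^{-\lambda})$ are used to derive a contradiction in the blow-up cases. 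That is a genuinely different — and in this setting essentially unavoidable — strategy.

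The central gap in your proposal is the Brezis--Lieb type splitting $\liminf_k[\mathcal{I}(f_k,g_k)-\mathcal{I}(f_k-f,g_k-g)]\geq \mathcal{I}(f,g)$, which you assert but do not justify, and which is exactly where the argument would fail. Because $\lambda<0$ the kernel $|(x,t)-(y,z)|^{-\lambda}$ \emph{grows} at infinity, so the cross terms $\mathcal{I}(f,\,g_k-g)$ and $\mathcal{I}(f_k-f,\,g)$ between a fixed profile and mass escaping to infinity do not vanish — they are weighted by a kernel that is large precisely on the escaping region — so the decoupling that underlies every Brezis--Lieb/concentration-compactness splitting is absent. (Your parenthetical about super/subadditivity of $\|\cdot\|_{L^{p}}^{p}$ for $p<1$ is also garbled: for nonnegative functions one has $\|a+b\|_{L^p}^p\le\|a\|_{L^p}^p+\|b\|_{L^p}^p$, and in any case $f_k-f$ need not be nonnegative.) A second, decisive symptom: by the paper's Remark \ref{theorem2}$+1$ (citing \cite{DGZ2020}), the infimum is \emph{not} attained when $\alpha=\beta=0$, so any correct proof must break at $\alpha=\beta=0$; your argument uses $\alpha,\beta>0$ only speculatively (``presumably what is needed to close an integrability gap''), and nothing in your compactness scheme visibly fails when $\alpha=\beta=0$. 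In the paper, strict positivity of the weights enters quantitatively in the exclusion of blow-up (e.g.\ the convergence of $\int_1^\infty\rho^{(\alpha-\lambda)p_j'+n}\,d\rho$ in \eqref{contra} and the lower bounds $U_j\gtrsim t^\alpha(1+|(x,t)|^{-\lambda})$), and any complete proof must make this dependence explicit.
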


\begin{rem}
When $\alpha=\beta=0$, Dou, Guo and Zhu \cite{DGZ2020} proved the minimizer of
$N_{\alpha,\beta,\lambda}$ can never be attained with $p=p_0,q=q_{0}.$ We only need to show
 the attainability of minimizers for $\alpha,\beta>0$ in here.
\end{rem}

In most previous literature, to prove the existence of extremal functions of reversed HLS-type inequalities, one usually used the rearrangement inequalities. However, the rearrangement inequalities are no longer valid to prove the existence of extremal functions for inequality \eqref{RHLSD-O} since vertical weights may cause there exists no radically symmetric minimizing sequence.

In this paper, we develop a renormalization method to deal with the situation where the presence of the factor $t^\alpha z^\beta$ in the numerator causes the failure of the rearrangement inequalities for inequalities \eqref{RHLSD-O}. This renormalization method is a rearrangement-free technique.
In order to prove Theorem \ref{theorem2}, we use some ideas of Dou, Guo, Zhu \cite{DGZ2020} and Han, Zhang \cite{HZ2022, ZH2022}.
While the works in \cite{DGZ2020, HZ2022,ZH2022} only considered the special case $p_0=q_0$ for $\alpha=\beta=0$ and $f=g\in L^{p_0}(\mathbb{R}^{n})$ in the proof of attainability of best constant for HLS inequality, one natural question remains open. Can we extend these results to general case $p_\alpha\neq q_\beta$, $f\in L^{p_\alpha}(\mathbb{R}^{n+1}_+)$ and $g\in L^{q_\beta}(\mathbb{R}^{n+1}_+)$ for any $0\le\alpha<-\frac{1}{p'_\alpha}$ and $0\le\beta<-\frac{1}{q'_\beta}$?

We completely answered this question by introducing some new ideas. We first study the extremal problems for subcritical exponents $p$ and $q$ and obtain the existence of extremal functions under subcritical case in a unit ball. We then pass to the limit to obtain the expected attainability of best constant with critical exponents by introducing the renormalization method. We think either the result and the idea of the proof may be applicable to other problems.

The Euler-Lagrange equation for extremal functions to inequality \eqref{RHLSD-O}, up to a constant multiplier, is given by
\begin{equation}\label{Eulereq0}
\begin{cases}
f^{p-1}(x,t)&=\int_{\mathbb{R}^{n+1}_+}\frac{t^\alpha z^\beta g(y,z)}{|(x,t)-(y,z)|^\lambda}dydz\ \quad(x,t)\in\mathbb{R}^{n+1}_+,\\
g^{q-1}(y,z)&=\int_{\mathbb{R}^{n+1}_+}\frac{t^\alpha z^\beta f(x,t)}{|(x,t)-(y,z)|^\lambda}dxdt\ \quad(y,z)\in\mathbb{R}^{n+1}_+.
\end{cases}
\end{equation}

To classify the extremal function of  inequality \eqref{RHLSD-1}, we discuss the regularity, radical symmetry of positive solutions to system \eqref{Eulereq0} as follows.

\begin{theorem}\label{regularity1}
Let $\alpha,\beta,\lambda,p,q$ satisfy  \eqref{RWH-exp-0}
and $(f,g)$ be a pair of positive Lebesgue measurable solution to \eqref{Eulereq0}. Then $f,g\in C^\infty(\mathbb{R}^{n+1}_+)\cap C^\gamma(\overline{\mathbb{R}^{n+1}_+})$ for $\gamma\in(0,1)$.
\end{theorem}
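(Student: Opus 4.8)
The plan is to establish regularity in two stages: first an interior bootstrap giving $f,g\in C^\infty(\mathbb{R}^{n+1}_+)$, and then a boundary estimate giving $C^\gamma(\overline{\mathbb{R}^{n+1}_+})$ up to the hyperplane $\{t=0\}$. Throughout, the sign structure is favorable in the sense that the kernel $|(x,t)-(y,z)|^{-\lambda}$ with $-n-1<\lambda<0$ is a \emph{positive, bounded-on-bounded-sets, globally polynomially growing} kernel, and $p-1<0$, $q-1<0$, so the nonlinearities $f^{p-1},g^{q-1}$ are decreasing. The first step is to record a priori integrability: since $(f,g)$ is assumed to be a pair of positive solutions associated with the extremal problem, one has $f\in L^{p_\alpha}$, $g\in L^{q_\beta}$ (or at least $f\in L^p_{loc}$, $g\in L^q_{loc}$ with the relevant weighted global control), and the defining integrals in \eqref{Eulereq0} converge. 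From convergence of the integrals and positivity, one deduces pointwise \emph{local} lower bounds $f\ge c_K>0$, $g\ge c_K>0$ on every compact $K\subset\mathbb{R}^{n+1}_+$, because the right-hand sides are bounded below on $K$ by integrating the (strictly positive, locally bounded below) kernel against $g z^\beta$, resp. $f t^\alpha$, over a fixed ball. These lower bounds are what make the singular nonlinearities $f^{p-1}, g^{q-1}$ locally \emph{bounded above}, which is the crucial point when $p-1,q-1<0$.

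Next I would run the interior bootstrap. Fix a compact set and a slightly larger compact set; split each integral in \eqref{Eulereq0} into a near piece (where $|(x,t)-(y,z)|$ is small) and a far piece (where it is bounded below). On the far piece the kernel is bounded, and the integral is controlled by $\|g z^\beta\|_{L^1}$-type quantities coming from the global $L^{q_\beta}$ bound together with $\beta<-1/q'$ (this exponent condition is exactly what guarantees the weighted tail is integrable; it plays the same role here as in the proof of Theorem \ref{RWHLSB}), hence is smooth in $(x,t)$ by differentiation under the integral sign. On the near piece, since $-\lambda>0$ the kernel is in fact locally \emph{bounded} (indeed continuous), so the near piece is also continuous, and one gets $f,g\in L^\infty_{loc}$ immediately — there is no genuine singularity to bootstrap against, only the behavior of $z^\beta$ near $z=0$, which is locally integrable since $\beta\ge0$. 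Once $f,g\in L^\infty_{loc}$ and bounded below locally, $f^{p-1},g^{q-1}\in L^\infty_{loc}$, and then differentiating \eqref{Eulereq0} repeatedly under the integral sign (legitimate because $\nabla_{y,z}^k |(x,t)-(y,z)|^{-\lambda}$ is again locally integrable in $(x,t)$ for every $k$, as $-\lambda-k$ can be made $>-(n+1)$ only finitely often — more precisely one differentiates in the \emph{outer} variable where there is no singularity at all) shows $f,g\in C^\infty(\mathbb{R}^{n+1}_+)$. Actually the cleanest route: the outer variable $(y,z)$ never coincides with the integration variable in a way that produces a non-integrable singularity because $-\lambda>0$; so smoothness in $(y,z)$ is formal differentiation under the integral, and the interior $C^\infty$ conclusion follows with essentially no bootstrap at all.

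The boundary Hölder estimate is where the real work lies and is the main obstacle. Near a boundary point $(y_0,0)\in\partial\mathbb{R}^{n+1}_+$ one must show $g^{q-1}(y,z)=\int t^\alpha z^\beta f\,/|(x,t)-(y,z)|^\lambda$ extends Hölder-continuously as $z\to0$, and similarly for $f$. Here one uses: (i) the global integrability $f\in L^{p_\alpha}$ to control the contribution of the region far from $(y_0,0)$ (the kernel grows only polynomially and $-\lambda<n+1$ keeps it locally integrable, while the $L^{p_\alpha}$ bound plus the weight exponent conditions control the tail); (ii) for the near region, one exploits that $z^\beta\to0$ as $z\to 0$ (since $\beta>0$ once we are away from the degenerate case, or more carefully, that $z^\beta$ is itself Hölder of order $\min(\beta,1)$) together with the modulus-of-continuity estimate for the Riesz-type potential $\int |(x,t)-(y,z)|^{-\lambda} h\,dxdt$ acting on an $L^{p_\alpha}$ density, which is Hölder continuous of order $\min(1, n+1-\lambda - (n+1)/p_\alpha)=\min(1,-\lambda/2+\cdots)>0$ by the standard potential estimate for $-\lambda<n+1$. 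One combines these to get $g^{q-1}\in C^{\gamma'}(\overline{\mathbb{R}^{n+1}_+})$ locally for some $\gamma'>0$; then, since $g$ is bounded below and above on compacts and $u\mapsto u^{1/(q-1)}$ is locally Lipschitz away from $0$, it follows that $g\in C^{\gamma}(\overline{\mathbb{R}^{n+1}_+})$, and symmetrically for $f$. The delicate points to get right are: the exact admissible Hölder exponent, which depends on $\lambda$, $\alpha$, $\beta$, and $p_\alpha$ through the potential estimate; the uniform-in-$(y,z)$ control of the far tail using the precise constraints in \eqref{RWH-exp-0}; and ensuring the lower bound $g\ge c>0$ persists up to the boundary (which again follows from convergence of the defining integral and positivity of the kernel, integrating over a fixed interior ball bounded away from $\{t=0\}$). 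I expect the boundary Hölder continuity — specifically identifying the correct $\gamma$ and handling the interplay of the weights $t^\alpha,z^\beta$ with the potential estimate — to be the technically heaviest part; the interior $C^\infty$ claim is comparatively soft because the kernel has no singularity when $-\lambda>0$.
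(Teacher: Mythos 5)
Your overall architecture (pointwise bounds from positivity, far/near splitting, differentiation under the integral for interior smoothness, a modulus-of-continuity estimate for the potential at the boundary) is in the right spirit, but there are two genuine gaps. The first and most serious: you never establish the global weighted integrability
$\int_{\mathbb{R}^{n+1}_+} z^{\beta}\bigl(1+|(y,z)|^{-\lambda}\bigr)g(y,z)\,dydz<\infty$
(and its analogue for $f$), and your proposed substitute, membership in $L^{p_\alpha}$ with $p_\alpha<1$, cannot do the job: for exponents below $1$ H\"older's inequality is reversed, so $\|f\|_{L^{p_\alpha}}<\infty$ gives no upper bound whatsoever on $\int_E f$ over any set $E$, and hence no control of your ``far piece'' or of the tail in the H\"older estimate. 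The paper extracts exactly this weighted $L^1$ bound from the equation itself: since the solution is finite at some point $(\hat x,\hat t)$, the kernel comparison $|(\hat x,\hat t)-(y,z)|^{-\lambda}\gtrsim 1+|(y,z)|^{-\lambda}$ away from $(\hat x,\hat t)$ forces the weighted integral to converge (this is the content of Lemma 3.5(i), the analogue of \eqref{nor12}--\eqref{nor13}), and from it the two-sided bound $f^{p-1}(x,t)\asymp t^{\alpha}(1+|(x,t)|^{-\lambda})$ follows. Without this step nothing downstream closes. Relatedly, the ``standard Riesz potential estimate'' you invoke for an $L^{p_\alpha}$ density with $p_\alpha<1$ is not available; the correct (and elementary) tool is the pointwise kernel-difference bound $\bigl||\zeta^1-\eta|^{-\lambda}-|\zeta^2-\eta|^{-\lambda}\bigr|\le C|\zeta^1-\zeta^2|^{-\lambda}$ or $\le C(1+|\eta|^{-\lambda-1})|\zeta^1-\zeta^2|$, integrated against the weighted $L^1$ density just described.

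The second gap is that several of your inequalities run backwards because $p-1<0$ reverses them. If the right-hand side of $f^{p-1}=\int t^{\alpha}z^{\beta}g|\cdot|^{-\lambda}$ is bounded \emph{below} on a compact set, you get $f^{p-1}\ge c$, i.e.\ an \emph{upper} bound $f\le c^{1/(p-1)}$, not the lower bound you claim; the local lower bound on $f$ instead requires the upper bound on the integral, i.e.\ again the missing weighted integrability. The same reversal undermines your boundary claim that ``$g\ge c>0$ persists up to the boundary'': from $g^{q-1}(y,z)\gtrsim z^{\beta}$ one gets $g\lesssim z^{\beta/(q-1)}$ with $\beta/(q-1)<0$, so for $\beta>0$ the weights force $f^{p-1},g^{q-1}$ to vanish like $t^{\alpha},z^{\beta}$ at the boundary and $f,g$ themselves to blow up there. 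This is precisely why the paper does not argue on $f,g$ directly but passes to $u=f^{p-1}$, $v=g^{q-1}$, pulls the system back to the unit ball by the conformal map \eqref{cftrans}, and proves H\"older continuity of $u,v$ up to the closure; your plan, carried out literally on $f$ and $g$, collides with this degeneracy. Finally, your interior $C^\infty$ claim does require a genuine bootstrap: the $k$-th derivative of the kernel behaves like $|w|^{-\lambda-k}$, which fails to be locally integrable once $k\ge -\lambda+n+1$, so high-order derivatives cannot be obtained by naive differentiation under the integral sign in the outer variable.
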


\begin{theorem}\label{theoremfenlei1}
Suppose the assumptions in Theorem \ref{regularity1} hold with $\alpha,\beta>0$, then
$f(x,t)$ and $g(x,t)$ are radically symmetric with respect to $x$ about some $x_0$.
Moreover, for $p=p_\alpha=\frac{2(n+1)}{2(n+1)+2\alpha-\lambda}$ and $q=q_{\beta}=\frac{2(n+1)}{2(n+1)+2\beta-\lambda}$, if $f,g\in C(\overline{\mathbb{R}^{n+1}_+})$, then there exist $c_1,c_2>0$ and $d>0$ such that
\begin{equation*}
f(x,0)=c_1(\frac{d}{1+d^{2}|x-\xi_0|^2})^{\frac{2(n+1)+2\alpha-\lambda}{2}},\quad g(x,0)=c_2(\frac{d}{1+d^{2}|x-\xi_0|^2})^{\frac{2(n+1)+2\beta-\lambda}{2}}
\end{equation*}
for some $\xi_0\in\partial\mathbb{R}^{n+1}_{+}$ and $x\in\partial\mathbb{R}^{n+1}_{+}$.
\end{theorem}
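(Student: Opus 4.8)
The plan is to establish the two assertions by two related devices: the method of moving planes in the horizontal variables for the radial symmetry in $x$, and the method of moving spheres about boundary points, powered by the conformal covariance of the critical system, for the explicit form of the boundary profiles.

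For the radial symmetry I would use that the kernel $|(x,t)-(y,z)|^{-\lambda}$ and the weights $t^\alpha,z^\beta$ in \eqref{Eulereq0} depend on $x,y$ only through $x-y$, so \eqref{Eulereq0} is invariant under translations and rotations of $\mathbb{R}^n$. Fix $e\in\mathbb S^{n-1}$; for $\mu\in\mathbb{R}$ set $\Sigma_\mu=\{(x,t)\in\mathbb{R}^{n+1}_+:x\cdot e<\mu\}$, let $X^\mu=(x^\mu,t)$ be the reflection of $X=(x,t)$ in the hyperplane $\{x\cdot e=\mu\}$, and put $w^f_\mu(X)=f(X^\mu)-f(X)$ and $w^g_\mu(X)=g(X^\mu)-g(X)$ on $\Sigma_\mu$. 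Substituting \eqref{Eulereq0} and reflecting the part of each integral lying over $\mathbb{R}^{n+1}_+\setminus\Sigma_\mu$ produces the usual integral identities; since $-\lambda>0$ and $|X^\mu-Y|>|X-Y|$ for $X,Y\in\Sigma_\mu$, the relevant kernel difference keeps one sign, and because $s\mapsto s^{p-1}$ and $s\mapsto s^{q-1}$ are decreasing (here $0<p,q<1$) the conditions $w^f_\mu\ge0$ and $w^g_\mu\ge0$ on $\Sigma_\mu$ are positively coupled, each implying the other. I would then run the integral moving-plane scheme as in \cite{DZ2015b, DGZ2020, DHL2021}: the integrability $f\in L^p$, $g\in L^q$ together with Theorem~\ref{regularity1} makes the negative parts $w^{f,-}_\mu$, $w^{g,-}_\mu$ have arbitrarily small norm on the reflected region as $\mu\to-\infty$, so they vanish there and the procedure starts; a strong-maximum-principle step for integral equations then prevents the plane from stopping before a position where $w^f_\mu\equiv w^g_\mu\equiv0$. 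Carrying this out for every $e$ and from both sides gives symmetry and monotonicity of $f(\cdot,t)$ and $g(\cdot,t)$ about a hyperplane in each direction, and these hyperplanes pass through one common point $x_0$, which yields the stated radial symmetry (with radial monotonicity) about $x_0$.

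For the explicit form when $p=p_\alpha$, $q=q_\beta$, the first step is to record the conformal covariance of \eqref{Eulereq0}. For $\xi\in\partial\mathbb{R}^{n+1}_+$ and $\tau>0$ let $X^{\xi,\tau}=\xi+\tau^2(X-\xi)/|X-\xi|^2$ be the inversion about $\partial B_\tau(\xi)$, which preserves $\mathbb{R}^{n+1}_+$, and set $f_{\xi,\tau}(X)=(\tau/|X-\xi|)^{2(n+1)+2\alpha-\lambda}f(X^{\xi,\tau})$, $g_{\xi,\tau}(X)=(\tau/|X-\xi|)^{2(n+1)+2\beta-\lambda}g(X^{\xi,\tau})$. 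Using $|X^{\xi,\tau}-Y^{\xi,\tau}|=\tau^2|X-Y|/(|X-\xi|\,|Y-\xi|)$, the fact that the $t$-coordinate transforms by $\tau^2/|X-\xi|^2$, and the Jacobian $\tau^{2(n+1)}|Y-\xi|^{-2(n+1)}$, a direct computation --- in which the exponents $2(n+1)+2\alpha-\lambda$ and $2(n+1)+2\beta-\lambda$ are exactly those for which every power of $\tau$ and of $|X-\xi|$ cancels, and this is where $p=p_\alpha$, $q=q_\beta$ enters --- shows that $(f_{\xi,\tau},g_{\xi,\tau})$ solves the same system \eqref{Eulereq0}. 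I would then compare $f$ with $f_{\xi,\tau}$ and $g$ with $g_{\xi,\tau}$ on $\Sigma_{\xi,\tau}=\mathbb{R}^{n+1}_+\setminus B_\tau(\xi)$; reflecting the inner part of each integral through the sphere and using the reflection positivity of $|X-Y|^{-\lambda}$ under inversions, together again with $0<p,q<1$, gives the same positive coupling of signs as for the planes. The moving-sphere procedure then yields, for each $\xi$, either $f_{\xi,\tau}\le f$ and $g_{\xi,\tau}\le g$ on $\Sigma_{\xi,\tau}$ for all $\tau>0$, or a critical radius $\bar\tau(\xi)>0$ at which $f\equiv f_{\xi,\bar\tau(\xi)}$ and $g\equiv g_{\xi,\bar\tau(\xi)}$; the first alternative holding for every $\xi$ would force $f$, hence $g$, to be constant, which is impossible as $f\in L^{p_\alpha}(\mathbb{R}^{n+1}_+)$. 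Inserting the identities $f\equiv f_{\xi,\bar\tau(\xi)}$, $g\equiv g_{\xi,\bar\tau(\xi)}$ for all $\xi\in\partial\mathbb{R}^{n+1}_+$ into the Li-Zhu type calculus lemma (cf. \cite{LZ1995}), read off on the boundary where (by hypothesis) $f,g\in C(\overline{\mathbb{R}^{n+1}_+})$, and matching the homogeneities $2(n+1)+2\alpha-\lambda$ and $2(n+1)+2\beta-\lambda$, identifies $f(\cdot,0)$ and $g(\cdot,0)$ with the claimed bubbles $c_1(d/(1+d^2|x-\xi_0|^2))^{(2(n+1)+2\alpha-\lambda)/2}$ and $c_2(d/(1+d^2|x-\xi_0|^2))^{(2(n+1)+2\beta-\lambda)/2}$.

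The main obstacle I expect is the behaviour at $\partial\mathbb{R}^{n+1}_+$: since $\alpha,\beta>0$, the factors $t^\alpha,z^\beta$ on the right of \eqref{Eulereq0} make $f,g$ degenerate near the boundary (in fact $f\sim t^{\alpha/(p-1)}$ and $g\sim z^{\beta/(q-1)}$, which blow up), so one must check that the integral identities, the smallness estimates launching both the moving planes and the moving spheres, and the strong-maximum-principle step all persist under this degeneration; this is precisely where the assumptions $\alpha<-1/p'$, $\beta<-1/q'$ (which keep $f,g$ in $L^p,L^q$ up to the boundary) and Theorem~\ref{regularity1} are used, and where the analysis deviates from the purely interior case. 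A secondary point is to verify that the decay forced by the equations at infinity is strong enough both to start the moving planes at $\mu=-\infty$ and to exclude the non-compact alternative in the moving-sphere step.
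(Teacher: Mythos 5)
Your treatment of the second assertion (the explicit boundary profiles in the conformal case) is essentially the paper's argument: conformal covariance of \eqref{Eulereq0} under inversions centred at $\partial\mathbb{R}^{n+1}_+$, a moving-sphere dichotomy, exclusion of the non-stopping alternative, and the Li--Zhu calculus lemma. Two remarks there. First, the paper works throughout with $u=f^{p-1}$, $v=g^{q-1}$ (system \eqref{intsp}), whose Kelvin exponents are $\lambda-2\alpha$, $\lambda-2\beta$; this is equivalent to your normalisation for $f,g$. Second, your claim that the alternative ``$f_{\xi,\tau}\le f$ outside $B_\tau(\xi)$ for every $\xi$ and every $\tau$'' forces $f$ to be \emph{constant} is not what the standard lemma gives: it forces $u$ (hence $f$) to depend only on the vertical coordinate $t$ (Lemma 3.7 of \cite{DZ2015a}, as used in the paper), and the contradiction is then obtained not from $f\in L^{p_\alpha}$ (which is not among the hypotheses of the theorem) but from the divergence of the integral defining $u(0,t)$ for such a profile. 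This is a fixable slip, but as written the exclusion step is wrong.

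For the first assertion (radial symmetry in $x$) you take a genuinely different route: moving planes in the horizontal variables, whereas the paper never moves planes. Instead it restricts the moving-sphere radius to $0<r\le|\xi|$, proves $\bar r(\xi)=|\xi|$ for every boundary point $\xi$, and then invokes the calculus lemma of \cite{QLX2008} (Lemma \ref{lemmasymm}) to convert the family of inequalities $u\le u_{\xi,r}$, $|(x,t)-\xi|\ge r$, $0<r<|\xi|$, into radial symmetry and monotonicity in $x$. The advantage of the paper's route is that all the hard analysis (the boundary degeneration caused by $t^\alpha$, handled in Lemma \ref{lemaa}, and the $L^1$-contraction start of Lemma \ref{lemmastart}, which works because the bad sets sit inside a small ball of small measure) is done once and reused. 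Your sign analysis for the planes is correct --- the two sign reversals ($-\lambda>0$ and $p-1,q-1<0$) do produce a positive coupling of $w^f_\mu$ and $w^g_\mu$ --- but the mechanism you propose for \emph{starting} the planes is the weak link: for $0<p,q<1$ the quantities $\|\cdot\|_{L^p}$ are not norms, the reversed HLS inequality points the wrong way for bounding the negative parts from above, and $\Sigma_\mu$ has infinite measure, so no analogue of the small-measure contraction of Lemma \ref{lemmastart} is available. To launch the planes one must instead use the sharp two-sided asymptotics of Lemma \ref{lem2} ($f^{p-1}\sim a\,t^{\alpha}|(x,t)|^{-\lambda}$ at infinity, together with the matching gradient bound) to compare $f$ and $f_\mu$ pointwise for $\mu$ very negative. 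As stated, your starting step would fail; with that replacement the moving-plane route should go through, but it is not shorter than the paper's.
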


%\Red{Here we only require $f\in L^{p_\alpha}(\mathbb{R}^{n+1}_+)$ due to the fact that if $f\in L^{p_\alpha}(\mathbb{R}^{n+1}_+)$, then $g\in %L^{q_\beta}(\mathbb{R}^{n+1}_+)$. Indeed, using the duality form of inequality \eqref{RHLSD-O} and \eqref{Eulereq0}, we have
%\begin{align*}
%(\int_{\mathbb{R}^{n+1}_+}|f(x,t)|^{p_\alpha}dxdt)^{\frac{1}{p_\alpha}}&=(\int_{\mathbb{R}^{n+1}_+}(\int_{\mathbb{R}^{n+1}_+}\frac{t^\alpha z^\beta %g(y,z)}{|(x,t)-(y,z)|^\lambda}dydz)^{\frac{p_\alpha}{p_\alpha-1}}dxdt)^{\frac{1}{p_\alpha}}\\
%&\leq C (n,\alpha,\beta,\lambda)(\int_{\mathbb{R}^{n+1}_+}|g(y,z)|^{q_\beta}dxdt)^{\frac{1}{q_\beta(p_\alpha-1)}},
%\end{align*}
%which implies $\| f\|_{L^{p_\alpha}(\mathbb{R}^{n+1}_+)}^{p_\alpha-1}\geq C (n,\alpha,\beta,\lambda)\| g\|_{L^{q_\beta}(\mathbb{R}^{n+1}_+)}$.}

Note that equation \eqref{Eulereq0} is conformal invariant for $p=p_\beta$ and $q=q_\beta$, and hence one can employ
the method of moving spheres to classify Lebesgue measurable solutions $(f,g)$ of \eqref{Eulereq0}.

The method of moving spheres, introduced by Li and Zhu \cite{LZ1995}, and improved by Li  and Zhang \cite{LZ2003} and Li \cite{L2004}, is a variant of the well known method of moving planes. It has been widely used and has become a powerful tool to give the explicit form of solutions for some conformal invariant elliptic equations. For more results related to the method of moving spheres, we refer to \cite{DHL2021, DGZ2017, DZ2015a, DZ2015b, Gl2020, HL2021}.

By Theorem \ref{theorem2}, we deduce that the sufficient condition of the existence of positive solutions $(f,g)$ satisfying $(f,g)\in L^{p}(\mathbb{R}^{n+1}_+)\times L^{q}(\mathbb{R}^{n+1}_+)$ to system \eqref{Eulereq0} is
\begin{equation}\label{SC}
\frac 1p +\frac 1{q}=\frac{\alpha+\beta-\lambda}{n+1}+2.
\end{equation}

We will use the Pohozaev identities to prove that the sufficient condition \eqref{SC} is also a necessary condition for the existence of positive solutions to equation \eqref{Eulereq0}. In most previous literature, one usually needed to assume that the solutions $f$ and $g$ are $C^1$ and then investigate the necessary condition for the existence of positive solutions by establishing the Pohozaev identities. Here we introduce a new
idea, constructing some cut-off functions to obtain the Pohozaev identities in weak sense. This way, we only require $f$ and $g$ are Lebesgue measurable.

We first give the definition of weak positive solutions of equation \eqref{Eulereq0}.

\begin{df} We say that $(f,g)$  is a pair of weak positive solutions of equation \eqref{Eulereq0}, if  $f,g$ are the nonnegative Lebesgue measurable solutions, and satisfy
\begin{eqnarray*}
& &\int_{\mathbb{R}^{n+1}_+}f^{p-1}(x,t)\varphi(x,t)dxdt+ \int_{\mathbb{R}^{n+1}_+}g^{q-1}(x,t)\varphi(x,t)dxdt\nonumber\\
&=&\int_{\mathbb{R}^{n+1}_+}\int_{\mathbb{R}^{n+1}_+}
  \frac{t^{\alpha}z^{\beta} g(y,z)\varphi(x,t)}{|(x,t)-(y,z)|^\lambda }dydzdxdt
  +\int_{\mathbb{R}^{n+1}_+}\int_{\mathbb{R}^{n+1}_+}
  \frac{z^{\alpha}t^{\beta}f(y,z)\varphi(x,y)}{|(x,t)-(y,z)|^\lambda}dydz dxdt~~~~~~~~~~~~~~
\end{eqnarray*}
 for any $\varphi\in C^\infty_0(\mathbb{R}^{n+1}_+)$.
\end{df}

\begin{theorem}\label{theorem4}
For $-n-1<\lambda<0$, $\alpha,\beta\geq0$, $p,q\in (0,1)$, assume that there exists a pair of weak positive solutions $(f,g)$ satisfying \eqref{Eulereq0}. Then a necessary condition for $p$ and $q$ is
$$\frac 1p +\frac 1{q}=\frac{\alpha+\beta-\lambda}{n+1}+2.$$
\end{theorem}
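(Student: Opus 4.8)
The plan is to derive the Pohozaev identity in a weak, rescaling form. Since we only know $f,g$ are positive Lebesgue measurable solutions of \eqref{Eulereq0}, the classical argument (multiply by $x\cdot\nabla f$, integrate by parts) is unavailable; instead I would exploit the scaling structure of the nonlocal system directly. Introduce for $\mu>0$ the dilations $f_\mu(x,t)=\mu^{a}f(\mu x,\mu t)$ and $g_\mu(x,t)=\mu^{b}g(\mu x,\mu t)$ with exponents $a,b$ chosen so that $f_\mu,g_\mu$ formally satisfy the same system; a change of variables in the integral operators shows the homogeneity of the kernel $|(x,t)-(y,z)|^{-\lambda}$ together with the weights $t^\alpha z^\beta$ forces a linear relation among $a,b,n,\lambda,\alpha,\beta$. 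Then consider the functional $J(\mu)=\int f_\mu^{p-1}\,? $ — more precisely the bilinear energy $E(f,g)=\int\int \frac{t^\alpha z^\beta f g}{|(x,t)-(y,z)|^\lambda}$ and the norms $\|f\|_{L^p}$, $\|g\|_{L^q}$ — and compute $\frac{d}{d\mu}\big|_{\mu=1}$ of the Euler--Lagrange relation tested against a cut-off of the scaling vector field.

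Concretely, the key steps are: (i) Fix a smooth radial cut-off $\eta_R$ with $\eta_R\equiv 1$ on $B_R$, supported in $B_{2R}$, $|\nabla\eta_R|\lesssim 1/R$; take as test function in the weak formulation $\varphi=\varphi_R:=\eta_R\,(x\cdot\nabla f + t\,\partial_t f)$ — but since $f$ need not be differentiable, instead use the \emph{difference-quotient / integrated} version, i.e.\ test against $\varphi_R^\mu:=\eta_R(f_\mu-f)/(\mu-1)$ and pass $\mu\to 1$ \emph{after} $R\to\infty$, or equivalently differentiate the scaled identity. (ii) Write the weak identity for the scaled pair: because $(f,g)$ solves \eqref{Eulereq0}, one gets $\int f^{p-1}\varphi+\int g^{q-1}\varphi = \int\int\frac{t^\alpha z^\beta g\,\varphi}{|\cdot|^\lambda} + \int\int\frac{z^\alpha t^\beta f\,\varphi}{|\cdot|^\lambda}$. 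Substitute $\varphi=\varphi_R^\mu$, use the change of variables $(x,t)\mapsto(\mu x,\mu t)$ on each term, and collect powers of $\mu$. (iii) The left side produces factors $\mu^{\theta_1}$, $\mu^{\theta_2}$ with $\theta_i$ linear in $p,q$; the right side produces a common factor $\mu^{\theta_3}$ coming from the kernel/weight homogeneity $-\lambda+\alpha+\beta$ plus the $2(n+1)$ from the two volume elements plus the scaling weights $a+b$. (iv) Differentiating at $\mu=1$ and letting $R\to\infty$ (controlling the $\eta_R$-error terms by the integrability $f\in L^p$, $g\in L^q$ and the convergence of the nonlocal integrals, exactly as in the finiteness estimates implicit behind inequality \eqref{RHLSD-O}) kills all boundary contributions and leaves the scalar identity $\theta_1=\theta_2=\theta_3$, which after simplification is precisely $\frac1p+\frac1q=\frac{\alpha+\beta-\lambda}{n+1}+2$.

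An alternative and perhaps cleaner route, which I would actually write up, avoids differentiation: test the weak equation for $f$ against $\varphi = \eta_R\, t\,\partial_t(\cdot)$-type fields is still problematic, so instead use the \emph{doubling} trick. Apply the weak formulation twice with the two natural "radial dilation" test functions $\varphi=\eta_R\,x\cdot\nabla(f^{\,?})$ replaced by their regularized analogues obtained by mollifying $f,g$: set $f_\epsilon=f*\rho_\epsilon$ (mollification only in the tangential variables, keeping positivity), note $f_\epsilon\in C^\infty$, and write the Pohozaev identity for the mollified right-hand side; then let $\epsilon\to0$ using that $\frac{t^\alpha z^\beta g}{|(x,t)-(y,z)|^\lambda}\in C^\infty$ in $(x,t)$ by Theorem~\ref{regularity1} (so in fact $f^{p-1}$ is smooth, hence $f$ is smooth where positive!). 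Indeed Theorem~\ref{regularity1} already gives $f,g\in C^\infty(\mathbb{R}^{n+1}_+)$, so on the open half-space the classical Pohozaev computation is legitimate; the only subtlety is the behavior at $\{t=0\}$ and at infinity, which is handled by the cut-off $\eta_R$ together with $C^\gamma(\overline{\mathbb{R}^{n+1}_+})$ regularity and the decay forced by $f\in L^p$, $g\in L^q$.

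The main obstacle, and where I would spend most of the effort, is controlling the cut-off error terms and justifying that the boundary/infinity contributions vanish in the limit: one must show $\int_{B_{2R}\setminus B_R}|\nabla\eta_R|\,(|x|+t)\,(f^{p-1}+g^{q-1}+\text{nonlocal terms})\to 0$, which requires quantitative decay of $f,g$ and of the nonlocal integrals $\int\int \frac{t^\alpha z^\beta fg}{|\cdot|^\lambda}$ near infinity — not automatic since $\lambda<0$ makes the kernel grow. The way around this is to first establish, using \eqref{Eulereq0} itself and the integrability $f\in L^p, g\in L^q$ with $p,q\in(0,1)$, pointwise upper bounds $f(x,t)\le C(1+|x|+t)^{-\sigma_1}$ and $g\le C(1+|x|+t)^{-\sigma_2}$ with $\sigma_1,\sigma_2$ large enough that all truncation errors are $o(1)$; this decay estimate, proved by splitting the defining integrals into near/far regions, is the technical heart of the argument, and everything else is bookkeeping with the homogeneity exponents.
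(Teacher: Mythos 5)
Your second route---invoking the interior smoothness from Theorem~\ref{regularity1}, running the classical Pohozaev computation against the radial vector field with an annular cut-off, and controlling the truncation errors by growth/decay estimates extracted from the integral system itself---is essentially the paper's proof: Lemma~\ref{lem2} supplies the two-sided bounds $u\asymp t^{\alpha}(1+|(x,t)|^{-\lambda})$ for $u=f^{p-1}$ (so the integrability $\int f^{p}<\infty$ is \emph{derived} from the equations rather than assumed, as your sketch implicitly does), a subsequence $R_j\to\infty$ kills the spherical boundary term, and the $\{t=0\}$ portion of the boundary contributes nothing because the radial field is tangent there. The only ingredient your sketch omits is the Fubini identity $\int f^{p}=\int g^{q}=\iint t^{\alpha}z^{\beta}f\,g\,|(x,t)-(y,z)|^{-\lambda}$, which is what lets the two Pohozaev contributions and the kernel term be divided by a common finite positive number so that the identity collapses to the scalar relation $\frac1p+\frac1q=\frac{\alpha+\beta-\lambda}{n+1}+2$.
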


As a consequence, we obtain the following Liouville type theorem for equation \eqref{Eulereq0}.
\begin{crl}\label{coro01}
For $-n-1<\lambda<0$, $\alpha,\beta\geq0$, $p,q\in (0,1)$, assume that
$$\frac 1p +\frac 1{q}\neq\frac{\alpha+\beta-\lambda}{n+1}+2,$$
then there does not exist a pair of positive Lebesgue measurable solutions $(f,g)$ satisfying \eqref{Eulereq0}.
\end{crl}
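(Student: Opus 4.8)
The plan is to obtain Corollary~\ref{coro01} as the contrapositive of Theorem~\ref{theorem4}. The only discrepancy between the two statements is that Theorem~\ref{theorem4} is phrased for \emph{weak} positive solutions, whereas the corollary concerns arbitrary positive Lebesgue measurable solutions of \eqref{Eulereq0}; consequently the content of the argument is to check that every positive measurable solution of \eqref{Eulereq0} is in fact a weak positive solution in the sense of the definition preceding Theorem~\ref{theorem4}. Granting this, suppose \eqref{SC} fails but a pair of positive measurable solutions $(f,g)$ of \eqref{Eulereq0} nevertheless exists; then $(f,g)$ is a weak positive solution, so Theorem~\ref{theorem4} forces \eqref{SC} to hold, a contradiction. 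Hence no such pair exists.

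To perform the upgrade, fix $\varphi\in C^\infty_0(\mathbb{R}^{n+1}_+)$ with $\mathrm{supp}\,\varphi\subset K$, multiply the first equation of \eqref{Eulereq0} by $\varphi(x,t)$ and integrate over $\mathbb{R}^{n+1}_+$; since the integrand is nonnegative, Tonelli's theorem rewrites the right-hand side as $\int\int \frac{t^\alpha z^\beta g(y,z)\varphi(x,t)}{|(x,t)-(y,z)|^\lambda}\,dydz\,dxdt$ with no extra hypothesis. Repeating this with the second equation of \eqref{Eulereq0}, testing against $\varphi(y,z)$, integrating, and relabelling $(x,t)\leftrightarrow(y,z)$, reproduces the second double integral appearing in the definition of a weak solution. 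Adding the two identities yields precisely the weak formulation, \emph{provided} the left-hand sides $\int_K f^{p-1}\varphi$ and $\int_K g^{q-1}\varphi$ are finite, i.e. $f^{p-1},\,g^{q-1}\in L^1_{\mathrm{loc}}(\mathbb{R}^{n+1}_+)$.

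That local integrability is the only place where some care is needed, and it rests on the sign condition $\lambda<0$: the kernel $|(x,t)-(y,z)|^{-\lambda}=|(x,t)-(y,z)|^{|\lambda|}$ has no diagonal singularity and, for $(x,t)$ in the compact set $K$, is dominated by $C(K)\big(1+|(y,z)|^{|\lambda|}\big)$ by the triangle inequality. Hence, by Tonelli,
\[
\int_K f^{p-1}(x,t)\,dxdt\le C(K)\int_{\mathbb{R}^{n+1}_+}\big(1+|(y,z)|^{|\lambda|}\big)\,z^\beta g(y,z)\,dydz .
\]
To see the right-hand side is finite, split $\mathbb{R}^{n+1}_+$ as $\{|(y,z)|>2R\}\cup\{|(y,z)|\le 2R\}$ with $K\subset B_R$: on the far piece, the finiteness of $f^{p-1}$ at a single point of $K$ (which is built into \eqref{Eulereq0}, since $f$ is positive and finite there) together with the triangle inequality bounds $\int_{|(y,z)|>2R}(1+|(y,z)|^{|\lambda|})z^\beta g$; on the bounded piece, one picks a base point $(x_1,t_1)$ with $|x_1|$ sufficiently large, uses that $|(x_1,t_1)-(y,z)|\ge|x_1|-|y|$ is then bounded below on $\{|(y,z)|\le 2R\}$, and deduces $\int_{\{|(y,z)|\le 2R\}}z^\beta g<\infty$ from the finiteness of $f^{p-1}(x_1,t_1)$ (here $z^\beta$ is harmless, as $\beta\ge0$). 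The symmetric estimate gives $g^{q-1}\in L^1_{\mathrm{loc}}$. Alternatively, under the full hypotheses \eqref{RWH-exp-0}, Theorem~\ref{regularity1} gives $f,g\in C^\infty(\mathbb{R}^{n+1}_+)$ positive, so $f^{p-1},g^{q-1}$ are locally bounded and the upgrade is immediate. There is no genuine conceptual obstacle in this corollary; the one point demanding attention is this local-integrability bookkeeping, and the single structural ingredient is the negativity of $\lambda$.
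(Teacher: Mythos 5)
Your proposal is correct and follows the paper's (implicit) route: the corollary is just the contrapositive of Theorem \ref{theorem4}, and the only real content is the upgrade from "positive Lebesgue measurable solution" to "weak positive solution," which you supply via the local-integrability estimate for $f^{p-1}$ and $g^{q-1}$ — precisely the information the paper extracts in Lemma \ref{lem2} (i)--(ii) and then invokes at the start of the proof of Theorem \ref{theorem4-1}, which is itself phrased for positive Lebesgue measurable solutions. One small caveat: your "alternative" appeal to Theorem \ref{regularity1} is vacuous in this setting, since its hypothesis \eqref{RWH-exp-0} contains the balance condition that the corollary assumes to fail, but your main argument does not rely on it.
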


Let $(x,x')\in \mathbb{R}^{n+m}$ with $x\in\mathbb{R}^{n}, x'\in\mathbb{R}^{m}$, we also extend the reversed HLS inequality with general partial variable weights $|x'|$. Assume that $\alpha,\beta,\lambda,p,q$ satisfy
\begin{equation}\label{RWH-exp-gen0}
\begin{cases}
&-n-m<\lambda<0, 0< p,q<1,\\
&0\le\alpha<-\frac{m}{p'},0\le\beta<-\frac{m}{q'},\\
&\frac1p+\frac1{q}+\frac{\lambda-\alpha-\beta}{n+m}=2,\\
\end{cases}
\end{equation}
we have

\begin{theorem}\label{thmgen}
Let $\lambda,\alpha,\beta,p,q$ satisfy \eqref{RWH-exp-gen0}.
Then there exists a constant $N_{m,\alpha,\beta,\lambda}:=N(n,m,\alpha,\beta,\lambda, p)>0$ such that for any nonnegative functions $f\in L^p(\mathbb{R}^{n+m})$ and $g\in L^{q}(\mathbb{R}^{n+m})$,
\begin{equation}\label{gen01}
\int_{\mathbb{R}^{n+m}}\int_{\mathbb{R}^{n+m}}\frac{|x'|^\alpha |y'|^\beta g(x,x') f(y,y')}{|(x,x')-(y,y')|^{\lambda} } dxdx'dydy'
\geq N_{m,\alpha,\beta,\lambda}\|f\|_{L^p(\mathbb{R}^{n+m})}\|g\|_{L^{q}(\mathbb{R}^{n+m})}
\end{equation}
holds. Moreover, the upper and lower bounds of constant  $ N_{m,\alpha,\beta,\lambda}$ satisfy
\begin{eqnarray*}
(\frac{pq-p}{2pq-p-q})^{\frac{1-q}{q}}(\frac{pq-q}{2pq-p-q})^{\frac{1-p}{p}} \mbox{min}\{ D_1, D_2\} \le N_{m,\alpha,\beta,\lambda}\le \mbox{min}\{ D_1, D_2\},
\end{eqnarray*}
where
\begin{align*}
D_1&=\big[\frac{2\pi^{\frac{n+m}{2}}(p-1)}{(n+m+\alpha) p-n-m}\frac{\Gamma(\frac{(\alpha+m) p-m}{2(p-1)})}{\Gamma(\frac{(n+m+\alpha)p-n-m }{2(p-1)})\Gamma(\frac{m}{2})}\big]^{\frac{p-1}{p}}\times\\
&\qquad\big[\frac{2\pi^{\frac{n+m}{2}}(q-1)}{(n+m+\beta-\lambda) q-n-m}\frac{\Gamma(\frac{(\beta+m) q-m}{2(q-1)})}{\Gamma(\frac{(n+m+\beta )q-n-m}{2(q-1)})\Gamma(\frac{m}{2})}\big]^{\frac{q-1}{q}},
\end{align*}
\begin{align*}
D_2&=\big[\frac{2\pi^{\frac{n+m}{2}}(p-1)}{(n+m+\alpha-\lambda)p-n-m}\frac{\Gamma(\frac{(\alpha+m) p-m}{2(p-1)})}{\Gamma(\frac{(n+m+\alpha)p-n-m }{2(p-1)})\Gamma(\frac{m}{2})}\big]^{\frac{p-1}{p}}\times\\
&\qquad\big[\frac{2\pi^{\frac{n+m}{2}}(q-1)}{(n+m+\beta) q-n-m}\frac{\Gamma(\frac{(\beta+m) q-m}{2(q-1)})}{\Gamma(\frac{(n+m+\beta )q-n-m}{2(q-1)})\Gamma(\frac{m}{2})}\big]^{\frac{q-1}{q}}.
\end{align*}
\end{theorem}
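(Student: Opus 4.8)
The plan is to adapt, with $\mathbb{R}^{n+1}_+$ and the vertical coordinate $t$ replaced by $\mathbb{R}^{n+m}$ and $|x'|$ with $x'\in\mathbb{R}^m$, the argument proving Theorem~\ref{RWHLSB}; no genuinely new idea is needed, only a bookkeeping of the $m$-dimensional spherical integrations. First I would dualize: by the reversed H\"older inequality (Lemma~2.1 of \cite{DZ2015b}), with $r=q'<0$, inequality \eqref{gen01} is equivalent to the operator bound
\[
\Big\|\,|y'|^{\beta}\int_{\mathbb{R}^{n+m}}\frac{|x'|^{\alpha}f(x,x')}{|(x,x')-(y,y')|^{\lambda}}\,dx\,dx'\,\Big\|_{L^{r}(\mathbb{R}^{n+m})}\ \ge\ C\,\|f\|_{L^{p}(\mathbb{R}^{n+m})},
\]
the $m$-dimensional analogue of \eqref{RHLSD-1} (with $L^{r}$ for $r<0$ read as the usual reversed functional on positive functions), together with its symmetric companion of the type \eqref{WHLSD-1}.

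The core estimate is a pointwise lower bound for the inner integral. Since $-\lambda>0$ and $|(x,x')-(y,y')|\ge(1-\theta)|(y,y')|$ whenever $|(x,x')|\le\theta|(y,y')|$, discarding the rest of the domain gives, for any $\theta\in(0,1)$,
\[
|y'|^{\beta}\!\int_{\mathbb{R}^{n+m}}\frac{|x'|^{\alpha}f(x,x')}{|(x,x')-(y,y')|^{\lambda}}\,dx\,dx'\ \ge\ (1-\theta)^{-\lambda}\,|y'|^{\beta}|(y,y')|^{-\lambda}\!\!\int_{|(x,x')|\le\theta|(y,y')|}\!\!|x'|^{\alpha}f(x,x')\,dx\,dx'.
\]
Passing to polar coordinates in the combined variable $(x,x')$ and then in $(y,y')$, and bounding the $L^{r}$ functional from below (note $r<0$ reverses the inequality), reduces the claim to a one-dimensional reversed Hardy inequality for the increasing function $\rho\mapsto\int_{|(x,x')|\le\rho}|x'|^{\alpha}f$, with the power weights produced by the $m$-dimensional sphere; for this the sharp constant (cf.\ \cite{AKM2019}) is the ratio of Gamma functions appearing in $D_{1}$, while the spherical surface measure and the radial/Gaussian integrations supply the factors $\pi^{(n+m)/2}$, $2$ and $\Gamma(m/2)$. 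The two constants $D_{1}$ and $D_{2}$ correspond to whether the factor $|(x,x')-(y,y')|^{-\lambda}$ is absorbed into the side carrying the weight $|y'|^{\beta}$ or, by the symmetric argument, into the side carrying $|x'|^{\alpha}$; tracking all constants (and the convex split with weights $\tfrac{pq-p}{2pq-p-q},\ \tfrac{pq-q}{2pq-p-q}$ read off from the scaling relation in \eqref{RWH-exp-gen0}) through the reversed H\"older and reversed Hardy steps yields the lower bound $(\tfrac{pq-p}{2pq-p-q})^{\frac{1-q}{q}}(\tfrac{pq-q}{2pq-p-q})^{\frac{1-p}{p}}\min\{D_{1},D_{2}\}\le N_{m,\alpha,\beta,\lambda}$. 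The matching upper bound $N_{m,\alpha,\beta,\lambda}\le\min\{D_{1},D_{2}\}$ follows by inserting suitable one-parameter families of test functions (powers of $|x'|$ against radial Gaussian or rational profiles in the remaining variables) into \eqref{gen01} and passing to the limit.

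The main obstacle is not the inequality itself — the ball-restriction step above already delivers it with some positive constant — but pinning down the explicit $D_{1},D_{2}$ and the precise loss factor. The kernel $|(x,x')-(y,y')|^{-\lambda}$ vanishes on the diagonal, so the restriction to $\{|(x,x')|\le\theta|(y,y')|\}$ is unavoidable and lossy; one must optimize $\theta$ jointly with the exponents entering the reversed H\"older split, and invoke the sharp power-weight form of the reversed Hardy inequality, to recover the stated constants. One should also verify all the integrability requirements — in particular that $|y'|^{\beta r}$ is locally integrable near $\{y'=0\}$ in $\mathbb{R}^{m}$, which is precisely the hypothesis $\beta<-\tfrac{m}{q'}$ (and symmetrically $\alpha<-\tfrac{m}{p'}$) — and note that the constraint $\frac1p+\frac1q+\frac{\lambda-\alpha-\beta}{n+m}=2$ is exactly what makes the functional scale invariant, hence what makes the radial Hardy reduction close. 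Aside from these points, the proof is mutatis mutandis that of Theorem~\ref{RWHLSB}.
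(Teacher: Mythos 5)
Your proposal follows essentially the same route as the paper: the authors explicitly omit the proof of this theorem, stating that the method of Section~2 (reversed H\"older duality, restriction of the kernel to the off-diagonal regions $|(x,x')|\le\tfrac12|(y,y')|$ and $|(y,y')|\le\tfrac12|(x,x')|$, and the weighted reversed Hardy inequality of Lemma~\ref{Weighted Hardy ineq} with the power weights evaluated in polar coordinates) carries over with only the $m$-dimensional spherical bookkeeping you describe. Your remarks on the loss incurred by the kernel restriction and on obtaining the upper bound via test functions are reasonable refinements, but they do not change the argument's structure.
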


For $f(x,x')\in C^\infty_0(\mathbb{R}^{n+m})$, we define
\[H_{\alpha,\beta}f(y,y')=\int_{\mathbb{R}^{n+m}}\frac{|x'|^\alpha |y'|^\beta  f(x,x')}{|(x,x')-(y,y')|^{\lambda}}dxdx'
\]
and
\begin{equation*}
R_\lambda f(y,y')=\int_{\mathbb{R}^{n+m}}\frac{f(x,x')}{|(x,x')-(y,y')|^\lambda}dxdx' \qquad\  ~~\forall\ (y,y')\in\mathbb{R}^{n+m}.
\end{equation*}

By duality and the reversed H\"{o}lder inequality, it is easy to see that the inequality \eqref{gen01} is equivalent to the following weighted HLS inequality with $r=q'$. For simplicity, let $\alpha,\beta,\lambda,p,r$ satisfy
\begin{equation}\label{gen02}
\begin{cases}
&-n-m<\lambda<0, r<0< p<1,\\
&0\le\alpha<-\frac{m}{p'},0\le\beta<-\frac{m}{r},\\
&\frac 1{r}=\frac{1}{p}-\frac{n+m+(\alpha+\beta-\lambda)}{n+m}.
\end{cases}
\end{equation}
\begin{crl}\label{thmgen}
Let $\alpha,\beta,\lambda,p,r$ satisfy \eqref{gen02}.
There exists a constant $C(n,m,\alpha,\beta,\lambda,p)>0$ depending on $n,m,\alpha,\beta, \lambda, p$ such that
\begin{equation*}
\| H_{\alpha,\beta}f\|_{L^{r}( \mathbb{R}^{n+m})}\ge C(n,m,\alpha,\beta,\lambda,p)\|f\|_{L^p(
\mathbb{R}^{n+m})}
\end{equation*}
or
\begin{equation*}
\|(R_\lambda f)|y'|^{\beta}\|_{L^{r}( \mathbb{R}^{n+m})}\ge C(n,m,\alpha,\beta,\lambda,p)\||x'|^{-\alpha} f\|_{L^p(
\mathbb{R}^{n+m})}
\end{equation*}
holds for any nonnegative function $f\in L^p(\mathbb{R}^{n+m}).$
\end{crl}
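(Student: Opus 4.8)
\medskip
\noindent The plan is to derive this corollary from inequality \eqref{gen01} — already available from the preceding theorem — by duality together with the reversed H\"older inequality, exactly as \eqref{RHLSD-O} yields \eqref{RHLSD-1}--\eqref{WHLSD-1} when $m=1$; thus it is the $\mathbb{R}^{n+m}$ counterpart of Corollary \ref{RWHLSD-theo}. Two elementary observations carry out the bookkeeping. First, since the factor $|y'|^{\beta}$ in the definition of $H_{\alpha,\beta}$ does not depend on the integration variable, one has the factorization $H_{\alpha,\beta}(|x'|^{-\alpha}h)=|y'|^{\beta}R_\lambda h$ for every nonnegative measurable $h$. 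Second, all integrands in \eqref{gen01} being nonnegative, Tonelli's theorem lets one integrate out one of the two variables and rewrite the left-hand side of \eqref{gen01} as a bilinear pairing $\int_{\mathbb{R}^{n+m}}(H_{\alpha,\beta}f)\,g$, where $f$ is the factor lying in $L^{p}$ and $g$ the factor in $L^{q}$ (after renaming the two functions if necessary so that the weight indices match the definition of $H_{\alpha,\beta}$, the same bookkeeping already carried out in passing from \eqref{RHLSD-O} to \eqref{RHLSD-1}). Hence \eqref{gen01} asserts precisely that $\int_{\mathbb{R}^{n+m}}(H_{\alpha,\beta}f)\,g\ \ge\ N_{m,\alpha,\beta,\lambda}\,\|f\|_{L^{p}(\mathbb{R}^{n+m})}\|g\|_{L^{q}(\mathbb{R}^{n+m})}$ for all nonnegative $f\in L^{p}(\mathbb{R}^{n+m})$ and $g\in L^{q}(\mathbb{R}^{n+m})$.

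\medskip
\noindent I would then invoke the reversed H\"older inequality (e.g.\ Lemma~2.1 in \cite{DZ2015b}): under \eqref{gen02} the exponent $q\in(0,1)$ has negative conjugate $q'=r$, so for every nonnegative measurable $h$ on $\mathbb{R}^{n+m}$,
\[
\|h\|_{L^{r}(\mathbb{R}^{n+m})}=\inf\Big\{\int_{\mathbb{R}^{n+m}}h\,g\ :\ g\ge 0,\ \|g\|_{L^{q}(\mathbb{R}^{n+m})}=1\Big\},
\]
the lower bound $\int hg\ge\|h\|_{L^{r}}\|g\|_{L^{q}}$ being the reversed H\"older inequality and the infimum being attained at $g$ proportional to $h^{r-1}$ whenever $0<\|h\|_{L^{r}}<\infty$. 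Applying this with $h=H_{\alpha,\beta}f$ and combining with the pairing form of \eqref{gen01}, the assertion that \eqref{gen01} holds for all admissible $g$ of unit $L^{q}$ norm becomes exactly
\[
\|H_{\alpha,\beta}f\|_{L^{r}(\mathbb{R}^{n+m})}\ \ge\ N_{m,\alpha,\beta,\lambda}\,\|f\|_{L^{p}(\mathbb{R}^{n+m})}\qquad\text{for every }f\ge 0;
\]
the reversed H\"older inequality also furnishes the reverse implication, so the two statements are equivalent, which gives the first inequality of the corollary with $C(n,m,\alpha,\beta,\lambda,p)=N_{m,\alpha,\beta,\lambda}>0$ (whose two-sided bounds are inherited from the preceding theorem).

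\medskip
\noindent The weighted reformulation then follows from the change of unknown $f\mapsto|x'|^{-\alpha}f$, a bijection of the nonnegative cone of $L^{p}(\mathbb{R}^{n+m})$ onto that of the corresponding weighted space since $|x'|>0$ almost everywhere: by the factorization recorded above, $H_{\alpha,\beta}(|x'|^{-\alpha}f)=|y'|^{\beta}R_\lambda f$, while the right-hand side becomes $\||x'|^{-\alpha}f\|_{L^{p}}$, so the first inequality transforms verbatim into $\|(R_\lambda f)|y'|^{\beta}\|_{L^{r}}\ge C\,\||x'|^{-\alpha}f\|_{L^{p}}$; since the substitution is reversible, all three statements are equivalent. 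I do not expect a genuine obstacle: all of the analytic content is carried by \eqref{gen01} (which, like Theorem \ref{RWHLSB}, one establishes via the reversed Hardy inequality), and what remains is clerical — checking that \eqref{gen02} is nothing but \eqref{RWH-exp-gen0} rewritten with $r=q'$, assigning the weight indices correctly when the double integral is recast as a pairing, and observing the usual finiteness conventions of the reversed H\"older inequality (for $f\not\equiv 0$ the strict positivity of the kernel forces $H_{\alpha,\beta}f>0$ almost everywhere, so $\|H_{\alpha,\beta}f\|_{L^{r}}$ is a well-defined element of $(0,+\infty]$, whereas for $f\equiv 0$ both sides are trivial). If one insists on naming the hardest step, it is merely this index and exponent bookkeeping, the same point already treated in Corollary \ref{RWHLSD-theo}.
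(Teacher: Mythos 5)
Your proposal is correct and follows exactly the route the paper takes (and merely sketches): the corollary is obtained from \eqref{gen01} by the reversed H\"older duality with $r=q'$, plus the substitution $f\mapsto|x'|^{-\alpha}f$ and the factorization $H_{\alpha,\beta}(|x'|^{-\alpha}f)=|y'|^{\beta}R_\lambda f$ to pass between the two equivalent forms. No issues.
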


If $\alpha=\beta=0$, inequality \eqref{gen01} becomes inequality \eqref{DZ-reverse}. This work provides a different approach to establish  inequality \eqref{DZ-reverse}.

Define
\begin{align}\label{genmin}
N_{m,\alpha,\beta,\lambda}&=\inf\{\int_{\mathbb{R}^{n+m}}\int_{\mathbb{R}^{n+m}}\frac{|x'|^\alpha |y'|^{\beta} f(x,x')g(y,y')}{|(x,x')-(y,y')|^{\lambda}}dxdx'dydy': f\geq0, g\geq0,\nonumber\\
&\qquad\qquad \|f\|_{L^{p_\alpha}(\mathbb{R}^{n+m})}= \|g\|_{L^{q_\beta}(\mathbb{R}^{n+m})}=1\}\nonumber\\
&=\inf\{\frac{\int_{\mathbb{R}^{n+m}}\int_{\mathbb{R}^{n+m}}\frac{|x'|^\alpha |y'|^{\beta} f(x,x')g(y,y')}{|(x,x')-(y,y')|^{\lambda}}dxdx'dydy'}{\|f\|_{L^{p_\alpha}(\mathbb{R}^{n+m})} \|g\|_{L^{q_\beta}(\mathbb{R}^{n+m})}}: f\geq0, g\geq0, f\in L^{p_\alpha}(\mathbb{R}^{n+m}),g\in L^{q_\beta}(\mathbb{R}^{n+m})\},
\end{align}
where $p_\alpha=\frac{2(n+m)}{2(n+m)+2\alpha-\lambda}$ and $q_\beta=\frac{2(n+m)}{2(n+m)+2\beta-\lambda}$.

\medskip

Arguing as Theorem \ref{theorem2},  we can prove the attainability of minimizers for minimizing problem \eqref{genmin} using the renormalization method. Different from the proof of Theorem \ref{theorem2}, here we study the corresponding inequalities with subcritical exponents on the sphere $\mathbb{S}^{n+m}$ by using stereographic projection, which is equivalent to the inequality \eqref{gen01}. Then we prove the existence of extremal functions on the sphere $\mathbb{S}^{n+m}$ and hence obtain the existence of extremal functions of inequalities \eqref{gen01}.
The result is stated as follows.

\begin{theorem}\label{gentheorem2}
Let $\alpha,\beta,\lambda,p,q$ satisfy \eqref{RWH-exp-gen0}. $N_{m,\alpha,\beta,\lambda}$ is attained by a pair of positive functions $(f,g)\in L^{p_\alpha}(\mathbb{R}^{n+m})\times L^{q_\beta}(\mathbb{R}^{n+m})$ satisfying
$\|f\|_{L^{p_\alpha}(\mathbb{R}^{n+m})}=1$ and $\|g\|_{L^{q_\beta}(\mathbb{R}^{n+m})}=1$.
\end{theorem}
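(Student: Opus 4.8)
The plan is to adapt the renormalization scheme used in the proof of Theorem \ref{theorem2}, replacing the unit ball in $\mathbb{R}^{n+1}_+$ by the round sphere $\mathbb{S}^{n+m}$, which is the compactification naturally adapted to the conformal invariance of \eqref{genmin} at the critical exponents $p_\alpha$ and $q_\beta$. First I would use the stereographic projection $\pi:\mathbb{S}^{n+m}\setminus\{P\}\to\mathbb{R}^{n+m}$ with conformal factor $J_\pi$: writing $f$ and $g$ in terms of $F\circ\pi^{-1}$ and $G\circ\pi^{-1}$ weighted by appropriate powers of $J_\pi$ and using the identity that expresses the Euclidean distance $|\pi(\xi)-\pi(\eta)|$ through the chordal distance $|\xi-\eta|$ and the Jacobians, the precise normalization of $p_\alpha,q_\beta$ in \eqref{RWH-exp-gen0} makes every Jacobian factor cancel and turns \eqref{genmin} into the equivalent problem
\[
\widetilde N=\inf\Big\{\int_{\mathbb{S}^{n+m}}\int_{\mathbb{S}^{n+m}}\frac{w_\alpha(\xi)\,w_\beta(\eta)\,F(\xi)\,G(\eta)}{|\xi-\eta|^{\lambda}}\,d\xi\,d\eta:\ F,G\ge0,\ \|F\|_{L^{p_\alpha}(\mathbb{S}^{n+m})}=\|G\|_{L^{q_\beta}(\mathbb{S}^{n+m})}=1\Big\},
\]
where $w_\alpha,w_\beta\ge0$ are the explicit weights arising from $|x'|^\alpha,|y'|^\beta$; they are comparable to a power of the distance to the subsphere $\pi^{-1}(\{x'=0\})$. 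Because $-n-m<\lambda<0$, the kernel $|\xi-\eta|^{-\lambda}$ is bounded and continuous on $\mathbb{S}^{n+m}\times\mathbb{S}^{n+m}$, which is exactly why the sphere is a convenient model.

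Next I would break the scaling identity in \eqref{RWH-exp-gen0} by freezing $q=q_\beta$ and taking a subcritical $p<p_\alpha$, and let $\widetilde N_p$ be the corresponding infimum under $\|F\|_{L^p(\mathbb{S}^{n+m})}=1$. On the compact manifold $\mathbb{S}^{n+m}$ the direct method applies: starting from a minimizing sequence, the reversed H\"older inequality and the boundedness of the kernel provide the compactness required to pass to the limit in the bilinear form, while the constraint functionals are upper semicontinuous (recall $0<p,q<1$), and the reversed inequality \eqref{gen01} prevents the limit from being trivial; hence $\widetilde N_p$ is attained by a pair $(F_p,G_p)$, which by the associated Euler--Lagrange system and a regularity bootstrap may be taken positive. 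One then checks $\widetilde N_p\to\widetilde N$ as $p\uparrow p_\alpha$, the bound $\limsup_{p\uparrow p_\alpha}\widetilde N_p\le\widetilde N$ following from test pairs and $\liminf_{p\uparrow p_\alpha}\widetilde N_p\ge\widetilde N$ from the inequalities.

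Then comes the renormalization. The critical functional is invariant under the subgroup $\mathcal{G}$ of M\"obius transformations of $\mathbb{S}^{n+m}$ induced by the translations and dilations of $\mathbb{R}^{n+m}$ that preserve the subspace $\{x'=0\}$, since these preserve $|x'|^{\alpha}|y'|^{\beta}|(x,x')-(y,y')|^{-\lambda}\,dxdx'dydy'$. I would renormalize the subcritical minimizers $(F_p,G_p)$ by suitable elements of $\mathcal{G}$ — for instance by requiring the $w_\alpha$-weighted $L^{p_\alpha}$-mass of $F_p$ to be balanced at a fixed location and scale — and then let $p\uparrow p_\alpha$. The renormalized pair has weak limits $F_p\rightharpoonup F$ and $G_p\rightharpoonup G$; the renormalization forces $F\not\equiv0$ and $G\not\equiv0$ (the boundedness of the kernel on $\mathbb{S}^{n+m}$ rules out the vanishing alternative), and a Brezis--Lieb type decomposition — carried out with care because $0<p_\alpha,q_\beta<1$ makes $F\mapsto\|F\|_{p_\alpha}^{p_\alpha}$ only subadditive, so the usual strict subadditivity must be replaced by its reversed counterpart — together with $\widetilde N_p\to\widetilde N$ shows that no mass escapes, so $(F,G)$ attains $\widetilde N$. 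Transporting $(F,G)$ back through $\pi$ yields positive functions in $L^{p_\alpha}(\mathbb{R}^{n+m})\times L^{q_\beta}(\mathbb{R}^{n+m})$ realizing \eqref{genmin}, and a final rescaling normalizes both norms to $1$.

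The step I expect to be the main obstacle is the renormalization: one must choose the renormalizing family inside the restricted symmetry group left by the weight $|x'|^\alpha$ (the full conformal group being broken to the subgroup fixing $\{x'=0\}$) so that the renormalized subcritical minimizers neither concentrate at a point, nor spread to a point, nor push all of their weighted mass towards the excluded pole $P$; and one must then run the Brezis--Lieb accounting in the non-locally-convex range $0<p<1$, where the vanishing and mass-splitting alternatives have to be excluded by reversed inequalities rather than by convexity. Everything else becomes essentially routine once the weighted reversed problem is placed on the compact sphere.
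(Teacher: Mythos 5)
Your overall architecture --- compactify via stereographic projection to $\mathbb{S}^{n+m}$, minimize at subcritical exponents, send $p\uparrow p_\alpha$, $q\uparrow q_\beta$, and renormalize --- is exactly the route the paper indicates for this theorem (it transplants the proof of Theorem \ref{theorem2}, i.e.\ of Theorem \ref{equi01'}, from the ball $B^{n+1}$ to the sphere, with the distance to the subsphere $\pi^{-1}(\{x'=0\})$ playing the role of the vertical weight). Your subcritical existence step and the convergence $\widetilde N_p\to\widetilde N$ correspond to Proposition \ref{pro1} and Lemma \ref{limit} and are fine in outline.

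The genuine gap is in the limit passage. You propose weak limits $F_p\rightharpoonup F$ in $L^{p_\alpha}$ followed by a ``reversed Brezis--Lieb accounting,'' but for $0<p_\alpha<1$ the space $L^{p_\alpha}$ over a non-atomic measure has trivial dual, so there is no usable weak topology (the paper instead passes to $f_j^{p}\rightharpoonup f^{p}$ in the Banach space $L^{1/p}$), and a Brezis--Lieb decomposition would in any case require a.e.\ convergence that you have not produced. More importantly, you never supply the mechanism that excludes concentration: observing that the kernel $|\xi-\eta|^{-\lambda}$ is bounded rules out nothing in the genuinely critical scenario, namely bubbling at a point of $\{x'=0\}$, where the weights $|x'|^\alpha|y'|^\beta$ exactly compensate the blow-up of the $L^1$ masses forced by the constraints $\|F\|_{p_\alpha}=\|G\|_{q_\beta}=1$ (the exponents are tuned so that this contribution is $O(1)$). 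The paper closes this step by a completely different and concrete device: the renormalized subcritical extremals $(U_j,V_j)$ of \eqref{intsp01} solve the system \eqref{intsp02}, from which one derives the uniform two-sided pointwise bounds \eqref{nor2}; the normalization $U_j=1$ at a fixed point together with the explicit integral estimate \eqref{contra} then gives a contradiction when the two components blow up at incomparable rates, while in the comparable case the H\"older equicontinuity of solutions of \eqref{intsp02} yields locally uniform convergence via Arzel\`a--Ascoli to a pair solving the critical Euler--Lagrange system with nondegenerate norms. To make your proof work you would need to reproduce this Euler--Lagrange and pointwise-bound analysis on $\mathbb{S}^{n+m}$; the weak-convergence plus Brezis--Lieb scheme as stated does not close.
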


The Euler-Lagrange equation for extremal functions to inequality \eqref{gen01}, up to a constant multiplier, is given by
\begin{equation}\label{genEulereq0}
\begin{cases}
f^{p-1}(x,x')=\int_{\mathbb{R}^{n+m}}\frac{|x'|^\alpha |y'|^\beta g(y,y')}{|(x,x')-(y,y')|^\lambda}dydy'\ \quad(x,x')\in\mathbb{R}^{n+m},\\
g^{q-1}(y,y')=\int_{\mathbb{R}^{n+m}}\frac{|x'|^\alpha |y'|^\beta f(x,x')}{|(x,x')-(y,y')|^\lambda}dxdx'\ \quad(y,y')\in\mathbb{R}^{n+m}.
\end{cases}
\end{equation}

\begin{theorem}\label{gentheoremfenlei1}
Let $\alpha,\beta,\lambda,p,q$ satisfy  \eqref{RWH-exp-gen0}
and $(f,g)$ be a pair of positive Lebesgue measurable solutions to \eqref{genEulereq0}. Then $f,g\in C^\gamma(\mathbb{R}^{n+m})$ for $\gamma\in (0,1)$,
$f(x,x')$ and $g(x,x')$ are radially symmetric and monotone decreasing with respect to $x$ about some $x_0$ in  $\mathbb{R}^n,$ and are radially symmetric and monotone decreasing with respect to $x'$ about the origin in $\mathbb{R}^m$, respectively. That is, $f(x,x')=f(|x-x_0|,|x'|)$ and $g(x,x')=g(|x-x_0|,|x'|)$.
Moreover, for $p=p_\alpha=\frac{2(n+m)}{2(n+m)+2\alpha-\lambda}$ and $q_\beta=\frac{2(n+m)}{2(n+m)+2\beta-\lambda}$, if $f,g\in C(\mathbb{R}^{n+m})$, then there exist $c_1, c_2>0$ and $d>0$ such that
\begin{equation*}
f(x,0)=c_1(\frac{d}{d^{2}+|x-\xi_0|^2})^{\frac{2(n+m)+2\alpha-\lambda}{2}}, \ g(x,0)=c_2(\frac{d}{d^{2}+|x-\xi_0|^2})^{\frac{2(n+m)+2\beta-\lambda}{2}}
\end{equation*}
for some $\xi_0\in \mathbb{R}^{n}$ and  $x\in\mathbb{R}^{n}$.
\end{theorem}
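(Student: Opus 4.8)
The plan is to follow, adapting it to the partial--variable weight $|x'|^{\alpha}$, the scheme behind Theorems~\ref{regularity1} and \ref{theoremfenlei1}; the one genuinely new feature is that the singular set of the weight is now $\{x'=0\}\cong\mathbb{R}^{n}$, of codimension $m$, rather than a hyperplane. For the regularity, observe that since $-n-m<\lambda<0$ the kernel $|(x,x')-(y,y')|^{-\lambda}=|(x,x')-(y,y')|^{|\lambda|}$ carries no local singularity, so the smoothness of $f,g$ is governed only by their decay at infinity and by the regularity of the weights. First I would combine \eqref{genEulereq0} with the positivity of $f,g$ to obtain local two--sided bounds on compact subsets of $\mathbb{R}^{n+m}\setminus\{x'=0\}$, and, near $\{x'=0\}$, two--sided bounds for the renormalized quantities $|x'|^{-\alpha}f^{p-1}=R_{\lambda}(|y'|^{\beta}g)$ and $|y'|^{-\beta}g^{q-1}=R_{\lambda}(|x'|^{\alpha}f)$. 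Splitting the kernel into a compactly supported part---which, convolved with these now locally bounded densities, inherits the Hölder (and off--the--origin smooth) behaviour of $|z|^{|\lambda|}$---and a far part, on which the kernel is smooth and the polynomial decay of $f,g$ (itself read off from \eqref{genEulereq0}) makes the potentials as differentiable as one likes, a standard bootstrap gives $f,g\in C^{\infty}(\mathbb{R}^{n+m}\setminus\{x'=0\})$, and, together with the corresponding Hölder estimates for $|x'|^{-\alpha}f^{p-1}$ and $|y'|^{-\beta}g^{q-1}$ up to $\{x'=0\}$, the $C^{\gamma}$ regularity claimed in the statement, mimicking the proof of Theorem~\ref{regularity1}.

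Next, radial monotonicity of $f,g$ in $x'$ about the origin of $\mathbb{R}^{m}$, and in $x$ about some $x_{0}\in\mathbb{R}^{n}$, both come from the method of moving planes in integral form, applied directly to the measurable pair $(f,g)$ in the style of \cite{DZ2015b,DHL2021}. For the $x'$--symmetry one reflects across hyperplanes through the origin of $\mathbb{R}^{m}$: the weight $|x'|^{\alpha}|y'|^{\beta}$ and the kernel are invariant under such a reflection performed simultaneously on $x'$ and on $y'$, and the coupled structure of \eqref{genEulereq0} (keeping in mind $p-1,q-1<0$) lets the usual comparison conclude that the reflected pair coincides with $(f,g)$, with strict decrease in $|x'|$ off the mirror. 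For the $x$--symmetry one uses instead the translation invariance of the weight in the $x$--variable together with reflections perpendicular to the $x$--directions; combining the two families yields $f(x,x')=f(|x-x_{0}|,|x'|)$ and $g(x,x')=g(|x-x_{0}|,|x'|)$, each decreasing in both radial arguments.

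For the classification in the conformal case $p=p_{\alpha}$, $q=q_{\beta}$, note that \eqref{genEulereq0} is invariant under the Kelvin--type transforms centred at points $\xi\in\{x'=0\}\cong\mathbb{R}^{n}$: with $z_{\xi,\mu}=\xi+\mu^{2}(z-\xi)/|z-\xi|^{2}$ one has $|(z_{\xi,\mu})'|=\mu^{2}|z'|/|z-\xi|^{2}$, $|z_{\xi,\mu}-w_{\xi,\mu}|=\mu^{2}|z-w|/(|z-\xi|\,|w-\xi|)$ and Jacobian $(\mu/|z-\xi|)^{2(n+m)}$, and a short computation shows that the exponents $p_{\alpha}-1$, $q_{\beta}-1$ are precisely those for which $f_{\xi,\mu}(z):=(\mu/|z-\xi|)^{2(n+m)+2\alpha-\lambda}f(z_{\xi,\mu})$ and $g_{\xi,\mu}(z):=(\mu/|z-\xi|)^{2(n+m)+2\beta-\lambda}g(z_{\xi,\mu})$ solve the same system. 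I would then run the method of moving spheres on the pair $(f,g)$, in the style of \cite{Gl2020,DGZ2017,DHL2021,HL2021}: for each $\xi$ one starts the spheres from small $\mu$ and, following the standard dichotomy, shows that either they can be moved to all radii---which one excludes using the behaviour of $f,g$ at infinity from the regularity step together with \eqref{genEulereq0}---or there is a finite critical radius $\mu(\xi)$ at which $f$ and $f_{\xi,\mu(\xi)}$, and $g$ and $g_{\xi,\mu(\xi)}$, coincide. After the symmetry step and the renormalization above, this family of identities descends to an $n$--dimensional conformally invariant integral system on $\{x'=0\}$ for the traces of $|x'|^{-\alpha}f^{p-1}$ and $|y'|^{-\beta}g^{q-1}$; the Li--Zhu/Li--Zhang calculus lemma (\cite{LZ1995,LZ2003,L2004}) then pins these down, and raising to the powers $1/(p-1)$ and $1/(q-1)$ yields the stated expressions for $f(x,0)$ and $g(x,0)$, with $\xi_{0}\in\mathbb{R}^{n}$ and $c_{1},c_{2},d>0$.

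The delicate point is this last step. In contrast with the unweighted case $\alpha=\beta=0$ of \cite{DZ2015b,DGZ2020}, one can neither symmetrize nor restrict to $\{x'=0\}$ for free, so the moving--sphere comparison has to be carried out on the full coupled $(n+m)$--dimensional weighted system, with the admissible Kelvin transforms confined to centres lying on the codimension--$m$ set $\{x'=0\}$. Keeping precise track of the weights $|x'|^{\alpha}$ and $|y'|^{\beta}$ both in the conformal--covariance identity and in the decay estimates that start the moving spheres and that prevent pushing them to infinity is the heart of the matter; this is also where the hypothesis $f,g\in C(\mathbb{R}^{n+m})$ is used, to promote the almost--everywhere identity produced by the calculus lemma to a pointwise statement.
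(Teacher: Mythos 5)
Your proposal matches the route the paper itself takes: the authors explicitly omit the proof of this theorem, stating that the arguments of Sections 2--4 carry over ``with small modifications'', and your plan is exactly the Section 4 blueprint transplanted to $\mathbb{R}^{n+m}$ --- two-sided a priori bounds and H\"older regularity for $|x'|^{-\alpha}f^{p-1}$ and $|y'|^{-\beta}g^{q-1}$, then the method of moving spheres with Kelvin transforms centred on the singular set $\{x'=0\}$ of the weight, conformal covariance at $p_\alpha,q_\beta$, and Li's lemmas to identify the traces on $\{x'=0\}$. The one place you deviate is the symmetry step: Section 4 extracts the radial symmetry in $x$ from the moving-sphere machinery combined with the calculus lemma of \cite{QLX2008}, whereas you propose moving planes in integral form; since that calculus lemma only yields symmetry in the variables tangent to the set of admissible sphere centres, your reflections in the $x'$-directions are in fact the natural (and essentially unavoidable) way to obtain the radial symmetry and monotone decrease in $|x'|$ asserted here, which has no analogue in the half-space theorem. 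Both routes rest on the same comparison estimates and asymptotics, so this is a sensible adaptation rather than a genuinely different argument, and I see no gap beyond the level of detail the paper itself leaves implicit.
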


Notice that we use $|x'|^\alpha$ and $|y'|^\beta$ instead of $x_{n+1}^\alpha$ and $y_{n+1}^\beta$ for inequalities \eqref{gen01} on $\mathbb{R}^{n+m}\times\mathbb{R}^{n+m}$, the methods in Section 2, Section 3 and  Section 4 are still holds here with small modifications, so we present only the results and omit the details of the proofs. %But we could not immediately deduce the inequality \eqref{genEulereq0} from \eqref{RHLSD-O}.
 % unless $f(x,x')= f(x,|x'|)$ and $g(y,y')= y(y,|y'|)$.

This paper is organized as follows. In section 2, we establish the reverse weighted HLS inequality \eqref{RHLSD-O}. Section 3 is devoted to proving the existence of extremal function of the reverse weighted HLS inequality \eqref{RHLSD-O} by introducing the renormalization method.  In section 4, employing the method of moving spheres, we show the radial symmetry of positive solutions for equation \eqref{Eulereq0} and explicit form on the boundary $\partial\mathbb{R}^{n+1}_+=\mathbb{R}^{n}$ of  all extremal functions in the conformal invariant cases. In section 5, we obtain a necessary condition for the existence of positive solutions to equation \eqref{Eulereq0} in weak sense.

In all paper, positive constants are denoted by $c, C$ (with subcript in some cases) and are allowed to vary within a single line or formula.

\section{The rough reversed Hardy-Littlewood-Sobolev inequality with vertical weights\label{Section 2} }
In this section, we shall establish the rough reverse weighted  HLS inequalities on the upper half space by weighted Hardy inequality.

For $R>0$, we denote
$B_R(0)=\{(y,z)\in\mathbb{R}^{n}\times\mathbb{R}\,: \,|(y,z)|<R\} $ and $B_R^+(0)=\mathbb{R}^{n+1}_+\cap B_R(0) $.
The following lemma is crucial in our proof of Theorem \ref{RWHLSD-theo}, which can be proved by employing the same method as in \cite{AKM2019}.
\begin{lemma}\label{Weighted Hardy ineq}
Let $W$ and $U$ be nonnegative locally integrable weighted functions on $\mathbb{R}^{n+1}_+$. For $p\in (0,1)$ and $r<0$, the inequality
\begin{equation}\label{WH-1}
\big(\int_{\mathbb{R}^{n+1}_+}W(x,t)\big(\int_{ B_{|(x,t)|}^{+}(0)}f(y,z)dydz\big)^{r}dxdt\big)^\frac{1}{r}
\ge C_1(n,p,r)\big(\int_{\mathbb{R}^{n+1}_+}f^p(y,z)U(y,z)dydz\big)^\frac{1}{p}
\end{equation}
 holds for some $C_1(n, p,r)>0$ and all nonnegative measurable functions $f$, if and only if
\begin{equation*}
0<D_1=\inf_{(y,z)\in\mathbb{R}^{n+1}_+ }\big\{\big(\int_{\mathbb{R}^{n+1}_+\backslash B_{|(x,t)|}^{+}(0)}W(x,t)dxdt\big)^\frac {1}{r}\big(\int_{B_{|(x,t)|}^{+}(0)}U^{1-p'}(y,z)dydz\big)^\frac1{p'}\big\}.
\end{equation*}
Moreover, the biggest constant $C_1(n, p,r)>0$ in \eqref{WH-1} has the following relation to $D_1$
\[D_1\ge C_1(n, p, r)\ge (\frac{p'}{p'+r})^{-\frac1{r}}(\frac{r}{p'+r})^{-\frac1{p'}}D_1.
\]
On the other hand,
\begin{align}\label{WH-1-rev}
\big(\int_{\mathbb{R}^{n+1}_+}W(x,t)\big(\int_{\mathbb{R}^{n+1}_+\backslash {B_{|(x,t)|}^{+}(0)}}f(y,z)dydz\big)^{r}dxdt\big)^\frac{1}{r}
\ge C_2(n, p,r)\big(\int_{\mathbb{R}^{n+1}_+}f^p(y,z)U(y,z)dydz\big)^\frac1p
\end{align}
holds for some $C_2(n, p,r)>0$ and all nonnegative measurable functions $f,$ if and only if
\begin{align*}
0<D_2
=\inf_{(x,t)\in\mathbb{R}^{n+1}_+ }\big\{\big(\int_{B_{|(x,t)|}^{+}(0)}W(x,t)dxdt\big)^\frac{1}{r}\big(\int_{\mathbb{R}^{n+1}_+\backslash B_{|(x,t)|}^{+}(0)}U^{1-p'}(y,z)dydz\big)^\frac1{p'}\big\}.
\end{align*}
Moreover,  the biggest constant $C_2(n, p,r)>0$ in  \eqref{WH-1-rev} has the following relation to $D_2$,
\[D_2\ge C_2(n, p,r)\ge (\frac{p'}{p'+r})^{-\frac1{r}}(\frac{r}{p'+r})^{-\frac1{p'}}D_2.
\]
\end{lemma}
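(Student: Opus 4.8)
\medskip
\noindent\textbf{Proof proposal.} The plan is to reduce Lemma~\ref{Weighted Hardy ineq} to the one-dimensional two-weight reversed Hardy inequality of \cite{AKM2019} by passing to polar coordinates, and then to read off the constants. Write a point of $\mathbb{R}^{n+1}_+$ in polar form $(\sigma,\theta)$ with $\sigma=|(y,z)|$ and $\theta$ in the open upper hemisphere $\mathbb{S}^n_+$, so that $dy\,dz=\sigma^{n}\,d\sigma\,d\theta$. For nonnegative $f$ set $\widehat f(\sigma)=\sigma^{n}\int_{\mathbb{S}^n_+}f(\sigma\theta)\,d\theta$ and $\widehat W(\rho)=\rho^{n}\int_{\mathbb{S}^n_+}W(\rho\theta)\,d\theta$. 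Since $\big(\int_{B^+_\rho(0)}f\big)^{r}$ does not depend on the angular variable, the left-hand side of \eqref{WH-1} equals $\big(\int_0^\infty\widehat W(\rho)\big(\int_0^\rho\widehat f\,\big)^{r}d\rho\big)^{1/r}$ and hence depends on $f$ only through the radial profile $\widehat f$.

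Next I would compress $U$ sphere by sphere. Applying the reversed H\"older inequality (Lemma~2.1 of \cite{DZ2015b}) on $\mathbb{S}^n_+$ for each fixed $\sigma$, splitting $f$ into $fU^{1/p}$ and $U^{-1/p}$ (and carrying the corresponding powers of $\sigma$), yields $\int_{\mathbb{R}^{n+1}_+}f^{p}U\le\int_0^\infty\widehat f(\sigma)^{p}\,\widehat U(\sigma)^{1-p}\,d\sigma$ with $\widehat U(\sigma)=\sigma^{n}\int_{\mathbb{S}^n_+}U^{1-p'}(\sigma\theta)\,d\theta$, using $-p/p'=1-p$. As $r/p<0$, such an upper bound for $\int f^{p}U$ is exactly what the right-hand side of \eqref{WH-1} calls for, and the step is lossless: equality in reversed H\"older on a sphere holds for $f(y,z)=h(|(y,z)|)\,U(y,z)^{1-p'}$ with $h$ radial, and such $f$ realize every profile, so the best constant in \eqref{WH-1} equals the best constant $C_1$ in
\[
\bigg(\int_0^\infty\widehat W(\rho)\Big(\int_0^\rho\widehat f\,\Big)^{r}d\rho\bigg)^{1/r}\ \ge\ C_1\bigg(\int_0^\infty\widehat f(\sigma)^{p}\,\widehat U(\sigma)^{1-p}\,d\sigma\bigg)^{1/p}.
\]
By \cite{AKM2019}, for $0<p<1$, $r<0$ this holds for some $C_1>0$ if and only if $\inf_{\rho>0}\big(\int_\rho^\infty\widehat W\big)^{1/r}\big(\int_0^\rho\widehat U^{(1-p)(1-p')}\big)^{1/p'}>0$, and then $C_1$ lies between $\big(\frac{p'}{p'+r}\big)^{-1/r}\big(\frac{r}{p'+r}\big)^{-1/p'}$ times this quantity and the quantity itself. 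Since $(1-p)(1-p')=1$ and, by Fubini, $\int_0^\rho\widehat U=\int_{B^+_\rho(0)}U^{1-p'}$ and $\int_\rho^\infty\widehat W=\int_{\mathbb{R}^{n+1}_+\setminus B^+_\rho(0)}W$, this quantity is exactly $D_1$, which gives the asserted bounds. Inequality \eqref{WH-1-rev} follows by the same route with $B^+_\rho(0)$ replaced by its complement and $\int_0^\rho\widehat f$ by $\int_\rho^\infty\widehat f$, the corresponding condition being $\inf_\rho\big(\int_0^\rho\widehat W\big)^{1/r}\big(\int_\rho^\infty\widehat U\big)^{1/p'}=D_2$.

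For the necessity of $D_1>0$ I would argue directly: assuming \eqref{WH-1}, test it against $f=U^{1-p'}\chi_{B^+_\rho(0)}$ (first truncating $U$ from below and the radius from above, then removing the truncation); since $\int_{B^+_\sigma(0)}f=\int_{B^+_\rho(0)}U^{1-p'}$ for $\sigma\ge\rho$ and $r<0$, the left-hand side is bounded above by $\big(\int_{B^+_\rho(0)}U^{1-p'}\big)\big(\int_{\mathbb{R}^{n+1}_+\setminus B^+_\rho(0)}W\big)^{1/r}$, which after simplification (using $1-1/p=1/p'$ and $\int f^pU=\int_{B^+_\rho(0)}U^{1-p'}$) gives $C_1\le\big(\int_{\mathbb{R}^{n+1}_+\setminus B^+_\rho(0)}W\big)^{1/r}\big(\int_{B^+_\rho(0)}U^{1-p'}\big)^{1/p'}$ for every $\rho$, i.e.\ $D_1\ge C_1>0$; the analogous test function handles $D_2$.

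The main obstacle is the one-dimensional sufficiency with the sharp constant, i.e.\ the argument of \cite{AKM2019} itself if a self-contained proof is wanted: one inserts an auxiliary power $V(\rho)^{s}$ (with $V(\rho)=\int_0^\rho\widehat U$) inside the reversed H\"older estimate of $\int_0^\rho\widehat f$, uses $\int_0^\rho V'(\sigma)V(\sigma)^{-sp'}\,d\sigma=V(\rho)^{1-sp'}/(1-sp')$ for $sp'<1$, applies Fubini, and then feeds in $D_1>0$; optimizing over $s$ produces the factor $\big(\frac{p'}{p'+r}\big)^{-1/r}\big(\frac{r}{p'+r}\big)^{-1/p'}$. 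A recurring technical point is that $W,U$ are only locally integrable, so every identity above should first be carried out for truncated weights (e.g.\ $\min(U,k)$ on $B^+_R(0)$) and then passed to the limit by monotone convergence, checking at each application of the reversed H\"older or reversed Minkowski inequality that the integrals involved are neither $0$ nor $\infty$ and approximating $f$ by bounded compactly supported functions where necessary.
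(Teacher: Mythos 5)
Your proposal is correct and follows exactly the route the paper intends: the paper gives no proof of Lemma \ref{Weighted Hardy ineq}, stating only that it "can be proved by employing the same method as in \cite{AKM2019}", and your reduction via polar decomposition, sphere-by-sphere reversed H\"older compression of $U$ (using $(1-p)(1-p')=1$), the one-dimensional two-weight reversed Hardy inequality, and the test functions $U^{1-p'}\chi_{B^+_\rho(0)}$ for necessity is precisely that method. The technical caveats you flag (truncation of the weights, degenerate spherical integrals) are the right ones to worry about.
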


\textbf{Proof of Theorem \ref{RWHLSB}.} By reversed H\"{o}lder inequality, we have
\begin{eqnarray*}
% \nonumber to remove numbering (before each equation)
\int_{\mathbb{R}^{n+1}_+}\int_{\mathbb{R}^{n+1}_+}\frac{t^\alpha z^\beta f(x,t)g(y,z)}{|(x,t)-(y,z)|^{\lambda}}dxdtdydz
&=&\int_{\mathbb{R}^{n+1}_+}(\int_{\mathbb{R}^{n+1}_+}\frac{t^\alpha z^\beta f(x,t)}{|(x,t)-(y,z)|^{\lambda}}dxdt) g(y,z)dydz\\
&\ge&(\int_{\mathbb{R}^{n+1}_+}(\int_{\mathbb{R}^{n+1}_+}\frac{t^\alpha z^\beta f(x,t)}{|(x,t)-(y,z)|^{\lambda}}dxdt)^{r}dydz)^{\frac1{r}}\|g\|_{L^{r'}(\mathbb{R}^{n+1}_+)}.
\end{eqnarray*}
Let $u(x,t)=t^\alpha f(x,t)$ and $r=q'<0$, then \eqref{RHLSD-1}  is equivalent to
\[\int_{\mathbb{R}^{n+1}_+}(\int_{\mathbb{R}^{n+1}_+} \frac{z^\beta u(x,t)}{|(x,t)-(y,z)|^{\lambda}}dxdt)^{r}dydz\le N_{\alpha,\beta,\lambda}^{r}\|t^{-\alpha}u\|^{r}_{L^p(
\mathbb{R}^{n+1}_+)}.\]
%Since
%\[\int_{\mathbb{R}^{n+1}_+} \frac{z^\beta u(x,t)}{|(x,t)-(y,z)|^{\lambda}}dxdt\ge\int_{B_{\frac{|(y,z)|}2}^+(0)} \frac{z^\beta u(x,t)}{|(x,t)-(y,z)|^{\lambda}}dxdt,\]
%we obtain
It is easy to see that
\[(\int_{\mathbb{R}^{n+1}_+}\frac{ z^\beta u(x,t)}{|(x,t)-(y,z)|^{\lambda}}dxdt)^{r}\le(\int_{B_{\frac{|(y,z)|}2}^+(0)}\frac{ z^\beta u(x,t)}{|(x,t)-(y,z)|^{\lambda}}dxdt)^{r}.\]
Therefore,
\begin{eqnarray}\label{RHLSD-2}
% \nonumber to remove numbering (before each equation)
& &(\int_{\mathbb{R}^{n+1}_+}z^{\beta r}(\int_{\mathbb{R}^{n+1}_+} \frac{u(x,t)}{|(x,t)-(y,z)|^{\lambda}}dxdt)^{r}dydz)^{\frac1{r}}\nonumber\\
&\ge&(\int_{\mathbb{R}^{n+1}_+}z^{\beta r}(\int_{B_{\frac{|(y,z)|}2}^+(0)} \frac{u(x,t)}{|(x,t)-(y,z)|^{\lambda}}dxdt)^{r}dydz)^{\frac1{r}}\nonumber\\
&:=&I_1^{\frac1{r}}(u).
\end{eqnarray}
Similarly, we derive
\begin{eqnarray}\label{RHLSD-3}
% \nonumber to remove numbering (before each equation)
& &(\int_{\mathbb{R}^{n+1}_+}z^{\beta r}(\int_{\mathbb{R}^{n+1}_+} \frac{u(x,t)}{|(x,t)-(y,z)|^{\lambda}}dxdt)^{r}dydz)^{\frac1{r}}\nonumber\\
&\ge&(\int_{\mathbb{R}^{n+1}_+}z^{\beta r}(\int_{\mathbb{R}^{n+1}_+\setminus B_{2|(y,z)|}^+(0)} \frac{u(x,t)}{|(x,t)-(y,z)|^{\lambda}}dxdt)^{r}dydz)^{\frac1{r}}\nonumber\\
&:=&I_2^{\frac1{r}}(u).
\end{eqnarray}
Combining \eqref{RHLSD-2} with \eqref{RHLSD-3}, we have
\[(\int_{\mathbb{R}^{n+1}_+}z^{\beta r}(\int_{\mathbb{R}^{n+1}_+}\frac{u(x,t)}{ |(x,t)-(y,z)|^{\lambda}}dxdt)^{r}dydz)^{\frac1{r}}\ge \frac{I_1^{\frac1{r}}(u)}{2}+\frac{I_2^{\frac1{r}}(u)}{2}.\]
Hence, we only need to show
\begin{eqnarray*}
I_i(u)\le N_{\alpha,\beta,\lambda}^{r}\|t^{-\alpha}u\|^{r}_{L^p(\mathbb{R}^{n+1}_+)},\quad i=1,2.
\end{eqnarray*}
We divide into two cases to discuss.

$(1)$ If $|(x,t)|\le \frac{|(y,z)|}2$, then $|(x,t)-(y,z)|\ge\frac{|(y,z)|}2$. For any $\lambda<0,$ we have
%$2^{\lambda}|(y,z)|^{-\lambda} \le |(x,t)-(y,z)|^{-\lambda}.$
%The above inequality implies that
\[2^{\lambda}\int_{B_{\frac{|(y,z)|}2}^+(0)} \frac{u(x,t)}{|(y,z)|^{\lambda}}dxdt\le\int_{B_{\frac{|(y,z)|}2}^+(0)} \frac{u(x,t)}{|(x,t)-(y,z)|^{\lambda}}dxdt.\]
Since $r<0$, we obtain
\[(\int_{B_{\frac{|(y,z)|}2}^+(0)} \frac{u(x,t)}{|(x,t)-(y,z)|^{\lambda}}dxdt)^{r}\le 2^{\lambda r}(\int_{B_{\frac{|(y,z)|}2}^+(0)} \frac{u(x,t)}{|(y,z)|^{\lambda}}dxdt)^{r},\]
 and then
\begin{eqnarray}\label{theo001}
 I_1(u)%&=&\int_{\mathbb{R}^{n+1}_+}z^{\beta r}(\int_{B_{\frac{|(y,z)|}2}^+(0)} \frac{u(x,t)}{|(x,t)-(y,z)|^{\lambda}}dxdt)^{r}dydz\nonumber\\
 &\le&2^{\lambda r}\int_{\mathbb{R}^{n+1}_+}\frac{z^{\beta r}}{|(y,z)|^{\lambda r}}(\int_{B_{\frac{|(y,z)|}2}^+(0)} u(x,t)dxdt)^{r}dydz.
\end{eqnarray}
Choosing $W(y,z)=z^{\beta r}|(y,z)|^{-\lambda r}$ and $U(x,t)=t^{-\alpha p}$ in \eqref{WH-1}, it follows from \eqref{theo001} that
\begin{eqnarray}\label{E1-es-1}
I_1(u)\le N_{\alpha,\beta,\lambda}^{r}\|t^{-\alpha} u\|^{r}_{L^p(\mathbb{R}^{n+1}_+)},
\end{eqnarray}
where $N_{\alpha,\beta,\lambda}$ can be seen as $C_1(n,p,r)$ in \eqref{WH-1}.
In fact,  from \eqref{RWH-exp-1} we know that
\begin{equation}\label{RWH-exp-1-eqv}
\frac {n+1}{r}=-(\alpha+\beta-\lambda)- \frac{n+1}{p'}.
\end{equation}
Since  $\alpha<-\frac{n+1}{p'}$,  we have $-\beta+\lambda<\frac {n+1}{r}$. That is, ${n+1}+(\beta-\lambda){r}<0.$  Since $\beta<-\frac{1}{r}$, %by the change of normal polar coordinates
we have
\begin{eqnarray}\label{WH-6}
\int_{\mathbb{R}^{n+1}_+\backslash B_{|(y,z)|}^+(0)}W(y,z)dydz&=&\int_{\mathbb{R}^{n+1}_+\backslash B_{|(y,z)|}^+(0)}z^{\beta r}|(y,z)|^{-\lambda r}dydz\nonumber\\
&=&J_\beta \int_{|(y,z)|}^\infty \rho^{(\beta-\lambda) r}\rho^{n}d\rho\nonumber\\
%&=&\frac{J_\beta}{n+1+(\beta-\lambda)r}|(y,z)|^{n+1+(\beta-\lambda)r}\nonumber\\
&=& C_3(n,\beta,\lambda,r)|(y,z)|^{n+1+(\beta-\lambda)r},
\end{eqnarray}
where $C_3(n,\beta,\lambda,r)=\frac{J_\beta}{n+1+(\beta-\lambda)r}$ and
\begin{eqnarray*}
J_\beta&=&\int_{0}^{\pi}(sin\theta_1)^{n-1+\beta r}d\theta_1\int_{0}^{\pi}(sin\theta_2)^{n-2+\beta r }d\theta_2\cdots\int_{0}^{\pi}(sin\theta_{n})^{\beta r} d\theta_{n}=\pi^{\frac{n}{2}}\frac{\Gamma(\frac{\beta r+1}{2})}{\Gamma(\frac{n+\beta r+1}{2})}.
\end{eqnarray*}

Moreover, since $\alpha<-\frac{n+1}{p'}$, we have $n+1+\alpha p'>0$. Since $\alpha<-\frac{1} {p'}$, one has
\begin{eqnarray}\label{WH-7}
\int_{B_{|(y,z)|}^+(0)}U^{1-p'}(x,t)dxdt&=&\int_{B_{|(y,z)|}^+(0)} t^{-\alpha p(1-p')}dxdt\nonumber\\
&=&J_\alpha\int_0^{|(y,z)|} \rho^{-\alpha p(1-p')}\rho^n d\rho\nonumber\\
%&=&\frac{J_\alpha}{n+1+\alpha p'}|(y,z)|^{n+1+\alpha p'}\nonumber\\
&=&C_4(n,\alpha,p) |(y,z)|^{n+1+\alpha p'},
\end{eqnarray}
where $C_4(n,\alpha,p)=\frac{J_\alpha}{n+1+\alpha p'}$ and
\begin{eqnarray*}
J_\alpha&=&\int_{0}^{\pi}(sin\theta_1)^{n-1+\alpha p'}d\theta_1\int_{0}^{\pi}(sin\theta_2)^{n-2+\alpha p'}d\theta_2\cdots\int_{0}^{\pi}(sin\theta_{n})^{\alpha p'} d\theta_{n}=\pi^{\frac{n}{2}}\frac{\Gamma(\frac{\alpha p'+1}{2})}{\Gamma(\frac{n+\alpha p'+1}{2})}.
\end{eqnarray*}

The equalities \eqref{WH-6} and \eqref{WH-7} yield
\begin{eqnarray*}
D_1&=&\big(\int_{\mathbb{R}^{n+1}_+\backslash B_{|(y,z)|}^+(0)}W(y,z)dydz\big)^\frac1{r}\big(\int_{B_{|(y,z)|}^+(0)}U^{1-p'}(x,t)dxdt\big)^\frac1{p'}\\
&=&C_3^\frac1{r}(n,\beta,\lambda,r)C_4^\frac1{p'}(n,\alpha,p)|(y,z)|^{\frac{n+1} {r}+\frac{n+1}{p'}+(\alpha+\beta-\lambda)}\\
&=&C_3^\frac1{r}(n,\beta,\lambda,r)C_4^\frac1{p'}(n,\alpha,p),
\end{eqnarray*}
where $\frac{n+1} {r}+\frac{n+1}{p'}+(\alpha+\beta-\lambda)=0$ due to \eqref{RWH-exp-1}. Therefore, we prove \eqref{E1-es-1}.

$(2)$ If $|(y,z)|\le \frac{|(x,t)|}2$, then $|(x,t)-(y,z)|\ge\frac{|(x,t)|}2$. Similar to the case $I_1(u)$,  choosing $W(y,z)=z^{\beta r}$ and
$U(x,t)=t^{ -\alpha p}|(x,t)|^{\lambda p}$ in \eqref{WH-1-rev}, we have
\begin{eqnarray}\label{I2}
I_2(u)%&=&\int_{\mathbb{R}^{n+1}_+}(\int_{\mathbb{R}^{n+1}_+\backslash B_{2|(y,z)|}^+(0)}\frac{z^\beta u(x,t)}{|(x,t)-(y,z)|^\lambda}dxdt)^{r}dydz\nonumber\\
&\le&
2^{\lambda r}\int_{\mathbb{R}^{n+1}_+}z^{\beta r}(\int_{\mathbb{R}^{n+1}_+\backslash B_{2|(y,z)|}^+(0)} \frac{u(x,t)}{|(x,t)|^\lambda}dxdt)^{r}dydz\nonumber\\
&\le&
N_{\alpha,\beta,\lambda}^{r}\|t^{-\alpha} u\|^{r}_{L^p(\mathbb{R}^{n+1}_+)},
\end{eqnarray}%if condition \eqref{WH-cond-2} is satisfied.
where $N_{\alpha,\beta,\lambda}$ can be seen as $C_2(n,p,r)$ in \eqref{WH-1-rev}.
%By \eqref{RWH-exp-1}, we have
%\begin{equation*}
%\frac {n+1}{r}+\beta= {\lambda-\alpha} - \frac{n+1}{p'}.
%\end{equation*}
In fact, it follows from \eqref{RWH-exp-1-eqv} that $\frac{n+1}{p'}>\lambda-\alpha$ for any $\beta<-\frac{n+1}{r}$. That is, $n+1+(\alpha-\lambda)p'<0.$ Thus, we have
\begin{eqnarray}\label{WH-8}
\int_{\mathbb{R}^{n+1}_+\backslash B_{|(y,z)|}^+(0)}U^{1-p'}(x,t)dxdt&=&\int_{\mathbb{R}^{n+1}_+\backslash  B_{|(y,z)|}^+(0)}(t^{-\alpha}|(x,t)|^{ \lambda })^{p(1-p')}dxdt\nonumber\\
&=& J_\alpha\int_{|(y,z)|}^\infty \rho^{-(\alpha-\lambda)p(1-p')}\rho^nd\rho\nonumber\\
%&=&\frac{J_\alpha}{n+1+(\alpha-\lambda)p'}|(y,z)|^{n+1+(\alpha-\lambda)p'}\nonumber\\
&=& C_5(n,\alpha,\lambda,p)|(y,z)|^{n+1+(\alpha-\lambda)p'}
\end{eqnarray}
and
\begin{eqnarray}\label{WH-9}
\int_{B_{|(y,z)|}^+(0)}W(y,z)dydz&=&\int_{ B_{|(y,z)|}^+(0)}z^{\beta r}dydz\nonumber\\
&=&J_\beta\int_0^{|y,z|} \rho^{\beta r}\rho^n d\rho\nonumber\\
%&=&\frac{J_\beta}{n+1+\beta r}|(y,z)|^{n+1+\beta r}\nonumber\\
&=&C_6(n,\beta,q) |(y,z)|^{n+1+\beta r},
\end{eqnarray}
where $C_5(n,\alpha,\lambda,p)=\frac{J_\alpha}{n+1+(\alpha-\lambda)p'}$ and $C_6(n,\beta,q)=\frac{J_\beta}{n+1+\beta r}$.
Combining \eqref{WH-8} with \eqref{WH-9}, we show that \eqref{WH-1-rev} holds and hence \eqref{I2} is proved.

Finally,  we show the estimate of constant $N_{\alpha,\beta,\lambda}$. From
 Lemma \ref{Weighted Hardy ineq}, we know
%$$N_{\alpha,\beta,\lambda}=\mbox{min}\{ C_1(n,p,r), C_2(n,p,r)\},$$
$$D_1=C_3^\frac1{r}(n,\beta,\lambda,r)C_4^\frac1{p'}(n,\alpha,p)\ \mbox{and}\ D_2=C_5^\frac1{p'}(n,\alpha,\lambda,p)C_6^\frac1{r}(n,\beta,q).$$
Hence, we give the upper and lower bounds of constant $N_{\alpha,\beta,\lambda}$ as follows
\begin{eqnarray*}
(\frac{p'}{p'+r})^{-\frac1{r}}(\frac{r}{p'+r})^{-\frac1{p'}}\mbox{min}\{ D_1, D_2\} \le N_{\alpha,\beta,\lambda}\le \mbox{min}\{ D_1, D_2\},
\end{eqnarray*}
where $$D_1=\big[\frac{\pi^{\frac{n}{2}}}{n+1+\alpha p'}\frac{\Gamma(\frac{\alpha p'+1}{2})}{\Gamma(\frac{n+\alpha p'+1}{2})}\big]^{\frac{1}{p'}}\big[\frac{\pi^{\frac{n}{2}}}{n+1+(\beta-\lambda) r}\frac{\Gamma(\frac{\beta r+1}{2})}{\Gamma(\frac{n+\beta r+1}{2})}\big]^{\frac{1}{r}},$$
$$D_2=\big[\frac{\pi^{\frac{n}{2}}}{n+1+(\alpha-\lambda)p'}\frac{\Gamma(\frac{\alpha p'+1}{2})}{\Gamma(\frac{n+\alpha p'+1}{2})}\big]^{\frac{1}{p'}}\big[\frac{\pi^{\frac{n}{2}}}{n+1+\beta r}\frac{\Gamma(\frac{\beta r+1}{2})}{\Gamma(\frac{n+\beta r+1}{2})}\big]^{\frac{1}{r}}.$$
This completes the proof of Theorem \ref{RWHLSB}.
 \hfill$\Box$

\section{Sharp reversed Hardy-Littlewood-Sobolev inequality with vertical weights\label{Section 3} }

In this section, we prove the existence of extremal functions for inequality \eqref{RHLSD-O} by introducing a renormalization method.

Define conformal transformation $\mathcal{T}: \zeta\in B^{n+1}\rightarrow \zeta^{x^0}\in\mathbb{R}^{n+1}_+$ given by
\begin{equation}\label{cftrans}
\zeta^{x^0}:=\frac{2^2(\zeta-x^0)}{|\zeta-x^0|^2}+x^0\in\mathbb{R}^{n+1}_+,
\end{equation}
where $x^0=(0,-2)\in\mathbb{R}^n\times(0,-\infty)=:\mathbb{R}^{n+1}_-$ and $B^{n+1}=B(x^1,1)$ with $x^1=(0,-1)\in\mathbb{R}^{n+1}_-$ and radius $1$. Let $\mathcal{T}^{-1}: \mathbb{R}^{n+1}_+ \rightarrow B^{n+1}$ be the inverse of $\mathcal{T}$.

%We always assume that $q_\beta$ and $q'_\beta$ are conjugate numbers. That is, $q_\beta$ and $q'_\beta$ satisfy $\frac{1}{q_\beta}+\frac{1}{q'_\beta}=1$.
For conformal case $p_\alpha=\frac{2(n+1)}{2(n+1)+2\alpha-\lambda}$ and $q_\beta=\frac{2(n+1)}{2(n+1)+2\beta-\lambda}$, it is easy to verify that the inequality \eqref{RHLSD-O} is equivalent to the following integral inequality on the ball $B^{n+1}$:
\begin{align}\label{02}
&\int_{B^{n+1}}\int_{B^{n+1}}\big(\frac{1-|\zeta-x^1|^2}{2}\big)^\alpha\big(\frac{1-|\eta-x^1|^2}{2}\big)^\beta\frac{ f(\zeta)g(\eta)}{|\zeta-\eta|^\lambda}d\zeta d\eta\geq N_{\alpha,\beta,\lambda} \|f\|_{L^{p_\alpha}(B^{n+1})}\|g\|_{L^{q_\beta}(B^{n+1})}.
\end{align}

The sharp constant to inequality \eqref{02} is classified by
\begin{align*}
N_{\alpha,\beta,\lambda}&=\inf_{\substack{\|f\|_{L^{p_\alpha}(B^{n+1})}=1\\ \|g\|_{L^{q_\beta}(B^{n+1})}=1}}\int_{B^{n+1}}\int_{B^{n+1}}\big(\frac{1-|\zeta-x^1|^2}{2}\big)^\alpha
\big(\frac{1-|\eta-x^1|^2}{2}\big)^\beta\frac{ f(\zeta)g(\eta)}{|\zeta-\eta|^{\lambda}}d\zeta d\eta\nonumber\\
&=\inf_{\substack{f\in L^{p_\alpha}(B^{n+1}),\\g\in L^{q_\beta}(B^{n+1})}}\frac{\int_{B^{n+1}}\int_{B^{n+1}}(\frac{1-|\zeta-x^1|^2}{2})^\alpha
(\frac{1-|\eta-x^1|^2}{2})^\beta\frac{ f(\zeta)g(\eta)}{|\zeta-\eta|^{\lambda}}d\zeta d\eta}{\|f\|_{L^{p_\alpha}(B^{n+1})} \|g\|_{L^{q_\beta}(B^{n+1})}}.
\end{align*}

\begin{theorem}\label{equi01'}
$N_{\alpha,\beta,\lambda}$  is attained by a pair of positive functions $(f,g)\in L^{p_\alpha}(B^{n+1})\times L^{q_\beta}(B^{n+1})$ satisfying $\|f\|_{L^{p_\alpha}(B^{n+1})}=1$ and $\|g\|_{L^{q_\beta}(B^{n+1})}=1$, respectively.
\end{theorem}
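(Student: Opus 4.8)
\medskip
\noindent\textbf{Strategy of the proof.}
Write $J(f,g)$ for the double integral on the left-hand side of \eqref{02}. Since \eqref{02} is obtained from \eqref{RHLSD-O} through the conformal map $\mathcal{T}$ of \eqref{cftrans}, the constant $N_{\alpha,\beta,\lambda}$ here is exactly the one bounded in \eqref{bestrange}; in particular it is finite and strictly positive, so the minimization is well posed and only the attainment is at issue. The plan is to run a subcritical approximation entirely on $B^{n+1}$, and to remove the loss of compactness caused by the conformal invariance of \eqref{02} by a \emph{renormalization}: a carefully normalized conformal change of variables applied to the subcritical minimizers before the limit is taken.

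\medskip
\noindent\emph{Step 1: the subcritical problem on $B^{n+1}$.}
For a small parameter $s$ pick subcritical exponents $p_s,q_s\in(0,1)$ (so that \eqref{02} with $(p_s,q_s)$ in place of $(p_\alpha,q_\beta)$ loses its conformal invariance), with $p_s\to p_\alpha$ and $q_s\to q_\beta$ as $s\to 0$, and set
\[
N_s:=\inf\big\{\,J(f,g):\ f,g\ge 0,\ \|f\|_{L^{p_s}(B^{n+1})}=\|g\|_{L^{q_s}(B^{n+1})}=1\,\big\}.
\]
The kernel $|\zeta-\eta|^{-\lambda}=|\zeta-\eta|^{|\lambda|}$ and the weights $\big(\tfrac{1-|\zeta-x^1|^2}{2}\big)^{\alpha}$, $\big(\tfrac{1-|\eta-x^1|^2}{2}\big)^{\beta}$ are bounded and continuous on $\overline{B^{n+1}}$; hence along a minimizing sequence no interior concentration can occur, since for exponents less than $1$ such concentration would force $\int f\to\infty$ and the positivity of the weights at an interior point would then force $J\to\infty$, while the strict subcriticality prevents escape of mass to $\partial B^{n+1}$. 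One therefore extracts a subsequence converging strongly in $L^{p_s}\times L^{q_s}$, and the equality cases of the reversed H\"older inequality pin the limit $(f_s,g_s)$ to unit norms. Writing down the Euler--Lagrange system associated with this subcritical problem and bootstrapping exactly as in the proof of Theorem \ref{regularity1}, the minimizers may be taken positive and H\"older continuous on $B^{n+1}$.

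\medskip
\noindent\emph{Step 2: renormalization and the limit $s\to0$.}
First one shows $N_s\to N_{\alpha,\beta,\lambda}$, by approximating fixed competitors for one direction and using \eqref{02} together with a continuity estimate in the exponents for the other. Now \eqref{02} is invariant under a non-compact group of conformal transformations of $B^{n+1}$, namely those induced through $\mathcal{T}$ by the translations and dilations of $\mathbb{R}^{n+1}_+$; consequently the minimizers $(f_s,g_s)$ are determined only up to this group and may, as $s\to0$, concentrate near the point $x^0\in\partial B^{n+1}$ that $\mathcal{T}$ sends to infinity. To prevent this, choose for each $s$ a conformal transformation $\phi_s$ of $B^{n+1}$ and replace $(f_s,g_s)$ by the associated weighted push-forwards $(\widetilde f_s,\widetilde g_s)$ --- normalized so that the quotient functional $J(\cdot,\cdot)/(\|\cdot\|_{L^{p_\alpha}}\|\cdot\|_{L^{q_\beta}})$ and the constraints $\|\widetilde f_s\|_{L^{p_\alpha}}=\|\widetilde g_s\|_{L^{q_\beta}}=1$ are preserved --- with $\phi_s$ fixed by a balancing normalization, for instance that the $L^{p_\alpha}$-mass of $\widetilde f_s$ over a fixed half-ball equals half the total. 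The renormalized pair then cannot concentrate at a single boundary point; using the uniform regularity furnished by the (renormalized) Euler--Lagrange system, one extracts subsequences $\widetilde f_s\to f$ and $\widetilde g_s\to g$ a.e. and in $L^1_{\mathrm{loc}}$, and passes to the limit in the functional to obtain $J(f,g)\le N_{\alpha,\beta,\lambda}$. Once it is known that $f\not\equiv0$, $\|f\|_{L^{p_\alpha}}\le1$ and $\|g\|_{L^{q_\beta}}\le1$, inserting the admissible pair $\big(f/\|f\|_{L^{p_\alpha}},\,g/\|g\|_{L^{q_\beta}}\big)$ into \eqref{02} and using $N_{\alpha,\beta,\lambda}>0$ forces $J(f,g)=N_{\alpha,\beta,\lambda}$ and $\|f\|_{L^{p_\alpha}}=\|g\|_{L^{q_\beta}}=1$, which is the assertion.

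\medskip
\noindent\emph{Main obstacle.}
Step 1 and the bookkeeping of the conformal change of variables are routine; the heart of the argument is the \emph{non-degeneracy} in Step 2 --- that the balancing normalization survives the limit, so that $f\not\equiv0$ and no mass escapes to $x^0$. A Brezis--Lieb-type decomposition shows that an escaping piece would, after its concentration is undone using the conformal invariance, carry energy at least $N_{\alpha,\beta,\lambda}$ times its normalized mass; this bound alone does not yield a contradiction --- and indeed it cannot when $\alpha=\beta=0$, which is exactly the non-existence phenomenon of Dou, Guo and Zhu \cite{DGZ2020}. The decisive extra input is that for $\alpha,\beta>0$ the conformal weights $\big(\tfrac{1-|\cdot-x^1|^2}{2}\big)^{\alpha}$ and $\big(\tfrac{1-|\cdot-x^1|^2}{2}\big)^{\beta}$ vanish at $x^0$, which makes every boundary-concentrating profile strictly suboptimal and so rules out the escape. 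I expect this energy-splitting and non-vanishing analysis, together with the uniform estimates near $\partial B^{n+1}$ needed to make it rigorous, to be the technical core of the proof.
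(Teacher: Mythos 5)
Your overall architecture --- a subcritical minimization on $B^{n+1}$ followed by a renormalized limit as the exponents tend to $(p_\alpha,q_\beta)$ --- is exactly the paper's (Proposition \ref{pro1} and Lemma \ref{limit} are your Step 1 and the first half of your Step 2). But the heart of the matter, your ``main obstacle'', is where the proposal diverges from what actually works, and it is left as an expectation rather than an argument. Your plan is to \emph{prevent} concentration by a mass-balancing choice of conformal gauge and then to exclude escaping bubbles by a Brezis--Lieb splitting together with the vanishing of the weights at $x^0$. For the reversed inequality this is the wrong target: the paper does not exclude blow-up of the subcritical minimizers. It normalizes at the maximum point of $f_{p_j}$, so that the renormalized $U_j$ of \eqref{intsp01}--\eqref{intsp02} satisfies $U_j\ge U_j(0,\tfrac{2}{\rho_j})=1$ everywhere, proves from the integral system the two-sided pointwise bounds $U_j(x,t)\asymp t^{\alpha}(1+|(x,t)|^{-\lambda})$ of \eqref{nor2}, and then splits into cases according to the relative blow-up rates of $f_{p_j}$ and $g_{q_j}$: the unbalanced cases are killed by the explicit estimate \eqref{contra} (boundedness of $V_j(0,1)$ against $V_j\to\infty$ uniformly, which crucially uses the lower bound $U_j\ge1$ that a mass-balancing normalization would not supply), while in the balanced case the blow-up genuinely occurs and the minimizer is \emph{recovered from} the half-space limit $(U,V)$: one shows $\int U^{p'_\alpha}\ge1$, $\int V^{q'_\beta}\ge1$ and exploits $p_\alpha,q_\beta<1$ to convert these lower bounds into attainment of $N_{\alpha,\beta,\lambda}$ back on the ball. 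A Brezis--Lieb-type energy splitting is moreover problematic in this setting, because for exponents below one the $L^p$-norm is superadditive and the functional is being minimized, so the usual strict-subadditivity dichotomy does not run; the paper avoids it entirely.

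A more minor gap: your Step 1 dismisses the existence of subcritical minimizers as routine (``no interior concentration can occur \dots strict subcriticality prevents escape of mass''). For $0<p,q<1$ this compactness is not free: Proposition \ref{pro1} has to establish a uniform $L^1$ bound on the minimizing sequence by a two-point argument on the level sets of $T_{\alpha,\beta,p,q}f_j$, then work with weak convergence of $f_j^{p}$ in $L^{1/p}$ and upgrade it to convergence of the bilinear functional; none of this follows from boundedness of the kernel alone. So the proposal identifies the right two-stage strategy, but leaves both the subcritical compactness and, more seriously, the blow-up analysis unproved, and the mechanism it proposes for the latter is not the one that closes the argument.
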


\subsection{\textbf{Subcritical reverse weighted HLS inequality on the ball}}
\

In this subsection, we first establish the subcritical reverse weighted HLS inequality on the ball $B^{n+1}$. The existence of the corresponding extremal functions is also established.

\begin{theorem}\label{equi1}
Assume $\lambda\in(-n-1,0)$, $p\in (0,p_\alpha)$ and $q\in (0,q_\beta)$. Then there exists a constant $C_{n,\alpha,\beta,\lambda,p}>0$ such that
\begin{align}\label{2}
&\int_{B^{n+1}}\int_{B^{n+1}}\big(\frac{1-|\zeta-x^1|^2}{2}\big)^\alpha\big(\frac{1-|\eta-x^1|^2}{2}\big)^\beta\frac{ f(\zeta)g(\eta)}{|\zeta-\eta|^\lambda}d\zeta d\eta\geq C_{n,\alpha,\beta,\lambda,p} \|f\|_{L^p(B^{n+1})}\|g\|_{L^{q}(B^{n+1})}
\end{align}
holds for any nonnegative functions  $f\in L^p(B^{n+1})$ and $g\in L^{q}(B^{n+1})$.
\end{theorem}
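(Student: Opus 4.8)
The plan is to derive the subcritical inequality \eqref{2} on the ball $B^{n+1}$ directly from the (already established) critical inequality \eqref{02}, i.e.\ from Theorem \ref{RWHLSB} transplanted to the ball. The key observation is that $B^{n+1}$ has finite Lebesgue measure and the two conformal weights $\big(\tfrac{1-|\zeta-x^1|^2}{2}\big)^\alpha$ and $\big(\tfrac{1-|\eta-x^1|^2}{2}\big)^\beta$ are bounded above on $B^{n+1}$, so shrinking the exponents in the $L^p$--$L^q$ norms should only cost a multiplicative constant. Concretely, since $p<p_\alpha$ and $q<q_\beta$, H\"older's inequality on the finite-measure space $B^{n+1}$ gives
\begin{equation*}
\|f\|_{L^p(B^{n+1})}\le |B^{n+1}|^{\frac1p-\frac1{p_\alpha}}\|f\|_{L^{p_\alpha}(B^{n+1})},\qquad
\|g\|_{L^q(B^{n+1})}\le |B^{n+1}|^{\frac1q-\frac1{q_\beta}}\|g\|_{L^{q_\beta}(B^{n+1})}.
\end{equation*}
Here the exponents are positive because $p<p_\alpha$, $q<q_\beta$; note $p_\alpha,q_\beta\in(0,1)$ so all norms are really quasi-norms, but the H\"older-type bound in this direction (large exponent controls small exponent on a finite measure space) is still valid and elementary.

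First I would record that \eqref{02} holds with the sharp constant $N_{\alpha,\beta,\lambda}>0$ furnished by Theorem \ref{RWHLSB} via the conformal change of variables $\mathcal T$; this is stated in the excerpt as the equivalence between \eqref{RHLSD-O} and \eqref{02}. Then, for any nonnegative $f\in L^p(B^{n+1})$ and $g\in L^q(B^{n+1})$, since $B^{n+1}$ is bounded these functions also lie in $L^{p_\alpha}(B^{n+1})$ and $L^{q_\beta}(B^{n+1})$ respectively (again because the ambient measure is finite and $p_\alpha<p$?—no: here $p_\alpha>p$ is false in general, so I must be careful). In fact $p_\alpha=\frac{2(n+1)}{2(n+1)+2\alpha-\lambda}$; since $-n-1<\lambda<0$ and $\alpha\ge0$ one has $2(n+1)+2\alpha-\lambda>2(n+1)$, hence $p_\alpha<1$, and the hypothesis $p\in(0,p_\alpha)$ forces $p<p_\alpha$, so indeed $L^{p}(B^{n+1})\supset L^{p_\alpha}(B^{n+1})$, i.e.\ the a priori integrability goes the \emph{wrong} way: a function in $L^p$ need not be in $L^{p_\alpha}$. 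So the naive reduction fails, and this is where the real work lies.

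To circumvent this, I would instead prove \eqref{2} by a direct argument that does not route through the critical inequality. The plan: split each integral $\int_{B^{n+1}}$ against $|\zeta-\eta|^{-\lambda}$ using that $-\lambda>0$ so the kernel is bounded below on the bounded set $B^{n+1}$ by $(\mathrm{diam}\,B^{n+1})^{-\lambda}=2^{-\lambda}>0$. Thus
\begin{equation*}
\int_{B^{n+1}}\!\!\int_{B^{n+1}}\!\Big(\tfrac{1-|\zeta-x^1|^2}{2}\Big)^{\!\alpha}\!\Big(\tfrac{1-|\eta-x^1|^2}{2}\Big)^{\!\beta}\frac{f(\zeta)g(\eta)}{|\zeta-\eta|^\lambda}\,d\zeta d\eta
\ge 2^{-\lambda}\!\int_{B^{n+1}}\!\Big(\tfrac{1-|\zeta-x^1|^2}{2}\Big)^{\!\alpha}\!f(\zeta)\,d\zeta\cdot\!\int_{B^{n+1}}\!\Big(\tfrac{1-|\eta-x^1|^2}{2}\Big)^{\!\beta}\!g(\eta)\,d\eta,
\end{equation*}
because $\lambda<0$ makes $|\zeta-\eta|^{-\lambda}\ge 2^{-\lambda}$ pointwise, and the double integral then factors. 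It remains to bound $\int_{B^{n+1}}w_\alpha f$ below by $c\|f\|_{L^p(B^{n+1})}$ and similarly for $g$, where $w_\alpha(\zeta)=\big(\tfrac{1-|\zeta-x^1|^2}{2}\big)^\alpha$. Since $0<p<1$, the reversed H\"older inequality (Lemma 2.1 in \cite{DZ2015b}) applies: for nonnegative $f$,
\begin{equation*}
\int_{B^{n+1}}w_\alpha f\,d\zeta\ge \|f\|_{L^p(B^{n+1})}\,\|w_\alpha\|_{L^{p'}(B^{n+1})},
\end{equation*}
with $p'<0$, and the quantity $\|w_\alpha\|_{L^{p'}(B^{n+1})}=\big(\int_{B^{n+1}}w_\alpha^{p'}\big)^{1/p'}$ is finite and strictly positive \emph{iff} $\int_{B^{n+1}}w_\alpha^{p'}<\infty$, i.e.\ iff $\alpha p'>-1$ near the boundary $|\zeta-x^1|\to1$. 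This integrability condition $\alpha p'>-1$, equivalently $\alpha<-1/p'=1/p-1$, must be checked; it should follow from $p<p_\alpha$ together with the constraints \eqref{RWH-exp-0}. The main obstacle is precisely verifying that $w_\alpha\in L^{p'}(B^{n+1})$ and $w_\beta\in L^{q'}(B^{n+1})$ (i.e.\ that the boundary vanishing of the conformal weight is mild enough for the negative exponent), after which the inequality \eqref{2} follows with $C_{n,\alpha,\beta,\lambda,p}=2^{-\lambda}\|w_\alpha\|_{L^{p'}(B^{n+1})}\|w_\beta\|_{L^{q'}(B^{n+1})}>0$. I expect the verification of this integrability, using $p\in(0,p_\alpha)$ and the relation between $p_\alpha$ and $\alpha$, to be the one genuinely non-routine point; everything else is the reversed H\"older inequality and the lower bound on the kernel on a bounded set.
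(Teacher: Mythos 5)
Your second, ``direct'' argument breaks down at its very first step because of a sign error. For $\lambda<0$ the kernel is $|\zeta-\eta|^{-\lambda}=|\zeta-\eta|^{|\lambda|}$, which is \emph{increasing} in $|\zeta-\eta|$ and vanishes on the diagonal; on $B^{n+1}$ (diameter $2$) one therefore has the \emph{upper} bound $|\zeta-\eta|^{-\lambda}\le 2^{-\lambda}$, not the pointwise lower bound $|\zeta-\eta|^{-\lambda}\ge 2^{-\lambda}$ that you assert. There is no positive pointwise lower bound for the kernel, and the factorization of the double integral into $2^{-\lambda}\bigl(\int w_\alpha f\bigr)\bigl(\int w_\beta g\bigr)$ is false: taking $f=g=\chi_{B_\epsilon(\zeta_0)}$ for an interior point $\zeta_0$ makes the double integral at most of order $(2\epsilon)^{-\lambda}\bigl(\int w_\alpha f\bigr)\bigl(\int w_\beta g\bigr)$, and $(2\epsilon)^{-\lambda}\to0$, so no constant $c>0$ can work. (The subcritical inequality itself survives this concentration test only because $\tfrac1p+\tfrac1q>\tfrac1{p_\alpha}+\tfrac1{q_\beta}\ge 2-\tfrac{\lambda}{n+1}$; your factorized lower bound does not.) Everything downstream of this step therefore collapses, including the reversed H\"older reduction to $\|w_\alpha\|_{L^{p'}}$.

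The route you abandoned is in fact the correct one, and it is what the paper uses implicitly (compare the chain in \eqref{5'}). Your objection --- that $f\in L^p(B^{n+1})$ need not lie in $L^{p_\alpha}(B^{n+1})$ --- is harmless for a \emph{reversed} inequality. If $\int_{B^{n+1}}f^{p_\alpha}=+\infty$, apply \eqref{02} to the truncations $f_N=\min(f,N)\in L^\infty(B^{n+1})\subset L^{p_\alpha}(B^{n+1})$ and let $N\to\infty$: monotone convergence forces the left-hand side of \eqref{2} to equal $+\infty$, so the claim is vacuous. Otherwise $\|f\|_{L^{p_\alpha}(B^{n+1})}<\infty$, and since $p<p_\alpha<1$ and $|B^{n+1}|<\infty$, ordinary H\"older with exponent $p_\alpha/p>1$ gives $\|f\|_{L^{p_\alpha}(B^{n+1})}\ge |B^{n+1}|^{\frac1{p_\alpha}-\frac1p}\|f\|_{L^{p}(B^{n+1})}$ (exactly the comparison used in \eqref{5'}), and similarly for $g$. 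Chaining this with \eqref{02} yields \eqref{2} with $C_{n,\alpha,\beta,\lambda,p}=N_{\alpha,\beta,\lambda}\,|B^{n+1}|^{\frac1{p_\alpha}-\frac1p+\frac1{q_\beta}-\frac1q}$, and no further work is needed.
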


For $\eta\in B^{n+1}$, we introduce the operator
\begin{align*}
T_{\alpha,\beta,p,q}f(\eta)&=\int_{B^{n+1}}\big(\frac{1-|\zeta-x^1|^2}{2}\big)^\alpha\big(\frac{1-|\eta-x^1|^2}{2}\big)^\beta
\frac{ f(\zeta)}{|\zeta-\eta|^{\lambda}}d\zeta.
\end{align*}
It is easy to see that the inequality \eqref{2} is equivalent to
\begin{align}\label{critical}
&\|T_{\alpha,\beta,p,q}f\|_{L^{q'}(B^{n+1})}\geq C_{n,\alpha,\beta,\lambda,p} \|f\|_{L^p(B^{n+1})}
\end{align}
for any nonnegative function $f\in L^p(B^{n+1})$.

Now we consider the extremal problem of inequality \eqref{2}:
\begin{align}\label{3}
C^*_{n,\alpha,\beta,\lambda,p}&=\inf\{\int_{B^{n+1}}\int_{B^{n+1}}(\frac{1-|\zeta-x^1|^2}{2}\big)^\alpha(\frac{1-|\eta-x^1|^2}{2}\big)^\beta
\frac{ f(\zeta)g(\eta)}{|\zeta-\eta|^{\lambda}}d\zeta d\eta : f\geq0, g\geq0,\nonumber\\
&\qquad\qquad\|f\|_{L^p(B^{n+1})}= \|g\|_{L^{q}(B^{n+1})}=1\}\nonumber\\
&=\inf\{\frac{\int_{B^{n+1}}\int_{B^{n+1}}(\frac{1-|\zeta-x^1|^2}{2})^\alpha(\frac{1-|\eta-x^1|^2}{2})^\beta\frac{ f(\zeta)g(\eta)}{|\zeta-\eta|^{\lambda}}d\zeta d\eta}{\|f\|_{L^p(B^{n+1})}\|g\|_{L^{q}(B^{n+1})}}:f\geq0,g\geq0,\nonumber\\
&\qquad\qquad f\in L^p(B^{n+1}),g\in L^{q}(B^{n+1})\}.
\end{align}
It follows from \eqref{2} and \eqref{3} that $C^*_{n,\alpha,\beta,\lambda,p}\geq C_{n,\alpha,\beta,\lambda,p}>0$.

The following proposition plays a crucial role in our proof of existence of the minimizer. The method we shall use is similar to that of \cite{DGZ2017} and \cite{DGZ2020}, but we need to
introduce some new ideas such that this method can be applied to reverse weighted HLS inequality.

\begin{prop}\label{pro1}
\ \ $(i)$ There exists a pair of nonnegative functions $(f,g)\in L^1(B^{n+1})\times L^1(B^{n+1}) $ such that $\|f\|_{L^p(B^{n+1})}=\|g\|_{L^{q}(B^{n+1})}=1$ and
\begin{align*}
C^*_{n,\alpha,\beta,\lambda,p}&=\int_{B^{n+1}}\int_{B^{n+1}}\big(\frac{1-|\zeta-x^1|^2}{2}\big)^\alpha\big(\frac{1-|\eta-x^1|^2}{2}\big)^\beta
\frac{ f(\zeta)g(\eta)}{|\zeta-\eta|^{\lambda}}d\zeta d\eta.
\end{align*}
\ \ \ $(ii)$ For $\zeta\in \overline{B^{n+1}}$, the functions $f,g$ satisfy the Euler-Lagrange equations
\begin{equation}\label{integral}\begin{cases}
C^*_{n,\alpha,\beta,\lambda,p}f^{p-1}(\zeta)=\int_{B^{n+1}}(\frac{1-|\zeta-x^1|^2}{2}\big)^\alpha(\frac{1-|\eta-x^1|^2}{2})^\beta
\frac{g(\eta)}{|\zeta-\eta|^{\lambda}} d\eta,\\
C^*_{n,\alpha,\beta,\lambda,p}g^{q-1}(\zeta)=\int_{B^{n+1}}(\frac{1-|\eta-x^1|^2}{2})^\alpha(\frac{1-|\zeta-x^1|^2}{2})^\beta
\frac{ f(\eta)}{|\zeta-\eta|^{\lambda}} d\eta.
\end{cases}\end{equation}
\ \ \ $(iii)$ There exists a constant $C=C(n,\alpha,\beta,\lambda,p,q)>0$ such that
\begin{equation}\label{bdd1}
\frac{1}{C}\leq f,g\leq C.
\end{equation}
Furthermore, $f, g\in C^\gamma(\overline{B^{n+1}})$ with $\gamma\in(0,1)$.
\end{prop}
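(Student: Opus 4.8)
The plan is to run the direct method of the calculus of variations on the minimization problem \eqref{3}, but with the crucial twist that, because the kernel $|\zeta-\eta|^{-\lambda}$ is \emph{bounded} on $B^{n+1}\times B^{n+1}$ (as $\lambda<0$) and the weights $(\tfrac{1-|\zeta-x^1|^2}{2})^\alpha$, $(\tfrac{1-|\eta-x^1|^2}{2})^\beta$ are bounded, the functional $f,g\mapsto \iint (\cdots) \frac{f g}{|\zeta-\eta|^\lambda}$ is continuous on $L^1\times L^1$ rather than merely on $L^p\times L^q$. First I would take a minimizing sequence $(f_k,g_k)$ with $\|f_k\|_{L^p(B^{n+1})}=\|g_k\|_{L^{q}(B^{n+1})}=1$. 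Since $p,q\in(0,1)$, the $L^p$ and $L^q$ "norms" control the $L^1$ norms on the bounded domain $B^{n+1}$ by reverse Hölder (H\"older's inequality with exponent $<1$ goes the other way), so $(f_k)$, $(g_k)$ are bounded in $L^1(B^{n+1})$; after passing to a subsequence they converge weakly-$*$ as measures to nonnegative measures. The subcritical restriction $p<p_\alpha$, $q<q_\beta$ is what lets me upgrade this: I would show the minimizing sequence cannot concentrate, so the limits are absolutely continuous, i.e. given by $L^1$ functions $f,g$. Concretely, I expect to argue that if a minimizing sequence concentrated near a point, the value of the functional would blow up (because $L^1$ mass concentrating at a point, against a bounded-below kernel, forces the double integral up) while the normalization $\|f_k\|_p=1$ with $p<p_\alpha$ subcritical would be preserved — giving a contradiction with infimality — so in fact one gets strong $L^1$ compactness and $f_k\to f$, $g_k\to g$ in $L^1$.

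Next, for part $(i)$: by the $L^1$-continuity of the bilinear functional noted above, $\iint(\cdots)\frac{f_k g_k}{|\zeta-\eta|^\lambda}\to \iint(\cdots)\frac{fg}{|\zeta-\eta|^\lambda}$; meanwhile by Fatou (applied to $|f_k|^p$, $|g_k|^q$, exploiting $p,q<1$ together with a.e. convergence along a further subsequence) one gets $\|f\|_{L^p}\ge \limsup \|f_k\|_{L^p}=1$ in the reversed sense appropriate to exponents $<1$ — more precisely, $\int f^p \le \liminf \int f_k^p = 1$, hence $\|f\|_p\le 1$, and likewise $\|g\|_q\le 1$. Since decreasing $\|f\|_p$ or $\|g\|_q$ while keeping the numerator fixed would only decrease the quotient in \eqref{3}, the pair $(f/\|f\|_p,\, g/\|g\|_q)$ is admissible and still achieves the infimum; and one checks $f,g\not\equiv 0$ because otherwise the quotient would be $+\infty$, contradicting $C^*_{n,\alpha,\beta,\lambda,p}<\infty$ (finiteness of $C^*$ follows by testing with, say, constants). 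So, after renormalizing, $(f,g)$ is a genuine minimizer with $\|f\|_p=\|g\|_q=1$.

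For part $(ii)$: with $(f,g)$ a minimizer, I perturb $f\mapsto f+\varepsilon\phi$ for $\phi\in C(\overline{B^{n+1}})$ (and symmetrically in $g$), differentiate the quotient in $\varepsilon$ at $\varepsilon=0$, and use that $p-1<0$ so the map $f\mapsto \|f\|_{L^p}$ is differentiable away from the zero set — which is why one needs the lower bound in $(iii)$, but at this stage one may work formally on the set $\{f>0\}$ and then use $(iii)$ to justify it globally. The stationarity condition is exactly \eqref{integral}, up to the Lagrange multiplier which is pinned to $C^*_{n,\alpha,\beta,\lambda,p}$ by multiplying the first equation by $f$, the second by $g$, integrating, and comparing with the normalization and the value of the functional. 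For part $(iii)$: from \eqref{integral}, the right-hand sides are integrals of $L^1$ functions against the bounded kernel $|\zeta-\eta|^{-\lambda}$ times bounded weights, hence are bounded \emph{above} on $\overline{B^{n+1}}$, giving $f^{p-1},g^{q-1}\le C$, i.e. (since $p-1,q-1<0$) a lower bound $f,g\ge 1/C$. For the upper bound on $f,g$: the lower bound on $g$ just obtained, plugged back into the first equation of \eqref{integral}, shows $f^{p-1}(\zeta)\ge \tfrac1C\int_{B^{n+1}}(\cdots)|\zeta-\eta|^{-\lambda}d\eta$, and since $\lambda<0$ the kernel is bounded below away from $0$ on the compact set, so the right side is bounded below by a positive constant; thus $f^{p-1}\ge c>0$, i.e. $f\le C$, and symmetrically $g\le C$. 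Once $f,g$ are pinched between two positive constants, the right-hand sides of \eqref{integral} are H\"older continuous on $\overline{B^{n+1}}$: the weights $(\tfrac{1-|\cdot-x^1|^2}{2})^{\alpha},{}^{\beta}$ are $C^\gamma$ up to the boundary, and $\zeta\mapsto \int_{B^{n+1}} h(\eta)|\zeta-\eta|^{-\lambda}d\eta$ with $h$ bounded is $C^\gamma(\overline{B^{n+1}})$ for any $\gamma\in(0,1)$ because $\lambda<0$ makes the kernel Lipschitz-in-$\zeta$ with an integrable (indeed bounded) modulus; hence $f^{p-1},g^{q-1}\in C^\gamma$, and since they are bounded away from $0$ and $x\mapsto x^{1/(p-1)}$ is smooth there, $f,g\in C^\gamma(\overline{B^{n+1}})$.

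The main obstacle is the first step: ruling out concentration of the minimizing sequence and thereby getting strong $L^1$ compactness of $(f_k,g_k)$. This is where the strict subcriticality $p<p_\alpha$, $q<q_\beta$ must be used decisively — at the critical exponents the conformal invariance supplies a noncompact family of near-minimizers (translations/dilations of the extremal of \eqref{02}), so some quantitative estimate, e.g. showing that on the set where $f_k$ is large its $L^1$ mass is controlled by its $L^p$ mass with a gain coming from $p<p_\alpha$, is needed to exclude mass escaping to a point. Everything after that — existence of the minimizer, the Euler–Lagrange system, the two-sided bounds, and the H\"older regularity — is a bootstrap that runs smoothly precisely because $\lambda<0$ keeps all the relevant kernels bounded and continuous, which is the recurring mechanism exploited throughout.
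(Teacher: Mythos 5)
Your overall architecture (direct method, then Euler--Lagrange, then pointwise bounds and H\"older regularity) matches the paper's, and your treatment of (ii) and (iii) is essentially the paper's argument. But the first and decisive step contains a genuine error: you claim that $\|f_k\|_{L^p(B^{n+1})}=1$ with $p<1$ controls $\|f_k\|_{L^1(B^{n+1})}$ ``by reverse H\"older.'' The inequality goes the other way. On a bounded domain the reverse H\"older inequality with $g\equiv 1$ gives $\|f\|_{L^1}\ge c\,\|f\|_{L^p}$ for $0<p<1$ (equivalently, $L^1\subset L^p$), so the normalization yields only a \emph{lower} bound on the $L^1$ mass. Indeed $f=N\,\mathbf{1}_E$ with $|E|=N^{-p}$ has $\|f\|_{L^p}=1$ but $\|f\|_{L^1}=N^{1-p}\to\infty$. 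Your fallback heuristic --- that concentration would force the double integral to blow up because the kernel is ``bounded below'' --- also fails: since $\lambda<0$ the kernel is $|\zeta-\eta|^{-\lambda}=|\zeta-\eta|^{|\lambda|}$, which \emph{vanishes} on the diagonal, and the weights $(\tfrac{1-|\cdot-x^1|^2}{2})^{\alpha},(\tfrac{1-|\cdot-x^1|^2}{2})^{\beta}$ vanish on $\partial B^{n+1}$, so mass concentrating at a point or at the boundary is not penalized by any crude lower bound on the kernel. The uniform $L^1$ bound is exactly the nontrivial content of the paper's Step~1: from $C_3\le\iint(\cdots)\le C_4$ and reverse H\"older one gets $\|T_{\alpha,\beta,p,q}f_j\|_{L^{q'}}\le C_4$ with $q'<0$; a level-set argument (using the subcritical inequality \eqref{critical} at the exponent $q'_\beta$) then shows the set $\{T_{\alpha,\beta,p,q}f_j<M\}$ has measure bounded below, one picks two points $\zeta_j^1,\zeta_j^2$ in it with $|\zeta_j^1-\zeta_j^2|\ge\epsilon_0$, and since every $\eta$ is at distance $\ge\epsilon_0/4$ from one of them, $\int f_j$ is controlled by $T_{\alpha,\beta,p,q}f_j(\zeta_j^i)<M$. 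Without this (or an equivalent substitute) your compactness scheme does not start.

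A secondary but real flaw is the renormalization at the end of your part (i): if the limit only satisfies $\|f\|_{L^p}\le 1$, $\|g\|_{L^q}\le 1$, then replacing $(f,g)$ by $(f/\|f\|_{L^p},\,g/\|g\|_{L^q})$ \emph{increases} the Rayleigh quotient (you divide the numerator's value by a number $\le 1$), so you cannot conclude the limit attains the infimum unless you prove $\|f\|_{L^p}=\|g\|_{L^q}=1$ exactly. The paper obtains this equality in its Step~2 by showing $f_j^p\rightharpoonup f^p$ weakly in $L^{1/p}$ (which again needs the uniform $L^1$ bound, since $\|f_j^p\|_{L^{1/p}}=\|f_j\|_{L^1}^p$) and testing against the constant function $1\in L^{1/(1-p)}(B^{n+1})$, giving $\int f_j^p\to\int f^p=1$; the passage to the limit in the bilinear form is then done via the auxiliary factorization $f_j^p f^{1-p}$ rather than by strong $L^1$ convergence.
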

\begin{proof}
${\bf (i).}$  We first show $C^*_{n,\alpha,\beta,\lambda,p}$ is attained by a pair of nonnegative functions $(f,g)\in L^1(B^{n+1})\times L^1(B^{n+1})$.

By density argument, we choose a pair of nonnegative minimizing sequence $\{f_j,g_j\}_{j=1}^{+\infty}\in C^\infty(B^{n+1})\times C^\infty(B^{n+1})$ such that
$$\|f_j\|_{L^p(B^{n+1})}=\|g_j\|_{L^{q}(B^{n+1})}=1$$
and
\begin{align*}
C^*_{n,\alpha,\beta,\lambda,p}&=\lim_{j\rightarrow+\infty}\int_{B^{n+1}}\int_{B^{n+1}}\big(\frac{1-|\zeta-x^1|^2}{2}\big)^\alpha
\big(\frac{1-|\eta-x^1|^2}{2}\big)^\beta\frac{f_j(\zeta)g_j(\eta)}{|\zeta-\eta|^{\lambda}}d\zeta d\eta,\ \ j=1,2,....
\end{align*}
We carry out the proof of part (i) in three steps.

\ \ {\bf Step 1.} We show that
\begin{equation}\label{4}
\|f_j\|_{L^1(B^{n+1})}\leq C_1,\ \|g_j\|_{L^{1}(B^{n+1})}\leq C_2\ \ \mbox{uniformaly}.
\end{equation}
In fact, we know from \eqref{bestrange} that there exist constants $C_3$ and $C_4$ such that
\begin{align*}
0<&C_3\leq \int_{B^{n+1}}\int_{B^{n+1}}\big(\frac{1-|\zeta-x^1|^2}{2}\big)^\alpha
\big(\frac{1-|\eta-x^1|^2}{2}\big)^\beta
\frac{f_j(\zeta)g_j(\eta)}{|\zeta-\eta|^{\lambda}}d\zeta d\eta\leq C_4<\infty.
\end{align*}
Then it follows from reversed H\"{o}lder inequality that
$$\|T_{\alpha,\beta,p,q}f_j\|_{L^{q'}(B^{n+1})}=\|T_{\alpha,\beta,p,q}f_j\|_{L^{q'}(B^{n+1})}\|g_j\|_{L^{q}(B^{n+1})}\leq C_4,$$
$$\|T_{\alpha,\beta,p,q}g_j\|_{L^{p'}(B^{n+1})}=\|T_{\alpha,\beta,p,q}g_j\|_{L^{p'}(B^{n+1})}\|f_j\|_{L^{p}(B^{n+1})}\leq C_4.$$
Since $q'_\beta<q'<0$, we derive that for some constant $M>0$ (to be determined later),
\begin{align}\label{5}
C_4^{q'}&\leq \int_{B^{n+1}}|T_{\alpha,\beta,p,q}f_j|^{q'} d\eta\nonumber\\
&=\int_{\{T_{\alpha,\beta,p,q}f_j\geq M\}}|T_{\alpha,\beta,p,q}f_j|^{q'} d\eta+\int_{\{T_{\alpha,\beta,p,q}f_j< M\}}|T_{\alpha,\beta,p,q}f_j|^{q'} d\eta\nonumber\\
&\leq M^{q'} |B^{n+1}|+|\{T_{\alpha,\beta,p,q}f_j< M\}|^{1-\frac{q'}{q'_\beta}}\|T_{\alpha,\beta,p,q}f_j\|^{q'}_{L^{q'_\beta}(B^{n+1})},
\end{align}
where $\frac{1}{q'_\beta}+\frac{1}{q_\beta}=1$.
Using \eqref{critical} and reversed H\"{o}lder inequality, we have
\begin{align}\label{5'}
\|T_{\alpha,\beta,p_\alpha,q_\beta}f_j\|_{L^{q'_\beta}(B^{n+1})}&\geq C_5\|f_j\|_{L^{p_\alpha}(B^{n+1})}\nonumber\\
&\geq C_5|B^{n+1}|^{\frac{1}{p_\alpha}-\frac{1}{p}}\|f_j\|_{L^{p}(B^{n+1})}=C_5|B^{n+1}|^{\frac{1}{p_\alpha}-\frac{1}{p}}.
\end{align}

Now we choose $M>0$ such that $M^{q'}|B^{n+1}|=\frac{C_4^{q'}}{2}$. Combining \eqref{5} with \eqref{5'} yields
\begin{align*}
\frac{C_4^{q'}}{2} &\leq \big(C_5|B^{n+1}|^{\frac{1}{p_\alpha}-\frac{1}{p}}\big)^{q'}|\{T_{\alpha,\beta,p,q}f_j< M\}|^{1-\frac{q'}{q'_\beta}},
\end{align*}
which implies
$$|\{T_{\alpha,\beta,p,q}f_j< M\}|\geq \big(\frac{C_4}{C_5|B^{n+1}|^{\frac{1}{p_\alpha}-\frac{1}{p}}}\big)^{\frac{q'_\beta q'}{q'_\beta-q'}}>0.$$
Hence, there exists $\epsilon_0>0$ such that for any $j$, we can find two points $\zeta_j^1,\zeta_j^2\in \Omega_1=\{\zeta: T_{\alpha,\beta,p,q}f_j< M\}$ satisfying
$|\zeta_j^1-\zeta_j^2|\geq \epsilon_0$.
Then we have
\begin{align*}
& \int_{B^{n+1}}f_j(\eta)d\eta \leq C_6\int_{B_{1-\frac{\epsilon_0}{4}}(x_1)\setminus B_{\frac{\epsilon_0}{4}}(\zeta_j^1)}f_j(\eta)d\eta+C_6\int_{B_{1-\frac{\epsilon_0}{4}}(x_1)\setminus B_{\frac{\epsilon_0}{4}}(\zeta_j^2)}f_j(\eta)d\eta\nonumber\\
\leq& C_7\big(\frac{1}{\epsilon_0}\big)^{\alpha+\beta-\lambda}\int_{B_{1-\frac{\epsilon_0}{4}}(x_1)\setminus B_{\frac{\epsilon_0}{4}}(\zeta_j^1)}\big(\frac{1-|\zeta_j^1-x^1|^2}{2}\big)^\alpha
 \big(\frac{1-|\eta-x^1|^2}{2}\big)^\beta
\frac{ f_j(\eta)}{|\zeta_j^1-\eta|^{\lambda}}d\eta\nonumber\\
&+C_7\big(\frac{1}{\epsilon_0}\big)^{\alpha+\beta-\lambda}\int_{B_{1-\frac{\epsilon_0}{4}}(x_1)\setminus B_{\frac{\epsilon_0}{4}}(\zeta_j^2)}\big(\frac{1-|\zeta_j^1-x^1|^2}{2}\big)^\alpha
 \big(\frac{1-|\eta-x^1|^2}{2}\big)^\beta
\frac{f_j(\eta)}{|\zeta_j^2-\eta|^{\lambda}}d\eta\nonumber\\
\leq& 2C_7M\big(\frac{1}{\epsilon_0}\big)^{\alpha+\beta-\lambda}\quad\  \ \ \ \mbox{uniformly for all}\ j.
\end{align*}

We can find two points $\eta_j^1,\eta_j^2\in \Omega_1=\{\eta: T_{\alpha,\beta,p,q}f_j< M\}$ satisfying
$|\eta_j^1-\eta_j^2|\geq \epsilon_0$.
Then we have
\begin{align*}
& \int_{B^{n+1}}f_j(\zeta)d\zeta \leq C_6\int_{B_{1-\frac{\epsilon_0}{4}}(x_1)\setminus B_{\frac{\epsilon_0}{4}}(\eta_j^1)}f_j(\zeta)d\zeta+C_6\int_{B_{1-\frac{\epsilon_0}{4}}(x_1)\setminus B_{\frac{\epsilon_0}{4}}(\eta_j^2)}f_j(\zeta)d\zeta\nonumber\\
\leq& C_7\big(\frac{1}{\epsilon_0}\big)^{\alpha+\beta-\lambda}\int_{B_{1-\frac{\epsilon_0}{4}}(x_1)\setminus B_{\frac{\epsilon_0}{4}}(\eta_j^1)}
 \big(\frac{1-|\zeta-x^1|^2}{2}\big)^\alpha\big(\frac{1-|\eta_j^1-x^1|^2}{2}\big)^\beta
\frac{ f_j(\zeta)}{|\zeta-\eta_j^1|^{\lambda}}d\zeta\nonumber\\
&+C_7\big(\frac{1}{\epsilon_0}\big)^{\alpha+\beta-\lambda}\int_{B_{1-\frac{\epsilon_0}{4}}(x_1)\setminus B_{\frac{\epsilon_0}{4}}(\eta_j^2)}
 \big(\frac{1-|\zeta-x^1|^2}{2}\big)^\alpha\big(\frac{1-|\eta_j^2-x^1|^2}{2}\big)^\beta
\frac{f_j(\zeta)}{|\zeta-\eta_j^2|^{\lambda}}d\zeta\nonumber\\
\leq& 2C_7M\big(\frac{1}{\epsilon_0}\big)^{\alpha+\beta-\lambda}\quad\  \ \ \ \mbox{uniformly for all}\ j.
\end{align*}
It follows $\|f_j\|_{L^1(B^{n+1})}\leq C_1$. Similarly, we have $\|g_j\|_{L^{1}(B^{n+1})}\leq C_2$.

{\bf Step 2.} There exist two subsequences of $\{f_j^p\}$ and $\{g_j^{p}\}$ (still denoted by $\{f_j^p\}$ and $\{g_j^{p}\}$) and two nonnegative functions $f,g\in L^{1}(B^{n+1})$ such that
\begin{align}\label{8}
&\int_{B^{n+1}}|f_j(\zeta)|^pd\zeta\rightarrow \int_{B^{n+1}}|f(\zeta)|^pd\zeta,\ \ \int_{B^{n+1}}|g_j(\zeta)|^{q}d\zeta\rightarrow \int_{B^{n+1}}|g(\zeta)|^{q}d\zeta\ \ \ \mbox{as}\ j\rightarrow+\infty.
\end{align}

Without loss of generality, we assume that $p\leq q$. By \eqref{4}, we infer that there exist two subsequences of $\{f_j^p\}$ and $\{g_j^{p}\}$ (still denoted by $\{f_j^p\}$ and $\{g_j^{p}\}$) and two nonnegative functions $f,g\in L^{1}(B^{n+1})$ such that
$$f_j^p\rightharpoonup f^p,\ \ \ \ g_j^{p}\rightharpoonup g^{p} \ \mbox{weakly\  in}\  L^{\frac{1}{p}}(B^{n+1}).$$
Since $1\in L^{\frac{1}{1-p}}(B^{n+1})$ and  $g_j^{q-p}\in L^{\frac{1}{q-p}}(B^{n+1})\subset L^{\frac{1}{1-p}}(B^{n+1})$, we immediately derive \eqref{8}.

{ \bf Step 3:} We show that
\begin{align*}
&\int_{B^{n+1}}\int_{B^{n+1}}\big(\frac{1-|\zeta-x^1|^2}{2}\big)^\alpha\big(\frac{1-|\eta-x^1|^2}{2}\big)^\beta
\frac{ f_j(\zeta)g_j(\eta)}{|\zeta-\eta|^{\lambda}}d\zeta d\eta\nonumber\\
&\rightarrow\int_{B^{n+1}}\int_{B^{n+1}}\big(\frac{1-|\zeta-x^1|^2}{2}\big)^\alpha\big(\frac{1-|\eta-x^1|^2}{2}\big)^\beta
\frac{f(\zeta)g(\eta)}{|\zeta-\eta|^{\lambda}}d\zeta d\eta\ \ \ \mbox{as}\ j\rightarrow\infty.
\end{align*}

By \eqref{4} and the interpolation inequality, we have
$$\int_{B^{n+1}}|f_j(\zeta)|^pd\zeta\geq C>0\ \ \mbox{and}\ \int_{B^{n+1}}|g_j(\zeta)|^{q}d\zeta\geq C>0.$$
This implies that
$$\int_{B^{n+1}}|f(\zeta)|^pd\zeta\geq C>0\ \ \mbox{and}\ \int_{B^{n+1}}|g(\zeta)|^{q}d\zeta\geq C>0.$$
Then, we have, for any fixed $\eta\in \overline{B^{n+1}}$, that
$$\big(\frac{1-|\zeta-x^1|^2}{2}\big)^\alpha\big(\frac{1-|\eta-x^1|^2}{2}\big)^\beta
\frac{ f^{1-p}(\zeta)}{|\zeta-\eta|^{\lambda}}\in L^{\frac{1}{1-p}}(B^{n+1}),$$
and for any fixed  $\zeta\in \overline{B^{n+1}}$
$$\big(\frac{1-|\zeta-x^1|^2}{2}\big)^\alpha\big(\frac{1-|\eta-x^1|^2}{2}\big)^\beta
\frac{ g^{1-p}(\eta)}{|\zeta-\eta|^{\lambda}}\in L^{\frac{1}{1-p}}(B^{n+1}).$$
Thus, one can conclude that
\begin{align*}
&\int_{B^{n+1}}\big(\frac{1-|\zeta-x^1|^2}{2}\big)^\alpha\big(\frac{1-|\eta-x^1|^2}{2}\big)^\beta
\frac{f_j^p(\zeta) f^{1-p}(\zeta)}{|\zeta-\eta|^{\lambda}}d\zeta\nonumber\\
&\rightarrow\int_{B^{n+1}}\big(\frac{1-|\zeta-x^1|^2}{2}\big)^\alpha\big(\frac{1-|\eta-x^1|^2}{2}\big)^\beta
\frac{f(\zeta)}{|\zeta-\eta|^{\lambda}}d\zeta\ \ \ \mbox{as}\ j\rightarrow\infty,\nonumber\\
&\int_{B^{n+1}}\big(\frac{1-|\zeta-x^1|^2}{2}\big)^\alpha\big(\frac{1-|\eta-x^1|^2}{2}\big)^\beta
\frac{g_j^p(\eta) g^{1-p}(\eta)}{|\zeta-\eta|^{\lambda}}d\eta\nonumber\\
&\rightarrow\int_{B^{n+1}}\big(\frac{1-|\zeta-x^1|^2}{2}\big)^\alpha\big(\frac{1-|\eta-x^1|^2}{2}\big)^\beta
\frac{g(\eta)}{|\zeta-\eta|^{\lambda}} d\eta\ \ \ \mbox{as}\ j\rightarrow\infty.
\end{align*}
Using similar argument as Lemma 3.2 in \cite{DGZ2020}, we also show that the above convergences are uniformly convergent for all $\zeta,\eta\in \overline{B^{n+1}}$.

Therefore, for any $\epsilon>0$ small enough, there exists $j_0\in N$ such that for all $j>j_0$,
\begin{align}\label{newinqeu}
&\big|\int_{B^{n+1}}\int_{B^{n+1}}\big(\frac{1-|\zeta-x^1|^2}{2}\big)^\alpha\big(\frac{1-|\eta-x^1|^2}{2}\big)^\beta
\frac{f_j^p(\zeta) f^{1-p}(\zeta)g_j^p(\eta) g^{1-p}(\eta)}{|\zeta-\eta|^{\lambda}}d\zeta d\eta-\nonumber\\
&\int_{B^{n+1}}\int_{B^{n+1}}\big(\frac{1-|\zeta-x^1|^2}{2}\big)^\alpha\big(\frac{1-|\eta-x^1|^2}{2}\big)^\beta
\frac{f_j^p(\zeta) f^{1-p}(\zeta)g(\eta)}{|\zeta-\eta|^{\lambda}}d\zeta d\eta\big|\nonumber\\
&\leq \epsilon\int_{B^{n+1}} f_j^p(\zeta) f^{1-p}(\zeta) d\zeta\leq \epsilon C.
\end{align}
Here we obtain the last inequality by using H\"{o}lder inequality.
Notice that $f^{1-p}\in L^{\frac{1}{1-p}}(B^{n+1})$ and
$$\int_{B^{n+1}}\big(\frac{1-|\zeta-x^1|^2}{2}\big)^\alpha\big(\frac{1-|\eta-x^1|^2}{2}\big)^\beta
\frac{ g(\eta)}{|\zeta-\eta|^{\lambda}}d\eta\leq C\int_{B^{n+1}}g(\eta)d\eta \leq C,$$
one has
\begin{align*}
&\int_{B^{n+1}}\int_{B^{n+1}}\big(\frac{1-|\zeta-x^1|^2}{2}\big)^\alpha\big(\frac{1-|\eta-x^1|^2}{2}\big)^\beta
\frac{f_j^p(\zeta) f^{1-p}(\zeta)g(\eta)}{|\zeta-\eta|^{\lambda}}d\zeta d\eta\nonumber\\
&\rightarrow\int_{B^{n+1}}\int_{B^{n+1}}\big(\frac{1-|\zeta-x^1|^2}{2}\big)^\alpha\big(\frac{1-|\eta-x^1|^2}{2}\big)^\beta
\frac{f(\zeta)g(\eta)}{|\zeta-\eta|^{\lambda}} d\zeta d\eta\ \ \ \mbox{as}\ j\rightarrow\infty.
\end{align*}
It follows from the above convergence and \eqref{newinqeu} that
\begin{align*}
&\int_{B^{n+1}}\int_{B^{n+1}}\big(\frac{1-|\zeta-x^1|^2}{2}\big)^\alpha\big(\frac{1-|\eta-x^1|^2}{2}\big)^\beta
\frac{f_j^p(\zeta) f^{1-p}(\zeta)g_j^p(\eta) g^{1-p}(\eta)}{|\zeta-\eta|^{\lambda}}d\zeta d\eta\nonumber\\
&\rightarrow\int_{B^{n+1}}\int_{B^{n+1}}\big(\frac{1-|\zeta-x^1|^2}{2}\big)^\alpha\big(\frac{1-|\eta-x^1|^2}{2}\big)^\beta
\frac{f(\zeta)g(\eta)}{|\zeta-\eta|^{\lambda}}d\zeta d\eta\ \ \ \mbox{as}\ j\rightarrow\infty.
\end{align*}
By H\"{o}lder inequality, we have
\begin{align*}
&\int_{B^{n+1}}\int_{B^{n+1}}\big(\frac{1-|\zeta-x^1|^2}{2}\big)^\alpha\big(\frac{1-|\eta-x^1|^2}{2}\big)^\beta
\frac{f_j^p(\zeta) f^{1-p}(\zeta)g_j^p(\eta) g^{1-p}(\eta)}{|\zeta-\eta|^{\lambda}}d\zeta d\eta\nonumber\\
&\leq \big(\int_{B^{n+1}}\int_{B^{n+1}}\big(\frac{1-|\zeta-x^1|^2}{2}\big)^\alpha\big(\frac{1-|\eta-x^1|^2}{2}\big)^\beta
\frac{f_j(\zeta) g_j(\eta)}{|\zeta-\eta|^{\lambda}}d\zeta d\eta \big)^p \times\nonumber\\
&\quad\big(\int_{B^{n+1}}\int_{B^{n+1}}\big(\frac{1-|\zeta-x^1|^2}{2}\big)^\alpha\big(\frac{1-|\eta-x^1|^2}{2}\big)^\beta
\frac{ f(\zeta) g(\eta)}{|\zeta-\eta|^{\lambda}}d\zeta d\eta\big)^{1-p}.
\end{align*}
Thus,
\begin{align*}
&\int_{B^{n+1}}\int_{B^{n+1}}\big(\frac{1-|\zeta-x^1|^2}{2}\big)^\alpha\big(\frac{1-|\eta-x^1|^2}{2}\big)^\beta
\frac{f(\zeta)g(\eta)}{|\zeta-\eta|^{\lambda}}d\zeta d\eta\nonumber\\
&\leq\liminf_{j\rightarrow+\infty}\int_{B^{n+1}}\int_{B^{n+1}}\big(\frac{1-|\zeta-x^1|^2}{2}\big)^\alpha\big(\frac{1-|\eta-x^1|^2}{2}\big)^\beta
\frac{ f_j(\zeta)g_j(\eta)}{|\zeta-\eta|^{\lambda}}d\zeta d\eta.
\end{align*}
This together with \eqref{3} implies that
\begin{align*}
C^*_{n,\alpha,\beta,\lambda,p}&=\int_{B^{n+1}}\int_{B^{n+1}}\big(\frac{1-|\zeta-x^1|^2}{2}\big)^\alpha\big(\frac{1-|\eta-x^1|^2}{2}\big)^\beta
\frac{f(\zeta)g(\eta)}{|\zeta-\eta|^{\lambda}}d\zeta d\eta\nonumber\\
&=\lim_{j\rightarrow+\infty}\int_{B^{n+1}}\int_{B^{n+1}}\big(\frac{1-|\zeta-x^1|^2}{2}\big)^\alpha\big(\frac{1-|\eta-x^1|^2}{2}\big)^\beta
\frac{ f_j(\zeta)g_j(\eta)}{|\zeta-\eta|^{\lambda}}d\zeta d\eta.
\end{align*}
Therefore, we deduce that $(f,g)\in L^1(B^{n+1})\times L^1(B^{n+1})$ is a minimizer.

 ${\bf (ii).}$ We show that $f$ and $g$ satisfy Euler-Lagrange equation \eqref{integral}.

Since $0<p<1$ and $0<q<1$, we first need to prove $f>0$ and $g>0$ a.e. in $B^{n+1}$. For any positive function $\varphi\in C^\infty(B^{n+1})$ and sufficiently small $t>0$, one can deduce that
$f+t\varphi>0$ in $B^{n+1}$ and
\begin{align}\label{11}
&\quad t\int_{B^{n+1}}\int_{B^{n+1}}\big(\frac{1-|\zeta-x^1|^2}{2}\big)^\alpha\big(\frac{1-|\eta-x^1|^2}{2}\big)^\beta
\frac{ \varphi(\zeta)g(\eta)}{|\zeta-\eta|^{\lambda}}d\zeta d\eta\nonumber\\
&=\int_{B^{n+1}}\int_{B^{n+1}}\big(\frac{1-|\zeta-x^1|^2}{2}\big)^\alpha\big(\frac{1-|\eta-x^1|^2}{2}\big)^\beta
\frac{ (f+t\varphi)(\zeta)g(\eta)}{|\zeta-\eta|^{\lambda}}d\zeta d\eta\nonumber\\
&\quad -\int_{B^{n+1}}\int_{B^{n+1}}\big(\frac{1-|\zeta-x^1|^2}{2}\big)^\alpha\big(\frac{1-|\eta-x^1|^2}{2}\big)^\beta
\frac{ f(\zeta)g(\eta)}{|\zeta-\eta|^{\lambda}}d\zeta d\eta\nonumber\\
&\geq C^*_{n,\alpha,\beta,\lambda,p}\big(\|f+t\varphi\|_{L^p(B^{n+1})}-\|f\|_{L^p(B^{n+1})}\big)\nonumber\\
&= C^*_{n,\alpha,\beta,\lambda,p}t\big(\int_{B^{n+1}}(f+\theta\varphi)^pd\zeta\big)^{\frac{1}{p}-1}\int_{B^{n+1}}(f+\theta\varphi)^{p-1}\varphi d\zeta\ (0<\theta<t)\nonumber\\
&\geq C^*_{n,\alpha,\beta,\lambda,p}t\int_{B^{n+1}}f^{p-1}\varphi d\zeta,
\end{align}
where in the second equality we have used the mean value theorem, and in the last inequality we have used Fatou's Lemma as $t\rightarrow0^{+}$.

Now we claim that $f>0$ a.e. in $B^{n+1}$. Otherwise, for any $\epsilon>0$, there exists $\Omega_\epsilon\subset B^{n+1}$ such that $|\Omega_\epsilon|>0$ and
$f(\zeta)<\epsilon,\ \ \forall\ \zeta\in\Omega_\epsilon.$
This together with \eqref{11} yields that
\begin{align*}
\epsilon^{p-1}|\Omega_\epsilon|&\leq \int_{\Omega_\epsilon}f^{p-1}(\zeta)d\zeta\nonumber\\
&\leq\frac{1}{C^*_{n,\alpha,\beta,\lambda,p}}\int_{B^{n+1}}\int_{B^{n+1}}\big(\frac{1-|\zeta-x^1|^2}{2}\big)^\alpha\big(\frac{1-|\eta-x^1|^2}{2}\big)^\beta
\frac{g(\eta)}{|\zeta-\eta|^{\lambda}}d\zeta d\eta\nonumber\\
&\leq C\int_{B^{n+1}}g(\eta) d\eta\leq C.
\end{align*}
Then we derive a contradiction when $\epsilon>0$ is sufficiently small. Similarly, we have $g>0$ a.e. in $B^{n+1}$. Therefore,
$(f,g)$ is a pair of solutions of \eqref{integral}.

 ${\bf (iii).}$ We prove $(f,g)\in C^\gamma(\overline{B^{n+1}})\times C^\gamma(\overline{B^{n+1}})$ with $\gamma\in(0,1)$.

Since $f\in L^1(B^{n+1})$ and $0<p<p_\alpha<1$, using \eqref{integral}, there exists a constant $C_7>0$ such that $\frac{1}{C_7}<f<C_7$. Similarly, for some constant $C_8>0$, we have $\frac{1}{C_8}<g<C_8$.

%Now we show that $F(\zeta)$ is differential in $\zeta_i$ for $i=1,2,...,n+1$ and $\zeta=(\zeta_1,\zeta_2,...,\zeta_{n+1})$.
%Take $\varepsilon>0$ such that $\zeta\in B_\varepsilon(x^1)$, we write
%\begin{align*}
%F(\zeta)&=\int_{B^{n+1}}(\frac{1-|\zeta^1-x^1|^2}{2}\big)^\alpha(\frac{1-|\eta-x^1|^2}{2})^\beta
%\frac{g(\eta)}{|\zeta-\eta|^{\lambda}} d\eta\\
%&=\int_{B_\varepsilon(x^1)} (\frac{1-|\zeta^1-x^1|^2}{2}\big)^\alpha(\frac{1-|\eta-x^1|^2}{2})^\beta
%\frac{g(\eta)}{|\zeta-\eta|^{\lambda}} d\eta+\\
%&\qquad\int_{B^{n+1}\setminus B_\varepsilon(x^1)} (\frac{1-|\zeta^1-x^1|^2}{2}\big)^\alpha(\frac{1-|\eta-x^1|^2}{2})^\beta
%\frac{g(\eta)}{|\zeta-\eta|^{\lambda}} d\eta\\
%&=I_1(\zeta)+I_2(\zeta).
%\end{align*}
%It is easy to see that $I_2(\zeta)$ is differential in $\zeta_i$ for $\zeta\in B_\varepsilon(x^1)$ and $i=1,2,...,n+1$. Furthermore, $I_2(\zeta)\in %C^\infty(B_\varepsilon(x^1))$.

%Next, we prove $I_1(\zeta)$ is  at least H\"{o}lder continuous in $B_\varepsilon(x^1)$.
Let $F(\zeta)=C^*_{n,\alpha,\beta,\lambda,p}f^{p-1}(\zeta)$. For any given  $\zeta^1,\zeta^2\in \overline{B^{n+1}}$ and arbitrary $\eta\in B^{n+1}$, it is easy to verify that
\begin{align}\label{pineq}
&\Big||\zeta^1-\eta|^{-\lambda}-|\zeta^2-\eta|^{-\lambda}\Big|\leq
\begin{cases}
C|\zeta^1-\zeta^2|^{-\lambda}, \quad \text{if} \,\, -1<\lambda<0,\\
C\big(1+|\eta|^{-\lambda-1}\big)|\zeta^1-\zeta^2|, \quad \text{if} \,\, \lambda\leq-1.
\end{cases}
\end{align}
Since $g$ is bounded, we have, using\eqref{pineq}, that for any given $\zeta^1,\zeta^2\in \overline{B^{n+1}}$ and $\alpha\in(0,1)$
\begin{eqnarray}\label{equicon}
&&|F(\zeta^1)-F(\zeta^2)|\nonumber\\
%&=&\big|\int_{B^{n+1}}(\frac{1-|\zeta^1-x^1|^2}{2}\big)^\alpha(\frac{1-|\eta-x^1|^2}{2})^\beta
%\frac{g(\eta)}{|\zeta^1-\eta|^{\lambda}} d\eta-\int_{B^{n+1}} (\frac{1-|\zeta^2-x^1|^2}{2}\big)^\alpha(\frac{1-|\eta-x^1|^2}{2})^\beta
%\frac{g(\eta)}{|\zeta^2-\eta|^{\lambda}} d\eta\big|\nonumber\\
&=&\big|\int_{B^{n+1}}\big[(\frac{1-|\zeta^1-x^1|^2}{2}\big)^\alpha-(\frac{1-|\zeta^2-x^1|^2}{2}\big)^\alpha\big](\frac{1-|\eta-x^1|^2}{2})^\beta
\frac{g(\eta)}{|\zeta^1-\eta|^{\lambda}} d\eta\big.+\nonumber\nonumber\\
&&\quad\ \  \big.\int_{B^{n+1}} (\frac{1-|\zeta^2-x^1|^2}{2}\big)^\alpha(\frac{1-|\eta-x^1|^2}{2})^\beta
\big[\frac{1}{|\zeta^1-\eta|^{\lambda}}-\frac{1}{|\zeta^2-\eta|^{\lambda}}\big]g(\eta) d\eta\big|\nonumber\\
&\leq& C|\zeta^1-\zeta^2|^\alpha\int_{B^{n+1}}\frac{g(\eta)}{|\zeta^1-\eta|^{\lambda}}d\eta +C\int_{B^{n+1}}
\big|\frac{1}{|\zeta^1-\eta|^{\lambda}}-\frac{1}{|\zeta^2-\eta|^{\lambda}}\big|g(\eta)d\eta\nonumber\\
&\leq& C|\zeta^1-\zeta^2|^\alpha\int_{B^{n+1}}g(\eta)d\eta +C|\zeta^1-\zeta^2|^{\gamma}
\begin{cases}
\int_{B^{n+1}}g(\eta)d\eta,\quad&-1<\lambda<0\nonumber\\
\int_{B^{n+1}}\big(1+|\eta|^{-\lambda-1}\big)g(\eta)d\eta,\quad& \lambda\leq-1
\end{cases}\nonumber\\
&\leq& C|\zeta^1-\zeta^2|^{\gamma},
\end{eqnarray}
where $\gamma\in (0,\alpha]$.
For $\alpha=0$, we have
\begin{eqnarray*}
|F(\zeta^1)-F(\zeta^2)|&=&\big|\int_{B^{n+1}}\big(\frac{1}{2}-\frac{|\eta-x^1|^2}{2}\big)^{\beta}
\big(\frac{1}{|\zeta^1-\eta|^{\lambda}}-\frac{1}{|\zeta^2-\eta|^{\lambda}}\big)g(\eta)d\eta  \big|\nonumber\\
&\leq& C|\zeta^1-\zeta^2|^{\gamma}.
\end{eqnarray*}
For $\alpha\geq1$, it is easy to check that $|F(\zeta^1)-F(\zeta^2)|\leq C|\zeta^1-\zeta^2|^{\gamma}$.
This implies that $F$ is H\"{o}lder continuous in $\overline{B^{n+1}}$.  Thus $f$ is  at least H\"{o}lder continuous in $\overline{B^{n+1}}$. Similarly, we know that $g$ is at least H\"{o}lder continuous in $\overline{B^{n+1}}$.
\end{proof}

\subsection{\textbf{Minimizer of critical reverse weighted  HLS inequality on the ball}}

\begin{lemma}\label{limit}
Let $\{f_p,g_{q}\}\in C^\gamma(\overline{B^{n+1}})\times C^\gamma(\overline{B^{n+1}})$ be defined as in Proposition \ref{pro1}. Then
$$C^*_{n,\alpha,\beta,\lambda,p}\rightarrow N_{\alpha,\beta,\lambda}\ \ \mbox{as}\ p\rightarrow p_\alpha^-,\ q\rightarrow {q_\beta}^-,$$
and the minimizer pair $\{f_p,g_{q}\}$ satisfy
\begin{align}\label{sub0}
& N_{\alpha,\beta,\lambda}=\lim_{p\rightarrow p_\alpha^-, \atop  q\rightarrow q_\beta^-}\frac{\int_{B^{n+1}}\int_{B^{n+1}}(\frac{1-|\zeta-x^1|^2}{2})^\alpha(\frac{1-|\eta-x^1|^2}{2})^\beta\frac{ f_p(\zeta)g_{q}(\eta)}{|\zeta-\eta|^{\lambda}}d\zeta d\eta}
{\|f_p\|_{L^{p_\alpha}(B^{n+1})}\|g_{q}\|_{L^{q_\beta}(B^{n+1})}}.
\end{align}
\end{lemma}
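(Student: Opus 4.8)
The plan is to establish the two one-sided estimates
\[
\limsup_{p\to p_\alpha^-,\ q\to q_\beta^-}C^*_{n,\alpha,\beta,\lambda,p}\le N_{\alpha,\beta,\lambda}
\qquad\text{and}\qquad
\liminf_{p\to p_\alpha^-,\ q\to q_\beta^-}C^*_{n,\alpha,\beta,\lambda,p}\ge N_{\alpha,\beta,\lambda},
\]
and then to combine the first of them with the admissibility of the subcritical minimizers in the critical variational problem in order to obtain \eqref{sub0}. The argument is entirely ``soft'': it uses only H\"older's inequality on the finite-measure domain $B^{n+1}$ together with monotone and dominated convergence, and it needs neither a uniform $L^\infty$ bound on $f_p,g_q$ nor any compactness of the family $\{(f_p,g_q)\}$.

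For the upper estimate, I would fix an arbitrary pair of nonnegative bounded functions $(f,g)$ with compact support in $B^{n+1}$; since $|B^{n+1}|<\infty$ and $p<p_\alpha$, $q<q_\beta$, one has $f\in L^p(B^{n+1})$ and $g\in L^q(B^{n+1})$, so by \eqref{3},
\[
C^*_{n,\alpha,\beta,\lambda,p}\le\frac{\int_{B^{n+1}}\int_{B^{n+1}}\big(\frac{1-|\zeta-x^1|^2}{2}\big)^\alpha\big(\frac{1-|\eta-x^1|^2}{2}\big)^\beta\frac{f(\zeta)g(\eta)}{|\zeta-\eta|^\lambda}\,d\zeta\,d\eta}{\|f\|_{L^p(B^{n+1})}\,\|g\|_{L^q(B^{n+1})}}.
\]
The numerator is a fixed finite number (recall $\lambda<0$, so $|\zeta-\eta|^{-\lambda}$ and the two weight factors are bounded on $B^{n+1}$), whereas $\|f\|_{L^p(B^{n+1})}\to\|f\|_{L^{p_\alpha}(B^{n+1})}$ and $\|g\|_{L^q(B^{n+1})}\to\|g\|_{L^{q_\beta}(B^{n+1})}$ by dominated convergence; hence $\limsup C^*_{n,\alpha,\beta,\lambda,p}$ is bounded above by the critical Rayleigh quotient at $(f,g)$. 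The upper estimate then follows from a density step: truncating an arbitrary admissible pair via $f\mapsto\min(f,k)\,\chi_{\{|\zeta-x^1|<1-1/k\}}$ and invoking monotone convergence shows that the infimum of the critical Rayleigh quotient over bounded, compactly supported pairs already equals $N_{\alpha,\beta,\lambda}$ (the critical infimum attached to \eqref{02}), so passing to the infimum over $(f,g)$ gives the claim.

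For the lower estimate, I would use the minimizer $(f_p,g_q)$ supplied by Proposition~\ref{pro1}, normalized by $\|f_p\|_{L^p(B^{n+1})}=\|g_q\|_{L^q(B^{n+1})}=1$, so that $\int_{B^{n+1}}\int_{B^{n+1}}(\cdots)\,\frac{f_pg_q}{|\zeta-\eta|^\lambda}=C^*_{n,\alpha,\beta,\lambda,p}$. The renormalized pair $\big(f_p/\|f_p\|_{L^{p_\alpha}(B^{n+1})},\ g_q/\|g_q\|_{L^{q_\beta}(B^{n+1})}\big)$ is admissible for the critical problem \eqref{02}, whence
\[
C^*_{n,\alpha,\beta,\lambda,p}\ \ge\ N_{\alpha,\beta,\lambda}\,\|f_p\|_{L^{p_\alpha}(B^{n+1})}\,\|g_q\|_{L^{q_\beta}(B^{n+1})}.
\]
H\"older's inequality on $B^{n+1}$ gives $\|f_p\|_{L^{p_\alpha}(B^{n+1})}\ge|B^{n+1}|^{\frac1{p_\alpha}-\frac1p}$ and $\|g_q\|_{L^{q_\beta}(B^{n+1})}\ge|B^{n+1}|^{\frac1{q_\beta}-\frac1q}$, and both exponents tend to $0$ as $p\to p_\alpha^-$, $q\to q_\beta^-$; hence $\liminf C^*_{n,\alpha,\beta,\lambda,p}\ge N_{\alpha,\beta,\lambda}$, and combined with the upper estimate this yields $C^*_{n,\alpha,\beta,\lambda,p}\to N_{\alpha,\beta,\lambda}$.

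Finally, for \eqref{sub0} I would set $a_{p,q}:=\|f_p\|_{L^{p_\alpha}(B^{n+1})}\|g_q\|_{L^{q_\beta}(B^{n+1})}$, so that the critical Rayleigh quotient at the subcritical minimizers equals $C^*_{n,\alpha,\beta,\lambda,p}/a_{p,q}$. By admissibility this quotient is $\ge N_{\alpha,\beta,\lambda}$, while the H\"older bounds above give $a_{p,q}\ge|B^{n+1}|^{\frac1{p_\alpha}-\frac1p+\frac1{q_\beta}-\frac1q}$ and hence $C^*_{n,\alpha,\beta,\lambda,p}/a_{p,q}\le C^*_{n,\alpha,\beta,\lambda,p}\,|B^{n+1}|^{\frac1p-\frac1{p_\alpha}+\frac1q-\frac1{q_\beta}}$, whose limit is $N_{\alpha,\beta,\lambda}\cdot 1=N_{\alpha,\beta,\lambda}$; a squeeze then produces \eqref{sub0}. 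The only point calling for a little care — essentially the only non-formal step — is the density assertion above, namely that the critical infimum $N_{\alpha,\beta,\lambda}$ is already realized as an infimum over bounded, compactly supported pairs; this is where the nonnegativity and the boundedness on $B^{n+1}$ of $|\zeta-\eta|^{-\lambda}$ (for $\lambda<0$) and of the two weight factors are used to legitimize the monotone-convergence passages.
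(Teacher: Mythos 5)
Your proposal is correct and follows essentially the same route as the paper: the lower bound and the final squeeze for \eqref{sub0} are identical (reversed H\"older on the finite-measure ball applied to the normalized subcritical minimizers, with $|B^{n+1}|^{\frac1{p_\alpha}-\frac1p+\frac1{q_\beta}-\frac1q}\to1$), and the upper bound likewise comes from testing the subcritical quotient with near-minimizers of the critical problem and letting $\|f\|_{L^p}\to\|f\|_{L^{p_\alpha}}$ by dominated convergence. The only deviation is that you restrict the critical test class to bounded, compactly supported pairs and then justify this by a truncation/monotone-convergence density step; this step is sound but unnecessary, since on the finite-measure ball with $0<p<p_\alpha<1$ every $f\in L^{p_\alpha}(B^{n+1})$ already lies in $L^{p}(B^{n+1})$, which is how the paper tests directly with an arbitrary critical minimizing sequence $(f_k,g_k)$.
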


\begin{proof}
Let $\tilde{f}_p=\frac{f_p}{\|f_p\|_{L^{p_\alpha}(B^{n+1})}}$ and $\tilde{g}_{q}=\frac{g_{q}}{\|g_{q}\|_{L^{q_\beta}(B^{n+1})}}$. Since $\|f_p\|_{L^{p}(B^{n+1})}=\|g_{q}\|_{L^{q}(B^{n+1})}=1$, we have, by reversed H\"{o}lder inequality, that
\begin{align*}
C^*_{n,\alpha,\beta,\lambda,p}
&=\|f_p\|_{L^{p_\alpha}(B^{n+1})}\|g_{q}\|_{L^{q_\beta}(B^{n+1})}\int_{B^{n+1}}\int_{B^{n+1}}
\big(\frac{1-|\zeta-x^1|^2}{2}\big)^\alpha
\big(\frac{1-|\eta-x^1|^2}{2}\big)^\beta
\frac{ \tilde{f}_p(\zeta)\tilde{g}_{q}(\eta)}{|\zeta-\eta|^{\lambda}}d\zeta d\eta\nonumber\\
&\geq |B^{n+1}|^{\frac{1}{p_\alpha}-\frac{1}{p}+\frac{1}{q_\beta}-\frac{1}{q}}\int_{B^{n+1}}\int_{B^{n+1}}
\big(\frac{1-|\zeta-x^1|^2}{2}\big)^\alpha\big(\frac{1-|\eta-x^1|^2}{2}\big)^\beta
\frac{ \tilde{f}_p(\zeta)\tilde{g}_{q}(\eta)}{|\zeta-\eta|^{\lambda}}d\zeta d\eta\nonumber\\
&\rightarrow N_{\alpha,\beta,\lambda}\quad  \mbox{as}\ p\rightarrow p_\alpha^-,\ q\rightarrow {q_\beta}^-,
\end{align*}
which yields
\begin{align}\label{es1}
\liminf_{p\rightarrow p_\alpha^-, q\rightarrow {q_\beta}^-} C^*_{n,\alpha,\beta,\lambda,p}\geq N_{\alpha,\beta,\lambda}.
\end{align}
Let $\{f_k,g_k\}\subset L^{p_\alpha}(B^{n+1})\times L^{q_\beta}(B^{n+1})$ be a pair of minimizing sequence of $N_{\alpha,\beta,\lambda}$. Namely,
\begin{align*}
N_{\alpha,\beta,\lambda}
&=\lim_{k\rightarrow+\infty}\frac{\int_{B^{n+1}}\int_{B^{n+1}}(\frac{1-|\zeta-x^1|^2}{2})^\alpha
(\frac{1-|\eta-x^1|^2}{2})^\beta\frac{ f_k(\zeta)g_k(\eta)}{|\zeta-\eta|^{\lambda}}d\zeta d\eta}{\|f_k\|_{L^{p_\alpha}(B^{n+1})}\|g_k\|_{L^{q_\beta}(B^{n+1})}}.
\end{align*}

Set $\tilde{f}_k=\frac{f_k}{\|f_k\|_{L^{p}(B^{n+1})}}$ and $\tilde{g}_k=\frac{g_k}{\|g_k\|_{L^{q}(B^{n+1})}}$.
For $p\in (0,p_\alpha)$ and $q\in (0,q_\beta)$, we deduce that
\begin{align}\label{sub1}
C^*_{n,\alpha,\beta,\lambda,p}&\leq\frac{\int_{B^{n+1}}\int_{B^{n+1}}(\frac{1-|\zeta-x^1|^2}{2})^\alpha(\frac{1-|\eta-x^1|^2}{2})^\beta\frac{ \tilde{f}_k(\zeta)\tilde{g}_k(\eta)}{|\zeta-\eta|^{\lambda}}d\zeta d\eta}
{\|\tilde{f}_k\|_{L^{p}(B^{n+1})}\|\tilde{g}_k\|_{L^{q}(B^{n+1})}}\nonumber\\
&=\frac{\int_{B^{n+1}}\int_{B^{n+1}}(\frac{1-|\zeta-x^1|^2}{2})^\alpha(\frac{1-|\eta-x^1|^2}{2})^\beta\frac{ f_k(\zeta)g_k(\eta)}{|\zeta-\eta|^{\lambda}}d\zeta d\eta}
{\|f_k\|_{L^{p}(B^{n+1})}\|g_k\|_{L^{q}(B^{n+1})}}.
\end{align}
By \eqref{sub1} and dominated convergence theorem, we have
\begin{align*}
\limsup_{p\rightarrow p_\alpha^-, q\rightarrow {q_\beta}^-}C^*_{n,\alpha,\beta,\lambda,p}
&\leq\frac{\int_{B^{n+1}}\int_{B^{n+1}}(\frac{1-|\zeta-x^1|^2}{2})^\alpha
(\frac{1-|\eta-x^1|^2}{2})^\beta\frac{ f_k(\zeta)g_k(\eta)}{|\zeta-\eta|^{\lambda}}d\zeta d\eta}{\|f_k\|_{L^{p_\alpha}(B^{n+1})}\|g_k\|_{L^{q_\beta}(B^{n+1})}}.
\end{align*}
Then, let $k\rightarrow\infty$, one has
\begin{align}\label{sub2}
&\limsup_{p\rightarrow p_\alpha^-, q\rightarrow {q_\beta}^-}C^*_{n,\alpha,\beta,\lambda,p}\leq N_{\alpha,\beta,\lambda}.
\end{align}
Hence, it follows from \eqref{es1} and \eqref{sub2} that
$$\limsup_{p\rightarrow p_\alpha^-, q\rightarrow {q_\beta}^-}C^*_{n,\alpha,\beta,\lambda,p}=N_{\alpha,\beta,\lambda}.$$

Applying H\"{o}lder inequality, we conclude that
\begin{align*}
N_{\alpha,\beta,\lambda}&\leq\frac{\int_{B^{n+1}}\int_{B^{n+1}}(\frac{1-|\zeta-x^1|^2}{2})^\alpha(\frac{1-|\eta-x^1|^2}{2})^\beta\frac{ f_p(\zeta)g_{q}(\eta)}{|\zeta-\eta|^{\lambda}}d\zeta d\eta}
{\|f_p\|_{L^{p_\alpha}(B^{n+1})}\|g_{q}\|_{L^{q_\beta}(B^{n+1})}}\nonumber\\
&\leq\frac{\int_{B^{n+1}}\int_{B^{n+1}}(\frac{1-|\zeta-x^1|^2}{2})^\alpha(\frac{1-|\eta-x^1|^2}{2})^\beta\frac{ f_p(\zeta)g_{q}(\eta)}{|\zeta-\eta|^{\lambda}}d\zeta d\eta}
{|B^{n+1}|^{\frac{1}{p}-\frac{1}{p_\alpha}+\frac{1}{q}-\frac{1}{q_\beta}}}\nonumber\\
&\rightarrow N_{\alpha,\beta,\lambda},\ \ \ \mbox{as}\ p\rightarrow p_\alpha^-, q\rightarrow {q_\beta}^-.
\end{align*}
Therefore, we obtain \eqref{sub0}.
\end{proof}

Now we are ready to complete the proof of Theorem \ref{equi01'}.

\textbf{Proof of Theorem \ref{equi01'}.}
Let $\{f_p,g_{q}\}\in C^\gamma(\overline{B^{n+1}})\times C^\gamma(\overline{B^{n+1}})$ be a minimizing sequence of $N_{\alpha,\beta,\lambda}$. Then, $\{f_p,g_{q}\}$ satisfies \eqref{integral}.
We assume that $f_p(\mathcal{W})=\max\limits_{\zeta\in \overline{B^{n+1}}}f_p(\zeta)$.

We carry out the proof by considering two cases.

{\bf Case 1:} $\mathcal{W}\in B^{n+1}$. By the scale invariance and translation, without loss of generality, we assume that $\mathcal{W}=(0,...,0,-1)\in B^{n+1}$. For some subsequences $p_j\rightarrow p_\alpha$ and $q_j\rightarrow q_\beta$,  we consider all two possibilities of $\max\{\max\limits_{\zeta\in \overline{B^{n+1}}}f_{p_j},\max\limits_{\zeta\in \overline{B^{n+1}}}g_{q_j}\}$.

Indeed, the invariant of system \eqref{integral} is clearly absent under the translation transformation. However, we can still make scaling and translation transformation such that $\mathcal{W}=(0,...,0,-1)\in B^{n+1}$, due to the uniformly boundedness of $(\frac{1-|\zeta-x^1|^2}{2})^\alpha(\frac{1-|\eta-x^1|^2}{2})^\beta$.

{\bf Case 1a:} For some subsequences $p_j\rightarrow p_\alpha$ and $q_j\rightarrow q_\beta$, $\max\{\max\limits_{\zeta\in \overline{B^{n+1}}}f_{p_j},\max\limits_{\zeta\in \overline{B^{n+1}}}g_{q_j}\}$ is uniformly bounded.

Arguing as \eqref{bdd1} and \eqref{equicon}, we have $\{f_{p_j}\}$ and $\{g_{q_j}\}$ are uniformly bounded and equicontinuous in $\overline{B^{n+1}}$. By \eqref{integral}, we know that there exists some constant $C>0$ (independent of $p_j$, $q_j$) such that $f_{p_j}, g_{q_j}\geq C$.  Then, it follows from Arel\`{a}-Ascoli theorem that there exist two subsequences of $\{f_{p_j}\}$ and $\{g_{q_j}\}$ (still denoted by $\{f_{p_j}\}$ and $\{g_{q_j}\}$) and two nonnegative functions $f, g\in C^\gamma(\overline{B^{n+1}})$ such that
$$f_{p_j}\rightarrow f,\ \ \ g_{q_j}\rightarrow g \ \ \mbox{uniformaly\ on}\ \overline{B^{n+1}}.$$
Hence we have
$$\int_{B^{n+1}}f^{p_\alpha}(\zeta)d\zeta=\lim_{p_j\rightarrow p_\alpha }\int_{B^{n+1}}f_{p_j}^{p_j}(\zeta)d\zeta=1,\ \ \int_{B^{n+1}}g^{q_\beta}(\zeta)d\zeta=\lim_{q_j\rightarrow q_\beta }\int_{B^{n+1}}g_{q_j}^{q_j}(\zeta)d\zeta=1.$$
By \eqref{integral} and Lemma \ref{limit}, one can deduce that
\begin{equation*}\begin{cases}
N_{\alpha,\beta,\lambda}f^{p_\alpha-1}(\zeta)=\int_{B^{n+1}}
(\frac{1-|\zeta-x^1|^2}{2})^\alpha(\frac{1-|\eta-x^1|^2}{2})^\beta
\frac{ g(\eta)}{|\zeta-\eta|^{\lambda}} d\eta,\\
N_{\alpha,\beta,\lambda}g^{q_\beta-1}(\zeta)=\int_{B^{n+1}} (\frac{1-|\eta-x^1|^2}{2})^\alpha(\frac{1-|\zeta-x^1|^2}{2})^\beta
\frac{ f(\eta)}{|\zeta-\eta|^{\lambda}} d\eta,
\end{cases}\end{equation*}
as $j\rightarrow+\infty$.
It follows that $f,g$ are minimizers.

{\bf Case 1b:} For any subsequences $p_j\rightarrow p_\alpha$ and $ q_j\rightarrow q_\beta$, $f_{p_j}(\mathcal{W})\rightarrow+\infty$ or $\max\limits_{\zeta\in \overline{B^{n+1}}} g_{q_j}\rightarrow+\infty$. Without loss of generality, we assume that $f_{p_j}(\mathcal{W})\rightarrow+\infty$.

{\bf Case (i):} $\limsup\limits_{j\rightarrow+\infty}\frac{f_{p_j}(\mathcal{W})}{{\max\limits_{\zeta\in \overline{B^{n+1}}}} g_{q_j}}=+\infty$.
Then there exist two subsequences of $p_j$ and  $q_j$ (still denoted by $p_j$ and  $q_j$) such that $f_{p_j}(\mathcal{W})\rightarrow +\infty$ and $\frac{f_{p_j}(\mathcal{W})}{\max\limits_{\zeta\in \overline{B^{n+1}}} g_{q_j}}\rightarrow+\infty$. Let $u_j=f_{p_j}^{p_j-1}$ and $v_j=g_{q_j}^{q_j-1}$. Since $\|f_{p_j}\|_{L^{p_j}(B^{n+1})}=\|g_{q_j}\|_{L^{q_j}(B^{n+1})}=1$, we have
\begin{equation}\label{int001}
\int_{B^{n+1}}u_j^{p'_j}(\zeta)d\zeta=\int_{B^{n+1}}v_j^{q'_j}(\zeta)d\zeta=1.
\end{equation}
By \eqref{integral}, we know
\begin{equation*}\begin{cases}
C_{n,\alpha,\beta,\lambda,p_j}u_j(\zeta)=\int_{B^{n+1}} (\frac{1-|\zeta-x^1|^2}{2})^\alpha(\frac{1-|\eta-x^1|^2}{2})^\beta
\frac{ v_j^{q'_j-1}(\eta)}{|\zeta-\eta|^{\lambda}} d\eta,\\
C_{n,\alpha,\beta,\lambda,p_j}v_j(\zeta)=\int_{B^{n+1}} (\frac{1-|\eta-x^1|^2}{2})^\alpha(\frac{1-|\zeta-x^1|^2}{2})^\beta
\frac{ u_j^{p'_j-1}(\eta)}{|\zeta-\eta|^{\lambda}} d\eta.
\end{cases}\end{equation*}
%where $\frac{1}{p_j}+\frac{1}{p'_j}=1$ and $\frac{1}{q_j}+\frac{1}{q'_j}=1$.
Then, applying conformal transformation and dilation on $\mathbb R_+^{n+1}$, we have
\begin{equation}\label{intsp00}\begin{cases}
C_{n,\alpha,\beta,\lambda,p_j}u_j(\mathcal{T}^{-1}(\rho(x,t)))
=\rho^{n+1+\alpha+\beta-\lambda}\int_{\mathbb{R}^{n+1}_+}
\frac{t^{\alpha}z^{\beta} v_j^{q'_j-1}(\mathcal{T}^{-1}(\rho(y,z)))}{|(x,t)-(y,z)|^\lambda}dydz,\\
C_{n,\alpha,\beta,\lambda,p_j}v_j(\mathcal{T}^{-1}(\rho(x,t)))
=\rho^{n+1+\alpha+\beta-\lambda}\int_{\mathbb{R}^{n+1}_+}
\frac{t^{\beta}z^{\alpha} u_j^{p'_j-1}(\mathcal{T}^{-1}(\rho(y,z)))}{|(x,t)-(y,z)|^\lambda}dydz.
\end{cases}\end{equation}

Note that $\mathcal{T}(0,...,0,-1)=(0,...,0,2)$. Now we take $\rho=\rho_j$ such that $\rho_j^{\frac{(n+1+\alpha+\beta-\lambda)q'_j}{(q'_j-1)(p'_j-1)-1}}u_j(\mathcal{T}^{-1}(0,...,0,2))=1$
and let
\begin{equation}\label{intsp01}\begin{cases}
U_j(x,t)=\rho_j^{\frac{(n+1+\alpha+\beta-\lambda)q'_j}{(q'_j-1)(p'_j-1)-1}}u_j(\mathcal{T}^{-1}(\rho_j(x,t))),\\
V_j(x,t)=\rho_j^{\frac{(n+1+\alpha+\beta-\lambda)p'_j}{(q'_j-1)(p'_j-1)-1}}v_j(\mathcal{T}^{-1}(\rho_j(x,t))).
\end{cases}\end{equation}
It is easy to see that $U_j$ and $V_j$ satisfy the following renormalized equations
\begin{equation}\label{intsp02}\begin{cases}
C_{n,\alpha,\beta,\lambda,p_j}U_j(x,t)
=\int_{\mathbb{R}^{n+1}_+}\frac{t^{\alpha}z^{\beta } V_j^{q'_j-1}(y,z)}{|(x,t)-(y,z)|^\lambda}dydz,\\
C_{n,\alpha,\beta,\lambda,p_j}V_j(x,t)
=\int_{\mathbb{R}^{n+1}_+}\frac{t^{\beta}z^{\alpha } U_j^{p'_j-1}(y,z)}{|(x,t)-(y,z)|^\lambda}dydz.
\end{cases}\end{equation}
Moreover, $ U_j(x,t)\geq  U_j(0,\frac{2}{\rho_j})=1$ and for $p_j\leq q_j$,
\begin{align}\label{nor1}
V_j(x,t)&\geq \rho_j^{\frac{(n+1+\alpha+\beta-\lambda)p'_j}{(q'_j-1)(p'_j-1)-1}}\min\limits_{\zeta\in \overline{B^{n+1}}} v_j(\zeta)\nonumber\\
& =\rho_j^{\frac{(n+1+\alpha+\beta-\lambda)(p'_j-q'_j)}{(q'_j-1)(p'_j-1)-1}}\frac{\min\limits_{\zeta\in \overline{B^{n+1}}} v_j}{u_j(\mathcal{T}^{-1}(0,...,0,2))}\nonumber\\
& \rightarrow +\infty \quad \  \mbox{uniformaly\ for\ any}\ (x,t)\in\mathbb{R}^{n+1}_+\ \mbox{as}\ j\rightarrow +\infty.
\end{align}

Next we show that there exists a constant $C_1\geq1$ such that for any $(x,t)\in\mathbb{R}^{n+1}_+$,
\begin{align}\label{nor2}
&\frac{1}{C_1}(1+|(x,t)|^{-\lambda})\leq \frac{U_j(x,t)}{t^\alpha}\leq C_1(1+|(x,t)|^{-\lambda}).
\end{align}
Once the inequality \eqref{nor2} holds, by the change of polar coordinates,  we have that for $\alpha p'_j+1>0$,
\begin{align}\label{contra}
&C_{n,\alpha,\beta,\lambda,p_j}V_j(0,1)\nonumber\\
&=\int_{\mathbb{R}^{n+1}_+}\frac{z^{\alpha } U_j^{p'_j-1}(y,z)}{|(0,1)-(y,z)|^\lambda}dydz\nonumber\\
&\leq C\int_{\mathbb{R}^{n+1}_+\setminus B_{1}^+(0)}z^{\alpha }(1+|(y,z)|^{-\lambda})U_j^{p'_j-1}(y,z)dydz +C\int_{ B_{1}^+(0)}z^{\alpha }(1+|(y,z)|^{-\lambda})U_j^{p'_j-1}(y,z)dydz\nonumber\\
&\leq C\int_{\mathbb{R}^{n+1}_+\setminus B_{1}^+(0)}z^{\alpha p'_j}|(y,z)|^{-\lambda p'_j}dydz +C\int_{ B_{1}^+(0)}z^{\alpha }(1+|(y,z)|^{-\lambda})U_j^{p'_j-1}(y,z)dydz\nonumber\\
& \leq C\bar{J}_\alpha \int_{1}^\infty \rho^{(\alpha-\lambda)p'_j}\rho^nd\rho+C\int_{ B_{1}^+(0)}z^{\alpha }(1+|(y,z)|^{-\lambda})dydz\nonumber\\
&\leq C,
\end{align}
where \begin{eqnarray*}
\bar{J}_\alpha&=&\frac{1}{n+1+(\alpha-\lambda)p'_j}\int_{0}^{\pi}(sin\theta_1)^{n-1+\alpha p'_j}d\theta_1\int_{0}^{\pi}(sin\theta_2)^{n-2+\alpha p'_j }d\theta_2\cdots\int_{0}^{\pi}(sin\theta_{n})^{\alpha p'_j} d\theta_{n}\\
&=&\frac{\pi^{\frac{n}{2}}}{n+1+(\alpha-\lambda)p'_j}\frac{\Gamma(\frac{\alpha p'_j+1}{2})}{\Gamma(\frac{n+\alpha p'_j+1}{2})},
\end{eqnarray*}
and we have used $n+1+(\alpha-\lambda)p'_j<0$ and $U_j(x,t)\geq 1$ in the last inequality.
This contradicts \eqref{nor1}.  Therefore, {\bf Case (i)} is impossible.

Then what left is to prove the inequality \eqref{nor2} holds.
Combining \eqref{int001}, \eqref{intsp01} with conformal transformation, we have
\begin{align*}
&\int_{B^{n+1}}u_j^{p'_j}(\zeta)d\zeta=\int_{\mathbb{R}^{n+1}_+} U_j^{p'_j}(x,t)\rho_j^{n+1-\frac{(n+1+\alpha+\beta-\lambda)p'_jq'_j}{(p'_j-1)(q'_j-1)-1}}dxdt=1,\nonumber\\
&\int_{B^{n+1}}v_j^{q'_j}(\zeta)d\zeta=\int_{\mathbb{R}^{n+1}_+} V_j^{q'_j}(x,t)\rho_j^{n+1-\frac{(n+1+\alpha+\beta-\lambda)p'_jq'_j}{(p'_j-1)(q'_j-1)-1}}dxdt=1.
\end{align*}
Hence we conclude that
\begin{equation*}
\text{meas}\{(x,t)\in\mathbb{R}^{n+1}_+ \mid U_j(x,t)<+\infty\}>0,\ \ \ \text{meas}\{(y,z)\in\mathbb{R}^{n+1}_+ \mid V_j(y,z)<+\infty\}>0.
\end{equation*}
It follows that there exists $R>1$ large enough and a measurable set $E$ such that
\begin{equation*}
E\subset\{(y,z)\in\mathbb{R}^{n+1}_+ \mid V_j(y,z)<R\}\cap B_R^{+}(0)
\end{equation*}
with $meas(E)>\frac{1}{R}$.

For any $(x,t)\in\mathbb{R}^{n+1}_+$, it follows from $\beta\geq0$ and $q'_j<0$ that
\begin{equation*}\begin{split}
C_{n,\alpha,\beta,\lambda,p_j}U_j(x,t)
&=\int_{\mathbb{R}^{n+1}_+}\frac{t^\alpha z^{\beta} V_j^{q'_j-1}(y,z)}{|(x,t)-(y,z)|^\lambda}dydz\\
&\geq \int_{E}\frac{t^\alpha z^{\beta} V_j^{q'_j-1}(y,z)}{|(x,t)-(y,z)|^\lambda}dydz\\
&\geq R^{q'_j-1}t^\alpha\int_{E}z^{\beta}|(x,t)-(y,z)|^{-\lambda} dydz.
\end{split}\end{equation*}
Since $\lambda<0$, there exists a constant $C_1\geq1$ such that
\begin{equation*}
\frac{U_j(x,t)}{t^\alpha}\geq\frac{1}{C_1}(1+|(x,t)|^{-\lambda}), \qquad \forall \, (x,t)\in \mathbb{R}^{n+1}_+.
\end{equation*}
Similarly, for any $(x,t)\in \mathbb{R}^{n+1}_+$, there exists a constant $C_2\geq1$ such that
\begin{equation}\label{nor11}
\frac{V_j(x,t)}{t^\beta}\geq\frac{1}{C_2}(1+|(x,t)|^{-\lambda}).
\end{equation}

On the other hand, there exists a point $(\hat{x},\hat{t})\in\mathbb{R}^{n+1}_+$ such that
\begin{equation}\label{nor12}
C_{n,\alpha,\beta,\lambda,p_j}U_j(\hat{x},\hat{t})=\int_{\mathbb{R}^{n+1}_{+}}\frac{\hat{t}^\alpha z^{\beta }V_j^{q'_j-1}(y,z)}{|(\hat{x},\hat{t})-(y,z)|^{\lambda}}dydz<+\infty.
\end{equation}
Then combining \eqref{nor11}, \eqref{nor12} with $\beta q'_j+1>0$, we have
\begin{align}\label{nor13}
\int_{\mathbb{R}^{n+1}_{+}}z^{\beta }(1+|(y,z)|^{-\lambda})V_j^{q'_j-1}(y,z)dydz&\leq C_{\hat{x},\hat{t}}\int_{\{|(y,z)|<\frac{1}{2}|(\hat{x},\hat{t})|\}}z^{\beta }|(\hat{x},\hat{t})-(y,z)|^{-\lambda} V_j^{q'_j-1}(y,z)dydz\nonumber\\
&\ \ \ +C_{\hat{x},\hat{t}}\int_{\{|(y,z)|>2|(\hat{x},\hat{t})|\}}z^{\beta }|(\hat{x},\hat{t})-(y,z)|^{-\lambda} V_j^{q'_j-1}(y,z)dydz \nonumber\\
&\ \ \ +\int_{\{\frac{1}{2}|(\hat{x},\hat{t})|\leq|(y,z)|\leq2|(\hat{x},\hat{t})|\}}z^{\beta }(1+|(y,z)|^{-\lambda})V_j^{q'_j-1}(y,z)dydz\nonumber\\
&< +\infty.
\end{align}
Therefore, by \eqref{nor13} and $\beta q'_j+1>0$, we derive that for any $(x,t)\in\mathbb{R}^{n+1}_+$,
\begin{equation}\label{nor14}\begin{split}
\frac{U_j(x,t)}{t^\alpha(1+|(x,t)|^{-\lambda})}&=\int_{\mathbb{R}^{n+1}_+}\frac{|(x,t)-(y,z)|^{-\lambda}}{1+|(x,t)|^{-\lambda}}z^{\beta }V_j^{q'_j-1}(y,z)dydz\\
&\leq C\int_{\mathbb{R}^{n+1}_+}(1+|(y,z)|^{-\lambda})z^{\beta }V_j^{q'_j-1}(y,z)dydz
<+\infty.
\end{split}\end{equation}
Combining \eqref{nor11} with \eqref{nor14}, we obtain \eqref{nor2}.

{\bf Case (ii):} $\limsup\limits_{j\rightarrow+\infty}\frac{f_{p_j}(\mathcal{W})}{\max\limits_{\zeta\in \overline{B^{n+1}}} g_{q_j}}=0$.
Then there exist two subsequences of $p_j$ and  $q_j$ (still denoted by $p_j$ and  $q_j$) such that $f_{p_j}(\mathcal{W})\rightarrow +\infty$ and $\frac{f_{p_j}(\mathcal{W})}{\max\limits_{\zeta\in \overline{B^{n+1}}} g_{q_j}}\rightarrow0$, which implies that $\max\limits_{\zeta\in \overline{B^{n+1}}} g_{q_j}\rightarrow+\infty$.
Using similar arguments as {\bf Case (i)}, we prove that {\bf Case (ii)} does not hold.

{\bf Case (iii):} $\limsup\limits_{j\rightarrow+\infty}\frac{f_{p_j}(\mathcal{W})}{\max\limits_{\zeta\in \overline{B^{n+1}}} g_{q_j}}=c_0\in (0,+\infty)$.
Then there exist two subsequences of $p_j$ and  $q_j$ (still denoted by $p_j$ and  $q_j$) such that $f_{p_j}(\mathcal{W})\rightarrow +\infty$, $\max\limits_{\zeta\in \overline{B^{n+1}}} g_{q_j}\rightarrow+\infty$ and $\frac{f_{p_j}(\mathcal{W})}{\max\limits_{\zeta\in \overline{B^{n+1}}} g_{q_j}}\rightarrow c_0$. Similar to {\bf Case (i)}, we choose $\{U_j,V_j\}$ defined as \eqref{intsp01}. It is easy to see that $U_j$ and $V_j$ satisfy \eqref{intsp02}. Moreover, $ U_j(x,t)\geq U_j(0,\frac{2}{\rho_j})=1$  and for $p_j\leq q_j$,
\begin{align*}
V_j(x,t)&\geq \rho_j^{\frac{(n+1+\alpha+\beta-\lambda)p'_j}{(q'_j-1)(p'_j-1)-1}}\min\limits_{\zeta\in \overline{B^{n+1}}} v_j(\zeta)\nonumber\\
& =\rho_j^{\frac{(n+1+\alpha+\beta-\lambda)(p'_j-q'_j)}{(q'_j-1)(p'_j-1)-1}}
\frac{\min\limits_{\zeta\in \overline{B^{n+1}}} v_j}{u_j(\mathcal{T}^{-1}(0,...,0,2))}\nonumber\\
& \geq c_1>0 \quad \  \mbox{uniformaly\ for\ any}\ (x,t)\in\mathbb{R}^{n+1}_+\ \mbox{as}\ j\rightarrow +\infty.
\end{align*}
Hence $V_j(x,t)$ has uniformly lower bound $c_1>0$.

Using similar arguments as \eqref{nor2}, there exist constants $C_3\geq1$ and $C_4\geq1$ such that for any $(x,t)\in\mathbb{R}^{n+1}_+$,
\begin{align}\label{nor16}
&\frac{1}{C_3}(1+|(x,t)|^{-\lambda})\leq \frac{U_j(x,t)}{t^{\alpha}}\leq C_3(1+|(x,t)|^{-\lambda}),
\end{align}
\begin{align}\label{nor17}
&\frac{1}{C_4}(1+|(x,t)|^{-\lambda})\leq \frac{V_j(x,t)}{t^{\beta}}\leq C_4(1+|(x,t)|^{-\lambda}).
\end{align}
For any given constant $R_0>0$ and $(x,t)\in B_{R_0}^+(0)$, using \eqref{nor13}, \eqref{nor17} and the lower bound of $V_j(x,t)$, we have
\begin{align*}
C_{n,\alpha,\beta,\lambda,p_j}U_j(x,t)&=\int_{\mathbb{R}^{n+1}_+}\frac{t^{\alpha}z^{\beta } V_j^{q'_j-1}(y,z)}{|(x,t)-(y,z)|^\lambda}dydz\nonumber\\
&\leq Ct^{\alpha}\int_{\mathbb{R}^{n+1}_+\setminus B_{|(x,t)|}^+(0) }z^{\beta }V_j^{q'_j-1}(y,z)(1+|(y,z)|^{-\lambda})dydz\nonumber\\
&\quad +Ct^{\alpha}\int_{B_{|(x,t)|}^+(0)}z^{\beta }(1+|(y,z)|^{-\lambda})dydz\nonumber\\
&\leq Ct^{\alpha}.
\end{align*}
This implies that $U_j(x,t)$ is uniformly bounded in $B_{R_0}^+(0)$. Similarly, $V_j(x,t)$ is uniformly bounded in $B_{R_0}^+(0)$.

Similar  to \eqref{equicon}, we can show that
$U_j(x,t)$ is equicontinuous in $\overline{\mathbb{R}^{n+1}_{+}}$  and $V_j(x,t)$ is equicontinuous in $\overline{\mathbb{R}^{n+1}_{+}}$.
It follows from Arzel\`{a}-Ascoli theorem that there exist two subsequences of $\{U_j\}$ and $\{V_j\}$ (still denoted by $\{U_j\}$ and $\{V_j\}$) and two functions $U$ and $V$ with lower bound $C>0$ such that
\begin{equation*}
U_j\rightarrow U\ \ \mbox{and}\ V_j\rightarrow V,\ \ \ \ \mbox{as}\  j\rightarrow+\infty\ \mbox{uniformaly\ on}\ B_{R_0}^+(0).
\end{equation*}
By the arbitrariness of $R_0$, one can see that $U(x)$ and $V(x)$ satisfy
\begin{equation*}\begin{cases}
N_{\alpha,\beta,\lambda}U(x,t)
=\int_{\mathbb{R}^{n+1}_+}\frac{t^{\alpha}z^{\beta } V^{q'_\beta-1}(y,z)}{|(x,t)-(y,z)|^\lambda}dydz\\
N_{\alpha,\beta,\lambda}V(x,t)
=\int_{\mathbb{R}^{n+1}_+}\frac{t^\beta z^{\alpha } U_j^{p'_\alpha-1}(y,z)}{|(x,t)-(y,z)|^\lambda}dydz.
\end{cases}\end{equation*}
Since
\begin{align*}
1&=\int_{B^{n+1}}u_j^{p'_j}(\zeta)d\zeta\\
&=\int_{\mathbb{R}^{n+1}_+} U_j^{p'_j}(x,t)\rho_j^{n+1-\frac{(n+1+\alpha+\beta-\lambda)p'_jq'_j}{(p'_j-1)(q'_j-1)-1}}dxdt\\
&\leq \int_{\mathbb{R}^{n+1}_+} U_j^{p'_j}(x,t)dxdt
\end{align*}
and $$ U_j^{p'_j}\rightarrow  U^{p'_\alpha}$$
uniformly on any compact domain, we know from \eqref{nor16} that
$$\int_{\mathbb{R}^{n+1}_+} U^{p'_\alpha}(x,t)dxdt=\lim_{j\rightarrow+\infty}\int_{\mathbb{R}^{n+1}_+} U_j^{p'_j}(x,t)dxdt\geq1.$$
Similarly, using \eqref{nor17}, we have
$$\int_{\mathbb{R}^{n+1}_+} V^{q'_\beta}(x,t)dxdt=\lim_{j\rightarrow+\infty}\int_{\mathbb{R}^{n+1}_+} V_j^{q'_j}(x,t)dxdt\geq1.$$
Let $F(x,t)= U^{p'_\alpha-1}(x,t)$ and $G(x,t)= V^{q'_\beta-1}(x,t)$. Then we derive
$$\int_{\mathbb{R}^{n+1}_+}F^{p_\alpha}(x,t)dxdt\geq1,\ \ \ \ \int_{\mathbb{R}^{n+1}_+}G^{q_\beta}(x,t)dxdt\geq1,$$
and $F, G$ satisfy
\begin{equation*}\begin{cases}
N_{\alpha,\beta,\lambda}F^{p_\alpha-1}(x,t)
=\int_{\mathbb{R}^{n+1}_+}\frac{t^\alpha z^{\beta} G(y,z)}{|(x,t)-(y,z)|^\lambda}dydz,\\
N_{\alpha,\beta,\lambda}G^{q_\beta-1}(x,t)
=\int_{\mathbb{R}^{n+1}_+}\frac{t^\beta z^{\alpha} F(y,z)}{|(x,t)-(y,z)|^\lambda}dydz.
\end{cases}\end{equation*}
Since $p_\alpha<1$ and $q_\beta<1$, using conformal transformation, we conclude that
\begin{align*}
N^2_{\alpha,\beta,\lambda}&=\frac{(\int_{\mathbb{R}^{n+1}_+}\int_{\mathbb{R}^{n+1}_+}\frac{t^\alpha z^{\beta} F(x,t) G(y,z)}{|(x,t)-(y,z)|^\lambda}dxdtdydz)^2}{\int_{\mathbb{R}^{n+1}_+}F^{p_\alpha}(x,t)dxdt\int_{\mathbb{R}^{n+1}_+}G^{q_\beta}(y,z)dydz}\\
&\geq\frac{(\int_{\mathbb{R}^{n+1}_+}\int_{\mathbb{R}^{n+1}_+}\frac{t^\alpha z^{\beta} F(x,t) G(y,z)}{|(x,t)-(y,z)|^\lambda}dxdtdydz)^2}{(\int_{\mathbb{R}^{n+1}_+}F^{p_\alpha}(x,t)dxdt)^{\frac{2}{p_\alpha}}
(\int_{\mathbb{R}^{n+1}_+}G^{q_\beta}(y,z)dydz)^{\frac{2}{q_\beta}}}\\
&=\frac{(\int_{B^{n+1}}\int_{B^{n+1}}
(\frac{1-|\zeta-x^1|^2}{2})^\alpha(\frac{1-|\eta-x^1|^2}{2})^\beta
\frac{ f(\zeta)g(\eta)}{|\zeta-\eta|^{\lambda}} d\zeta d\eta)^2}{(\int_{B^{n+1}}f^{p_\alpha}(\zeta)d\zeta)^{\frac{2}{p_\alpha}}
(\int_{B^{n+1}}g^{q_\beta}(\eta)d\eta)^{\frac{2}{q_\beta}}}.
\end{align*}
Therefore, $\{f(\xi), g(\eta)\}$ is a pair of minimizers of sharp constant $N_{\alpha,\beta,\lambda}$.

{\bf Case 2:} $\mathcal{W}_p\in \partial B^{n+1}$. By the scale and rotation invariance of \eqref{integral} for $\zeta\in \partial B^{n+1}$, without loss of generality, we assume $\mathcal{W}_p=(0,..,0)\in \partial B^{n+1}$. For some subsequences $p_j\rightarrow p_\alpha$ and $q_j\rightarrow q_\beta$,  we consider all possible values of $\max\{\max\limits_{\zeta\in \overline{B^{n+1}}}f_{p_j},\max\limits_{\zeta\in \overline{B^{n+1}}}g_{q_j}\}$.

It is easy to see that {\bf Case 1a} still holds here.
We only need to consider the following case:\\

For any subsequences $p_j\rightarrow p_\alpha$ and $ q_j\rightarrow q_\beta$, $f_{p_j}(\mathcal{W})\rightarrow+\infty$ or $\max\limits_{\zeta\in \overline{B^{n+1}}} g_{q_j}\rightarrow+\infty$. Without loss of generality, we assume that $f_{p_j}(\mathcal{W})\rightarrow+\infty$.

{\bf Case (2i):} $\limsup\limits_{j\rightarrow+\infty}\frac{f_{p_j}(\mathcal{W})}{{\max\limits_{\zeta\in \overline{B^{n+1}}}} g_{q_j}}=+\infty$.
Then there exist two subsequences of $p_j$ and  $q_j$ (still denoted by $p_j$ and  $q_j$) such that $f_{p_j}(\mathcal{W})\rightarrow +\infty$ and $\frac{f_{p_j}(\mathcal{W})}{\max\limits_{\zeta\in \overline{B^{n+1}}} g_{q_j}}\rightarrow+\infty$.

Using similar arguments as {\bf Case (i)}, applying conformal transformation and dilation, we know that $u_j(\mathcal{T}^{-1}(\rho(x,t)))$ and $v_j(\mathcal{T}^{-1}(\rho(x,t)))$ satisfy \eqref{intsp00} for $(x,t)\in \overline{\mathbb R^{n+1}_+}$.
Then we take $\rho=\rho_j$ satisfy $\rho_j^{\frac{(n+1+\alpha+\beta-\lambda)q'_j}{(q'_j-1)(p'_j-1)-1}}u_j(\mathcal{T}^{-1}(0,...,0,0))=1$,
and  let
\begin{equation}\label{ints001}\begin{cases}
U_j(x,t)=\rho_j^{\frac{(n+1+\alpha+\beta-\lambda)q'_j}{(q'_j-1)(p'_j-1)-1}}u_j(\mathcal{T}^{-1}(\rho_j(x,t))),\\
V_j(x,t)=\rho_j^{\frac{(n+1+\alpha+\beta-\lambda)p'_j}{(q'_j-1)(p'_j-1)-1}}v_j(\mathcal{T}^{-1}(\rho_j(x,t)))
\end{cases}\end{equation}
for $(x,t)\in \overline{\mathbb R^{n+1}_+}$.
It is easy to see that $U_j$ and $V_j$ satisfy the renormalized equations \eqref{intsp02}.
Moreover, $ U_j(x,t)\geq  U_j(0,...,0,0)=1$
and for $p_j\leq q_j$,
\begin{align*}
V_j(x,t)&\geq \rho_j^{\frac{(n+1+\alpha+\beta-\lambda)p'_j}{(q'_j-1)(p'_j-1)-1}}\min\limits_{\zeta\in \overline{B^{n+1}}} v_j(\zeta)\nonumber\\
& =\rho_j^{\frac{(n+1+\alpha+\beta-\lambda)(p'_j-q'_j)}{(q'_j-1)(p'_j-1)-1}}\frac{\min\limits_{\zeta\in \overline{B^{n+1}}} v_j}{u_j(\mathcal{T}^{-1}(0,...,0,0))}\nonumber\\
& \rightarrow +\infty \quad \  \mbox{uniformaly\ for\ any}\ (x,t)\in\mathbb{R}^{n+1}_+\ \mbox{as}\ j\rightarrow +\infty.
\end{align*}
Computing similarly to \eqref{contra}, we find that this case does not hold.

{\bf Case (2ii):} $\limsup\limits_{j\rightarrow+\infty}\frac{f_{p_j}(\mathcal{W})}{\max\limits_{\zeta\in \overline{B^{n+1}}} g_{q_j}}=0$.
Then there exist two subsequences of $p_j$ and  $q_j$ (still denoted by $p_j$ and  $q_j$) such that $f_{p_j}(\mathcal{W})\rightarrow +\infty$ and $\frac{f_{p_j}(\mathcal{W})}{\max\limits_{\zeta\in \overline{B^{n+1}}} g_{q_j}}\rightarrow0$, which implies that $\max\limits_{\zeta\in \overline{B^{n+1}}} g_{q_j}\rightarrow+\infty$.
Using similar arguments as {\bf Case (2i)}, we prove that {\bf Case (2ii)} does not hold.

{\bf Case (2iii):} $\limsup\limits_{j\rightarrow+\infty}\frac{f_{p_j}(\mathcal{W})}{\max\limits_{\zeta\in \overline{B^{n+1}}} g_{q_j}}=c_0\in (0,+\infty)$.
Then there exist two subsequences of $p_j$ and  $q_j$ (still denoted by $p_j$ and  $q_j$) such that $f_{p_j}(\mathcal{W})\rightarrow +\infty$, $\max\limits_{\zeta\in \overline{B^{n+1}}} g_{q_j}\rightarrow+\infty$ and $\frac{f_{p_j}(\mathcal{W})}{\max\limits_{\zeta\in \overline{B^{n+1}}} g_{q_j}}\rightarrow c_0$. Similar to {\bf Case (2i)}, we choose $\{U_j,V_j\}$ defined as \eqref{ints001}. It is easy to see that $U_j$ and $V_j$ satisfy \eqref{intsp02}. Moreover, $ U_j(x,t)\geq  U_j(0,...,0,0)=1$  and for $p_j\leq q_j$,
\begin{align*}
V_j(x,t)&\geq \rho_j^{\frac{(n+1+\alpha+\beta-\lambda)p'_j}{(q'_j-1)(p'_j-1)-1}}\min\limits_{\zeta\in \overline{B^{n+1}}} v_j(\zeta)\nonumber\\
& =\rho_j^{\frac{(n+1+\alpha+\beta-\lambda)(p'_j-q'_j)}{(q'_j-1)(p'_j-1)-1}}\frac{\min\limits_{\zeta\in \overline{B^{n+1}}} v_j}{u_j(\mathcal{T}^{-1}(0,...,0,0))}\nonumber\\
& \geq c_1>0 \quad \  \mbox{uniformaly\ for\ any}\ (x,t)\in\mathbb{R}^{n+1}_+\ \mbox{as}\ j\rightarrow +\infty.
\end{align*}
Hence $V_j(x,t)$ has uniformly lower bound $c_1>0$.

Arguing similarly to the proof of {\bf Case (iii)}, we show that $\{f(\xi), g(\eta)\}$ is a pair of minimizers of sharp constant $N_{\alpha,\beta,\lambda}$.
This completes the proof of Theorem \ref{equi01'}.
\hfill$\Box$

{\bf Proof of Theorem \ref{theorem2}.} Theorem \ref{theorem2} is an easy consequence of Theorem \ref{equi01'}.
 \hfill$\Box$

\section{Classification of extremal functions\label{Section 4}}

In this section, we investigate the regularity and radical symmetry of solutions to equation \eqref{Eulereq0},  and then classify the extremal function of inequality \eqref{RHLSD-1} by using the method of moving spheres.

Let
$$u(x,t)= f^{p-1}(x,t), \quad v(y,z)=g^{q-1}(y,z), \quad \theta=\frac{1}{1-p}>1\ \ \mbox{and}\ \kappa=\frac{1}{1-q}>1.$$
The Euler-Lagrange equation \eqref{Eulereq0} can be rewritten as the following integral system
\begin{equation}\label{intsp}\begin{cases}
u(x,t)=\int_{\mathbb{R}^{n+1}_+}\frac{t^{\alpha}z^{\beta} v^{-\kappa}(y,z)}{|(x,t)-(y,z)|^\lambda}dydz \quad (x,t)\in\mathbb{R}^{n+1}_+,\\
v(y,z)=\int_{\mathbb{R}^{n+1}_+}\frac{t^{\alpha}z^{\beta} u^{-\theta}(x,t)}{|(x,t)-(y,z)|^\lambda}dxdt \quad (y,z)\in\mathbb{R}^{n+1}_+,
\end{cases}\end{equation}
where $\alpha,\beta,\lambda,\kappa,\theta$ satisfy
\begin{equation}\label{RWH-exp-2}
\begin{cases}
&-n-1<\lambda<0, \kappa,\theta>1,\\
&0\le\alpha<\frac{1}{\theta-1},0\le\beta<\frac{1}{\kappa-1},\\
&\frac{1}{\kappa-1}+\frac{1}{\theta-1}=\frac{\alpha+\beta-\lambda}{n+1}.
\end{cases}
\end{equation}

\medskip

Theorem \ref{regularity1} and \ref{theoremfenlei1}  are equivalent to discuss the regularity, radical symmetry of system \eqref{intsp} as follows.

\begin{theorem}\label{regularity}
Let $\alpha,\beta,\lambda,\kappa,\theta$ satisfy \eqref{RWH-exp-2}
and $(u,v)$ be a pair of Lebesgue measurable positive solutions to system \eqref{intsp}. Then $u,v\in C^\infty(\mathbb{R}^{n+1}_+)\cap C^\gamma(\overline{\mathbb{R}^{n+1}_+})$ for $\gamma\in(0,1)$.
\end{theorem}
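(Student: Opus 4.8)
The plan is to prove the equivalent statement for the system \eqref{intsp} satisfied by $(u,v)=(f^{p-1},g^{q-1})$ (for which $f=u^{-\theta}$, $g=v^{-\kappa}$), in three stages: (i) two‑sided pointwise bounds for $(u,v)$ on compact subsets of $\mathbb{R}^{n+1}_+$, together with the crude growth/decay profiles forced by \eqref{RWH-exp-2}; (ii) a bootstrap on \eqref{intsp} giving interior $C^\infty$ regularity; (iii) H\"older regularity up to $\partial\mathbb{R}^{n+1}_+$ read off from the explicit weight $t^\alpha$ (resp.\ $z^\beta$). Arguing with $u,v$ rather than $f,g$ is convenient because $u,v$ will be bounded below by positive constants on compacts, whereas $f,g$ blow up at the degenerate boundary when $\alpha,\beta>0$.

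\emph{Step 1 (two-sided bounds).} Since $\lambda<0$, the kernel $|(x,t)-(y,z)|^{-\lambda}$ is a positive power of the distance: everywhere positive, bounded on bounded sets, vanishing (not singular) on the diagonal. As $(u,v)$ solves \eqref{intsp}, the integrals there are finite a.e.; fixing $(y_0,z_0)$ with $v(y_0,z_0)<\infty$ and comparing $|(x,t)-(y,z)|^{-\lambda}$ with $|(x,t)-(y_0,z_0)|^{-\lambda}$ (comparable for $(x,t)$ large, both bounded on a fixed bounded set, where the integrand behaves like $t^{\alpha(1-\theta)}$, integrable precisely because $\alpha<\tfrac1{\theta-1}$) propagates the finiteness of $v$ to all of $\mathbb{R}^{n+1}_+$ and yields $v\le C_K$ on every compact $K\subset\mathbb{R}^{n+1}_+$; the symmetric argument, using $\beta<\tfrac1{\kappa-1}$, gives $u\le C_K$. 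For the lower bounds, choosing a bounded set $E$ of positive measure with $v\le R$ on $E$ and inserting it into the first equation of \eqref{intsp} gives $u(x,t)\ge c\,t^\alpha$ on each compact subset of $\mathbb{R}^{n+1}_+$ and $u(x,t)\ge c\,t^\alpha|(x,t)|^{-\lambda}$ for $|(x,t)|$ large, and likewise $v(y,z)\ge c\,z^\beta$ and $v(y,z)\ge c\,z^\beta|(y,z)|^{-\lambda}$ for $|(y,z)|$ large. Consequently $u^{-\theta},v^{-\kappa}$ are bounded on compacts and decay polynomially at infinity, and $t^\alpha z^\beta u^{-\theta}$, $t^\alpha z^\beta v^{-\kappa}$, and $(1+|(y,z)|^{-\lambda})z^\beta v^{-\kappa}$ are globally integrable (here the balance identity in \eqref{RWH-exp-2} is used).

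\emph{Step 2 (interior smoothness).} Write $u=t^\alpha w$ with $w(x,t)=\int_{\mathbb{R}^{n+1}_+}z^\beta v^{-\kappa}(y,z)\,|(x,t)-(y,z)|^{-\lambda}\,dydz$ and, symmetrically, $v=z^\beta\tilde w$. Fix $P_0\in\mathbb{R}^{n+1}_+$ and $\delta>0$ with $\overline{B_{2\delta}(P_0)}\subset\mathbb{R}^{n+1}_+$, and split $w$ into its integrals over $B_{2\delta}(P_0)^c$ and over $B_{2\delta}(P_0)$. On the first, the kernel is real analytic in $(x,t)$ near $P_0$ with $k$-th derivatives dominated by $C_k(1+|(y,z)|^{-\lambda})$, integrable against $z^\beta v^{-\kappa}$ by Step~1, so that piece is $C^\infty$ near $P_0$; on the second, $z^\beta v^{-\kappa}$ is bounded and the elementary estimate \eqref{pineq} (which holds verbatim on any bounded set) shows that piece is $C^{\gamma_0}$ with $\gamma_0=\min(-\lambda,1)$. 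Hence $u,v\in C^{\gamma_0}_{\mathrm{loc}}(\mathbb{R}^{n+1}_+)$ and, since $u,v$ are bounded below, $z^\beta v^{-\kappa},\,t^\alpha u^{-\theta}\in C^{\gamma_0}_{\mathrm{loc}}$. Now $|P|^{-\lambda}$ is, up to a constant and a polynomial, the fundamental solution of the polyharmonic/fractional operator $(-\Delta)^{(n+1-\lambda)/2}$, so $w,\tilde w$ solve $(-\Delta)^{(n+1-\lambda)/2}w=c\,z^\beta v^{-\kappa}$ and $(-\Delta)^{(n+1-\lambda)/2}\tilde w=c\,t^\alpha u^{-\theta}$ in $\mathbb{R}^{n+1}_+$; interior Schauder estimates improve the regularity by $n+1-\lambda$ at each step, and, since $t^\alpha,z^\beta$ are smooth in the interior and $s\mapsto s^{-\theta},\,s\mapsto s^{-\kappa}$ are smooth on $(0,\infty)$, alternating the two equations of \eqref{intsp} bootstraps $u,v$ to $C^\infty(\mathbb{R}^{n+1}_+)$. (Equivalently, one avoids the PDE and differentiates $w$ under the integral, each $(x,t)$-differentiation lowering the kernel exponent by one while it stays above $-(n+1)$, the last step being a standard Riesz-potential H\"older estimate; this gives the same gain.)

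\emph{Step 3 (H\"older up to the boundary) and main obstacle.} Inequality \eqref{pineq} yields, for $P_1,P_2$ in a bounded subset of $\overline{\mathbb{R}^{n+1}_+}$,
\[
|w(P_1)-w(P_2)|\le C\,|P_1-P_2|^{\gamma_0}\int_{\mathbb{R}^{n+1}_+}z^\beta v^{-\kappa}(y,z)\,(1+|(y,z)|^{-\lambda})\,dydz ,
\]
and the integral on the right is finite by Step~1: near $\{z=0\}$ the integrand is at most $C\,z^{\beta(1-\kappa)}$, integrable since $\beta<\tfrac1{\kappa-1}$, and at infinity the lower bound $v(y,z)\ge c\,z^\beta|(y,z)|^{-\lambda}$ together with the balance relation in \eqref{RWH-exp-2} makes it summable. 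Thus $w\in C^{\gamma_0}(\overline{\mathbb{R}^{n+1}_+})$; as $t^\alpha$ is H\"older continuous on $[0,\infty)$, the product $u=t^\alpha w$, and symmetrically $v$, lies in $C^{\gamma}(\overline{\mathbb{R}^{n+1}_+})$ for a suitable $\gamma\in(0,1)$, which is the asserted regularity. I expect the delicate point to be Step~1: obtaining \emph{uniform} two-sided bounds near the degenerate boundary $\{t=0\}\cup\{z=0\}$, where $t^\alpha,z^\beta$ vanish while $u^{-\theta},v^{-\kappa}$ are unbounded, and controlling the growing kernel at infinity; both are settled exactly by the strict inequalities $\alpha<\tfrac1{\theta-1}$, $\beta<\tfrac1{\kappa-1}$ and the identity $\tfrac1{\kappa-1}+\tfrac1{\theta-1}=\tfrac{\alpha+\beta-\lambda}{n+1}$ in \eqref{RWH-exp-2}, after which Steps 2 and 3 are routine.
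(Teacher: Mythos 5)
Your proposal is correct and follows essentially the same route as the paper's proof (Lemma \ref{lem2}): first the two-sided bounds $u\asymp t^\alpha(1+|(x,t)|^{-\lambda})$, $v\asymp z^\beta(1+|(y,z)|^{-\lambda})$ obtained from a point of finiteness plus a positive-measure level set, then the weighted integrability, then the elementary kernel estimate \eqref{pineq} for H\"older continuity up to the boundary, and finally a bootstrap for interior smoothness. The only difference is cosmetic: the paper runs the H\"older estimate after pulling back to the ball $B^{n+1}$ via the conformal map, whereas you work directly with $w=u/t^\alpha$ on the half-space; the underlying inequalities are the same.
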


\begin{theorem}\label{theoremfen}
Let $\alpha,\beta,\lambda,\kappa,\theta$ satisfy \eqref{RWH-exp-2} and $\alpha,\beta>0$.
Suppose that $(u,v)$ is a pair of Lebesgue measurable positive solutions to system \eqref{intsp}, then
$u(x,t)$ and $v(x,t)$ are radically symmetric with respect to $x$ about some $x_0\in\mathbb{R}^{n}$.
Moreover, assume that
$$\kappa=\frac{2(n+1)+2\beta-\lambda}{2\beta-\lambda}, \quad \theta=\frac{2(n+1)+2\alpha-\lambda}{2\alpha-\lambda}.$$
Then, $u$ and $v$ must take the following form on the boundary $\partial \mathbb{R}^{n+1}_{+}$
\begin{equation*}
u(x,0)=c_1(\frac{d}{1+d^{2}|x-\xi_0|^2})^{\frac{\lambda-2\alpha}{2}}, \ \ v(x,0)=c_2(\frac{d}{1+d^{2}|x-\xi_0|^2})^{\frac{\lambda-2\beta}{2}} \quad \ \forall \, x\in\partial\mathbb{R}^{n+1}_{+}
\end{equation*}
for some $\xi_0\in\partial \mathbb{R}^{n+1}_{+}$, $c_1>0$, $c_2>0$ and $d>0$.
\end{theorem}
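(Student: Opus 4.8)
By Theorem~\ref{regularity} it suffices to argue with the integral system \eqref{intsp}, for a pair of positive solutions $(u,v)$ that are smooth in $\mathbb{R}^{n+1}_+$ and H\"{o}lder continuous up to $\partial\mathbb{R}^{n+1}_+$; note that, from the representation formulas, $u$ and $v$ vanish on $\partial\mathbb{R}^{n+1}_+$ like $t^\alpha$ and $z^\beta$ respectively, while $t^{-\alpha}u$ and $z^{-\beta}v$ extend continuously and positively to the boundary. The first step is to pin down the behaviour of $u,v$ at infinity and near $\partial\mathbb{R}^{n+1}_+$, using the integrability built into \eqref{RWH-exp-2} (the conditions $0\le\alpha<\tfrac{1}{\theta-1}$, $0\le\beta<\tfrac{1}{\kappa-1}$ together with $\tfrac{1}{\kappa-1}+\tfrac{1}{\theta-1}=\tfrac{\alpha+\beta-\lambda}{n+1}$), so that the comparison functions used below are under control; producing these asymptotics, together with the dominating functions needed for the various limit passages, is a genuine part of the work.

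For the radial symmetry in $x$ I would use the method of moving planes (the limiting case of moving spheres) in the $x$-variables: since the weights $t^\alpha,z^\beta$ and the kernel depend only on $t,z$ and on $|(x,t)-(y,z)|$, the system \eqref{intsp} is invariant under translations and reflections in $x$. For a direction $e\in\mathbb{R}^n$ write $u_\ell,v_\ell$ for the reflections of $u,v$ across $\{x\cdot e=\ell\}$; these are again solutions of \eqref{intsp}, and subtracting the equations one gets, on the half-space $\Sigma_\ell$ cut out by the plane, integral identities of the form $u-u_\ell=\int_{\Sigma_\ell}(K-K_\ell)(v^{-\kappa}-v_\ell^{-\kappa})$ with $K-K_\ell\le 0$ there (because $\lambda<0$ makes $|\,\cdot\,|^{-\lambda}$ increasing), and likewise for $v-v_\ell$. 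Since $s\mapsto s^{-\kappa}$ and $s\mapsto s^{-\theta}$ are decreasing, the system is \emph{cooperative}: $v\ge v_\ell$ on $\Sigma_\ell$ forces $u\ge u_\ell$ there and conversely. Starting the planes from $\ell=\pm\infty$ (controlling the tails by the Step~1 asymptotics) and sliding them, one shows that each $e$ admits a symmetry plane of both $u$ and $v$ — strict monotonicity being excluded by the leading asymptotic profile, which is even in $x$ — and hence $u(x,t)$ and $v(x,t)$ depend on $x$ only through $|x-x_0|$, where $x_0$ is the common intersection of these symmetry planes.

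For the explicit form in the conformal case $\kappa=\tfrac{2(n+1)+2\beta-\lambda}{2\beta-\lambda}$, $\theta=\tfrac{2(n+1)+2\alpha-\lambda}{2\alpha-\lambda}$, I would invoke the method of moving spheres proper. For $\xi\in\partial\mathbb{R}^{n+1}_+$ and $\mu>0$ let $w\mapsto w^{\xi,\mu}=\xi+\mu^2(w-\xi)/|w-\xi|^2$ be the inversion about $\partial B_\mu(\xi)$, which preserves $\mathbb{R}^{n+1}_+$, and set $u_{\xi,\mu}(w)=\big(\tfrac{|w-\xi|}{\mu}\big)^{2\alpha-\lambda}u(w^{\xi,\mu})$ and $v_{\xi,\mu}(w)=\big(\tfrac{|w-\xi|}{\mu}\big)^{2\beta-\lambda}v(w^{\xi,\mu})$. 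For precisely the conformal values of $\kappa,\theta$ the exponents $2\alpha-\lambda$, $2\beta-\lambda$ are the conformal weights and a direct change of variables shows $(u_{\xi,\mu},v_{\xi,\mu})$ is again a solution of \eqref{intsp} (this uses both the scaling identity and the conformal values), and moreover the system is invariant under dilations about boundary points. Running the moving spheres: for $\mu$ small one has $u\ge u_{\xi,\mu}$, $v\ge v_{\xi,\mu}$ outside $B_\mu(\xi)$ (the comparison functions being small near $\xi$ while $u,v$ are bounded below); sliding $\mu$ up, the cooperative identities plus a strong-comparison step give the usual dichotomy, and the possibility of moving the spheres indefinitely is ruled out by an integrability/asymptotics argument, so a critical sphere occurs at every $\xi\in\partial\mathbb{R}^{n+1}_+$. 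The critical spheres together with the dilation invariance and the radial symmetry already obtained reduce $u,v$ to the one-parameter Kelvin family via the calculus lemma of Li and Zhang (\cite{LZ2003,L2004}, cf.~\cite{LZ1995}); matching the homogeneity of $|(x,0)-(y,z)|^{-\lambda}$ identifies the boundary exponents as $\tfrac{\lambda-2\alpha}{2}$ for $u$ and $\tfrac{\lambda-2\beta}{2}$ for $v$, which under the standing hypothesis $u,v\in C(\overline{\mathbb{R}^{n+1}_+})$ is the asserted formula on $\partial\mathbb{R}^{n+1}_+$ (the half-space analogue of the classifications in \cite{DZ2015b,DM2021}).

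The main obstacle, relative to the classical moving-plane/sphere classifications on $\mathbb{R}^n$, is the interplay of two degeneracies: (i) $u$ and $v$ are \emph{unbounded} — they grow at infinity at the rate dictated by the weighted kernel — so both the start and the closure of the moving procedure must be carried out through refined asymptotic expansions rather than through a maximum principle for decaying functions, and in particular the ``move forever'' alternatives are excluded by quantitative growth/integrability comparisons; and (ii) the weights $t^\alpha,z^\beta$ \emph{vanish on} $\partial\mathbb{R}^{n+1}_+$, so $u,v$ vanish there and every comparison is meaningful only after dividing out $t^\alpha$ resp.\ $z^\beta$, which makes the uniform boundary estimates for $t^{-\alpha}u$ and $z^{-\beta}v$, and the passage from the interior conformal invariance to the boundary bubble profile, the genuinely delicate points of the proof.
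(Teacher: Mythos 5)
Your overall architecture for the classification part (asymptotics, Kelvin transforms $u_{\xi,\mu}(w)=(\tfrac{|w-\xi|}{\mu})^{2\alpha-\lambda}u(w^{\xi,\mu})$ about boundary points, then a calculus lemma) matches the paper's, but the comparison inequality from which you start the spheres points the wrong way, and the justification you give for it is false in this reversed setting. You claim that for small $\mu$ one has $u\ge u_{\xi,\mu}$, $v\ge v_{\xi,\mu}$ \emph{outside} $B_\mu(\xi)$, ``the comparison functions being small near $\xi$ while $u,v$ are bounded below''. Neither half of this holds: $u,v$ are not bounded below near $\partial\mathbb{R}^{n+1}_+$ (you yourself note they vanish like $t^\alpha$, $z^\beta$), and at infinity the Kelvin transform dominates $u$, not the reverse. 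Indeed, by Lemma \ref{lem2}(ii), for $|w|=|(x,t)|$ large one has $u(w)\asymp t^{\alpha}|w|^{-\lambda}$, while $w^{\xi,\mu}$ lies near $\xi$ so $u(w^{\xi,\mu})\asymp(\mu^{2}t/|w-\xi|^{2})^{\alpha}$, whence
\[
u_{\xi,\mu}(w)=\Big(\frac{\mu}{|w-\xi|}\Big)^{\lambda-2\alpha}u(w^{\xi,\mu})\asymp \mu^{\lambda}\,t^{\alpha}\,|w-\xi|^{-\lambda},
\]
and $\mu^{\lambda}\to+\infty$ as $\mu\to0^{+}$ because $\lambda<0$; so for small $\mu$ the inequality $u\ge u_{\xi,\mu}$ fails at infinity. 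The correct start (Lemmas \ref{lemaa} and \ref{lemmastart}) is $u\ge u_{\xi,r}$, $v\ge v_{\xi,r}$ \emph{inside} $B^{+}_{r}(\xi)$ — equivalently $u\le u_{\xi,r}$ outside — obtained by bounding $u_{\xi,r}\le 2C_1t^{\alpha}r^{-\lambda}$ on $B^{+}_{r^{2}}(\xi)$ against the lower bound $u\ge Ct^{\alpha}$ there, followed by an $L^{1}$-contraction on the bad sets. With your orientation the procedure cannot begin, and the calculus lemma of \cite{QLX2008} (which requires $u_{\xi,r}\le u$ off the ball) is then invoked with the wrong sign.

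Two further points. For the radial symmetry in $x$ you propose moving planes, whereas the paper runs a restricted moving-spheres argument (radii up to $|\xi|$) and concludes via Lemma \ref{lemmasymm}; the plane route is not hopeless, but your sketch skips its hardest step — $u,v$ grow like $|(x,t)|^{-\lambda}$, so starting and closing the planes requires the derivative asymptotics of $|(x,t)|^{\lambda/2}u$ (or the liminf comparison at infinity the paper performs), and ``the leading profile is even in $x$'' does not by itself exclude strict monotonicity or locate the critical plane. Finally, you assume the conformal values of $\kappa,\theta$ and do not explain why a critical sphere must occur: the paper excludes $\bar r(\xi)=+\infty$ for all $\xi$ by showing it would force $u=u(t)$, $v=v(z)$ and make the defining integral diverge, and it is the stopping lemma (Lemma \ref{lemmasequali}) that then \emph{derives} the conformal exponents rather than presupposing them.
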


To show the classification of the solutions of \eqref{intsp}, we employ the method of moving spheres.
Below are some well-known notation in this process. For any $r>0$, denote
$$B_{r}(x,t):=\{(y,z)\in \mathbb{R}^{n+1} \mid |(y,z)-(x,t)|<r,\ (x,t)\in \mathbb{R}^{n+1}\},$$
$$B_{r}^{+}(x,t):=\{(y,z)=(y_1,y_2,\cdots,y_n,z)\in B_{r}(x,t) \mid  z>0,\ (x,t)\in \partial\mathbb{R}^{n+1}_{+}\}.$$

For $\xi\in\partial\mathbb{R}^{n+1}_{+}$ and $r>0$, set
$$(x,t)^{\xi,r}:=\frac{r^2((x,t)-\xi)}{|(x,t)-\xi|^2}+\xi, \ \quad \forall\ (x,t)\in \mathbb{R}^{n+1}_{+}.$$
%For any such $(x,t)$, $(y,z)$ and $\xi$, it is easy to check
%\begin{equation}\label{equ2.1}
%|(x,t)^{\xi,r}-(y,z)^{\xi,r}|=\frac{r^2|(x,t)-(y,z)|}{|(x,t)-\xi||(y,z)-\xi|},
%\end{equation}
%\begin{equation}\label{equ2.2}
%|(y,z)-\xi||(x,t)-(y,z)^{\xi,r}|=|(x,t)-\xi||(x,t)^{\xi,r}-(y,z)|
%\end{equation}
%and
%\begin{equation}\label{equ2.3}
%((x,t)^{\xi,r})_{n+1}=\big(\frac{r}{|(x,t)-\xi|}\big)^2t.
%\end{equation}
Let $(u,v)$ be a pair of positive functions defined on $\mathbb{R}^{n+1}_{+}\times \mathbb{R}^{n+1}_{+}$. Define the Kelvin transforms
\begin{equation*}
u_{\xi,r}(x,t)=\big(\frac{r}{|(x,t)-\xi|}\big)^{\lambda-2\alpha} u((x,t)^{\xi,r}),\ \
v_{\xi,r}(y,z)=\big(\frac{r}{|(y,z)-\xi|}\big)^{\lambda-2\beta} v((y,z)^{\xi,r}) \qquad \forall \, (x,t), (y,z)\in \mathbb{R}^{n+1}_{+}.
\end{equation*}

\begin{lemma}\label{lemma1}
Assume that $(u,v)$ is a pair of positive solutions to system \eqref{intsp}. Then, for any $\xi\in\partial\mathbb{R}^{n+1}_{+}$ and $r>0$,
\begin{equation}\begin{split}\label{equ11}
&\quad u(x,t)-u_{\xi,r}(x,t)=\int_{B^{+}_{r}(\xi)}t^{\alpha}z^{\beta}K(\xi,r,(y,z),(x,t))
\big(\big(\frac{r}{|(y,z)-\xi|}\big)^{\mu_1}
v^{-\kappa}_{\xi,r}(y,z)-v^{-\kappa}(y,z)\big)dydz\\
\end{split}\end{equation}
for all $(x,t)\in \mathbb{R}^{n+1}_{+}$ and
\begin{equation}\label{equ12}\begin{split}
&\quad v(y,z)-v_{\xi,r}(y,z)=\int_{B^{+}_{r}(\xi)}t^{\alpha}z^{\beta}K(\xi,r,(y,z),(x,t))
\big(\big(\frac{r}{|(x,t)-\xi|}\big)^{\mu_2}
u^{-\theta}_{\xi,r}(x,t)-u^{-\theta}(x,t)\big)dxdt\\
\end{split}\end{equation}
for all $(y,z)\in\mathbb{R}^{n+1}_{+}$, where  $\mu_1=2(n+1)+2\beta-\lambda+(\lambda-2\beta)\kappa,\mu_2=2(n+1)+2\alpha-\lambda+(\lambda-2\alpha)\theta$, and
$$K(\xi,r,(y,z),(x,t)):=\big(\frac{r}{|(x,t)-\xi|}\big)^{\lambda}\frac{1}{|(x,t)^{\xi,r}-(y,z)|^{\lambda}}-\frac{1}{|(x,t)-(y,z)|^{\lambda}}.$$
Moreover, for any $\xi\in\partial\mathbb{R}^{n+1}_{+}$ and $r>0$,
\begin{equation}\label{equ224}
K(\xi,r,(y,z),(x,t))>0 \qquad \forall \, (x,t), (y,z)\in B^{+}_{r}(\xi).
\end{equation}
\end{lemma}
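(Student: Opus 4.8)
The plan is to derive the two integral identities \eqref{equ11}--\eqref{equ12} by transplanting each equation of the system \eqref{intsp} under the boundary inversion $(x,t)\mapsto(x,t)^{\xi,r}$, and then to read off \eqref{equ224} from an elementary algebraic identity. First I would record the standard properties of the inversion centred at $\xi\in\partial\mathbb{R}^{n+1}_{+}$: it is an involution mapping $\mathbb{R}^{n+1}_{+}$ onto itself and $B^{+}_{r}(\xi)$ onto $\mathbb{R}^{n+1}_{+}\setminus\overline{B^{+}_{r}(\xi)}$; its Jacobian is $\big(r/|(x,t)-\xi|\big)^{2(n+1)}$; the vertical coordinate of $(x,t)^{\xi,r}$ equals $r^{2}t/|(x,t)-\xi|^{2}$; and it satisfies the distance identities $|(x,t)^{\xi,r}-(y,z)^{\xi,r}|=\frac{r^{2}|(x,t)-(y,z)|}{|(x,t)-\xi|\,|(y,z)-\xi|}$ and $|(x,t)^{\xi,r}-(y,z)|=\frac{|(y,z)-\xi|}{|(x,t)-\xi|}\,|(x,t)-(y,z)^{\xi,r}|$. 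I would also use the involution property $(v_{\xi,r})_{\xi,r}=v$ and its consequence $v^{-\kappa}((w,s)^{\xi,r})=\big(r/|(w,s)-\xi|\big)^{\kappa(\lambda-2\beta)}v^{-\kappa}_{\xi,r}(w,s)$, which is immediate from the definition of $v_{\xi,r}$. All manipulations below are legitimate because the defining integrals in \eqref{intsp} and their transforms converge, by the regularity in Theorem \ref{regularity}.

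Starting from the first equation of \eqref{intsp} evaluated at $(x,t)^{\xi,r}$, substituting $(y,z)=(w,s)^{\xi,r}$, and then multiplying through by $\big(r/|(x,t)-\xi|\big)^{\lambda-2\alpha}$, I would collect the powers of $r$, $|(x,t)-\xi|$ and $|(w,s)-\xi|$. The exponent $\lambda-2\alpha$ is exactly the one for which all dependence on $|(x,t)-\xi|$ cancels; the surviving powers of $r$ and $|(w,s)-\xi|$ assemble into $\big(r/|(w,s)-\xi|\big)^{\mu_{1}}$, and one is left with the alternate representation
\begin{equation*}
u_{\xi,r}(x,t)=\int_{\mathbb{R}^{n+1}_{+}}\frac{t^{\alpha}z^{\beta}}{|(x,t)-(y,z)|^{\lambda}}\Big(\frac{r}{|(y,z)-\xi|}\Big)^{\mu_{1}}v^{-\kappa}_{\xi,r}(y,z)\,dydz .
\end{equation*}
Subtracting this from the first line of \eqref{intsp} expresses $u-u_{\xi,r}$ as a single integral over $\mathbb{R}^{n+1}_{+}$ of $\frac{t^{\alpha}z^{\beta}}{|(x,t)-(y,z)|^{\lambda}}\big[v^{-\kappa}(y,z)-\big(\frac{r}{|(y,z)-\xi|}\big)^{\mu_{1}}v^{-\kappa}_{\xi,r}(y,z)\big]$.

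Next I would split this integral over $B^{+}_{r}(\xi)$ and its complement, and on the complement substitute $(y,z)=(w,s)^{\xi,r}$ again, so that $(w,s)$ runs over $B^{+}_{r}(\xi)$. Using $(v_{\xi,r})_{\xi,r}=v$ together with the distance identities, the two pieces of the bracket turn, after simplification, into $\big(\frac{r}{|(x,t)-\xi|}\big)^{\lambda}|(x,t)^{\xi,r}-(w,s)|^{-\lambda}$ multiplied by $\big(\frac{r}{|(w,s)-\xi|}\big)^{\mu_{1}}v^{-\kappa}_{\xi,r}(w,s)$ and by $v^{-\kappa}(w,s)$, respectively. Adding the untouched $B^{+}_{r}(\xi)$ piece, the two kernels $|(x,t)-(w,s)|^{-\lambda}$ and $\big(\frac{r}{|(x,t)-\xi|}\big)^{\lambda}|(x,t)^{\xi,r}-(w,s)|^{-\lambda}$ combine into $-K(\xi,r,(w,s),(x,t))$, the bracket changes sign correspondingly, and one arrives at \eqref{equ11}. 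Identity \eqref{equ12} follows by the symmetric argument applied to the second equation of \eqref{intsp}, with $\theta,\mu_{2}$ in place of $\kappa,\mu_{1}$ and the Kelvin exponent $\lambda-2\beta$ in place of $\lambda-2\alpha$.

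Finally, for \eqref{equ224}, since $\lambda<0$ the inequality $K(\xi,r,(y,z),(x,t))>0$ is equivalent to $r\,|(x,t)-(y,z)|<|(x,t)-\xi|\,|(x,t)^{\xi,r}-(y,z)|$. Writing $a=(x,t)-\xi$ and $b=(y,z)-\xi$ and expanding directly, one verifies
\begin{equation*}
|(x,t)-\xi|^{2}\,|(x,t)^{\xi,r}-(y,z)|^{2}-r^{2}\,|(x,t)-(y,z)|^{2}=\big(|(x,t)-\xi|^{2}-r^{2}\big)\big(|(y,z)-\xi|^{2}-r^{2}\big),
\end{equation*}
whose right-hand side is strictly positive whenever $(x,t),(y,z)\in B^{+}_{r}(\xi)$; this gives the claim. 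The main obstacle is not conceptual but organizational: carrying the three-parameter power bookkeeping through the two successive changes of variables without slips, and checking that the two boundary contributions recombine with the correct signs into the single kernel $K$. Once the displayed product identity is in hand, the positivity \eqref{equ224} is immediate.
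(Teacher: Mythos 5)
Your proposal is correct and is exactly the standard Kelvin-transform computation that the paper itself omits (it only cites \cite{L2004} for the details): the exponent bookkeeping giving $\mu_1,\mu_2$, the splitting over $B_r^+(\xi)$ and its image under the inversion, and the product identity $|(x,t)-\xi|^{2}|(x,t)^{\xi,r}-(y,z)|^{2}-r^{2}|(x,t)-(y,z)|^{2}=(|(x,t)-\xi|^{2}-r^{2})(|(y,z)-\xi|^{2}-r^{2})$ all check out. No gaps.
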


The proof of Lemma \ref{lemma1} is similar to that in \cite{L2004}. We shall skip the details here.
Now we use the idea of \cite{ ChenL2017, L2004} to prove the following result, which is a crucial ingredient for the rest paper.

\begin{lemma}\label{lem2}
Assume that $(u,v)$ is a pair of positive Lebesgue
measurable solutions to system \eqref{intsp}. Then

\noindent (i) $$\int_{\mathbb{R}^{n+1}_{+}}t^{\alpha}(1+|(x,t)|^{-\lambda})u^{-\theta}(x,t)dxdt <+\infty,$$
$$\int_{\mathbb{R}^{n+1}_{+}}z^{\beta}(1+|(y,z)|^{-\lambda})v^{-\kappa}(y,z)dydz <+\infty;$$

\noindent (ii) there exist constants $C_1\geq1$ and $C_2\geq1$ such that
$$\frac{1}{C_1}(1+|(x,t)|^{-\lambda})\leq \frac{u(x,t)}{t^\alpha}\leq C_1(1+|(x,t)|^{-\lambda}),$$
$$\frac{1}{C_2}(1+|(y,z)|^{-\lambda})\leq \frac{v(y,z)}{z^\beta}\leq C_2(1+|(y,z)|^{-\lambda});$$

\noindent (iii) the following asymptotic properties hold:
$$a=\lim_{t^\alpha|(x,t)|^{-\lambda}\rightarrow\infty}\frac{u(x,t)}{t^\alpha|(x,t)|^{-\lambda}}=\int_{\mathbb{R}^{n+1}_{+}}z^{\beta}v^{-\kappa}(y,z)dydz<+\infty,$$
$$b=\lim_{z^\beta|(y,z)|^{-\lambda}\rightarrow\infty}\frac{v(y,z)}{z^\beta|(y,z)|^{-\lambda}}=\int_{\mathbb{R}^{n+1}_{+}}t^{\alpha}u^{-\theta}(x,t)dxdt<+\infty.$$
Moreover, $u,v\in C^\infty(\mathbb{R}^{n+1}_+)\cap C^\gamma(\overline{\mathbb{R}^{n+1}_+})$ for $\gamma\in(0,1)$.
\end{lemma}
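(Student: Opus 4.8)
\noindent\emph{Proof plan.} The plan is to run the standard regularity/decay bootstrap for integral systems of HLS type, but adapted to the fact that here $\lambda<0$, so the kernel $|(x,t)-(y,z)|^{-\lambda}$ does not blow up but \emph{grows} at infinity and \emph{vanishes} on the diagonal; consequently the quantities that must be controlled are exactly the weighted integrals $\int z^{\beta}(1+|(y,z)|^{-\lambda})v^{-\kappa}$ and $\int t^{\alpha}(1+|(x,t)|^{-\lambda})u^{-\theta}$ appearing in part (i). I will establish (i) first, then deduce (ii), then (iii), and finally the regularity.

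For (i): since $(u,v)$ is an honest solution of \eqref{intsp}, $u$ and $v$ are finite a.e., so one may pick two points $(\hat x_1,\hat t_1),(\hat x_2,\hat t_2)$ with $u$ finite there and with $|(\hat x_1,\hat t_1)-(\hat x_2,\hat t_2)|=2\delta>0$, so that every $(y,z)$ lies at distance $\ge\delta$ from at least one of them. Feeding these points into the first equation of \eqref{intsp}: on $\{|(y,z)|\ge 2|(\hat x_i,\hat t_i)|\}$ one has $|(\hat x_i,\hat t_i)-(y,z)|^{-\lambda}\ge c(1+|(y,z)|^{-\lambda})$, while on the bounded complement $|(\hat x_i,\hat t_i)-(y,z)|^{-\lambda}\ge \delta^{-\lambda}$ wherever the distance exceeds $\delta$, and there $1+|(y,z)|^{-\lambda}$ is bounded; summing the two contributions over $i$ gives $\int_{\mathbb{R}^{n+1}_{+}}z^{\beta}(1+|(y,z)|^{-\lambda})v^{-\kappa}\,dydz\le C\big(u(\hat x_1,\hat t_1)\hat t_1^{-\alpha}+u(\hat x_2,\hat t_2)\hat t_2^{-\alpha}\big)<+\infty$, and the analogue for $\int t^{\alpha}(1+|(x,t)|^{-\lambda})u^{-\theta}$ follows symmetrically from the second equation. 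Once (i) is known, the upper bounds of (ii) are immediate: from $u(x,t)=t^{\alpha}\int z^{\beta}v^{-\kappa}(y,z)|(x,t)-(y,z)|^{-\lambda}\,dydz$ and the elementary bound $|a-b|^{-\lambda}\le C(1+|a|^{-\lambda})(1+|b|^{-\lambda})$ one gets $u(x,t)/t^{\alpha}\le C_1(1+|(x,t)|^{-\lambda})$, and likewise for $v/z^{\beta}$. For the lower bounds, since $v$ is finite a.e.\ one may choose $M,R$ so that $E:=\{v\le M\}\cap B_R^{+}(0)$ has positive measure; then $v^{-\kappa}\ge M^{-\kappa}$ on $E$, so $u(x,t)/t^{\alpha}\ge M^{-\kappa}\int_{E}z^{\beta}|(x,t)-(y,z)|^{-\lambda}\,dydz$, and the last integral is a strictly positive continuous function of $(x,t)$ that is $\gtrsim|(x,t)|^{-\lambda}$ as $|(x,t)|\to\infty$, hence $\gtrsim 1+|(x,t)|^{-\lambda}$ throughout; the estimate for $v/z^{\beta}$ is symmetric. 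Part (iii) then follows from dominated convergence applied to $u(x,t)/(t^{\alpha}|(x,t)|^{-\lambda})=\int z^{\beta}v^{-\kappa}(y,z)\big(|(x,t)-(y,z)|/|(x,t)|\big)^{-\lambda}\,dydz$: the integrand tends pointwise to $z^{\beta}v^{-\kappa}$ as $|(x,t)|\to\infty$ (and $t^{\alpha}|(x,t)|^{-\lambda}\to\infty$ forces $|(x,t)|\to\infty$) and is dominated by $C\,z^{\beta}(1+|(y,z)|^{-\lambda})v^{-\kappa}\in L^{1}$ by (i), giving $a=\int z^{\beta}v^{-\kappa}<+\infty$; the formula for $b$ is identical.

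For the regularity: by (ii), $u$ and $v$ are bounded above and below on every compact subset of $\mathbb{R}^{n+1}_{+}$, so $u^{-\theta},v^{-\kappa}$ are locally bounded; a Hölder estimate exactly as in \eqref{equicon}—using the kernel inequality \eqref{pineq} together with the integrability of (i) to make the tails uniform—shows that $u/t^{\alpha}$ and $v/z^{\beta}$ lie in $C^{0,\gamma}$ on compact subsets of $\overline{\mathbb{R}^{n+1}_{+}}$, and since $t^{\alpha}$ (with $\alpha\ge0$) is itself locally Hölder, $u=t^{\alpha}(u/t^{\alpha})\in C^{\gamma}(\overline{\mathbb{R}^{n+1}_{+}})$ and similarly for $v$. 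Interior $C^{\infty}$ regularity is then a routine bootstrap: localizing with a cutoff, the ``far'' part of each potential has a $C^{\infty}$ kernel in $(x,t)$, while for the ``near'' part the smoothing effect of convolution against the positive power $|w|^{-\lambda}$ upgrades the Hölder exponent of $u^{-\theta},v^{-\kappa}$, and iterating the two equations of \eqref{intsp} against one another pushes $u,v$ up to $C^{\infty}(\mathbb{R}^{n+1}_{+})$.

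I expect the main obstacle to be part (i): one must locate the right finitely many ``good'' points and split the kernel so as to absorb the diagonal degeneracy of $|(x,t)-(y,z)|^{-\lambda}$ (which vanishes there precisely because $\lambda<0$) while still capturing its growth at infinity, producing exactly the weighted integrals $\int z^{\beta}(1+|(y,z)|^{-\lambda})v^{-\kappa}$ and $\int t^{\alpha}(1+|(x,t)|^{-\lambda})u^{-\theta}$ that drive everything else. Once these are in hand, (ii), (iii) and the regularity are largely bookkeeping, parallel to the renormalization argument already carried out in Case (iii) of the proof of Theorem \ref{equi01'}.
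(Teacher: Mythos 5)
Your proposal is correct, and its overall architecture --- integrability of the weighted potentials first, then the two-sided bounds, then dominated convergence for (iii), then H\"{o}lder continuity and bootstrap --- coincides with the paper's, which at this point simply reruns the computations \eqref{nor13} and \eqref{nor2} from the renormalization argument. The one genuine difference is in part (i). You use a two-point argument: two points where $u$ is finite, at mutual distance $2\delta$, so that for every $(y,z)$ the larger of the two kernels is at least $\delta^{-\lambda}$ and, at infinity, at least $c\,|(y,z)|^{-\lambda}$; summing the two instances of the first equation of \eqref{intsp} then yields (i) with no further input. The paper instead fixes a single point $(\hat x,\hat t)$ and splits $\mathbb{R}^{n+1}_+$ into $\{|(y,z)|<\frac{1}{2}|(\hat x,\hat t)|\}$, $\{|(y,z)|>2|(\hat x,\hat t)|\}$ and the intermediate annulus; because the kernel vanishes on the diagonal (as $\lambda<0$), the annulus must be handled separately, which forces the paper to establish the pointwise lower bound $v(y,z)\ge c\,z^{\beta}(1+|(y,z)|^{-\lambda})$ (the analogue of \eqref{nor11}) \emph{before} (i) and to use $\beta<\frac{1}{\kappa-1}$ there. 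Your ordering is cleaner: (i) needs only finiteness of $u$ at two points, and the lower bound in (ii) becomes an independent step via the positive-measure set $E$, exactly as in \eqref{nor2}. For the regularity you argue directly on $\overline{\mathbb{R}^{n+1}_+}$ using \eqref{pineq} and the tail control from (i), whereas the paper passes to the ball through the conformal map \eqref{cftrans} and proves H\"{o}lder continuity of the Kelvin transform on $\overline{B^{n+1}}$; for the stated (local) conclusion these are equivalent, the paper's route additionally packaging the behavior at infinity, and both proofs leave the interior $C^\infty$ bootstrap at the level of a sketch.
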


\begin{proof}
Using similar arguments as \eqref{nor13} and \eqref{nor2}, we derive (i) and (ii).

For $|(x,t)|>1$ and $|(y,z)|>1$, using (i) and (ii) gives
\begin{equation*}\begin{split}
\frac{u(x,t)}{t^\alpha|(x,t)|^{-\lambda}}&=|(x,t)|^{\lambda}\int_{\mathbb{R}^{n+1}_{+}}\frac{z^{\beta}v^{-\kappa}(y,z)}{|(x,t)-(y,z)|^{\lambda}}dydz\\
&\leq C \int_{\mathbb{R}^{n+1}_{+}}z^{\beta}(1+|(y,z)|^{-\lambda})v^{-\kappa}(y,z)dydz<+\infty
\end{split}\end{equation*}
and
\begin{equation*}\begin{split}
\frac{v(y,z)}{z^\beta|(y,z)|^{-\lambda}}&=|(y,z)|^{\lambda}\int_{\mathbb{R}^{n+1}_{+}}\frac{t^{\alpha}u^{-\theta}(x,t)}{|(x,t)-(y,z)|^{\lambda}}dxdt\\
&\leq C \int_{\mathbb{R}^{n+1}_{+}}t^{\alpha}(1+|(x,t)|^{-\lambda})u^{-\theta}(x,t)dxdt<+\infty.
\end{split}\end{equation*}
Hence, using dominated convergence theorem, we arrive at (iii).

\medskip

By conformal transformation \eqref{cftrans}, we have
\begin{align*}
&\int_{\mathbb{R}^{n+1}_{+}}z^{\beta}(1+|(x,t)|^{-\lambda})v^{-k}(x,t)dxdt\\
&=\int_{B^{n+1}}\big(\frac{1}{2}-\frac{|\eta-x^1|^2}{2}\big)^{\beta}
\big(\frac{2}{|\eta-x^0|}\big)^{2(n+1)+(\lambda-2\beta)k+2\beta}v^{-k}_{x^0,2}(\eta)\left(1+\left|\frac{2^2(\eta-x^0)}{|\eta-x^0|^2}
+x^0\right|^{-\lambda}\right)d\eta.
\end{align*}
Then using (i) gives
\begin{align*}
& \int_{B^{n+1}}\big(\frac{1}{2}-\frac{|\eta-x^1|^2}{2}\big)^{\beta}
\big(\frac{2}{|\eta-x^0|}\big)^{2(n+1)+(\lambda-2\beta)(k-1)}(1+|\eta|^{-\lambda-1})v^{-k}_{x^0,2}(\eta)d\eta\\
&\leq C\int_{B^{n+1}}\big(\frac{1}{2}-\frac{|\eta-x^1|^2}{2}\big)^{\beta}
\big(\frac{2}{|\eta-x^0|}\big)^{2(n+1)+(\lambda-2\beta)k+2\beta}v^{-k}_{x^0,2}(\eta)\left(1+\left|\frac{2^2(\eta-x^0)}{|\eta-x^0|^2}
+x^0\right|^{-\lambda}\right)d\eta\\
&<\infty,
\end{align*}
where in the first inequality we have used the fact
$$||\vec{a}|-|\vec{b}||\leq|\vec{a}-\vec{b}|.$$

A simple calculation gives
\begin{align*}
u_{x^0,2}(\zeta)
&=\int_{B^{n+1}}\big(\frac{1}{2}-\frac{|\zeta-x^1|^2}{2}\big)^{\alpha}\big(\frac{1}{2}-\frac{|\eta-x^1|^2}{2}\big)^{\beta}
\big(\frac{2}{|\eta-x^0|}\big)^{2(n+1)+(\lambda-2\beta)(k-1)}\frac{v^{-k}_{x^0,2}(\eta)}{|\zeta-\eta|^\lambda}d\eta,
\end{align*}
\begin{align*}
v_{x^0,2}(\eta)
&=\int_{B^{n+1}}\big(\frac{1}{2}-\frac{|\zeta-x^1|^2}{2}\big)^{\alpha}\big(\frac{1}{2}-\frac{|\eta-x^1|^2}{2}\big)^{\beta}
\big(\frac{2}{|\zeta-x^0|}\big)^{2(n+1)+(\lambda-2\alpha)(\theta-1)}\frac{u^{-\theta}_{x^0,2}(\zeta)}{|\zeta-\eta|^\lambda}d\zeta
\end{align*}
for all $\zeta\in B^{n+1}$.

For any given  $\zeta^1,\zeta^2\in \overline{B^{n+1}}$ and arbitrary $\eta\in B^{n+1}$, it is easy to verify that
\begin{align}\label{pineq}
&\Big||\zeta^1-\eta|^{-\lambda}-|\zeta^2-\eta|^{-\lambda}\Big|\leq
\begin{cases}
C|\zeta^1-\zeta^2|^{-\lambda}, \quad \text{if} \,\, -1<\lambda<0,\\ \\
C\big(1+|\eta|^{-\lambda-1}\big)|\zeta^1-\zeta^2|, \quad \text{if} \,\, \lambda\leq-1.
\end{cases}
\end{align}
Then using\eqref{pineq}, we deduce that for any given $\zeta^1,\zeta^2\in \overline{B^{n+1}}$ and $\alpha\in(0,1)$,
\begin{eqnarray*}
&&|u_{x^0,2}(\zeta^1)-u_{x^0,2}(\zeta^2)|\nonumber\\
&=&\big|\int_{B^{n+1}}\big(\frac{1}{2}-\frac{|\eta-x^1|^2}{2}\big)^{\beta}
\big(\frac{2}{|\eta-x^0|}\big)^{2(n+1)+(\lambda-2\beta)(k-1)}v^{-k}_{x^0,2}(\eta)\times\big.\nonumber\nonumber\\
&&\quad\  \big.\big[\big(\frac{1}{2}-\frac{|\zeta^1-x^1|^2}{2}\big)^{\alpha}\frac{1}{|\zeta^1-\eta|^\lambda}-
\big(\frac{1}{2}-\frac{|\zeta^2-x^1|^2}{2}\big)^{\alpha}\frac{1}{|\zeta^2-\eta|^\lambda}\big]d\eta
\big|\nonumber\\
&=&\big|\int_{B^{n+1}}\big(\frac{1}{2}-\frac{|\eta-x^1|^2}{2}\big)^{\beta}
\big(\frac{2}{|\eta-x^0|}\big)^{2(n+1)+(\lambda-2\beta)(k-1)}\frac{v^{-k}_{x^0,2}(\eta)}{|\zeta^2-\eta|^\lambda}\times\big.\nonumber\nonumber\\
&&\quad\  \big.\big[\big(\frac{1}{2}-\frac{|\zeta^1-x^1|^2}{2}\big)^{\alpha}-
\big(\frac{1}{2}-\frac{|\zeta^2-x^1|^2}{2}\big)^{\alpha}\big]d\eta\nonumber\\
&& +\int_{B^{n+1}}\big(\frac{1}{2}-\frac{|\zeta^1-x^1|^2}{2}\big)^{\alpha}\big(\frac{1}{2}-\frac{|\eta-x^1|^2}{2}\big)^{\beta}
\big(\frac{2}{|\eta-x^0|}\big)^{2(n+1)+(\lambda-2\beta)(k-1)}v^{-k}_{x^0,2}(\eta)\times\big.\nonumber\nonumber\\
&&\quad\  \big.\big[\frac{1}{|\zeta^1-\eta|^\lambda}-
\frac{1}{|\zeta^2-\eta|^\lambda}\big]
d\eta\big|\nonumber\\
&\leq& C|\zeta^1-\zeta^2|^\alpha\int_{B^{n+1}}\big(\frac{1}{2}-\frac{|\eta-x^1|^2}{2}\big)^{\beta}
\big(\frac{2}{|\eta-x^0|}\big)^{2(n+1)+(\lambda-2\beta)(k-1)}v^{-k}_{x^0,2}(\eta)d\eta\nonumber\\
&& + C|\zeta^1-\zeta^2|^{\gamma}
\begin{cases}
\int_{B^{n+1}}\big(\frac{1}{2}-\frac{|\eta-x^1|^2}{2}\big)^{\beta}
\big(\frac{2}{|\eta-x^0|}\big)^{2(n+1)+(\lambda-2\beta)(k-1)}v^{-k}_{x^0,2}(\eta)d\eta,\quad -1<\lambda<0\nonumber\\
\int_{B^{n+1}}\big(\frac{1}{2}-\frac{|\eta-x^1|^2}{2}\big)^{\beta}
\big(\frac{2}{|\eta-x^0|}\big)^{2(n+1)+(\lambda-2\beta)(k-1)}\big(1+|\eta|^{-\lambda-1}\big)v^{-k}_{x^0,2}(\eta)d\eta,\quad \lambda\leq-1
\end{cases}\nonumber\\
&\leq& C|\zeta^1-\zeta^2|^{\gamma},
\end{eqnarray*}
where $\gamma\in (0,\alpha]$.
For $\alpha=0$, we have
\begin{eqnarray*}
&&|u_{x^0,2}(\zeta^1)-u_{x^0,2}(\zeta^2)|\nonumber\\
&=&\big|\int_{B^{n+1}}\big(\frac{1}{2}-\frac{|\eta-x^1|^2}{2}\big)^{\beta}
\big(\frac{2}{|\eta-x^0|}\big)^{2(n+1)+(\lambda-2\beta)(k-1)}v^{-k}_{x^0,2}(\eta)\big[\frac{1}{|\zeta^1-\eta|^\lambda}-
\frac{1}{|\zeta^2-\eta|^\lambda}\big]
d\eta \big|\nonumber\\
&\leq& C|\zeta^1-\zeta^2|^{\gamma}.
\end{eqnarray*}
For $\alpha\geq1$, it is easy to check that $|u_{x^0,2}(\zeta^1)-u_{x^0,2}(\zeta^2)|\leq C|\zeta^1-\zeta^2|^{\gamma}$.
It follows that
$u_{x^0,2}(\zeta)\in C^\gamma(\overline{B^{n+1}})$ with $\gamma\in(0,1)$.
This implies that $u(x)$ is H\"{o}lder continuous in $\overline{\mathbb{R}^{n+1}}$. That is,
$$u(x)\in C^\gamma(\overline{\mathbb{R}^{n+1}}).$$
Similarly, one can deduce that $v(x)\in C^\gamma(\overline{\mathbb{R}^{n+1}})$.
By bootstrap, we obtain $u(x),v(x)\in C^\infty(\mathbb{R}^{n+1}_+)$.
\end{proof}

The following lemma shows that $u-u_{\xi,r}$ and $v-v_{\xi,r}$ are strictly positive in a small neighborhood of $\xi$.
\begin{lemma}\label{lemaa}
Assume that $(u,v)$ is a pair of positive Lebesgue measurable solutions to system \eqref{intsp}. Then, for any $\xi\in\partial\mathbb{R}^{n+1}_{+}$, there exists $\delta_{0}(\xi)>0$ small enough such that, for any $0<r\leq\delta_{0}$, there exist constants $C_3>0$ and $C_4>0$ such that
\begin{align*}
&u(x,t)-u_{\xi,r}(x,t)>C_3t^\alpha>0 ,\ \ v(y,z)-v_{\xi,r}(y,z)>C_4z^\beta>0 \qquad \forall \, (x,t), (y,z)\in B^+_{r^{2}}(\xi).
\end{align*}
\end{lemma}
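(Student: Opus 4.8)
\textbf{Proposed proof of Lemma \ref{lemaa}.}
The plan is to use the integral representation formulas \eqref{equ11}--\eqref{equ12} of Lemma \ref{lemma1} together with the two-sided bounds and positivity of the kernel $K$. Fix $\xi\in\partial\mathbb{R}^{n+1}_+$. First I would record the signs of the exponents $\mu_1=2(n+1)+2\beta-\lambda+(\lambda-2\beta)\kappa$ and $\mu_2=2(n+1)+2\alpha-\lambda+(\lambda-2\alpha)\theta$: in the critical (conformal) case $\mu_1=\mu_2=0$, while in the subcritical situation the condition $\frac{1}{\kappa-1}+\frac{1}{\theta-1}<\frac{\alpha+\beta-\lambda}{n+1}$ (equivalently the exponents being below the critical ones) forces $\mu_1,\mu_2$ to have a definite sign. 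Since on $B^+_r(\xi)$ one has $\frac{r}{|(y,z)-\xi|}\ge 1$, the factor $\big(\tfrac{r}{|(y,z)-\xi|}\big)^{\mu_1}\ge 1$ when $\mu_1\ge 0$, which is the case we need; this makes the sign analysis of the right-hand sides clean. (In the relevant critical case this factor is identically $1$, so no issue arises at all.)

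Next I would set up the bootstrap-type argument. Because $K(\xi,r,(y,z),(x,t))>0$ on $B^+_r(\xi)$ by \eqref{equ224}, the right-hand side of \eqref{equ11} has the sign of $v_{\xi,r}^{-\kappa}-v^{-\kappa}$ on $B^+_r(\xi)$ (using $\big(\tfrac{r}{|(y,z)-\xi|}\big)^{\mu_1}\ge 1$ and $-\kappa<0$, with appropriate care that $v_{\xi,r}\le v$ is what we want to propagate). The strategy is: on the small ball $B^+_{r^2}(\xi)$, the Kelvin transform $v_{\xi,r}$ is comparable to $v$ evaluated near the (bounded) region, whereas outside it the weight $(r/|\cdot-\xi|)^{\lambda-2\beta}$ with $\lambda-2\beta<0$ makes $v_{\xi,r}$ small. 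More precisely, for $(y,z)\in B^+_r(\xi)\setminus B^+_{r^2}(\xi)$ I would use Lemma \ref{lem2}(ii), which gives $v(y,z)\asymp z^\beta(1+|(y,z)|^{-\lambda})$, to show that $v_{\xi,r}^{-\kappa}(y,z)-v^{-\kappa}(y,z)$ is controlled, and on $B^+_{r^2}(\xi)$ one can make $v_{\xi,r}(y,z)-v(y,z)$ as small as one likes by choosing $r=\delta_0$ small, by continuity of $v$ (Lemma \ref{lem2}, $v\in C^\gamma(\overline{\mathbb{R}^{n+1}_+})$) and the fact that the Kelvin transform of a continuous positive function tends to the function itself near $\xi$ as $r\to 0$.

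Then I would estimate the right-hand side of \eqref{equ11} from below on $B^+_{r^2}(\xi)$. Using the lower bound for $K$ on the region where $v_{\xi,r}^{-\kappa}-v^{-\kappa}$ is positive (a fixed annular portion of $B^+_r(\xi)$ where $v_{\xi,r}<v$ strictly, by the smallness argument) and integrability from Lemma \ref{lem2}(i), I would produce a constant $C_3>0$ (depending on $\xi$ and $\delta_0$ but not on the point) with
\[
u(x,t)-u_{\xi,r}(x,t)\ge C_3\, t^\alpha,\qquad (x,t)\in B^+_{r^2}(\xi),
\]
the factor $t^\alpha$ coming out of \eqref{equ11} explicitly. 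The symmetric argument applied to \eqref{equ12}, with the roles of $(u,\theta,\alpha,\mu_2)$ and $(v,\kappa,\beta,\mu_1)$ interchanged, yields the companion estimate $v(y,z)-v_{\xi,r}(y,z)\ge C_4\,z^\beta$ on $B^+_{r^2}(\xi)$. I expect the main obstacle to be making the "smallness of $v_{\xi,r}-v$ near $\xi$" step quantitative and uniform: one must choose $\delta_0$ depending only on $\xi$ (through the modulus of continuity of $u,v$ near $\xi$ and the tail bounds of Lemma \ref{lem2}) so that the positive contribution of the kernel integral strictly dominates, and one has to handle the mild singularity of $|(x,t)^{\xi,r}-(y,z)|^{-\lambda}$ carefully when $\lambda\le-1$ using \eqref{pineq}; all the other pieces are routine given Lemmas \ref{lemma1} and \ref{lem2}.
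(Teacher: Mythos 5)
Your proposal takes the wrong route and contains a step that fails. The paper does not use the representation formulas \eqref{equ11}--\eqref{equ12} here at all, and for good reason: \eqref{equ11} expresses $u-u_{\xi,r}$ in terms of the sign of $v^{-\kappa}_{\xi,r}-v^{-\kappa}$ on $B^+_r(\xi)$, so to extract positivity of the $u$-difference from it you would already need sign information on the $v$-difference — which is precisely what Lemma \ref{lemaa} is meant to supply as the base step of the moving-sphere scheme. Your "bootstrap" never closes this circle. Worse, the central smallness claim is backwards: it is not true that "the Kelvin transform of a continuous positive function tends to the function itself near $\xi$ as $r\to0$." For $(y,z)\in B^+_{r^2}(\xi)$ the reflected point satisfies $|(y,z)^{\xi,r}-\xi|=r^2/|(y,z)-\xi|>1$, i.e. it escapes to a fixed distance (and beyond) from $\xi$, so $v_{\xi,r}$ on $B^+_{r^2}(\xi)$ is governed by the behaviour of $v$ far from $\xi$, not by $v$ near $\xi$. (You also have the sign of $\mu_1$ wrong: the paper uses $\mu_1\le 0$, so $(r/|(y,z)-\xi|)^{\mu_1}\le 1$ on $B^+_r(\xi)$.)

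The actual proof is a short direct computation from Lemma \ref{lem2}(ii). Take $\xi=0$. For $(x,t)\in B^+_{r^2}(0)$ one has $|(x,t)^{0,r}|\ge 1$, and the upper bound $u\le C_1 t^\alpha(1+|\cdot|^{-\lambda})\le 2C_1 t^\alpha|\cdot|^{-\lambda}$ applied at the reflected point gives, after the algebra of the Kelvin weights,
\begin{equation*}
u_{0,r}(x,t)=\Big(\frac{r}{|(x,t)|}\Big)^{\lambda-2\alpha}u((x,t)^{0,r})\le \frac{2C_1 t^\alpha}{r^{\lambda}},
\end{equation*}
which tends to $0$ uniformly (relative to $t^\alpha$) as $r\to0$ because $\lambda<0$. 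Since the lower bound in Lemma \ref{lem2}(ii) gives $u(x,t)\ge t^\alpha/C_1$, choosing $\delta_0$ so that $2C_1\delta_0^{-\lambda}<\tfrac{1}{2C_1}$ yields $u-u_{0,r}\ge C_3 t^\alpha$ on $B^+_{r^2}(0)$, and symmetrically for $v$. No kernel positivity, no representation formula, and no continuity modulus of $u,v$ near $\xi$ is needed.
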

\begin{proof}
Without loss of generality, we assume $\xi=0$. For any $(x,t)\in B^{+}_{r^{2}}(\xi)$, it is easy to see that $|(x,t)^{{0,r}}|\geq1$. From Lemma \ref{lem2}, we have
\begin{equation*}\begin{split}
u_{0,r}(x,t)&=\big(\frac{r}{|(x,t)|}\big)^{\lambda-2\alpha}u((x,t)^{0,r})\\
&\leq \big(\frac{r}{|(x,t)|}\big)^{\lambda-2\alpha}\big(\frac{r^2t}{|(x,t)|^{ 2}}\big)^{\alpha} \frac{2C_1}{|(x,t)^{0,r}|^{\lambda}}\\
&\leq \frac{2C_1t^{\alpha}}{r^{\lambda}}.
\end{split}\end{equation*}
Hence there exists $\delta_{0}>0$ sufficiently small, such that for any $0<r\leq\delta_{0}$, there exists constant $C_3>0$ such that
\begin{align*}
u(x,t)-u_{0,r}(x,t)&\geq u(x,t)-\frac{2C_1t^{\alpha}}{r^{\lambda}}\nonumber\\
&=t^\alpha\big(\int_{\mathbb{R}^{n+1}_{+}}\frac{ z^{\beta}v^{-\kappa}(y,z)}{|(x,t)-(y,z)|^{\lambda}}dydz-\frac{2C_1}{r^{\lambda}})\nonumber\\
&\geq C_3t^\alpha>0,\ \qquad \forall \, (x,t)\in B^{+}_{r^{2}}(0).
\end{align*}

Similarly, we derive that there exists $\delta_{0}>0$ small enough, such that for any $0<r\leq\delta_{0}$, there exists constant $C_4>0$ such that
\begin{equation*}\begin{split}
v(y,z)-v_{0,r}(y,z)\geq C_4z^\beta>0 \qquad \forall \, (y,z)\in B^{+}_{r^{2}}(0).
\end{split}\end{equation*}
This completes the proof of Lemma \ref{lemaa}.
\end{proof}

\medskip

For $\xi\in\partial\mathbb{R}^{n+1}_{+}$ and $r>0$, define
$$B_{r,u}^-=\{(x,t)\in B^{+}_{r}(\xi) \mid u(x,t)<u_{\xi,r}(x,t)\},\ \ B_{r,v}^-=\{(y,z)\in B_r^{+}(\xi) \mid v(y,z)<v_{\xi,r}(y,z)\}.$$
We start the moving sphere procedure by showing that $B_{r,u}^-=B_{r,v}^-=\emptyset$ for sufficiently small $r$.
\begin{lemma}\label{lemmastart}
For any $\xi\in\partial\mathbb{R}^{n+1}_{+}$, there exists $\epsilon_0(\xi)>0$ such that, for $r\in (0,\epsilon_0(\xi)]$,
\begin{equation*}
u(x,t)\geq u_{\xi,r}(x,t),\ \
v(y,z)\geq v_{\xi,r}(y,z) \qquad \forall \, (x,t), (y,z)\in B_{r}^{+}(\xi).
\end{equation*}
\end{lemma}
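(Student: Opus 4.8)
The plan is to run the standard ``start the moving sphere'' argument: I want to show that for $r$ small, the sets $B_{r,u}^-$ and $B_{r,v}^-$ are empty, by deriving a contradiction from the integral identities \eqref{equ11}--\eqref{equ12} combined with the reversed H\"older inequality (since $\kappa,\theta>1$, the maps $v\mapsto v^{-\kappa}$, $u\mapsto u^{-\theta}$ are \emph{decreasing}, which is exactly what makes the kernel positivity in \eqref{equ224} work in our favour). Fix $\xi\in\partial\mathbb{R}^{n+1}_+$; by Lemma \ref{lemaa} choose $\delta_0(\xi)>0$ so that for $0<r\le\delta_0$ one has $u>u_{\xi,r}$ and $v>v_{\xi,r}$ on $B^+_{r^2}(\xi)$. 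Hence $B_{r,u}^-\subset B^+_r(\xi)\setminus B^+_{r^2}(\xi)$ and similarly for $B_{r,v}^-$, so on these sets $|(x,t)-\xi|$ is bounded below by $r^2$; this will give us the decay needed to absorb constants.

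First I would estimate $u-u_{\xi,r}$ on $B_{r,v}^-$ using \eqref{equ11}. On $B_{r,v}^-$ we have $v_{\xi,r}>v>0$, and since $(r/|(y,z)-\xi|)^{\mu_1}\ge 1$ on $B^+_r(\xi)$ when the exponent $\mu_1\ge 0$ (one checks $\mu_1=2(n+1)+2\beta-\lambda+(\lambda-2\beta)\kappa\ge0$ from \eqref{RWH-exp-2}; in the conformal case it is exactly $0$, and in general the sign works out from $\kappa>1$ and $\beta\ge0$), the integrand bracket in \eqref{equ11} is controlled, using $K>0$, by $v_{\xi,r}^{-\kappa}-v^{-\kappa}$ on $B^-_{r,v}$ and is $\le$ the corresponding quantity with $v$ replaced by $v_{\xi,r}$ elsewhere; the upshot is a pointwise bound
\begin{equation*}
u(x,t)-u_{\xi,r}(x,t)\le C\,t^\alpha\int_{B^-_{r,v}} z^\beta\,|(x,t)-(y,z)|^{-\lambda}\big(v^{-\kappa}(y,z)-v_{\xi,r}^{-\kappa}(y,z)\big)\,dydz,
\end{equation*}
and since $v^{-\kappa}-v_{\xi,r}^{-\kappa}\le \kappa\, v^{-\kappa-1}(v_{\xi,r}-v)\cdot(\text{something})$... more efficiently, since $t\mapsto t^{-\kappa}$ is convex and decreasing, $v^{-\kappa}-v_{\xi,r}^{-\kappa}\le \kappa\,v_{\xi,r}^{-\kappa-1}\,(v_{\xi,r}-v)$ on $B^-_{r,v}$, and on $B^-_{r,v}$ we have lower bounds on $v_{\xi,r}$ from Lemma \ref{lem2}(ii) pushed through the Kelvin transform. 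Integrating $|(u-u_{\xi,r})^+/t^\alpha|^\theta$ against $t^\alpha$ over $B^-_{r,u}$, applying the reversed H\"older inequality (valid since $\theta'<0$), and using Lemma \ref{lem2}(i) to bound the tail integrals $\int z^\beta(1+|(y,z)|^{-\lambda})v^{-\kappa}$, I obtain an estimate of the form $\|(v-v_{\xi,r})^+\|$ over $B^-_{r,v}$ controlled by a constant times $r^{\sigma}$ (for some $\sigma>0$ coming from the measure of $B^-_{r,v}\subset B^+_r\setminus B^+_{r^2}$) times $\|(u-u_{\xi,r})^+\|$ over $B^-_{r,u}$, and symmetrically with the roles of $u$ and $v$ interchanged. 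Chaining the two inequalities and choosing $r=\epsilon_0(\xi)$ small enough that the product of the two small constants is $<1$ forces both norms to be zero, i.e.\ $B^-_{r,u}=B^-_{r,v}=\emptyset$.

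The main obstacle I anticipate is making the reversed-H\"older bookkeeping honest: because the exponents $p,q$ (equivalently $\theta',\kappa'$) are negative, one must be careful that all the quantities being raised to negative powers are bounded away from $0$ and $\infty$ on the relevant sets --- this is exactly why Lemma \ref{lem2}(ii) and the strict positivity lower bounds of Lemma \ref{lemaa} are needed --- and that the weights $t^\alpha$, $z^\beta$ are handled consistently on both sides of every inequality. A second delicate point is verifying $\mu_1,\mu_2\ge0$ (or otherwise incorporating the factor $(r/|(y,z)-\xi|)^{\mu_i}$, which on $B^+_r(\xi)$ is $\ge 1$ only when the exponent is nonnegative) so that the kernel-sign argument in \eqref{equ224} actually yields the one-signed integrand we want; in the non-conformal range one should check this follows from \eqref{RWH-exp-2}. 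Once these positivity/boundedness inputs are in place, the contraction estimate is routine.
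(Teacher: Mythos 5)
Your proposal follows the same route as the paper: localize $B_{r,u}^-$ and $B_{r,v}^-$ to the annulus $B^+_r(\xi)\setminus B^+_{r^2}(\xi)$ via Lemma \ref{lemaa}, use the positivity of $K$ together with $(r/|(y,z)-\xi|)^{\mu_1}\geq1$ to restrict the representation \eqref{equ11} to the bad set $B_{r,v}^-$, linearize $v^{-\kappa}-v_{\xi,r}^{-\kappa}\leq\kappa v^{-\kappa-1}(v_{\xi,r}-v)$, and close a two-way contraction whose constant tends to $0$ with $r$, forcing both bad sets to be empty. That is exactly the paper's argument.

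Two points need repair, though. First, the contraction step: the reversed H\"older inequality (for exponents in $(0,1)$ and their negative conjugates) gives a \emph{lower} bound on $\int fg$, so it cannot supply the \emph{upper} bound $\|u_{\xi,r}-u\|_{L^1(B_{r,u}^-)}\leq\tfrac14\|v_{\xi,r}-v\|_{L^1(B_{r,v}^-)}$ that you need; the paper instead bounds the localized kernel pointwise, $\kappa\,t^\alpha z^\beta v^{-\kappa-1}K\leq C'r^{\alpha+\beta-\lambda}$ on $B_{r,u}^-\times B_{r,v}^-$ (using $t,z\leq r$ and $K\leq|\tfrac{r((x,t)-\xi)}{|(x,t)-\xi|}-\tfrac{((y,z)-\xi)|(x,t)-\xi|}{r}|^{-\lambda}\leq Cr^{-\lambda}$), and then pairs this sup bound with plain $L^1$ norms and the smallness of $|B_{r,u}^-|$. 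Your weighted-$L^\theta$ bookkeeping is not needed and, as stated, points the wrong way. Second, your parenthetical claim that $\mu_1\geq0$ ``works out from $\kappa>1$ and $\beta\geq0$'' via \eqref{RWH-exp-2} is not correct: $\mu_1\geq0$ is equivalent to $\kappa\leq\frac{2(n+1)+2\beta-\lambda}{2\beta-\lambda}$, and \eqref{RWH-exp-2} only gives $\frac{1}{\kappa-1}>\beta$, which does not imply $\frac{1}{\kappa-1}\geq\frac{2\beta-\lambda}{2(n+1)}$ when $\beta<\frac{-\lambda}{2n}$. The paper does not derive this from \eqref{RWH-exp-2} either; it simply imposes $\kappa\leq\frac{2(n+1)+2\beta-\lambda}{2\beta-\lambda}$ as a hypothesis in this step (and later shows equality must hold). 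You should state it as an assumption rather than claim it follows.
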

\begin{proof}
Using \eqref{equ11} and \eqref{equ224}, we have,
for any $(x,t) \in B_{r,u}^-$ and $\kappa\leq\frac{2(n+1)+2\beta-\lambda}{2\beta-\lambda}$, that
\begin{align}\label{revise0}
0&<u_{\xi,r}(x,t)-u(x,t)\nonumber\\
&=\int_{B^{+}_{r}(\xi)}t^\alpha z^{\beta}K(\xi,r,(y,z),(x,t))\big(v^{-\kappa}(y,z)-\big(\frac{r}{|(y,z)-\xi|}\big)^{\mu_1}
v^{-\kappa}_{\xi,r}(y,z)\big)dydz\nonumber\\
%&\leq\int_{B_{r,v}^-}t^\alpha z^{\beta}K(\xi,r,(y,z),(x,t))\big(v^{-\kappa}(y,z)-\big(\frac{r}{|(y,z)-\xi|}\big)^{\mu_1}
%v^{-\kappa}_{\xi,r}(y,z)\big)dydz\nonumber\\
&\leq\int_{B_{r,v}^-}t^\alpha z^{\beta}K(\xi,r,(y,z),(x,t))\big(v^{-\kappa}(y,z)-v^{-\kappa}_{\xi,r}(y,z)\big)dydz\nonumber\\
&\leq \kappa\int_{B_{r,v}^-}t^\alpha z^{\beta}K(\xi,r,(y,z),(x,t))v^{-\kappa-1}(y,z)\big(v_{\xi,r}(y,z)-v(y,z)\big)dydz.
\end{align}

From Lemma \ref{lemaa}, we infer that, for any $r\in(0,\delta_{0}(\xi)]$,
$$B_{r,v}^-\subseteq B^+_r(\xi)\setminus B^+_{r^{2}}(\xi).$$
By the continuity of $v$,  there exists constant $C'>0$ independent of $r$ such that, for any $(x,t)\in B_{r,u}^{-}$ and $(y,z)\in B_{r,v}^-$,
\begin{align}\label{fvshangjie}
&\kappa t^\alpha z^{\beta}v^{-\kappa-1}(y,z)K(\xi,r,(y,z),(x,t))\nonumber\\
%&=\kappa t^\alpha z^{\beta}v^{-\kappa-1}(y,z)\big(\big(\frac{r}{|(x,t)-\xi|}\big)^{\lambda}\frac{1}
%{|(x,t)^{\xi,r}-(y,z)|^{\lambda}}-\frac{1}{|(x,t)-(y,z)|^{\lambda}}\big)\nonumber\\
&\leq\kappa t^\alpha z^{\beta}v^{-\kappa-1}(y,z)\big(\frac{r}{|(x,t)-\xi|}\big)^{\lambda}
\big|\frac{r^2((x,t)-\xi)}{|(x,t)-\xi|^2}-(y,z)+\xi\big|^{-\lambda}\nonumber\\
&=\kappa t^\alpha z^{\beta} v^{-\kappa-1}(y,z)\big|\frac{r((x,t)-\xi)}{|(x,t)-\xi|}-\frac{((y,z)-\xi)|(x,t)-\xi|}{r}\big|^{-\lambda}\nonumber\\
&\leq \frac{C'}{r^{\lambda-\alpha-\beta}}.
\end{align}
Then, for sufficiently small $r$, it follows from \eqref{revise0} and \eqref{fvshangjie} that
\begin{align}\label{equ20}
\|u_{\xi,r}-u\|_{L^1(B_{r,u}^-)}
&\leq C'r^{-\lambda+\alpha+\beta}|B_{r,u}^{-}|\cdot\big\|v_{\xi,r}-v\big\|_{L^1(B_{r,v}^-)}\nonumber\\
&\leq \frac{1}{4}\big\|v_{\xi,r}-v\big\|_{L^1(B_{r,v}^-)}.
\end{align}

A similar computation shows that there exists a constant $C''>0$ independent of $r$ such that for sufficiently small $r$,
\begin{align}\label{equ21}
\big\|v_{\xi,r}-v\big\|_{L^1(B_{r,v}^-)}
&\leq C''r^{-\lambda+\alpha+\beta}|B_{r,v}^{-}|\cdot\|u_{\xi,r}-u\|_{L^1(B_{r,u}^-)}\nonumber\\
&\leq \frac{1}{4}\|u_{\xi,r}-u\|_{L^1(B_{r,u}^-)}.
\end{align}

Combining \eqref{equ20} with \eqref{equ21}, we find that there exists $\epsilon_{0}(\xi)\in(0,\delta_{0}(\xi)]$ small enough such that, for any $0<r\leq\epsilon_{0}(\xi)$,
$$\|u_{\xi,r}-u\|_{L^1(B_{r,u}^-)}=\big\|v_{\xi,r}-v\big\|_{L^1(B_{r,v}^-)}=0.$$
Therefore, $B_{r,u}^-=B_{r,v}^-=\emptyset$ for any $r\in(0,\epsilon_{0}(\xi)]$.
\end{proof}

\medskip

For each fixed $\xi\in \partial\mathbb{R}^{n+1}_+$, define
\begin{equation*}
  \bar{r}(\xi)=\sup\{r>0 \mid u\geq u_{\xi,\mu},\
  v\geq v_{\xi,\mu}\,\, \text{in} \,\, B^+_{\mu}(\xi),\,\, \forall \,\, 0<\mu\leq r\}.
\end{equation*}
By Lemma \ref{lemmastart}, $\bar{r}(\xi)$ is well-defined and $0<\bar{r}(\xi)\leq+\infty$ for any $\xi\in \partial\mathbb{R}^{n+1}_+$. The following lemma indicates that if the sphere stops for some $\xi\in \partial\mathbb{R}^{n+1}_+$, then $u$ and $v$ are symmetric about $\partial B^+_{\bar{r}(\xi)}$.

\begin{lemma}\label{lemmasequali}
If there exists some  $\bar{\xi}\in \partial\mathbb{R}^{n+1}_+$ satisfies $\bar{r}(\bar{\xi})<+\infty$, then
\begin{equation*}
u(x,t)=u_{\bar{\xi},\bar{r}(\bar{\xi})}(x,t) ,\ \
v(y,z)=v_{\bar{\xi},\bar{r}(\bar{\xi})}(y,z) \qquad \forall \, (x,t), (y,z)\in B^+_{\bar{r}(\bar{\xi})}(\bar{\xi}).
\end{equation*}
Furthermore, we must have
$$\kappa=\frac{2(n+1)+2\beta-\lambda}{2\beta-\lambda}, \qquad \theta=\frac{2(n+1)+2\alpha-\lambda}{2\alpha-\lambda}.
$$
\end{lemma}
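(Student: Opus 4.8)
The plan is to run the standard ``moving-sphere-stops'' argument of Li--Zhang/Li. Fix $\bar\xi$ with $\bar r:=\bar r(\bar\xi)<+\infty$ and write $r$ for $\bar r$. By definition of $\bar r$ we have $u\ge u_{\bar\xi,r}$ and $v\ge v_{\bar\xi,r}$ on $B^+_r(\bar\xi)$; the goal is to upgrade these inequalities to equalities. Suppose, for contradiction, that $u\not\equiv u_{\bar\xi,r}$ on $B^+_r(\bar\xi)$. First I would check the exponent conditions $\kappa\le\frac{2(n+1)+2\beta-\lambda}{2\beta-\lambda}$ and $\theta\le\frac{2(n+1)+2\alpha-\lambda}{2\alpha-\lambda}$ are forced (so that $\mu_1,\mu_2\ge0$ in Lemma \ref{lemma1}); indeed if either were violated the weights $(r/|(y,z)-\bar\xi|)^{\mu_i}$ would have the wrong sign inside $B^+_r(\bar\xi)$ and the identities \eqref{equ11}--\eqref{equ12} together with $u\ge u_{\bar\xi,r}$, $v\ge v_{\bar\xi,r}$ and $K>0$ would give a contradiction outright. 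With $\mu_1,\mu_2\ge0$, the identities \eqref{equ11}--\eqref{equ12} show $u-u_{\bar\xi,r}\ge0$ and $v-v_{\bar\xi,r}\ge0$ are each expressed as a positive-kernel integral of the other, so by a strong-maximum-principle type argument (using positivity of $K$ on $B^+_r(\bar\xi)$ and continuity from Lemma \ref{lem2}) either both differences are identically zero, or both are strictly positive in the interior $B^+_r(\bar\xi)$.

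Assume the latter, strict positivity, holds. The next step is to show the sphere can then be pushed slightly further, contradicting maximality of $\bar r$. Pick $\varepsilon>0$; I want to show $u\ge u_{\bar\xi,r+\varepsilon}$ and $v\ge v_{\bar\xi,r+\varepsilon}$ on $B^+_{r+\varepsilon}(\bar\xi)$ for all small $\varepsilon$. On a compact inner region $B^+_{r-\delta}(\bar\xi)$ one has $u-u_{\bar\xi,r}\ge c_\delta\,t^\alpha>0$ strictly (using Lemma \ref{lem2}(ii) to control the $t^\alpha$-behaviour near the boundary), and since $(x,t,\varepsilon)\mapsto u_{\bar\xi,r+\varepsilon}(x,t)$ is continuous, $u\ge u_{\bar\xi,r+\varepsilon}$ on $B^+_{r-\delta}(\bar\xi)$ for $\varepsilon$ small; similarly for $v$. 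On the thin shell $B^+_{r+\varepsilon}(\bar\xi)\setminus B^+_{r-\delta}(\bar\xi)$ one reruns the $L^1$-contraction estimate exactly as in Lemma \ref{lemmastart}: the measure of the shell, hence of the ``bad sets'' $B^-_{r+\varepsilon,u}$, $B^-_{r+\varepsilon,v}$, is $O(\delta+\varepsilon)$, and the kernel bound \eqref{fvshangjie} gives $\|u_{\bar\xi,r+\varepsilon}-u\|_{L^1(B^-_{r+\varepsilon,u})}\le C(\delta+\varepsilon)^{\ast}\|v_{\bar\xi,r+\varepsilon}-v\|_{L^1(B^-_{r+\varepsilon,v})}$ and symmetrically, so choosing $\delta,\varepsilon$ small forces both bad sets to be empty. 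Hence $u\ge u_{\bar\xi,r+\varepsilon}$, $v\ge v_{\bar\xi,r+\varepsilon}$ on $B^+_{r+\varepsilon}(\bar\xi)$, contradicting the definition of $\bar r(\bar\xi)$. Therefore the strict case is impossible and we must be in the equality case: $u\equiv u_{\bar\xi,r}$ and $v\equiv v_{\bar\xi,r}$ on $B^+_{r}(\bar\xi)$.

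Finally, I would extract the exponent identities from this exact symmetry. Feeding $u=u_{\bar\xi,r}$ and $v=v_{\bar\xi,r}$ back into \eqref{equ11}: the left side vanishes identically, while the right side is $\int_{B^+_r(\bar\xi)}t^\alpha z^\beta K\big((r/|(y,z)-\bar\xi|)^{\mu_1}-1\big)v^{-\kappa}\,dy\,dz$ (using $v_{\bar\xi,r}=v$), which, since $K>0$ and $v^{-\kappa}>0$ throughout $B^+_r(\bar\xi)$ and $(r/|(y,z)-\bar\xi|)^{\mu_1}\ge1$ there with equality only on the boundary sphere, can vanish only if $\mu_1=0$, i.e. $2(n+1)+2\beta-\lambda+(\lambda-2\beta)\kappa=0$, which is exactly $\kappa=\frac{2(n+1)+2\beta-\lambda}{2\beta-\lambda}$. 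The analogous computation with \eqref{equ12} gives $\mu_2=0$, i.e. $\theta=\frac{2(n+1)+2\alpha-\lambda}{2\alpha-\lambda}$. I expect the main obstacle to be the shell estimate in the second paragraph: one must be careful that the constants in the $L^1$-contraction (which come from \eqref{fvshangjie} and depend on lower bounds of $v$, $u$ and on $r$ through powers of $r$) stay uniform as $\varepsilon\to0$, and that the boundary weight $z^\beta$ is handled so that ``strict positivity in the interior'' genuinely propagates up to a quantitative gap on compact subsets; Lemma \ref{lem2}(ii) together with the explicit form of $K$ is what makes this go through.
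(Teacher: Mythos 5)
Your overall strategy coincides with the paper's: assume the two inequalities are not both identities, upgrade them to strict positivity in the interior of $B^+_{\bar r}(\bar\xi)$ via the positive kernel $K$ in \eqref{equ11}--\eqref{equ12}, and then contradict the maximality of $\bar r(\bar\xi)$ by running the $L^1$-contraction of Lemma \ref{lemmastart} once more on a bad set of small measure. The differences are in the execution of the continuation step and in the treatment of the exponents. For the continuation, you argue on a compact inner region via continuity of $\varepsilon\mapsto u_{\bar\xi,\bar r+\varepsilon}$ together with a quantitative gap $u-u_{\bar\xi,\bar r}\ge c_\delta t^\alpha$, and then contract on the thin shell; the paper instead shows directly that $|B^-_{r,u}|\to 0$ as $r\to\bar r^+$ by decomposing $B^-_{r,u}$ into an annulus $A(\bar r-\epsilon_1,\bar r+\epsilon_1)$, a slab $\{t<\epsilon_1\}$, and an inner piece, and controls the inner piece by approximating $u$ in $L^1$ by a continuous function $h$ and using continuity of the map $(x,t)\mapsto(x,t)^{0,r}$ in $r$. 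Both routes are standard and both must (and do) excise the degenerate region near $\{t=0\}$, which you correctly flag as the delicate point. Your derivation of the exponent identities -- feeding $u\equiv u_{\bar\xi,\bar r}$, $v\equiv v_{\bar\xi,\bar r}$ back into \eqref{equ11}--\eqref{equ12} and using $K>0$ to force $\mu_1=\mu_2=0$ -- is correct and is actually a step the paper's written proof omits entirely, so this is a genuine improvement in completeness.

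One claim in your first paragraph should be repaired. You assert that $\mu_1\ge0$ and $\mu_2\ge0$ are both ``forced'', and that a violation of either would give a contradiction ``outright''. Under the balance condition \eqref{RWH-exp-2} one computes $\mu_1\ge 0\iff \kappa\le\frac{2(n+1)+2\beta-\lambda}{2\beta-\lambda}\iff\theta\ge\frac{2(n+1)+2\alpha-\lambda}{2\alpha-\lambda}\iff\mu_2\le0$, so $\mu_1$ and $\mu_2$ always have opposite signs; demanding both to be nonnegative is already equivalent to $\mu_1=\mu_2=0$, i.e.\ to the conclusion you are trying to prove, and there is no ``outright contradiction'' from, say, $\mu_2<0$ alone together with $v\ge v_{\bar\xi,r}$ and $K>0$. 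The clean logic is the one your last paragraph actually implements: take without loss of generality $\mu_1\ge0$ (hence $\mu_2\le0$), run the dichotomy and continuation using that sign, and only at the very end conclude $\mu_1=0$ from the identity $0=\int_{B^+_{\bar r}(\bar\xi)}t^\alpha z^\beta K\bigl((\bar r/|(y,z)-\bar\xi|)^{\mu_1}-1\bigr)v^{-\kappa}\,dydz$, which then yields $\mu_2=0$ as well. With that correction the argument is sound and matches the intended content of the lemma.
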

\begin{proof}
Without loss of generality, we assume $\bar{\xi}=0$, and denote $\bar{r}:=\bar{r}(0)$.

Suppose on the contrary that $u-u_{0,\bar{r}}\geq0$ in $B^+_{\bar{r}}(0)$ and $v-v_{0,\bar{r}}\geq0$ in $B^+_{\bar{r}}(0)$ but at least one of them is not identically zero. Without loss of generality, we assume that $v-v_{0,\bar{r}}\geq0$ but $v-v_{0,\bar{r}}$ is not identically zero in $B^+_{\bar{r}}(0)$. Then we will derive the desired contradiction for the definition of $\bar{r}$.

We first show that
\begin{equation}\label{positive1}
  u(x,t)-u_{0,\bar{r}}(x,t)>0 \quad\,\,\,\,\, \forall \, (x,t)\in B^+_{\bar{r}}(0)
\end{equation}
and
\begin{equation}\label{positive2}
  v(y,z)-v_{0,\bar{r}}(y,z)>0 \quad\,\,\,\,\, \forall \, (y,z)\in B^{+}_{\bar{r}}(0).
\end{equation}
Indeed, choose a point $(y^{0},z^0)\in B^{+}_{\bar{r}}(0)$ such that $v(y^{0},z^0)-v_{0,\bar{r}}(y^{0},z^0)>0$. Due to the continuity of $v$, there exists $\eta>0$ and   $c_{0}>0$ such that
\begin{equation}\label{vbigger}
B_{\eta}(y^{0},z^0)\subset B^+_{\bar{r}}(0) \,\,\,\,\,\, \text{and} \,\,\,\,\,\,
v(y,z)-v_{0,\bar{r}}(y,z)\geq c_{0}>0 \quad\,\,\, \forall \, (y,z)\in B_{\eta}(y^{0},z^0).
\end{equation}

By Lemma \ref{lemma1}, we have $K(0,\bar{r},(y,z),(x,t))>0$ for any $(x,t), (y,z)\in B^+_{\bar{r}}(0)$.  Noting that $\mu_1\le0$  and $\kappa\leq\frac{2(n+1)+2\beta-\lambda}{2\beta-\lambda}$, we know from \eqref{equ11} and \eqref{vbigger} that
\begin{equation*}\begin{split}
u(x,t)-u_{0,\bar{r}}(x,t)
&=\int_{B^{+}_{\bar{r}}(0)}t^\alpha z^{\beta}K(0,\bar{r},(y,z),(x,t))\big(\big(\frac{\bar{r}}{|(y,z)|}\big)^{\mu_1}
v^{-\kappa}_{0,\bar{r}}(y,z)-v^{-\kappa}(y,z)\big)dydz\\
&\geq\int_{B_\eta(y^{0},z^0)}t^\alpha z^{\beta} K(0,\bar{r},(y,z),(x,t))\big(v^{-\kappa}_{0,\bar{r}}(y,z)-v^{-\kappa}(y,z)\big)dydz\\
&>c_1t^\alpha>0.
\end{split}\end{equation*}
Thus, we arrive at \eqref{positive1}.
The inequality \eqref{positive2} follows from a similar computation.

Using similar arguments as \eqref{equ20} and \eqref{equ21}, we know that there exist constants $C_0, C_1>0$ such that for all $r>0$,
$$\|u_{\xi,r}-u\|_{L^1(B_{r,u}^-)}\leq C_0r^{-\lambda+\alpha+\beta}|B_{r,u}^{-}|\cdot\big\|v_{\xi,r}-v\big\|_{L^1(B_{r,v}^-)},$$
$$\big\|v_{\xi,r}-v\big\|_{L^1(B_{r,v}^-)}
\leq C_1r^{-\lambda+\alpha+\beta}|B_{r,v}^{-}|\cdot\|u_{\xi,r}-u\|_{L^1(B_{r,u}^-)}.$$
Based on  the above estimates, we will show that $|B_{r,u}^{-}|\rightarrow0$ as $r\rightarrow \bar{r}^{+}$, and hence obtain the desired contradiction.

For $0\leq\rho<\eta\leq+\infty$, $r>0$ and $\delta\geq0$, we denote
$$A(\rho,\eta)=\{(x,t)\in\mathbb R^{n+1}_+: \rho<|(x,t)|<\eta\},$$
$$A(\rho,\eta; r, \delta)=\{(x,t)\in A(\rho,\eta): u_{0,r}(x,t)-u(x,t)>\delta\}.$$
We first note that $B_{\bar{r},u}^{-}=A(0,\bar{r}; \bar{r}, 0)$.
For any sufficiently small $\epsilon_1\in(0,\bar{r})$, if $\bar{r}\leq r\leq\bar{r}+\epsilon_1$, then we have
$$B_{r,u}^{-}\subset A(\bar{r}-\epsilon_1, \bar{r}+\epsilon_1)\cup (B^+_{\bar{r}}\cap\{t<\epsilon_1\})\cup (A(0,\bar{r}-\epsilon_1; r, 0)\cap\{t>\epsilon_1\})$$
Moreover, for such $r$, we obtain the following two estimates
$$|A(\bar{r}-\epsilon_1, \bar{r}+\epsilon_1)|\leq C(n+1)\bar{r}^{n}\epsilon_1,\ \ |B^+_{\bar{r}}\cap\{t<\epsilon_1\}|\leq C(n+1)\bar{r}^{n}\epsilon_1.$$
Thus, it suffices to show that for every $0<\epsilon_1<\bar{r}$,
$$\lim_{r\rightarrow \bar{r}^{+}}|A(0,\bar{r}-\epsilon_1; r, 0)\cap\{t>\epsilon_1\}|=0.$$
We prove this by contradiction. Suppose there is $0<\epsilon_1<\bar{r}$, $l>0$ and a sequence $r_k\rightarrow \bar{r}^{+}$ such that for all $k$,
\begin{equation}\label{set1}
|A(0,\bar{r}-\epsilon_1; r_k, 0)\cap\{t>\epsilon_1\}|>l.
\end{equation}
For $(x,t)\in B^{+}_{\bar{r}}(0)\setminus\{0\}$, we have
\begin{equation*}\begin{split}
u_{0,\bar{r}}(x,t)-u(x,t)&=\int_{B^{+}_{\bar{r}}(0)}t^\alpha z^{\beta}K(0,\bar{r},(y,z),(x,t))\big(v^{-\kappa}(y,z)-\big(\frac{\bar{r}}{|(y,z)|}\big)^{\mu_1}
v^{-\kappa}_{0,\bar{r}}(y,z)\big)dydz\\
&\leq\int_{B_{\bar{r},v}^-}t^\alpha z^{\beta}K(0,\bar{r},(y,z),(x,t))\big(v^{-\kappa}(y,z)-\big(\frac{\bar{r}}{|(y,z)|}\big)^{\mu_1}
v^{-\kappa}_{0,\bar{r}}(y,z)\big)dydz.
\end{split}\end{equation*}
The above estimate and \eqref{set1} imply that there exists $c_1>0$ depending only on $n,\alpha,\beta,\lambda,\epsilon_1$ and the distribution function of $v^{-\kappa}(y,z)-\big(\frac{\bar{r}}{|(y,z)|}\big)^{\mu_1}
v^{-\kappa}_{0,\bar{r}}(y,z)$ such that
$$u(x,t)-u_{0,\bar{r}}(x,t)\geq c_1\ \ \ \mbox{for\ all}\ (x,t)\in A(0,\bar{r}-\epsilon_1)\cap\{t>\epsilon_1\}.$$
Thus, for $(x,t)\in A(0,\bar{r}-\epsilon_1; r_k, 0)\cap\{t>\epsilon_1\}$, we have
$$u_{0,r_k}(x,t)-u_{0,\bar{r}}(x,t)\geq u(x,t)-u_{0,\bar{r}}(x,t)\geq c_1.$$
For any $(x,t)\in A(0,\bar{r}-\epsilon_1; r_k, 0)\cap\{t>\epsilon_1\}$ and $h\in C^{0}\cap L^1(\mathbb R^{n+1}_+)$, there exists $k(h)\in N$ such that for all
$k>k(h)$,
\begin{align*}
c_1&\leq u_{0,r_k}(x,t)-u_{0,\bar{r}}(x,t)\nonumber\\
&=\big(\big(\frac{r_k}{|(x,t)|}\big)^{\lambda-2\alpha}-\big(\frac{\bar{r}}{|(x,t)|}\big)^{\lambda-2\alpha}\big)u((x,t)^{0,\bar{r}})
+\big(\frac{r_k}{|(x,t)|}\big)^{\lambda-2\alpha}\big(u((x,t)^{0,r_k})-u((x,t)^{0,\bar{r}})\big)\nonumber\\
&\leq \big(\frac{r_k}{|(x,t)|}\big)^{\lambda-2\alpha}\big(u((x,t)^{0,r_k})-u((x,t)^{0,\bar{r}})\big)\nonumber\\
&\leq |u((x,t)^{0,r_k})-h((x,t)^{0,r_k})|+|h((x,t)^{0,r_k})-h((x,t)^{0,\bar{r}})|+|h((x,t)^{0,\bar{r}})-u((x,t)^{0,\bar{r}})|\nonumber\\
&\leq \frac{c_1}{2}+|u((x,t)^{0,r_k})-h((x,t)^{0,r_k})|+|h((x,t)^{0,\bar{r}})-u((x,t)^{0,\bar{r}})|.
\end{align*}
Then, for $k>k(h)$, we choose $h\in C^{0}\cap L^1(\mathbb R^{n+1}_+)$ sufficiently close to $u$ in $L^1(\mathbb R^{n+1}_+)$ such that
\begin{align*}
l&\leq 2 \big|\{(x,t)\in A(0,\bar{r}-\epsilon_1):  |u((x,t)^{0,r_k})-h((x,t)^{0,r_k})|\geq\frac{c_1}{4}\}\big|\nonumber\\
&\quad +2 \big|\{(x,t)\in A(0,\bar{r}-\epsilon_1): |h((x,t)^{0,\bar{r}})-u((x,t)^{0,\bar{r}})|\geq\frac{c_1}{4}\}\big|\nonumber\\
&\leq 4 \big|\{(x,t)\in \mathbb R^{n+1}_+: |u((x,t)^{0,r_k})-h((x,t)^{0,r_k})|\geq\frac{c_1}{4}\}\big|\nonumber\\
&\leq C(n)c_1^{-1}\|u-h\|_{L^1(\mathbb R^{n+1}_+)}\leq \frac{l}{2}.
\end{align*}
Therefore, for every $0<\epsilon_1<\bar{r}$, we conclude that
$$\lim_{r\rightarrow \bar{r}^{+}}|A(0,\bar{r}-\epsilon_1; r, 0)\cap\{t>\epsilon_1\}|=0.$$
This completes the proof of Lemma \ref{lemmasequali}.
\end{proof}

\medskip

Now we are ready to give a complete proof of Theorem \ref{theoremfen}.

{\bf Proof of Theorem \ref{theoremfen}.} We carry out the proof by considering two different possible cases.

{\bf Case (i).}  $\bar{r}(\xi)=+\infty$ for all $\xi\in \partial\mathbb{R}^{n+1}_+$. For all $\xi\in \partial\mathbb{R}^{n+1}_+$ and $0<r<+\infty$, we have
$$u_{\xi,r}(x,t)\leq u(x,t),\ \ v_{\xi,r}(y,z)\leq v(y,z) \qquad \forall \,\, (x,t), (y,z)\in B^{+}_{r}(\xi).$$
Then, we know from Lemma 3.7 in \cite{DZ2015a} that $u(x,t)$ only depends on $t$ and $v(y,z)$ only depends on $z$.
It follows from \eqref{intsp} that, for any $(x,t)\in\mathbb{R}^{n+1}_{+}$,
\begin{align*}
u(0,t)&=\int_{\mathbb{R}^{n+1}_+}\frac{ t^\alpha z^{\beta} v^{-\kappa}(0,z)}{|(0,t)-(y,z)|^\lambda}dydz \\
&=\int_0^\infty\frac{ t^\alpha z^{\beta} v^{-\kappa}(0,z)}{|t-z|^{\lambda}}dz\int_0^\infty\frac{\rho^{n-1}}{(\rho^2+1)^\frac{\lambda}{2}} d\rho=+\infty.
\end{align*}
Thus, \emph{Case (i)} is impossible.

\medskip

{\bf Case (ii).} There exists $\hat{\xi}\in\partial\mathbb{R}^{n+1}_{+}$ such that $\bar{r}(\hat{\xi})<+\infty$. Then, by the definition of $\bar{r}(\hat{\xi})$, one can see that for any $0<r<\bar{r}(\hat{\xi})$,
$$u_{\hat{\xi},r}(x,t)\leq u(x,t) \qquad \forall \,\, (x,t)\in B^{+}_{r}(\hat{\xi}).$$
Moreover, Lemma \ref{lemmasequali} indicates that $\kappa=\frac{2(n+1)+2\beta-\lambda}{2\beta-\lambda}, \theta=\frac{2(n+1)+2\alpha-\lambda}{2\alpha-\lambda}$ and
\begin{equation*}
  u_{\hat{\xi},\bar{r}(\hat{\xi})}(x,t)=u(x,t)  \qquad \forall \ (x,t)\in B^{+}_{\bar{r}(\hat{\xi})}(\hat{\xi}).
\end{equation*}

For any $\xi\in\partial\mathbb{R}^{n+1}_{+}$, we know from the definition of $\bar{r}(\xi)$ that for any $0<r\leq\bar{r}(\xi)$,
$$u_{\xi,r}(x,t)\leq u(x,t) \qquad \forall \,\, (x,t)\in B^{+}_{r}(\xi),$$
that is,
$$u(x,t)\leq u_{\xi,r}(x,t) \qquad  \forall \,\,|(x,t)-\xi|\geq r, \quad \forall \,\, 0<r\leq\bar{r}(\xi).$$
Therefore, for any $r\in (0,\bar{r}(\xi)]$,
\begin{eqnarray*}
% \nonumber to remove numbering (before each equation)
&& \big[\bar{r}(\hat{\xi})\big]^{\lambda-2\alpha}u(\hat{\xi})=\liminf_{|(x,t)|\rightarrow+\infty}|(x,t)|^{\lambda-2\alpha}u_{\hat{\xi},\bar{r}(\hat{\xi})}(x,t)
  =\liminf_{|(x,t)|\rightarrow+\infty}|(x,t)|^{\lambda-2\alpha}u(x,t) \nonumber\\
&&\leq\liminf_{|(x,t)|\rightarrow+\infty}|(x,t)|^{\lambda-2\alpha}u_{\xi,r}(x,t)=r^{\lambda-2\alpha}u(\xi),
\end{eqnarray*}
which implies that $\bar{r}(\xi)<+\infty$ for all $\xi\in\partial\mathbb{R}^{n+1}_{+}$.

Applying Lemma \ref{lemmasequali}, we infer that, for all $\xi\in\partial\mathbb{R}^{n+1}_{+}$,
\begin{align*}
&u_{\xi,\bar{r}(\xi)}(x,t)=u(x,t) ,\ \ v_{\xi,\bar{r}(\xi)}(y,z)=v(y,z) \qquad  \forall \ (x,t), (y,z)\in\mathbb{R}^{n+1}_{+}.
\end{align*}
Then from Lemma 7.2 in \cite{L2004}, we have, for any $x\in\mathbb{R}^{n}$, that
\begin{equation*}
u(x,0)=c_1\big(\frac{d}{1+d^{2}|x-\xi_0|^2}\big)^{\frac{\lambda-2\alpha}{2}} \quad \text{and} \quad v(x,0)=c_2\big(\frac{d}{1+d^{2}|x-\xi_0|^2}\big)^{\frac{\lambda-2\beta}{2}}
\end{equation*}
for some $c_1>0$, $c_2>0$, $d>0$ and $\xi_0\in \partial\mathbb{R}^{n+1}_+$.
\medskip

We next state the following calculus lemma from \cite{QLX2008}, which gives the symmetry property of a function through the investigation of its Kelvin transformation.
\begin{lemma}[\cite{QLX2008}]\label{lemmasymm}
If $u\in C^{1}(\mathbb{R}^{n}\setminus \{0\})$ is a function such that for each $y\neq0$, there holds
$$u_{y,\lambda}(x)\leq u(x), \qquad  \forall \ 0<\lambda<|y|\ \mbox{and}\ |y-x|\geq \lambda\ \mbox{with}\ x\neq0,$$
then $u$ must be radically symmetric about the origin, and $u'(r)\leq0$ for $0<r<\infty$.
\end{lemma}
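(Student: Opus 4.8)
The plan is to deduce both assertions — radial symmetry about the origin and radial monotonicity — from two elementary point comparisons, following \cite{QLX2008} (see also \cite{L2004}). In each comparison I choose the centre $y$ and radius $\lambda$ of the moving sphere so that the inversion $x\mapsto x^{y,\lambda}=y+\lambda^{2}(x-y)/|x-y|^{2}$ carries a prescribed point to a second prescribed point, and then let an auxiliary parameter go to infinity. Write $\nu$ for the homogeneity exponent in $u_{y,\lambda}$, so that $u_{y,\lambda}(x)=(\lambda/|x-y|)^{\nu}u(x^{y,\lambda})$; recall that the hypothesis controls $u_{y,\lambda}$ only on the exterior $\{|x-y|\ge\lambda\}$, so each comparison produces one inequality and the matching one must come from symmetry or a limit.

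First I would show $u$ is constant on spheres centred at the origin. Fix $P\ne Q$ with $|P|=|Q|=R>0$; place $y$ on the line $PQ$, on the ray emanating from $Q$ through $P$ and beyond $P$, at distance $a=|y-P|$, and set $\lambda=\sqrt{|y-P|\,|y-Q|}$. A short computation using $|P|=|Q|$ gives $|y|^{2}-\lambda^{2}=|P|^{2}>0$, so $0<\lambda<|y|$; moreover $|y-Q|=a+|P-Q|>\lambda$, so the hypothesis applies at $x=Q$, and one has $Q^{y,\lambda}=P$. Hence
\[
u(Q)\ \ge\ \Big(\frac{|y-P|}{|y-Q|}\Big)^{\nu/2}u(P)\ =\ \Big(\frac{a}{a+|P-Q|}\Big)^{\nu/2}u(P),
\]
and letting $a\to+\infty$ yields $u(Q)\ge u(P)$. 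Exchanging $P$ and $Q$ gives the reverse inequality, so $u(P)=u(Q)$; since $P,Q$ on $\{|x|=R\}$ were arbitrary, $u$ is radially symmetric about $0$.

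Next I would prove monotonicity in the radial variable. Fix $e\in S^{n-1}$ and $0<r_{1}<r_{2}$, take $y=\rho e$ with $\rho>r_{2}$, and set $\lambda=\sqrt{(\rho-r_{1})(\rho-r_{2})}$. One checks $0<\lambda<\rho=|y|$, that $|y-r_{1}e|=\rho-r_{1}\ge\lambda$, and that $(r_{1}e)^{y,\lambda}=r_{2}e$, so the hypothesis at $x=r_{1}e$ gives $u(r_{1}e)\ge\big((\rho-r_{2})/(\rho-r_{1})\big)^{\nu/2}u(r_{2}e)$; as $\rho\to+\infty$ the prefactor tends to $1$ and we obtain $u(r_{1}e)\ge u(r_{2}e)$. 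Combined with the first step this says $u(x)=\phi(|x|)$ for some nonincreasing $\phi$, and the $C^{1}$ hypothesis upgrades this to $\phi'(r)\le0$, i.e. $u'(r)\le 0$ for all $r>0$.

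The argument involves no hard analysis; the only delicate point is the one–sidedness of the hypothesis, which forces us to extract the equalities and the monotonicity by driving the free parameter ($a$, resp. $\rho$) to the edge of its admissible range. Thus the main thing to be careful about is purely geometric bookkeeping: checking that the constraints $0<\lambda<|y|$, $x\ne0$ and $|x-y|\ge\lambda$ are maintained along the whole family used in each limit, and that the homogeneity prefactor $(\cdot)^{\nu/2}$ converges to $1$ — both immediate from the explicit choices above together with the continuity of $u$.
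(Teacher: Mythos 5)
Your proof is correct. The paper itself does not prove this lemma — it is quoted verbatim from \cite{QLX2008} as a known calculus fact — and your argument is precisely the standard one from that reference (and from \cite{L2004}): the two explicit sphere choices, with $\lambda^2=|y-P|\,|y-Q|$ mapping $Q$ to $P$ for the symmetry step and $\lambda^2=(\rho-r_1)(\rho-r_2)$ mapping $r_1e$ to $r_2e$ for the monotonicity step, together with the limits $a\to\infty$, $\rho\to\infty$ that drive the Kelvin prefactor to $1$. All the admissibility checks you flag ($|y|^2-\lambda^2=R^2>0$, $|y-Q|>\lambda$, $\rho-r_1>\lambda$, $x\ne0$) do hold with your choices, so there is no gap.
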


Now we can complete the proof of Theorem \ref{theoremfen}.
From Lemma \ref{lemmastart}, we know that there exists $\hat{\xi}\in\partial\mathbb{R}^{n+1}_{+}$ such that, for all $0<r<|\hat{\xi}|$,
\begin{align*}
&u(x,t)\geq u_{\hat{\xi},r}(x,t),\ \ v(y,z)\geq v_{\hat{\xi},r}(y,z) \qquad \forall \, (x,t), (y,z)\in B_{r}^{+}(\hat{\xi}).
\end{align*}
Define
\begin{equation*}
  \bar{r}(\xi)=\sup\{0<r\leq|\xi| : u\geq u_{\xi,\mu},\
  v\geq v_{\xi,\mu}\,\, \text{in} \,\, B^+_{\mu}(\xi),\,\, \forall \,\, 0<\mu\leq r\}.
\end{equation*}
By Lemma \ref{lemmasymm}, it suffices to show that
\begin{equation}\label{symmwequi}
\bar{r}(\xi)=|\xi|.
\end{equation}
Suppose \eqref{symmwequi} is not true, then there exists $\xi_0\in\partial\mathbb{R}^{n+1}_{+}$ such that $\bar{r}(\xi_0)<|\xi_0|$. For simplicity, we let $\bar{r}(\xi_0)=\bar{r}$. By the definition of $\bar{r}$, we have
\begin{align*}
&u(x,t)\geq u_{\xi_0,\bar{r}}(x,t) ,\ \ v(y,z)\geq v_{\xi_0,\bar{r}}(y,z) \qquad \forall \, (x,t), (y,z)\in B_{\bar{r}}^{+}(\xi_0).
\end{align*}
Using similar arguments as \eqref{positive1} and \eqref{positive2}, we derive
\begin{align*}
&u(x,t)> u_{\xi_0,\bar{r}}(x,t) ,\ \ v(y,z)> v_{\xi_0,\bar{r}}(y,z) \qquad \forall \, (x,t), (y,z)\in B_{\bar{r}}^{+}(\xi_0).
\end{align*}
Similar to proof process of Lemma \ref{lemmasequali}, one can conclude that there exists $\hat{\delta}>0$ such that, for all $r\in[\bar{r},\bar{r}+\hat{\delta}]$,
\begin{equation*}
u(x,t)-u_{\xi_0,r}(x,t)\geq0,\ \ v(y,z)-v_{\xi_0,r}(y,z)\geq0 \qquad \forall \, (x,t), (y,z)\in B^{+}_{r}(\xi_0),
\end{equation*}
which contradicts the definition of $\bar{r}$. Therefore, we must have $\bar{r}(\xi)=|\xi|$.

Then one has, for any $0<r<|\xi|$,
$$u_{\xi,r}(x,t)\leq u(x,t), \qquad \forall \,\, (x,t)\in B^{+}_{r}(\xi),$$
that is,
$$u(x,t)\leq u_{\xi,r}(x,t), \qquad  \forall \,\,|(x,t)-\xi|\geq r, \quad \forall \,\, 0<r<|\xi|.$$
Arguing as the proof of Lemma \ref{lemmasymm}, we know that $u(x,t)$ must be radically symmetric with respect to $x$.
Similarly, $v(y,z)$ is radically symmetric with respect to $y$. \hfill
$\square$

\section{The proof of Theorem \ref{theorem4}\label{Section 5}}

In this section, we will give the proof of Theorem \ref{theorem4} via  Pohozaev type identities. From Section \ref{Section 4}, we know system \eqref{Eulereq0} can be written as  system  \eqref{intsp}.
We first give the definition of weak positive solutions of system \eqref{intsp}.

\begin{df} We say that $(u,v)$  is pair of weak positive solutions of system \eqref{intsp}, if  $(u,v)$ is a pair of positive Lebesgue measurable solutions, and satisfies
\begin{eqnarray*}
& &\int_{\mathbb{R}^{n+1}_+}u(x,t)\varphi(x,t)dxdt+ \int_{\mathbb{R}^{n+1}_+}v(x,t)\varphi(x,t)dxdt\nonumber\\
&=&\int_{\mathbb{R}^{n+1}_+}\int_{\mathbb{R}^{n+1}_+}
  \frac{t^{\alpha}z^{\beta}v^{-\kappa}(y,z)\varphi(x,t)}{|(x,t)-(y,z)|^\lambda }dydzdxdt
  +\int_{\mathbb{R}^{n+1}_+}\int_{\mathbb{R}^{n+1}_+}
  \frac{z^{\alpha}t^{\beta}u^{-\theta}(y,z)\varphi(x,y)}{|(x,t)-(y,z)|^\lambda}dydz dxdt~~~~~~~~~~~~~~
\end{eqnarray*}
 for any $\varphi\in C^\infty_0(\mathbb{R}^{n+1}_+)$.
\end{df}

Theorem \ref{theorem4} can be proved through the following theorem.

\begin{theorem}\label{theorem4-1}
For $-n-1<\lambda<0$, $\alpha,\beta\geq0$ and $\kappa,\theta>1$, suppose that there exists a pair of weak positive solutions $(u,v)$ satisfying \eqref{intsp}, then a necessary condition for $k$ and $\theta$ is
$$\frac{n+1}{\theta-1}+\frac{n+1}{\kappa-1}=\alpha+\beta-\lambda.$$
\end{theorem}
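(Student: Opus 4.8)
The plan is to prove the equivalent statement for the integral system \eqref{intsp} (recall that under $u=f^{p-1}$, $v=g^{q-1}$, $\theta=\frac1{1-p}$, $\kappa=\frac1{1-q}$ the Euler--Lagrange system \eqref{Eulereq0} becomes \eqref{intsp}) by establishing a Pohozaev-type identity and comparing two evaluations of one and the same quantity. Write $X=(x,t)$, $Y=(y,z)$ and set
\[
J:=\int_{\mathbb{R}^{n+1}_+}\int_{\mathbb{R}^{n+1}_+}\frac{t^\alpha z^\beta\,u^{-\theta}(X)\,v^{-\kappa}(Y)}{|X-Y|^\lambda}\,dX\,dY .
\]
Multiplying the two equations of \eqref{intsp} by $u^{-\theta}$ and by $v^{-\kappa}$ respectively and integrating gives $\int u^{1-\theta}=\int v^{1-\kappa}=J$, and $J>0$ since $u,v>0$ and the kernel is positive. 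As a preliminary step I would record, by running the kernel-comparison and bootstrap arguments behind Lemma \ref{lem2} (which use only $-n-1<\lambda<0$, $\kappa,\theta>1$ and the ranges $0\le\alpha<\frac1{\theta-1}$, $0\le\beta<\frac1{\kappa-1}$ — the latter being in fact forced by mere finiteness of the defining integrals — but not the dimensional balance in \eqref{RWH-exp-2}), that $u,v\in C^\infty(\mathbb{R}^{n+1}_+)$, that $u\asymp t^\alpha(1+|X|^{-\lambda})$, $v\asymp z^\beta(1+|Y|^{-\lambda})$, and that $\int t^\alpha(1+|X|^{-\lambda})u^{-\theta}\,dX$ and $\int z^\beta(1+|Y|^{-\lambda})v^{-\kappa}\,dY$ are finite; in particular $0<J<\infty$ and $u^{1-\theta},v^{1-\kappa}\in L^1(\mathbb{R}^{n+1}_+)$.

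The heart of the argument is to compute $\int u^{-\theta}(X)\,(X\cdot\nabla u(X))\,dX+\int v^{-\kappa}(Y)\,(Y\cdot\nabla v(Y))\,dY$ in two different ways. First, inserting the representation $u(X)=\int t^\alpha z^\beta v^{-\kappa}(Y)|X-Y|^{-\lambda}\,dY$, differentiating under the integral sign, and using
\[
X\cdot\nabla_X\bigl(t^\alpha|X-Y|^{-\lambda}\bigr)=\Bigl(\alpha-\tfrac\lambda2\Bigr)t^\alpha|X-Y|^{-\lambda}-\tfrac\lambda2\,t^\alpha|X-Y|^{-\lambda-2}\bigl(|X|^2-|Y|^2\bigr),
\]
one gets $\int u^{-\theta}(X\cdot\nabla u)\,dX=(\alpha-\tfrac\lambda2)J-\tfrac\lambda2 Q$ with $Q:=\int\int t^\alpha z^\beta u^{-\theta}(X)v^{-\kappa}(Y)|X-Y|^{-\lambda-2}(|X|^2-|Y|^2)\,dX\,dY$; the mirror-image computation (differentiating the representation of $v$ in $Y$, where now $z^\beta$ is the external weight) gives $\int v^{-\kappa}(Y\cdot\nabla v)\,dY=(\beta-\tfrac\lambda2)J+\tfrac\lambda2 Q$. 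Adding the two makes the $Q$-terms cancel, leaving $(\alpha+\beta-\lambda)\,J$. Second, since $u^{-\theta}(X\cdot\nabla u)=\tfrac1{1-\theta}X\cdot\nabla(u^{1-\theta})$ and $\nabla\cdot(X\,u^{1-\theta})$ integrates to zero, an integration by parts yields $\int u^{-\theta}(X\cdot\nabla u)\,dX=\tfrac{n+1}{\theta-1}\int u^{1-\theta}\,dX=\tfrac{n+1}{\theta-1}J$, and similarly $\int v^{-\kappa}(Y\cdot\nabla v)\,dY=\tfrac{n+1}{\kappa-1}J$. Equating the two evaluations and dividing by $J>0$ gives $\tfrac{n+1}{\theta-1}+\tfrac{n+1}{\kappa-1}=\alpha+\beta-\lambda$, which, back in the $p,q$ variables, is exactly the asserted necessary condition; Corollary \ref{coro01} then follows by contraposition.

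To make the two integrations by parts rigorous for a merely measurable (hence only interior-smooth, possibly not $C^1$ up to $\{t=0\}$) solution — this is the role of the \emph{weak} formulation — I would introduce cut-offs $\zeta_R\in C_0^\infty(\mathbb{R}^{n+1}_+)$ equal to $1$ on $\{R^{-1}<t,\ |X|<R\}$, supported in $\{(2R)^{-1}<t,\ |X|<2R\}$, with $|X\cdot\nabla\zeta_R|\le C$ on the transition regions $\{t\sim R^{-1}\}$ and $\{|X|\sim R\}$, and carry out both computations with the compactly supported interior fields $\zeta_R\,X\cdot\nabla u$ and $\zeta_R\,Y\cdot\nabla v$ in place of $X\cdot\nabla u$ and $Y\cdot\nabla v$; these produce no genuine boundary term (the relevant fields vanish near $\partial\mathbb{R}^{n+1}_+$ and at infinity), and the only error terms are $\tfrac1{1-\theta}\int(X\cdot\nabla\zeta_R)\,u^{1-\theta}\,dX$ and its $v$-analogue, bounded on the strips by $C\int_{\{t\sim R^{-1}\}\cup\{|X|\sim R\}}u^{1-\theta}\,dX\to0$ as $R\to\infty$ by absolute continuity of $u^{1-\theta}\in L^1$. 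Differentiation under the integral over the compact support of $\zeta_R$, and the uniform-in-$R$ domination needed to pass $R\to\infty$ in the double integrals (using $|X|^2-|Y|^2|X-Y|^{-\lambda-2}\le(|X|+|Y|)|X-Y|^{-\lambda-1}$ together with the two-sided bounds on $u,v$), are then routine. I expect the preliminary step to be the real obstacle: one must establish the interior regularity, the sharp two-sided bounds, and the finiteness/local absolute continuity statements for a weak positive solution \emph{without} invoking the dimensional balance, since that identity is precisely what is being proved; once these are in place, the theorem is immediate from the $Q$-cancellation.
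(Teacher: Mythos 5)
Your proposal is correct and follows essentially the same route as the paper: a Pohozaev identity obtained by evaluating $\int u^{-\theta}\,(X\cdot\nabla u)+\int v^{-\kappa}\,(Y\cdot\nabla v)$ once by integration by parts against cut-offs (giving $(\tfrac{n+1}{\theta-1}+\tfrac{n+1}{\kappa-1})J$) and once by differentiating the representation formulas under the integral sign (giving $(\alpha+\beta-\lambda)J$ after the antisymmetric terms cancel), combined with the integrability statements and two-sided bounds of Lemma \ref{lem2}. Your only substantive departures --- truncating near $\{t=0\}$ rather than near the origin so that the test field is genuinely compactly supported in the open half space, and flagging explicitly that the estimates behind Lemma \ref{lem2} must be (and are) obtainable without invoking the dimensional balance in \eqref{RWH-exp-2}, which is precisely the identity being proved --- are refinements in rigor rather than a different argument.
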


\begin{proof}
For $-n-1<\lambda<0$, $\alpha,\beta\geq0$ and $\kappa,\theta>1$, assume that $(u,v)$ is a pair of positive Lebesgue measurable solutions to system \eqref{intsp}.  By Lemma \ref{lem2}, we have
\begin{align*}
0<\int_{\mathbb{R}^{n+1}_{+}}u^{1-\theta}(x,t)dxdt&\leq C_{1}\int_{\mathbb{R}^{n+1}_{+}}t^{\alpha}(1+|(x,t)|^{-\lambda})u^{-\theta}(x,t)dxdt<+\infty,\nonumber\\
0<\int_{\mathbb{R}^{n+1}_{+}}v^{1-\kappa}(y,z)dydz&\leq C_{2}\int_{\mathbb{R}^{n+1}_{+}}z^{\beta}(1+|(y,z)|^{-\lambda})v^{-\kappa}(y,z)dydz<+\infty.
\end{align*}
That is, $(u,v)\in L^{1-\theta}(\mathbb{R}^{n+1}_{+})\times L^{1-\kappa}(\mathbb{R}^{n+1}_{+})$.

By \eqref{intsp}, we obtain
\begin{equation*}\begin{cases}
u^{1-\theta}(x,t)=u^{-\theta}(x,t)\int_{\mathbb{R}^{n+1}_+}\frac{t^{\alpha}z^{\beta}}{|(x,t)-(y,z)|^{\lambda}}v^{-\kappa}(y,z)dydz, \qquad (x,t)\in\mathbb{R}^{n+1}_+,\\
v^{1-\kappa}(y,z)=v^{-\kappa}(y,z)\int_{\mathbb{R}^{n+1}_+}\frac{t^{\alpha}z^{\beta}}{|(x,t)-(y,z)|^{\lambda}}u^{-\theta}(x,t)dxdt, \qquad (y,z)\in\mathbb{R}^{n+1}_+.
\end{cases}\end{equation*}
Then, it follows from Fubini's theorem that
\begin{eqnarray}\label{Fubini}
\int_{\mathbb{R}_{+}^{n+1}}v^{1-\kappa}(y,z)dydz
&=&\int_{\mathbb{R}_{+}^{n+1}} \int_{\mathbb{R}_{+}^{n+1}}\frac{t^{\alpha}z^{\beta}}{|(x,t)-(y,z)|^{\lambda}}
u^{-\theta}(x,t)v^{-\kappa}(y,z)dxdtdydz\nonumber\\
&=&\int_{\mathbb{R}_{+}^{n+1}} u^{1-\theta}(x,t)dxdt.
\end{eqnarray}

For $\epsilon>0$ and $R>0$, we define
$$\phi_{\epsilon,R}(x,t)=\phi_{\epsilon}(x,t)\psi_{R}(x,t),$$
where $\phi_{\epsilon}(x,t)=\phi(\frac{|(x,t)|}{\epsilon})$ and $\psi_{R}(x,t)=\psi(\frac{|(x,t)|}{R})$, $\phi$ and $\psi$ are smooth functions in $\mathbb R$ satisfying $0\leq\phi,\psi\leq1$, $supp\ \phi\subset(1,+\infty)$, $supp\ \psi\subset(-\infty,2)$, and $\phi(\rho)=1$ for $\rho\geq2$ and $\psi(\rho)=1$ for $\rho\leq1$.
Thus, $supp\ \phi_{\epsilon,R}$ is contained in $\{\epsilon<|(x,t)|<2R\}$ for $\epsilon<2R$.

Since $(u,v)$ is a pair of  positive Lebesgue measurable to system \eqref{intsp}, $u$ and $v$ are smooth away from the singular set, and
$((x,t)\cdot\nabla u(x,t))\phi_{\epsilon,R}(x,t),((x,t)\cdot\nabla v(x,t))\phi_{\epsilon,R}(x,t)\in C^2_0(\mathbb{R}_{+}^{n+1})$.
Multiplying  the first equation in system \eqref{intsp} by $((x,t)\cdot\nabla u(x,t))\phi_{\epsilon,R}(x,t)$, we derive
\begin{align}\label{eq-a11}
&\quad \int_{B_{2R}^{+}(0)} u^{-\theta}(x,t)((x,t)\cdot\nabla u(x,t))\phi_{\epsilon,R}(x,t)dxdt\nonumber\\
&=\frac{1}{1-\theta}\int_{B_{2R}^{+}(0)} (x,t)\cdot\nabla (u^{1-\theta}(x,t))\phi_{\epsilon,R}(x,t)dxdt\nonumber\\
&=\frac{2R}{1-\theta}\int_{\partial B_{2R}(0)\cap\mathbb{R}_{+}^{n+1} }u^{1-\theta}(x,t)\phi_{\epsilon,R}(x,t)d\sigma-\frac{n+1}{1-\theta}\int_{B_{2R}^{+}(0)} u^{1-\theta}(x,t)\phi_{\epsilon,R}(x,t)dxdt\nonumber\\
&\quad -\frac{1}{1-\theta}\int_{B_{2R}^{+}(0)} u^{1-\theta}(x,t)((x,t)\cdot\nabla \phi_{\epsilon,R}(x,t))dxdt.
\end{align}
Similarly, we have
\begin{align}\label{eq-a12}
&\quad \int_{B_{2R}^{+}(0)} v^{-\kappa}(x,t)\big((x,t)\cdot\nabla  v(x,t)\big)\phi_{\epsilon,R}(x,t)dxdt\nonumber\\
&=\frac{1}{1-\kappa}\int_{B_{2R}^{+}(0)} (x,t)\cdot\nabla (v^{1-\kappa}(x,t))\phi_{\epsilon,R}(x,t)dxdt\nonumber\\
&=\frac{2R}{1-\kappa}\int_{\partial B_{2R}(0)\cap\mathbb{R}_{+}^{n+1} }v^{1-\kappa}(x,t)\phi_{\epsilon,R}(x,t)d\sigma-\frac{n+1}{1-\kappa}\int_{B_{2R}^{+}(0)} v^{1-\kappa}(x,t)\phi_{\epsilon,R}(x,t)dxdt\nonumber\\
&\quad -\frac{1}{1-\kappa}\int_{B_{2R}^{+}(0)} v^{1-\kappa}(x,t)\big((x,t)\cdot\nabla \phi_{\epsilon,R}(x,t)\big)dxdt.
\end{align}

Since $(u,v)\in L^{1-\theta}(\mathbb{R}^{n+1}_+)\times L^{1-\kappa}(\mathbb{R}^{n+1}_+)$ and $\phi_{\epsilon,R}$ is bounded, there exists a sequence $\{R_{j}\}$ with $R_j\rightarrow+\infty$ such that
$$R_j\int_{\partial B_{2R_j}(0)\cap\mathbb{R}_{+}^{n+1}} u^{1-\theta}(x,t)\phi_{\epsilon,R_j}(x,t)d\sigma\rightarrow0,\ \ R_j\int_{\partial B_{2R_j}(0)\cap\mathbb{R}_{+}^{n+1}} v^{1-\kappa}(x,t)\phi_{\epsilon,R_j}(x,t)d\sigma\rightarrow0.$$
Note that $\nabla \phi_\epsilon$ and $\nabla \psi_R$ have supports in $\{\epsilon<|(x,t)|<2\epsilon\}$ and $\{R<|(x,t)|<2R\}$, respectively. Therefore, $|(x,t)\cdot\nabla \phi_{\epsilon,R}(x,t)|\leq C$ on its compact support. Choose $R=R_{j}$ and let $\epsilon\rightarrow0$, $j\rightarrow+\infty$ in \eqref{eq-a11} and \eqref{eq-a12}, then using Lebesgue dominated convergence theorem we have
\begin{align}\label{poh}
&\quad \int_{\mathbb{R}^{n+1}_+} u^{-\theta}(x,t)\big((x,t)\cdot\nabla  u(x,t)\big)dxdt+\int_{\mathbb{R}^{n+1}_+}v^{-\kappa}(x,t)((x,t)\cdot\nabla v(x,t))dxdt\nonumber\\
&=-\frac{n+1}{1-\theta}\int_{\mathbb{R}^{n+1}_+} u^{1-\theta}(x,t)dxdt-\frac{n+1}{1-\kappa}\int_{\mathbb{R}^{n+1}_+} v(x,t)^{1-\kappa}dxdt.
\end{align}

On the other hand, using Lebesgue dominated convergence theorem and \eqref{intsp}, one can calculate that
\begin{align}\label{LHS-1}
&\int_{\mathbb{R}^{n+1}_+} u^{-\theta}(x,t)((x,t)\cdot\nabla u(x,t))\phi_{\epsilon,R}(x,t)dxdt\nonumber\\
&=\int_{\mathbb{R}^{n+1}_+} u^{-\theta}(x,t)\phi_{\epsilon,R}(x,t)\big[\sum_{j=1}^n x_j\frac{\partial}{\partial x_j} \big(\int_{\mathbb{R}^{n+1}_+}\frac{t^\alpha z^{\beta} v^{-\kappa}(y,z) }{|(x,t)-(y,z)|^{\lambda}} dydz\big)\nonumber\\
&\quad +t\frac{\partial}{\partial t} \big(\int_{\mathbb{R}^{n+1}_+}\frac{t^\alpha z^{\beta} v^{-\kappa}(y,z) }{|(x,t)-(y,z)|^{\lambda}}dydz \big)\big]     dxdt\nonumber\\
 &=-\lambda\int_{\mathbb{R}^{n+1}_+}\int_{\mathbb{R}^{n+1}_+} u^{-\theta}(x,t)\phi_{\epsilon,R}(x,t) \frac{t^\alpha z^{\beta} v^{-\kappa}(y,z) }{|(x,t)-(y,z)|^{\lambda+2}}\big[((x,t)-(y,z))\cdot (x,t)\big]dydzdxdt\nonumber\\
&\quad +\alpha\int_{\mathbb{R}^{n+1}_+}\int_{\mathbb{R}^{n+1}_+} u^{-\theta}(x,t)\phi_{\epsilon,R}(x,t) \frac{t^\alpha z^{\beta} v^{-\kappa}(y,z)}{|(x,t)-(y,z)|^{\lambda}} dydzdxdt
\end{align}
 and

 \begin{align}\label{LHS-2}
& \int_{\mathbb{R}^{n+1}_+} v^{-\kappa}(x,t)((x,t)\cdot\nabla v(x,t))\phi_{\epsilon,R}(x,t)dxdt\nonumber\\
&=\int_{\mathbb{R}^{n+1}_+}  v^{-\kappa}(x,t)\phi_{\epsilon,R}(x,t)\big[\sum_{j=1}^n x_j\frac{\partial}{\partial x_j} \big(\int_{\mathbb{R}^{n+1}_+}\frac{z^\alpha t^{\beta} u^{-\theta}(y,z) }{|(x,t)-(y,z)|^{\lambda}} dydz\big)\nonumber\\
&\quad +t\frac{\partial}{\partial t} \big(\int_{\mathbb{R}^{n+1}_+}\frac{z^\alpha t^{\beta} u^{-\theta}(y,z) }{|(x,t)-(y,z)|^{\lambda}} dydz\big)\big]    dxdt\nonumber\\
 &=-\lambda\int_{\mathbb{R}^{n+1}_+}\int_{\mathbb{R}^{n+1}_+}v^{-\kappa}(x,t)\phi_{\epsilon,R}(x,t) \frac{z^\alpha t^{\beta} u^{-\theta}(y,z) }{|(x,t)-(y,z)|^{\lambda+2}}\big[((x,t)-(y,z))\cdot (x,t)\big]dydzdxdt\nonumber\\
&\quad +\beta\int_{\mathbb{R}^{n+1}_+}\int_{\mathbb{R}^{n+1}_+} v^{-\kappa}(x,t)\phi_{\epsilon,R}(x,t) \frac{z^\alpha t^{\beta} u^{-\theta}(y,z)}{|(x,t)-(y,z)|^{\lambda}} dydzdxdt.
\end{align}
%\begin{eqnarray*}
%\nabla ( u(x,t))\cdot (x,t)&=\frac{d[ u(\rho (x,t))]}{d\rho}\bigg|_{\rho=1}\\
%&=-\lambda\int_{\mathbb{R}^{n+1}_+} \frac{t^\alpha z^{\beta} v^{-\kappa}(y,z) }{|(x,t)-(y,z)|^{\lambda+2}}\big[((x,t)-(y,z))\cdot (x,t)\big]dydz\\
%&\ \ \ \ +\alpha\int_{\mathbb{R}^{n+1}_+} \frac{t^\alpha z^{\beta} v^{-\kappa}(y,z)}{|(x,t)-(y,z)|^{\lambda}} dydz.
%\end{eqnarray*}
%and
%\begin{equation*}\begin{split}
%\nabla(v(y,z))\cdot (y,z)&=\frac{d[v(\rho (y,z))]}{d\rho}\bigg|_{\rho=1}\\
%&=-\lambda\int_{\mathbb{R}^{n+1}_+} \frac{t^{\alpha}z^\beta u^{-\theta}(x,t) }{|(x,t)-(y,z)|^{\lambda+2}}\big[-((x,t)-(y,z))\cdot (y,z)\big]dxdt\\
%&\ \ \ \ +\beta\int_{\mathbb{R}^{n+1}_+} \frac{t^{\alpha}z^\beta u^{-\theta}(x,t)}{|(x,t)-(y,z)|^{\lambda}} dxdt.
%\end{split}\end{equation*}
Let $R\to\infty$ and $\epsilon\to0$, combining \eqref{LHS-1} with \eqref{LHS-2}, we arrive at
\begin{align}\label{poh000}
&\quad \int_{\mathbb{R}^{n+1}_+} u^{-\theta}(x,t)\big((x,t)\cdot\nabla  u(x,t)\big)dxdt+\int_{\mathbb{R}^{n+1}_+}v^{-\kappa}(y,z)\big((y,z)\cdot\nabla v(y,z)\big)dx\nonumber\\
&=(\alpha+\beta-\lambda)\int_{\mathbb{R}_{+}^{n+1}} \int_{\mathbb{R}_{+}^{n+1}}\frac{t^{\alpha}z^{\beta}}{|(x,t)-(y,z)|^{\lambda}}u^{-\theta}(x,t)v^{-\kappa}(y,z)dxdtdydz.
\end{align}
Substituting \eqref{Fubini}, \eqref{poh} into \eqref{poh000}, we have
$$\frac{n+1}{\theta-1}+\frac{n+1}{\kappa-1}=\alpha+\beta-\lambda.$$
\end{proof}

As a consequence, we obtain the following Liouville type theorem for positive solutions of system \eqref{intsp}.
\begin{crl}\label{coro01}
For $-n-1<\lambda<0$, $\alpha,\beta\geq0$, $\kappa,\theta>1$, assume that
$$\frac{n+1}{\theta-1}+\frac{n+1}{\kappa-1}\neq\alpha+\beta-\lambda,$$
then there does not exist a pair of positive Lebesgue measurable solutions $(u,v)$ satisfying \eqref{intsp}.
\end{crl}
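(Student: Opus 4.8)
The plan is to obtain Corollary \ref{coro01} as the contrapositive of the necessary condition proved in Theorem \ref{theorem4-1}. Suppose, for contradiction, that $(u,v)$ is a pair of positive Lebesgue measurable solutions of the integral system \eqref{intsp}. I want to conclude that $(u,v)$ is in fact a pair of \emph{weak} positive solutions in the sense of the definition preceding Theorem \ref{theorem4-1}; once this is established, Theorem \ref{theorem4-1} forces $\frac{n+1}{\theta-1}+\frac{n+1}{\kappa-1}=\alpha+\beta-\lambda$, which contradicts the standing hypothesis and finishes the argument.

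First I would record the a priori information supplied by Lemma \ref{lem2}: any such pair satisfies $u,v\in C^\infty(\mathbb{R}^{n+1}_+)\cap C^\gamma(\overline{\mathbb{R}^{n+1}_+})$, together with the weighted integrability bounds $\int_{\mathbb{R}^{n+1}_+}t^\alpha(1+|(x,t)|^{-\lambda})u^{-\theta}\,dxdt<\infty$ and $\int_{\mathbb{R}^{n+1}_+}z^\beta(1+|(y,z)|^{-\lambda})v^{-\kappa}\,dydz<\infty$; in particular $u\in L^{1-\theta}(\mathbb{R}^{n+1}_+)$ and $v\in L^{1-\kappa}(\mathbb{R}^{n+1}_+)$. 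These are exactly the facts extracted from Lemma \ref{lem2} at the start of the proof of Theorem \ref{theorem4-1}, and they also include the two-sided asymptotics $u(x,t)\asymp t^\alpha(1+|(x,t)|^{-\lambda})$ and $v(y,z)\asymp z^\beta(1+|(y,z)|^{-\lambda})$, which control the behaviour of $u^{-\theta}$ and $v^{-\kappa}$ both near $\partial\mathbb{R}^{n+1}_+$ and at infinity.

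Next, to verify the weak formulation, fix $\varphi\in C^\infty_0(\mathbb{R}^{n+1}_+)$. The two double integrals appearing in the definition have nonnegative integrands $\frac{t^\alpha z^\beta v^{-\kappa}(y,z)|\varphi(x,t)|}{|(x,t)-(y,z)|^\lambda}$ and $\frac{z^\alpha t^\beta u^{-\theta}(y,z)|\varphi(x,t)|}{|(x,t)-(y,z)|^\lambda}$, so Tonelli's theorem applies with no extra hypothesis; carrying out the inner integration first and invoking the pointwise identities \eqref{intsp} gives $\int_{\mathbb{R}^{n+1}_+}\int_{\mathbb{R}^{n+1}_+}\frac{t^\alpha z^\beta v^{-\kappa}(y,z)\varphi(x,t)}{|(x,t)-(y,z)|^\lambda}\,dydzdxdt=\int_{\mathbb{R}^{n+1}_+}u\varphi\,dxdt$ and, after the usual relabelling of variables in the second equation of \eqref{intsp}, $\int_{\mathbb{R}^{n+1}_+}\int_{\mathbb{R}^{n+1}_+}\frac{z^\alpha t^\beta u^{-\theta}(y,z)\varphi(x,t)}{|(x,t)-(y,z)|^\lambda}\,dydzdxdt=\int_{\mathbb{R}^{n+1}_+}v\varphi\,dxdt$, the right-hand sides being finite since $u$ and $v$ are continuous on the compact support of $\varphi$. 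Adding the two identities is precisely the weak formulation, so $(u,v)$ is a pair of weak positive solutions of \eqref{intsp}.

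The step I expect to need the most care is exactly this passage from a merely measurable pointwise solution to a weak solution: one must be certain that the singular kernel is integrable against test functions and that the Fubini/Tonelli interchange is legitimate. Both are guaranteed — the integrability at infinity and the boundary/infinity asymptotics come from Lemma \ref{lem2}, and nonnegativity of the integrands makes Tonelli (rather than Fubini) sufficient, so no further integrability check is required. Everything past this reduction is a verbatim citation of Theorem \ref{theorem4-1}, which yields the scaling identity and hence the contradiction completing the proof of Corollary \ref{coro01}.
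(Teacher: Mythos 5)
Your proposal is correct and matches the paper's intent exactly: the corollary is stated there as an immediate consequence (contrapositive) of Theorem \ref{theorem4-1}, with no separate proof given, and the paper's own proof of that theorem in fact begins directly from the assumption that $(u,v)$ is a pair of positive Lebesgue measurable solutions and invokes Lemma \ref{lem2} just as you do. Your extra Tonelli/Fubini verification that a measurable pointwise solution is a weak solution is a harmless and correct filling-in of a detail the paper glosses over.
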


 \vskip 1cm

\noindent {\bf Acknowledgements}\\
%The author would like to thank the referee for his/her careful reading of the manuscript and many good suggestions.
This project is supported by  the
National Natural Science Foundation of China (Grant No. 12101380, 12071269), China Postdoctoral Science Foundation (Grant No. 2021M700086), Youth Innovation Team of Shaanxi Universities  and the Fundamental Research Funds for the Central Universities (Grant No. GK202307001, GK202202007)

\medskip

\end{document}